\newtheorem{theorem}{Theorem}[subsection]
\newtheorem{cor}[theorem]{Corollary}
\newtheorem{conjecture}[theorem]{Conjecture}
\newtheorem{lemma}[theorem]{Lemma}
\newtheorem{proposition}[theorem]{Proposition}
\newtheorem{suptheorem}{Theorem}[section]
\newtheorem{supcor}[suptheorem]{Corollary}
\newtheorem{suplemma}[suptheorem]{Lemma}
\newtheorem{supproposition}[suptheorem]{Proposition}
\theoremstyle{remark}
\newtheorem{remark}[theorem]{Remark}
\newtheorem{example}[theorem]{Example}
\newtheorem{supremark}[suptheorem]{Remark}
\theoremstyle{definition}
\newtheorem{definition}[theorem]{Definition}
\newtheorem{supdefinition}[suptheorem]{Definition}
\newcommand{\rZ}{{\rm Z}}
\newcommand{\rH}{{\rm H}}
\newcommand{\rE}{{\rm E}}
\newcommand{\quash}[1]{}  
\newcommand{\lps}{[\![}
\newcommand{\rps}{]\!]}
 \newcommand{\scr}{\mathscr }
\newcommand\rK{{\rm K}}
\newcommand\bu{\bullet}
\newcommand\und{\underline}
\newcommand\E{\mathcal{E}}
\newcommand\calF{\mathcal{F}}
\newcommand\calG{\mathcal{G}}
\newcommand\calC{\mathcal{C}}
\newcommand{\mr}[1]{\mathrm{#1}}
\newcommand{\mf}[1]{\mathfrak{#1}}
\newcommand{\mc}[1]{\mathcal{#1}}
\newcommand{\mb}[1]{\mathbb{#1}}
\newcommand{\Z}{\mb{Z}}
\newcommand{\Q}{\mb{Q}}
\newcommand{\zp}{\mb{Z}_p}
\newcommand{\qp}{\mb{Q}_p}
\newcommand{\cont}{\mr{cont}}
\newcommand{\fin}{\mr{fin}}
\newcommand{\La}{\Lambda}
\newcommand{\Ga}{\Gamma}
\newcommand{\Cl}{\mr{Cl}}
\newcommand{\Iw}{\mr{Iw}}
\newcommand{\invlim}[1]{\varprojlim_{#1} \,}
\newcommand{\dirlim}[1]{\varinjlim_{#1} \,}
\newcommand{\ps}[1]{[\![ #1 ]\!]}
\newcommand{\cotimes}[1]{\,\hat{\otimes}_{#1} \,}
\newcommand{\finite}{(\mr{finite})}
\DeclareMathOperator{\Hom}{Hom} 
 \DeclareMathOperator{\Gal}{Gal}
 \DeclareMathOperator{\Res}{Res}
\DeclareMathOperator{\coker}{coker} 
\DeclareMathOperator{\im}{im}
\DeclareMathOperator{\Ext}{Ext}
\DeclareMathOperator{\Spec}{Spec}
\DeclareMathOperator{\rank}{rank}
\def\Z{\mathbb{Z}}
\def\Q{\mathbb{Q}}
\def\O{{\mathcal{O}}}
\def\Box{{\bullet}}
\DeclareMathOperator{\cyc}{cyc}
\def\arrowdown#1#2{\Big\downarrow \rlap{$\vcenter{\hbox{$\scriptstyle#2$}}$}
{\hbox to -10pt{\hss{$\vcenter{\hbox{$\scriptstyle#1$}}$}}}}
\def\arrowup#1#2{\Big\uparrow \rlap{$\vcenter{\hbox{$\scriptstyle#2$}}$}
{\hbox to -10pt{\hss{$\vcenter{\hbox{$\scriptstyle#1$}}$}}}}
 \numberwithin{equation}{section}
\begin{document}
\title{Higher Chern classes in Iwasawa theory}

\author[\scriptsize  Bleher]{\scriptsize F. M. Bleher}
\address{F. M. Bleher, Dept. of Mathematics\\Univ. of Iowa\\Iowa City, IA 52242, USA}
\email{frauke-bleher@uiowa.edu}

\author[Chinburg]{T. Chinburg}
\address{T. Chinburg, Dept. of Mathematics\\ Univ. of Pennsylvania \\ Philadelphia, PA 19104, USA}
\email{ted@math.upenn.edu}

\author[Greenberg]{R. Greenberg}
\address{R. Greenberg\\Dept. of Mathematics\\Univ. of Washington\\Box 354350\\ Seattle, WA 98195, USA}
\email{greenber@math.washington.edu}

\author[Kakde]{M. Kakde}
\address{M. Kakde\\Dept. of Mathematics\\Indian Institute of Science\\Bangalore 560012, India}
\email{maheshkakde@iisc.ac.in}

\author[Pappas]{G. Pappas}
\address{G. Pappas\\ Dept. of
Mathematics\\
Michigan State Univ.\\
E. Lansing, MI 48824, USA}
\email{pappas@math.msu.edu}

\author[Sharifi]{R. Sharifi}
\address{R. Sharifi\\Dept. of Mathematics\\UCLA\\Box 951555\\Los Angeles, CA 90095, USA}
\email{sharifi@math.ucla.edu}

\author[Taylor]{M. J. Taylor}
\address{M. J. Taylor\\  Merton College\\ 
Oxford, OX1 4JE, UK}
\email{martin.taylor@merton.ox.ac.uk}



\begin{abstract} \
We begin a study of $m$th Chern classes and $m$th characteristic symbols for Iwasawa modules which are supported in codimension at least $m$. This extends the classical theory of  characteristic ideals and their generators for  Iwasawa modules which are torsion, i.e., supported in codimension at least $1$. We apply this to an Iwasawa module constructed from an inverse limit of $p$-parts of ideal class groups of abelian extensions of an imaginary quadratic field. When this module is pseudo-null, which is conjecturally always the case, we determine its second Chern class and show that it has a characteristic symbol given by the Steinberg symbol of two Katz $p$-adic $L$-functions. 
 \end{abstract}

\maketitle

 \tableofcontents
\newpage

\section{Introduction}
\label{s:intro}

The main conjecture of Iwasawa theory in its most classical form
asserts the equality of two ideals in a formal power series ring.
The first is defined through the action of the abelian Galois group of the $p$-cyclotomic
tower over an abelian base field on a limit of $p$-parts of class groups in the tower.
The other is generated by a power series that interpolates values of Dirichlet $L$-functions.
This conjecture was proven
by Mazur and Wiles \cite{MazurWiles} and has since been generalized in a multitude of ways.
It has led to the  development of a wide range of new methods in
number theory, arithmetic geometry and the theory of modular forms: 
see for example \cite{Green1}, \cite{KatoICM2006}, \cite{Iwasawa2012} and their references. 
 As we will explain in Section \ref{s:conj}, classical main conjectures pertain to the first
 Chern classes of various complexes of modules over Iwasawa algebras. 
In this paper, we begin a study of the higher
 Chern classes of such complexes and their relation
 to analytic invariants such as $p$-adic $L$-functions.  
 This can
 be seen as studying the behavior in higher codimension of
 the natural  complexes.
 
Higher Chern classes appear implicitly in some of the earliest 
work of Iwasawa \cite{IwBAMS}.
Let $p$ be an odd prime, and let $F_\infty$ denote a 
$\mathbb{Z}_p$-extension of a number field $F$.  
Iwasawa showed that for sufficiently large $n$, the order of the $p$-part of the ideal class group of the cyclic extension of degree $p^n$ in $F_\infty$
  is 
 \begin{equation}
 \label{eq:growthrate}
 p^{\mu p^n + \lambda n + \nu}
 \end{equation}
  for some constants $\mu$, $\lambda$ and $\nu$. Let $L$ be the maximal abelian
  unramified pro-$p$ extension of $F_\infty$.  Iwasawa's theorem is proved
  by studying the structure of $X = \mathrm{Gal}(L/F_\infty)$ as a module for the
  Iwasawa algebra $\Lambda = \mathbb{Z}_p\ps{\Gamma} \cong \mathbb{Z}_p\ps{t}$ associated to 
   $\Gamma = \mathrm{Gal}(F_\infty/F) \cong \mathbb{Z}_p$. 
Here, $\Lambda $ is a dimension two unique factorization domain with a unique codimension two prime ideal $(p, t)$, which has residue field $\mathbb{F}_p$.    The focus of classical
Iwasawa theory is on the invariants $\mu$ and $\lambda$, which pertain to the support of $X$
in codimension $1$ as a torsion finitely generated $\Lambda $-module. More precisely, 
$\mu$ and $\lambda$ are determined by the first Chern class of $X$ as a $\Lambda $-module,
as will be explained in  Subsection \ref{ss:firstclass}.  Suppose now
  that $\mu = 0 = \lambda$.  Then $X$ is either zero or supported in codimension $2$ (i.e., $X$ is pseudo-null), and  
  $$
  \nu \in \mathbb{Z} = \rK_0(\mathbb{F}_p) 
  $$
may be identified with the (localized) second Chern class of $X$ as a $\Lambda $-module.  In general,
 the relevant Chern class is associated to the codimension of the support of an Iwasawa module.
 This class can be thought of as the leading term in the algebraic description of the module.
 When one is dealing with complexes of modules, the natural codimension is that of the support
 of the cohomology of the complex.  
 
 \subsection{Chern classes and characteristic symbols.}
 \label{s:chernclass}
 
There is a general theory of  localized Chern classes due to Fulton-MacPherson \cite[Chapter 18]{FultonIntersectionTheory} based on MacPherson's 
graph construction (see also \cite{RobertsBook}).
Moreover, Gillet developed a sophisticated theory of  
Chern classes in $\rK$-cohomology with supports in
\cite{GilletRR}.  This pertains to suitable complexes of modules over a Noetherian
scheme which are exact off a closed subscheme and requires certain assumptions, including 
Gersten's conjecture. 
In this paper, we will
restrict to a special situation that can be examined by simpler tools.  
Suppose that  $R$ is a local commutative Noetherian ring and that $\calC^\bullet$ is a bounded complex of finitely generated $R$-modules which is exact in codimension less than $m$.  We now describe an $m$th Chern class which can be associated to $\calC^\bullet$. In our 
applications, $R$ will be an Iwasawa algebra. 

Let $Y = \mathrm{Spec}(R)$, and let
$Y^{(m)}$ be the set of codimension $m$ points of $Y$, i.e., height $m$ prime ideals
of $R$. Denote by $\rZ^m(Y)$   the group of cycles of codimension $m$ in $Y$, i.e.
the free abelian group  generated by $y\in Y^{(m)}$:
$$
\rZ^m(Y) =  \bigoplus\nolimits_{y \in Y^{(m)}} \mathbb{Z}\cdot y.
$$
 For $y\in Y^{(m)}$, let $R_y$ denote the localization of $R$ at $y$, and set  $\calC^\bullet_y=\calC^\bu\otimes_R R_y$.
Under our condition on $\calC^\bu$, the cohomology groups ${\rm H}^i(\calC^\bu_y)={\rm H}^i(\calC^\bu)\otimes_R R_y$ are finite length $R_y$-modules. We then define a (localized)
Chern class $c_m(\mathcal{C}^\bullet )$ in the group 
$ 
\rZ^m(Y) 
$ 
by letting the component at $y$ of $c_m(\calC^\bullet)$ be the 
alternating sum of the lengths 
$$
\sum\nolimits_i (-1)^i {\rm length}_{R_y}{\rm H}^i(\calC^\bu_y).
$$
If the codimension of the support of some ${\rm H}^i(\calC^\bu_y)$ is exactly $m$, the Chern class $c_m(\calC^\bullet)$ is what we referred to earlier, just
before the start of Subsection  \ref{s:chernclass}, as the leading term 
of $\calC^\bullet$ as a complex of $R$-modules.  This is a very special case of the construction in \cite{RobertsBook} and 
 \cite[Chapter 18]{FultonIntersectionTheory}. In particular, if $M$ is a 
finitely generated $R$-module which is supported in codimension at least $m$, we have $$c_m(M)=
\sum_{y\in Y^{(m)}}  {\rm length}_{R_y}(M_y)\cdot y.
$$

We would now like to relate $c_m(\calC^\bullet)$ to analytic invariants.
Suppose that $R$ is a regular integral domain, and let $Q$ be the fraction field of $R$.  When $m = 1$ one can use the divisor homomorphism
$$\nu_1 \colon Q^\times \to \rZ^1(Y) =  \bigoplus\nolimits_{y \in Y^{(1)}} \mathbb{Z}\cdot y.$$
In the language of the classical main conjectures, an element $f \in Q^\times$
such that $\nu_1(f) = c_1(M)$ is a characteristic power series for $M$
when $R$ is a formal power series ring.  A main conjecture for $M$ posits
that there is such an $f$ which can be constructed analytically, e.g. via
$p$-adic $L$-functions.

The key to generalizing this is to observe that $Q^\times$ is the first
Quillen $\rK$-group $\rK_1(Q )$ and $\nu_1$ is a tame symbol map.  
To try to relate $c_m(M)$ to analytic
invariants for arbitrary $m$, one can consider elements of $\rK_m(Q )$
which can be described by symbols involving $m$-tuples of elements
of $Q $ associated to $L$-functions.  The homomorphism $\nu_1$
is replaced by a homomorphism $\nu_m$ involving compositions of tame symbol maps.  
We now describe one way to do this.

Suppose that $\und\eta = (\eta_0,\ldots,\eta_{m})$ 
is a sequence of points of $Y$ with ${\rm codim}(\eta_i)=i$ and such that 
$\eta_{i+1}$ lies in the closure $\overline{\eta}_i$ of $\eta_i$ for all $i < m$. 
Denote by $P_m(Y)$ the set of all such sequences. Let $k(\eta_i)=Q(R/\eta_i)$ be the residue field of $\eta_i$.
Composing successive tame symbol maps (i.e., connecting maps of  localization sequences), we obtain homomorphisms
$$
	\nu_{\und\eta} \colon \rK_m(Q)=\rK_m(k(\eta_0))\to \rK_{m-1}(k(\eta_1))\to\cdots\to \rK_0(k(\eta_m))=\Z.
$$
Here, $\rK_i$ denotes the $i$th Quillen $\rK$-group.
We combine these in the following way to give a homomorphism 
$$
	\nu_m \colon \bigoplus_{\und\eta'\in P_{m-1}(Y)} \rK_m(Q)\to \rZ^m(Y) =  \bigoplus\nolimits_{y \in Y^{(m)}} \mathbb{Z}\cdot y.
$$
Suppose $a = (a_{\und\eta'})_{\und\eta'\in P_{m-1}(Y)}$.  We define
the component of $\nu_m(a)$ at $y$ to be the sum  of $\nu_{(\eta'_0,\eta'_1,\ldots , \eta'_{m-1}, y)}(a_{\und\eta'})$ 
over all the sequences $$\und\eta'  = (\eta'_0,\eta'_1,\ldots , \eta'_{m-1}) \in P_{m-1}(Y)$$ such that $y$ is in the closure of $\eta'_{m-1}$.

If $M$ is a finitely generated $R$-module supported in codimension at least $m$ as above, then we refer to any element in
$\bigoplus_{\und\eta'\in P_{m-1}(Y)} \rK_m(Q)$ that $\nu_m$ maps to $c_m(M)$ as a {\sl characteristic symbol} for $M$.
This generalizes the notion of a characteristic power series of a torsion module 
in classical Iwasawa theory, which can be reinterpreted as the case $m=1$.

We focus primarily on the case in which 
$m= 2$ and  $R$ is a formal power
series ring $A\lps t_1,\ldots,t_r\rps$ over a mixed characteristic complete discrete
valuation ring $A$.  In this case, we show that the symbol map $\nu_2$ 
gives an isomorphism 
\begin{equation}
\label{eq:niceisom}
  \frac{\prod'_{\eta_1 \in Y^{(1)}} \rK_2(Q)}{\rK_2(Q) \prod_{\eta_1\in Y^{(1)}}\rK_2(R_{\eta_1})} \xrightarrow{\ \sim\ } \rZ^2(Y).
\end{equation}
This uses the fact that Gersten's conjecture holds for $\rK_2$ and $R$.
In the numerator of \eqref{eq:niceisom}, the restricted product $\prod'_{\eta_1\in Y^{(1)}} \rK_2(Q)$ is the subgroup of the direct product in which all but a finite number of components belong to $\rK_2( R_{\eta_1})\subset \rK_2(Q)$. In the denominator, we have the product of the subgroups $\prod_{\eta_1\in Y^{(1)}}\rK_2( R_{\eta_1})$ and  $\rK_2( Q)$, the second group embedded diagonally in $\prod'_{\eta_1\in Y^{(1)}} \rK_2( Q)$.
 The significance of this formula is that it shows that one can specify elements of $\rZ^2(Y)$
 through a list of elements of $\rK_2(Q)$, one for each codimension one prime $\eta_1$ of $R$,
 such that the element for $\eta_1$ lies in $\rK_2( R_{\eta_1})$ for all but finitely many $\eta_1$.  
 
\subsection{Results.}

Returning to Iwasawa theory, an optimistic hope one might have is that
under certain hypotheses, the second Chern class of an Iwasawa module or complex thereof
can be described using \eqref{eq:niceisom} and  Steinberg symbols in $\rK_2(Q)$ with arguments that
are $p$-adic $L$-functions.  Our main result, Theorem \ref{thm:imagquad},   is of
exactly this kind.    
In it, we work under the assumption of a conjecture of Greenberg which predicts that 
certain Iwasawa modules over multi-variable power series rings are pseudo-null,
i.e., that they have trivial support in codimension $1$.  We recall this conjecture and some
evidence for it found by various authors in 
Subsection \ref{ss:GreenbergsConjecture}.  

More precisely,   we consider in Subsection \ref{ss:imaginaryquad} an  
imaginary quadratic field $E$, and we assume that $p$ is an odd prime that splits into two primes 
$\mf{p}$ and $\bar{\mf{p}}$ of $E$.   
Let $\widetilde{E}$ denote the compositum of all $\zp$-extensions of $E$.  Let $\psi$ be a one-dimensional $p$-adic
character of
the absolute Galois group of $E$ of finite order prime to $p$, and denote by $K$ (resp., $F$) the compositum of the fixed field of $\psi$ with $\widetilde{E}(\mu_p)$ (resp., $E(\mu_p)$). 
We consider the Iwasawa module $X=\mathrm{Gal}(L/K)$, where $L$ is the maximal abelian unramified pro-$p$ extension of $K$. Set $\mathcal{G}=\mathrm{Gal}(K/E)$, and
let $\omega$ be its Teichm\"uller character.   Let $\Delta = \Gal(F/E)$, which we may also view as the largest subgroup of $\mc{G}$
of prime-to-$p$ order.  For simplicity in this discussion, we suppose that $\psi \neq 1, \omega$.  

The Galois group $\mathcal{G}$ has an open maximal pro-$p$ subgroup $\Gamma$ isomorphic to $\zp^2$. 
Greenberg has conjectured that $X$ is pseudo-null as
a module for $\Lambda = \zp\ps{\Gamma} \cong \zp\lps t_1, t_2 \rps$. Our goal is to obtain information about $X$ 
and its eigenspaces $X^{ \psi } = \mc{O}_{\psi} \otimes_{\zp[\Delta]} X$,  
where $\mc{O}_{\psi}$ is the $\zp$-algebra generated by the values of $\psi$, and $\zp[\Delta] \to \mc{O}_{\psi}$ is the surjection induced by $\psi$.
When Greenberg's conjecture 
is true, the characteristic ideal giving the first Chern class of $X^{ \psi }$  is trivial. It thus 
makes good sense to consider the second Chern class, which gives information about the
height $2$ primes in the support of $X^{ \psi }$.

Consider the Katz $p$-adic $L$-functions $\mc{L}_{\mf{p}, \psi }$
and $\mc{L}_{\bar{\mf{p}},\psi }$ in the fraction field $Q$ of the ring $R = 
e_\psi \cdot W\ps{\mathcal{G}}\cong W\ps{t_1, t_2}$, where $e_\psi \in W\ps{\mathcal{G}}$ is the idempotent associated to $\psi$ and $W$ denotes the Witt vectors of an algebraic closure of ${\mathbb F}_p$. 
We can now define an analytic element $c_2^{\rm an}$ in the group $\rZ^2(\Spec(R))$ of \eqref{eq:niceisom} in the following way.  Let $c_2^{\rm an}$ be 
the image of the element on the left-hand side of \eqref{eq:niceisom} with component at $\eta_1$
the Steinberg symbol
$$
\{ \mc{L}_{\mf{p}, \psi }, \mc{L}_{\bar{\mf{p}}, \psi } \}\in \rK_2(Q),
$$
if $\mc{L}_{\mf{p}, \psi }$ is not a unit at $\eta_1$, and with other components trivial.
This element $c_2^{\rm an}$ does not depend on the ordering of $\mf{p}$ and $\bar{\mf{p}}$ 
(see Remark \ref{rem:symmetry}).

Our main result, Theorem \ref{thm:imagquad}, is that if $X$ is pseudo-null, then 
\begin{equation}
\label{eq:c2formula}
c_2^{\rm an} = c_2(X_W^{ \psi })  + c_2((X_W^{ \omega\psi^{-1} })^{\iota}(1)) 
\end{equation}
where  $X_W^{ \psi }$ and $(X_W^{ \omega\psi^{-1} })^{\iota}(1)$ are the $R$-modules defined
as follows: $X_W^{ \psi }$ is the completed tensor product $W \cotimes{ \mc{O}_{\psi}} X^{ \psi }$,
while $(X_W^{ \omega \psi^{-1} })^{\iota}(1)$ is the Tate twist of the module which results from 
 $X_W^{ \omega \psi^{-1} }$ by letting $g \in \mathcal{G}$
act by $g^{-1}$.

In \eqref{eq:c2formula}, one needs to take completed tensor products of Galois modules with $W$ because 
the analytic invariant $c_2^{\rm an}$ is only defined over $W$.  
Note that the right-hand side of \eqref{eq:c2formula} concerns
two different components of $X$, namely those associated to $\psi$
and $\omega \psi^{-1}$.  
It frequently occurs that exactly one of the two
is nontrivial:  see Example \ref{imquadexs}. In fact, one consequence of our main result is a codimension two
elliptic counterpart of the Herbrand-Ribet Theorem (see Corollary \ref{cor:Herbrand-Ribet}): the eigenspaces
$X^{ \psi }$ and $X^{ \omega \psi^{-1} }$ are both trivial
if and only
if one of $\mc{L}_{\mf{p}, \psi }$ or $ \mc{L}_{\bar{\mf{p}}, \psi } $
is a unit power series. 

 One can also interpret the
right-hand side of \eqref{eq:c2formula} in the following way.  Let $\Omega = \zp\ps{\mathcal{G}}$ and let 
$\epsilon \colon \Omega \to \Omega$
be the involution induced by the map $g \to \chi_{\text{cyc}}(g) g^{-1}$ on $\mathcal{G}$ where
$\chi_{\text{cyc}} \colon \mathcal{G} \to \mathbb{Z}_p^{\times}$ is the cyclotomic character.  
Then $(X^{ \omega\psi^{-1} })^{\iota}(1)$ is canonically isomorphic to the $\psi$ component
$(X_\epsilon)^{ \psi }$ of the twist $X_\epsilon = \Omega \otimes_{\epsilon,\Omega} X$
of $X$ by $\epsilon$.  Thus $X_\epsilon$ is isomorphic to $X$ as a $\Z_p$-module but with 
the action of $\Omega$ resulting from precomposing with the involution $\epsilon \colon 
\Omega \to \Omega$.  Then \eqref{eq:c2formula} can be written 
\begin{equation}
\label{eq:c2formulasecond}
c_2^{\rm an} = c_2(W \cotimes{ \mc{O}_{\psi}} (X \oplus X_\epsilon)^{ \psi }). 
\end{equation}

We discuss two extensions of \eqref{eq:c2formula}.  
In Subsection \ref{ss:onecomplexplace}, we  explain how the algebraic part of our result for imaginary quadratic fields extends, under certain additional hypotheses on $E$ and $\psi$, to number 
fields $E$ with at most one complex place.  
In Section \ref{s:generalization}, we show how  when $E$ is imaginary
quadratic and $K$ is Galois over $\mathbb{Q}$, we can obtain information about the $X$ above as a module for 
the non-commutative Iwasawa algebra
$\zp\ps{\mathrm{Gal}(K/\mathbb{Q})}$. This involves  a  
``non-commutative  second Chern class"  in which, instead of lengths of modules, we consider classes in appropriate Grothendieck groups.  Developing counterparts of
our results for more general non-commutative Galois groups is a natural
goal in view of the non-commutative main conjecture concerning
first Chern classes treated in \cite{Coates1}.

\subsection{Outline of the proof.}
\label{s:strategy}

We now outline the strategy of the proof of \eqref{eq:c2formula}.
We first consider the Galois group $\mf{X} = \mathrm{Gal}(N/K)$,
with $N$ the maximal abelian pro-$p$ extension of $K$ that is unramified outside of
$p$. One has $X = \mf{X}/(I_{\mf{p}} + I_{\bar{\mf{p}}})$ for $I_{\mf{p}}$ 
the subgroup of $\mf{X}$ generated by inertia groups of primes of $K$ over $\mf{p}$
and $I_{\bar{\mf{p}}}$ defined similarly for the prime $\bar{\mf{p}}$.
A novelty of the proof is that it requires carefully analyzing the 
discrepancy between the rank one $\Omega$-module $\mf{X}$ 
and its free reflexive hull. 
  
The reflexive hull of a $\La$-module $M$ is $M^{**} = (M^{*})^{*}$ for $M^* = \mathrm{Hom}_{\Lambda}(M,\Lambda)$, and there is a canonical homomorphism $M \to M^{**}$.
Iwasawa-theoretic duality results tell us that
since $X$ is pseudo-null, the map $\mf{X} \to \mf{X}^{**}$ is injective with an explicit pseudo-null cokernel
(in particular, see Proposition \ref{r2case}).
We have a commutative diagram
\begin{equation}
    \label{eq:diagramnasty}
    		\SelectTips{cm}{} \xymatrix{		
    		I_{\mf{p}} \oplus I_{\bar{\mf{p}}} \ar[r] \ar[d] & I_{\mf{p}}^{**} \oplus 
		I_{\bar{\mf{p}}}^{**}
    		\ar[d]\\
    		\mf{X} \ar[r] & \mf{X}^{**} . \\
    		}
\end{equation}
Taking cokernels of the vertical homomorphisms in \eqref{eq:diagramnasty} yields a homomorphism
$$
	f \colon X \to \mf{X}^{**}/(\mr{im}(I_{\mf{p}}^{**})+  \mr{im}(I_{\bar{\mf{p}}}^{**})),
$$
where $\mr{im}$ denotes the image. 
A snake lemma argument then tells us that the cokernel of $f$ is the
Tate twist of an Iwasawa adjoint $\alpha(X)$ 
of $X$ which has the same class  as $X^{\iota}$ in the quotient of the Grothendieck group of the category of pseudo-null modules by the 
Grothendieck group of the category of finite modules.  Moreover, the map $f$ is injective in its $\psi$-eigenspace as $\psi \neq \omega$.

The $\psi$-eigenspaces of $\mf{X}$, $I_\mf{p}$ and $I_{\bar{\mf{p}}}$ are of rank one over  
$\Lambda_\psi = \mathcal{O}_\psi\ps{\Gamma}$.  They need not be free, but the key point is that their reflexive hulls are. 
The main conjecture for imaginary quadratic fields proven by Rubin \cite{Rubin1, Rubin2} (see also \cite{Kings}) 
implies that the $p$-adic $L$-function $\mc{L}_{\bar{\mf{p}}, \psi }$ in $\Lambda_{W} = W\ps{\Gamma}$ generates 
the image of the map
$$
	W \cotimes{\mathcal{O}_\psi} (I_{\mf{p}}^{ \psi })^{**} \to W \cotimes{\mathcal{O}_\psi}(\mf{X}^{ \psi })^{**} \cong \Lambda_{W},
$$
and similarly switching the roles of the two primes.  
Putting everything together, we have an exact sequence of $\Lambda_{W}$-modules:
\begin{equation} \label{keyexseq} 0 \xrightarrow{}   	X_W^{ \psi } \to
    	\frac{\Lambda_{W} }{\mc{L}_{\mf{p}, \psi}\Lambda_{W} +  \mc{L}_{\bar{\mf{p}},\psi} \Lambda_{W} } \xrightarrow{} \alpha(X_W^{ \omega\psi^{-1} })(1) \xrightarrow{} 0.
\end{equation}
The second Chern class of the middle term is $c_2^{\mathrm{an}}$, and the second Chern class of the last term
depends only on its class in the Grothendieck group, yielding \eqref{eq:c2formula}.

\subsection{Generalizations.}

We next describe several potential generalizations of \eqref{eq:c2formula} to other fields and Selmer groups of higher-dimensional Galois representations.  We intend them as motivation for further study, leaving details to future work.

Consider first the case of a CM field $E$ of degree $2d$, and suppose that the primes over $p$ in $E$ are split from
the maximal totally real subfield.  There is then a natural generalization of the analytic class on the left side of (\ref{eq:c2formulasecond}).  Fix a $p$-adic CM type $\mathcal{Q}$ for $E$, and let $\bar{\mathcal{Q}}$ be the conjugate type.  Then for $K$ and $F$ defined
as above using a $p$-adic character $\psi$ of prime-to-$p$ order of the absolute Galois 
group of $E$, one has Katz $p$-adic $L$-functions $\mc{L}_{\mathcal{Q}, \psi }$ and $\mc{L}_{\bar{\mathcal{Q}}, \psi }$ in the algebra $R = \Omega^\psi_W$ for $\Omega = \zp\ps{\Gal(K/E)}$.  Suppose that the quotient
\begin{equation}
\label{eq:higheranalytic}
 \frac{R}{R \mc{L}_{\mathcal{Q}, \psi } +  R \mc{L}_{\bar{\mathcal{Q}}, \psi }}
 \end{equation}
is pseudo-null over $R$.  A generalization of \eqref{eq:c2formula} would relate the second Chern class $c_{2,\mathcal{Q}}^{\mr{an}}$ of (\ref{eq:higheranalytic}) to a sum of second Chern classes of algebraic objects arising from Galois groups.  

The question immediately arises of how to extend our algebraic methods, as the unramified outside $p$ Iwasawa module $\mf{X}$ over $K$ has $\Omega$-rank $d$.  For this, we turn to the use of highest exterior powers, which has a rich tradition in the study of special values of $L$-functions, notably in conjectures of Stark and Rubin-Stark.  Let $I_{\mc{Q}}$ denote the subgroup of  $\mf{X}$ generated by the inertia groups of the primes in $\mc{Q}$.  Recall that the main conjecture states that the quotient $\mf{X}/I_{\mc{Q}}$ has first Chern class agreeing with the divisor of $\mc{L}_{\mc{Q},\psi}$.  To obtain a rank one object 
related to  the above $p$-adic $L$-functions as before, it is natural to consider the $d$th exterior power of the reflexive hull of $\mf{X}$, which we may localize at a height $2$ prime to ensure its freeness.  Under Greenberg's conjecture, the quotient $\bigwedge_{\Omega}^d\mf{X}/\bigwedge_{\Omega}^dI_{\mc{Q}}$ is pseudo-isomorphic to the analogous quotient for exterior powers of reflexive hulls, and the main conjecture becomes the statement that $c_1((\bigwedge_{\Omega}^d\mf{X}/\bigwedge_{\Omega}^dI_{\mc{Q}})^{\psi}_W)$ is the divisor of $\mc{L}_{\mathcal{Q},\psi}$.  

This suggests that the proper object for comparison with the analytic class $c_{2,\mathcal{Q}}^{\mr{an}}$ is no longer the second Chern class of the $\psi$-eigenspace of the unramified Iwasawa module $X$ but of the quotient of $d$th exterior powers
\begin{equation}
\label{eq:Zdefn}
	Z_{\mc{Q}} = \frac{\bigwedge_{\Omega}^d\mf{X}}{\bigwedge_{\Omega}^dI_{\mc{Q}} + 
	\bigwedge_{\Omega}^d I_{\mf{\bar{\mc{Q}}}}}.
\end{equation}
 Following the approach used when $d = 1$, it is natural to consider the difference between  $Z_{\mc{Q}}$ and the analogous quotient in which every term is replaced by its reflexive hull.  This is the approach taken in  \cite{ExteriorPower}, where roughly speaking,  
we show that the analytic class $c_{2,\mc{Q}}^{\mr{an}}$ is a sum of second Chern classes arising from $Z_{\mc{Q}}^{\psi}$ and $X^{ \omega\psi^{-1}}$.  
We also provide a Galois-theoretic interpretation of $Z_{\mc{Q}}$ for $d = 2$ as a quotient 
of the second graded quotient in the lower central series 
of the Galois group of the maximal pro-$p$, unramified outside $p$ extension of $K$.
 
We can also consider the case in which $E$ is imaginary quadratic but $p$ is inert, so that $\mf{X}$ is again
of rank one but the product of corresponding inertia groups over the completions of $K$ at $p$ has rank two.  Using Kobayashi's plus/minus Selmer conditions, Pollack and Rubin \cite[Section 4]{pollackrubin} define  two rank one submodules $I^{+}$ and $I^{-}$ of $\mf{X}$ that play the role of $I_{\mf{p}}$ and $I_{\bar{\mf{p}}}$.  Using the results in the present paper, one can get an analogue of (\ref{keyexseq}) with $\mc{L}_{\mf{p}}$ and $\mc{L}_{\bar{\mf{p}}}$ replaced by characteristic elements of $\mf{X}/I^+$ and $\mf{X}/I^-$.  As a two-variable main conjecture in this setting is lacking, this does not directly yield a relation of the second Chern class with $L$-functions.  

Finally, we turn to the Selmer groups of ordinary $p$-adic modular forms, which fit in one-variable families known as Hida families, parameterized by the weight of the form.  Theorem \ref{thm:imagquad} is a special case of this framework involving  CM newforms.  Hida families of residually irreducible newforms give rise to  Galois representations with Galois-stable lattices free over an integral extension $\mathcal{I}$ of an Iwasawa algebra in two variables, the so-called weight and cyclotomic variables.  These lattices are self-dual up to a twist.  We can use the cohomology of such a lattice to define objects analogous to $\mf{X}$ and $X$.  The former
is of rank one over $\mathcal{I}$, and the latter as before should be pseudo-null, with the dual Selmer group of the lattice providing an intermediate object between the two.  The dual Selmer group is expected to have first Chern class given by a Mazur-Kitagawa $p$-adic $L$-function.  In this case, the algebraic study goes through without serious additional complication, and the only obstruction to an exact sequence as in \eqref{keyexseq} is the identification of a second annihilator. 

The above examples may be just the tip of an iceberg.  In  \cite{greenberg:1994}, a main conjecture is formulated in a very general context where one considers a Galois representation over a complete Noetherian local ring $R$ with finite residue field of characteristic $p$. A main conjecture corresponds to a so-called {\em Panchishkin condition}.  It is not uncommon for there to be more than one choice of a Panchishkin condition and hence more than one main conjecture.  On the analytic side, the corresponding $p$-adic $L$-functions should often have divisors intersecting properly. On the algebraic side, the Pontryagin dual of the intersections of the corresponding Selmer groups should often be supported in codimension 2. It is then tempting to believe that the type of result we consider in this paper would have an analogue in this context and would involve taking highest exterior powers of appropriate $R$-modules. 

There are other situations where one can define more than one Selmer group and more than one $p$-adic $L$-function in natural ways, found for example in the work 
of Pollack \cite{pollack:2003}, Kobayashi \cite{kobayashi:2003}, Lei-Loeffler-Zerbes \cite{llz:2010}, Sprung \cite{sprung:2012}, Pottharst \cite{pottharst:2013}.
Some of these constructions involve what amounts to a choice of Panchishkin condition after a change of scalars.  This begs the question, that we do not address here, of how to define a suitable generalized notion of a Panchishkin condition.

\subsection{Organization of the paper.}

In Section \ref{s:chern}, we define Chern classes and characteristic symbols, and 
we explain how \eqref{eq:niceisom} follows from
certain proven cases of  Gersten's conjecture. 
In Section \ref{s:conj}, we recall the formalism
of some previous main conjectures in Iwasawa theory.  We also recall properties of Katz's 
$p$-adic $L$-functions and 
Rubin's results on the main conjecture over imaginary quadratic fields.  In Subsection \ref{ss:GreenbergsConjecture}, 
we recall Greenberg's conjecture and some evidence for it.

In Section \ref{s:FieldsWithR2Equal1}, we discuss various Iwasawa modules in some generality.
The emphasis is on working out Iwasawa-theoretic consequences of Tate, Poitou-Tate and Grothendieck
duality.
This requires the work in the Appendix, which concerns Ext-groups
and Iwasawa adjoints of modules over certain completed group rings.

We begin Section \ref{s:reflectiontype} with a discussion of reflection theorems of 
the kind we will need to discuss  Iwasawa theory in codimension two.  
In Subsection \ref{ss:rational}, we discuss codimension two phenomena in the most
classical case of the cyclotomic $\zp$-extension of an abelian extension of $\mathbb{Q}$.  Our main result over imaginary
quadratic fields is proven in Subsection \ref{ss:imaginaryquad} using the strategy discussed above.  The extension
 of the algebraic part of the proof to number fields with at most one complex place is given in Subsection
 \ref{ss:onecomplexplace}.  The non-commutative generalization over
 imaginary quadratic fields is proved
 in Section \ref{s:generalization}.

\subsection*{Acknowledgements} F.\,B. was partially supported by NSF FRG Grant No.\ DMS-1360621
and NSF Grant No.\ DMS-1801328.
 T.\,C. was partially supported by  NSF FRG Grant No.\ DMS-1360767, NSF FRG Grant No.\ DMS-1265290,
NSF SaTC grants No. CNS-1513671 and CNS-1701785, Simons Foundation Grant No.\ 338379
and 
NSF Grant No.\ DMS-1107263/1107367/1107452 ``RNMS: Geometric Structures and Representation 
Varieties" (the GEAR Network).
F.\,B. and T.\,C. would also like to thank  L'Institut des Hautes \'{E}tudes Scientifiques for support during 
the Fall of 2015. R.\,G. was partially supported by NSF FRG Grant No.\ DMS-1360902
and 
would also like to thank La Fondation Sciences
Math\'{e}matiques de Paris for its support during the Fall of 2015.
M.\,K. was partially supported by EPSRC First Grant No.\ EP/L021986/1 and would also like to thank TIFR, Mumbai, for its hospitality during the writing of parts of this paper. 
G.\,P.  was partially supported by  NSF FRG Grant No.\ DMS-1360733 and NSF Grant No.\ DMS-1701619.
R.\,S.  was partially supported by  NSF FRG Grant No.\ DMS-1360583, NSF Grant Nos.\ DMS-1401122/1615568 and DMS-1801963, and
Simons Foundation Grant No.\ 304824.
T.\,C., R.\,G., M.\,K. and R.\,S. would like to thank the Banff International Research Station for hosting the workshop 
``Applications of Iwasawa Algebras'' in March 2013.

\newpage

 \section{Chern classes and characteristic symbols}

 \label{s:chern}
 
\subsection{Chern classes}
\label{ss:generalsetup}  

We denote by $\rK'_m(R)$ and $\rK_m(R)$, the Quillen $\rK$-groups \cite{Quillen} of a ring $R$ defined using the categories of finitely generated and finitely generated projective $R$-modules, respectively.  If $R$ is regular and Noetherian, then we can identify $\rK_m(R)=\rK_m'(R)$.

Suppose that $R$ is a commutative local integral Noetherian ring.  Denote by ${\mathfrak m}$ the maximal ideal of $R$.
Set $Y=\Spec(R)$, and denote by $Y^{(i)}$ the set of points of $Y$ of codimension $i$, i.e., of 
prime ideals of $R$ of height $i$.  
Let $Q$ denote the fraction field $Q(R)$ of $R$, and denote by $\eta$ the generic point of 
$Y$. 

For $m\geq 0$, we set
$$
 Z^m(Y)=\bigoplus\nolimits_{y\in Y^{(m)}} \Z\cdot y,
$$
the right-hand side being the free abelian group generated by $Y^{(m)}$.

 Consider the  Grothendieck group $\rK'^{(m)}_0(R)=\rK'^{(m)}_0(Y)$ of bounded complexes $\E^\bu$  of finitely generated $R$-modules
which are  exact in codimension less than $m$,   as defined for example in \cite[I.3]{SouleAbramovich}. This is generated by
classes $[\E^\bu]$ of such complexes with relations given by
\begin{itemize}
\item[(i)] $[\E^\bu]=[\calF^\bu]$ if there is a quasi-isomorphism $\E^\bu\xrightarrow{\sim} \calF^\bu$,

\item[(ii)] $[\E^\bu]=[\calF^\bu]+[\calG^\bu]$ if there is an exact sequences of complexes
\[
0\to \calF^\bu\to \E^\bu\to \calG^\bu\to 0.
\]
\end{itemize}
If $M$ is a finitely generated $R$-module with support of codimension at least $m$, 
we regard it as a complex with only nonzero term $M$ at degree $0$.

Suppose that $\mathcal C^\bullet$ is a   bounded complex   of finitely generated  $R$-modules
which is  exact in codimension less than $m$. Then for each $y\in Y^{(m)}$, we can consider 
the complex of $R_y$-modules given by the localization $\mathcal C^\bullet_y=\calC^\bu\otimes_R R_y$.
The assumption on $\mathcal C^\bullet$ implies that all the homology groups
$ {\rm H}^i(\mathcal C^\bullet_y)$ 
are  $ R_y$-modules of finite length and $ {\rm H}^i(\mathcal C^\bullet_y)=0$ for all but a finite number 
of $y\in Y^{(m)}$. We set
$$
c_m(\mathcal C^\bullet)_y=\sum_{i}(-1)^i{\mathrm {length}}_{ R_y} {\rm H}^i(\mathcal C^\bullet_y)
$$
and 
$$
 c_m(\mathcal C^\bullet)=\sum_{y\in Y^{(m)}}c_m(\mathcal C^\bullet)_y\cdot y\in \rZ^m(Y).
$$

We can easily see that $c_m(\mathcal C^\bullet) $ only depends on the class $[\mathcal C^\bullet]$ 
in $\rK'^{(m)}_0(Y)$ and that it is additive, which is to say that it gives a group homomorphism 
$$
c_m \colon \rK'^{(m)}_0(Y)\to {\rm Z}^m(Y).
$$
The element $c_m(\mathcal C^\bullet)$ can also be thought of as a localized $m$th Chern class of $\mathcal C^\bullet$.
In particular, if $M$ is a finitely generated $ R$-module which is supported in codimension $\geq m$, then we have 
 $$
 c_m(M)=\sum_{y\in Y^{(m)}}  {\rm length}_{R_y}(M_y)\cdot y .
$$
In \cite{RobertsBook}, the element $c_m(M)$ is called the codimension-$m$ cycle associated to $M$ and is
denoted by $[M]_{\dim(R)-m}$.
The class $c_m$ can also be given as a very special case of the construction in \cite[Chapter 18]{FultonIntersectionTheory}. 

 In what follows,  we will show how to produce   elements of $\rZ^m(Y)$
 starting from elements in   $\rK_m( Q)$.

\subsection{Tame symbols and Parshin chains}
\label{ss:tamesymbols}

Suppose that $R$ is a discrete valuation ring with maximal ideal $\mathfrak m$,
fraction field $Q$ and residue field $k$.  Then, for all $m\geq 1$, 
the localization sequence of \cite[Theorem 5]{Quillen} produces connecting homomorphisms
$$
\partial_m \colon  \rK_m(Q)\to \rK_{m-1}(k).
$$
We will call these homomorphisms $\partial_m$ ``tame symbols''. 

If $m=1$, then $\partial_1(f)={\rm val}(f)\in \rK_0(k)=\Z$.
If $m=2$, then by Matsumoto's theorem, all elements in $\rK_2(Q)$ are 
finite sums of Steinberg symbols $\{f, g\}$ with $f, g\in Q^\times$ (see \cite{MilnorK}).  
We have
\begin{equation}\label{eq:tamesymbol}
\partial_2(\{f, g\})=(-1)^{{\rm val}(f){\rm val}(g)}\,\frac{f^{{\rm val}(g)}}{g^{{\rm val}(f)}}\, {\rm mod}\,\mathfrak m\, \in k^\times
\end{equation}
(see for example \cite{GraysonTame}, Cor. 7.13). In this case, by \cite{DennisStein}, localization gives a short exact sequence
\begin{equation}\label{eq:exactK2}
1\to \rK_2(R)\to \rK_2(Q)\xrightarrow{\partial_2}k^\times\to 1.
\end{equation}
 This exactness is a special case of Gersten's conjecture: see Subsection \ref{ss:gersten}.

  In what follows, we denote by $\eta_i$ a point in $Y^{(i)}$, i.e., a prime ideal of codimension $i$.
 Suppose that $\eta_i$ lies in the closure $\overline {\{\eta_{i-1}\}}$, so $\eta_i$ contains $\eta_{i-1}$, and consider $R/(\eta_{i-1})$. This is a local integral domain with fraction field $k(\eta_{i-1})$, and $\eta_i$ defines a height $1$ prime ideal in
 $R/(\eta_{i-1})$. The localization $R_{\eta_{i-1},\eta_{i}}=( R/(\eta_{i-1}))_{\eta_i}$ is a $1$-dimensional local ring with fraction field $k(\eta_{i-1})$ and residue field $k(\eta_{i})$. The localization sequence in $\rK'$-theory applied to $ R_{\eta_{i-1},\eta_{i}}$ still gives a connecting homomorphism
  $$
 \partial_{m}(\eta_{i-1},\eta_i) \colon \rK_m(k(\eta_{i-1}))\xrightarrow{ }\rK_{m-1}(k(\eta_i)).
  $$ 
  
  For $m=1$, by \cite[Lemma 5.16]{Quillen} (see also Remark 5.17 therein), or by \cite[Corollary 8.3]{GraysonTame}, the homomorphism
  $
  \partial_1(\eta_{i-1},\eta_i) \colon k(\eta_{i-1})^\times\to \Z
  $ is equal to ${\rm ord}_{\eta_i} \colon k(\eta_{i-1})^\times\to \Z$ where ${\rm ord}_{\eta_i}$ is the unique
  homomorphism with
  $$
 {\rm ord}_{\eta_i}(x)={\rm length}_{ R_{\eta_{i-1},\eta_i}}( R_{\eta_{i-1},\eta_i}/(x))
  $$
  for all $x\in   R_{\eta_{i-1},\eta_i}-\{0\}$.

  For any $n\geq 1$, we now consider the
 set $P_n(Y)$ of ordered sequences of points of $Y$ of the form $\und\eta=(\eta_0,\eta_1,\ldots, \eta_{n})$, with ${\rm codim}(\eta_i)=i$ and $\eta_i \in \overline{\{\eta_{i-1}\}}$, for all $i$. Such sequences are examples of ``Parshin chains'' \cite{ParshinCrelle}.  For $\und\eta=(\eta_0,\eta_1,\ldots, \eta_{n})\in P_n(Y)$, we define a homomorphism
 $$
\nu_{\und\eta} \colon \rK_n(Q)=\rK_n(k(\eta_0))\to \Z=\rK_0(k(\eta_n))
$$
as the composition of successive symbol maps:
\begin{multline*}
 \nu_{\und\eta}=\partial_{1}(\eta_{n-1},\eta_n)\circ\cdots \circ\partial_{n-1}(\eta_1, \eta_2)
 \circ \partial_{n}(\eta_0,\eta_1) \colon \\
\rK_n(k(\eta_0))\to \rK_{n-1}(k(\eta_{1}) )\to\cdots \to \rK_1(k(\eta_{n-1}))\to \rK_0(k(\eta_{n})).
\end{multline*}
Using this, we can define a homomorphism
 $$
 \nu_m \colon \bigoplus_{\und\eta'\in P_{m-1}(Y)} \rK_m(Q)\to \rZ^m(Y)=\bigoplus_{y\in Y^{(m)}}\Z\cdot y
 $$
 by setting the component of  $\nu_m( (a_{\und\eta'})_{\und\eta'})$ for $a_{\und\eta'}\in \rK_m(Q)$ that corresponds to $y\in Y^{(m)}$ to be the sum
 \begin{equation} \label{eq:summaps}
\nu_m( (a_{\und\eta'})_{\und\eta'})_y=\sum_{\und\eta' \ |\ y\in \overline{\{ \eta'_{m-1}\}}}\nu_{\und\eta'\cup y} (a_{\und\eta}).
 \end{equation}
 Here, we set 
 $$
 	\und\eta'\cup y=(\eta_0',\eta_1',\ldots, \eta_{m-1}')\cup y=(\eta_0',\eta_1',\ldots, \eta_{m-1}',  y).
$$ 
Only a finite number of terms in the sum are nonzero.
 \medskip

 For the remainder of the section, we assume that $R$ is in addition regular.
 
 For $m=1$, the map $\nu_m$ amounts to
 $$
 \nu_1 \colon \rK_1( Q)={Q}^\times\xrightarrow{} \rZ^1(Y)=\bigoplus_{y\in Y^{(1)}}\Z\cdot y
 $$
sending $f\in {Q}^\times$ to its divisor ${\rm div}(f)$.
 Since $ R$ is regular, it is a UFD, and $\nu_1$ gives 
  an isomorphism
 \begin{equation}\label{eq:adelicCH1}
 {\rm div}\colon {Q}^\times/{  R}^\times\xrightarrow{\sim} \rZ^1(Y).
 \end{equation}
 
 For $m=2$, the map
 $$
 \nu_2\colon \bigoplus_{\eta_1\in Y^{(1)}}\rK_2(Q)\to \rZ^2(Y)= \bigoplus_{y\in Y^{(2)}}\Z\cdot y,
 $$
 satisfies
 $$
 \nu_2(a)=\sum_{\eta_1\in Y^{(1)} }{\rm div}_{\eta_1}(\partial_2(a_{\eta_1})) 
 $$
for $a=(a_{\eta_1})_{\eta_1}$ with $a_{\eta_1}\in \rK_2(Q)$.
 Here, 
 $$
 {\rm div}_{\eta_1}(f)=\sum_{y\in \overline{\{\eta_1\}}}{\rm ord}_{y}(f)\cdot y
 $$
  is the divisor of the function $f\in k(\eta_1)^\times$ on $\overline{\{\eta_1\}}$. 
  \quash{Then $
 {\rm div}_{\eta_1}(f)$ is also equal to  the push down 
by $\widetilde{\bar\eta}_1\to \bar\eta_1\subset Y$ of the divisor of the function $f$
on the normalization $\widetilde{\bar\eta}_1$ of the closure $\bar\eta_1=\Spec(\scr R/(\eta_1))$. (See 
\cite[1.2]{FultonIntersectionTheory}.) }

\subsection{Tame symbols and Gersten's conjecture} 
\label{ss:gersten}

In this paragraph we suppose that Gersten's conjecture is true for $\rK_2$ and the 
integral regular local ring $ R$. By this, we mean that we assume that the sequence
\begin{equation}\label{eq:GConj}
1\to \rK_2( R)\to \rK_2(Q)\xrightarrow{\vartheta_2} \bigoplus_{\eta_1\in Y^{(1)}}
 k(\eta_1)^\times\xrightarrow{\vartheta_1} \bigoplus_{\eta_2\in Y^{(2)}}\Z\to 0
\end{equation}
is exact, where the component of $\vartheta_2$ at $\eta_1$ is the connecting homomorphism
$$
	\partial_2(\eta_0,\eta_1)\colon \rK_2(Q)\to \rK_1(k(\eta_1))=k(\eta_1)^\times,
$$ 
and $\vartheta_1$
has components $\partial_1(\eta_1,\eta_2)={\rm ord}_{\eta_2}\colon k(\eta_1)^\times\to \Z$.

The sequence \eqref{eq:GConj} is exact when the integral regular local ring $ R$  is a DVR
by Dennis-Stein \cite{DennisStein}, when $ R$ is essentially of finite type over a field 
by Quillen \cite[Theorem 5.11]{Quillen}, when $ R$ is essentially of finite type and smooth over a mixed characteristic DVR by Gillet-Levine \cite{GilletLevine} and Bloch \cite{BlochonGersten}, and when $ R=A\lps t_1,\ldots,  t_r\rps$ is a formal power series ring over a complete DVR $A$ by work of Reid-Sherman \cite{ReidSherman}. In these last two cases, by
examining the proof of \cite[Corollary 6]{GilletLevine} (see also  \cite[Corollary 3]{ReidSherman}), one sees that 
the main theorems  of \cite{GilletLevine} and \cite{ReidSherman} allow one to reduce  the proof 
to the case of a DVR.

By the result of Dennis and Stein quoted above for the DVR ${ R}_{\eta_1}$, we 
also have
\begin{equation}\label{eq:DS2}
1\to \rK_2( R_{\eta_1})\to \rK_2(Q)\xrightarrow{\partial_2} k(\eta_1)^\times\to 1.
\end{equation}
Continuing to assume \eqref{eq:GConj} is exact,
we then obtain that $\vartheta_1$ induces an isomorphism
\begin{equation}
\frac{\bigoplus_{\eta_1\in Y^{(1)}}
 k(\eta_1)^\times}{\vartheta_2(\rK_2(Q))}\xrightarrow{\sim } \rZ^2(Y).
\end{equation}
Combining this with \eqref{eq:DS2}, we obtain an isomorphism
\begin{equation}\label{eq:adelicCH2}
\bar\nu_2\colon \frac{\prod'_{\eta_1\in Y^{(1)}} \rK_2(Q)}{\rK_2(Q)\cdot \prod_{\eta_1\in Y^{(1)}}\rK_2( R_{\eta_1})}\xrightarrow{\ \sim\ } \rZ^2(Y)=\bigoplus_{\eta_2\in Y^{(2)}} \Z\cdot \eta_2
\end{equation}
where the various terms are as in the following paragraph.

In the numerator, the restricted product $\prod'_{\eta_1\in Y^{(1)}} \rK_2(Q)$ is the subgroup of the direct product 
in which all but a finite number of components
 belong to $\rK_2(  R_{\eta_1})$. In the denominator, we have the product of the subgroups $\prod_{\eta_1\in Y^{(1)}}\rK_2( R_{\eta_1})$ and  $\rK_2(Q)$, the second group embedded diagonally in $\prod'_{\eta_1\in Y^{(1)}} \rK_2(Q)$.
Note that by the description of elements in $\rK_2(Q)$ as symbols, this diagonal embedding of $\rK_2(Q)$ lies in the restricted product.
The map giving the isomorphism is obtained by 
\begin{equation}
 \nu_2\colon \prod'_{\eta_1\in Y^{(1)}} \rK_2(Q)\to \rZ^2(Y),
\end{equation}
which is defined by summing the maps
\[
\nu_{(\eta_0,\eta_1, \eta_2)}= \partial_1(\eta_1, \eta_2)\circ \partial_2(\eta_0, \eta_1)\colon \rK_2(Q)\to \Z
\]
 as in \eqref{eq:summaps}.  The map $\nu_2$ is well-defined on the restricted product since $\nu_{(\eta_0,\eta_1, \eta_2)}$ is trivial on $\rK_2( R_{\eta_1})$, 
and it makes sense independently of assuming that \eqref{eq:GConj} is exact.

\subsection{Characteristic symbols}
\label{ss:characteristicsymbols}

Suppose that $R$ is a local integral Noetherian ring and  that
 $\calC^\bullet$ is a complex of finitely generated  
$ R$-modules which is exact on codimension $m-1$. 
We can then consider the $m$th  localized  Chern class, as defined in Subsection \ref{ss:generalsetup}
$$
c_m(\calC^\bullet)\in \rZ^m(Y)=\bigoplus_{\eta_m\in Y^{(m)}}\Z\cdot \eta_m.
$$

\begin{definition}
An element  $(a_{\und\eta'})_{\und\eta'}\in \bigoplus_{\und\eta'\in P_{m-1}(Y)} \rK_m(Q)$
such that 
$$
\nu_m((a_{\und\eta'})_{\und\eta'})=c_m(\calC^\bullet)
$$
in $\rZ^m(Y)$ will be called an {\sl $m$th characteristic symbol} for $\calC^\bullet$.
\end{definition}

If $m$ is the smallest integer such that $\calC^\bullet$ is exact on codimension less than $m$, we will
simply say that $(a_{\und\eta'})_{\und\eta'}$ as above is a  characteristic symbol.

\subsection{First and second Chern classes and characteristic symbols}
\label{ss:firstclass}

We now assume that the 
integral Noetherian local ring $R$ is, in addition, regular.

Suppose first that $m=1$, and let $\calC^\bullet$ be a complex of finitely generated 
$R$-modules which is exact on codimension $0$, which is to say that $\calC^\bullet\otimes_{ R}{Q}$ is exact. 
We can then consider the first    Chern class
$
c_1(\calC^\bullet)\in \rZ^1(Y).
$ 
By \eqref{eq:adelicCH1}, we have $\rZ^1(Y)\simeq {Q}^\times/{ R}^\times$
given by the divisor map. In this case, a first characteristic symbol 
(or characteristic element) for $\calC^\bullet$
is an element $f\in {Q}^\times$ such that
$$
{\rm div}(f)= c_1(\calC^\bullet).
$$
This extends the classical notion of a characteristic power series of a torsion module 
in Iwasawa theory, considering the module as a complex of modules supported in degree zero.

In fact, let $M$ be a finitely generated torsion $R$-module. Let $\mathscr{P}$ be a set of representatives in $R$ 
for the equivalence classes of irreducibles under multiplication by units so that
$\mathscr{P}$ is in bijection with the set of height $1$ primes $Y^{(1)}$.  For 
each $\pi \in \mathscr{P}$, let $n_{\pi}(M)$ be the length of the localization of $M$
at the prime ideal of $R$ generated by $\pi$. 
Then $c_1(M)=\sum_{\pi\in \mathscr{P}}n_\pi\cdot (\pi)$. 
In the sections that follow,
we will also use the symbol $c_1(M)$ to denote 
 the ideal generated by $\prod_{\pi \in \mathscr{P}} \pi^{n_{\pi}(M)}$; this should not lead to confusion.  Note that, with this notation, $M$ is pseudo-null if and only if $c_1(M) = R$.  If $R = \zp\ps{t} = \zp\ps{\zp}$, then $c_1(M)$
 is just the usual characteristic ideal of $R$. This explains 
 the statements in the introduction connecting the growth rate in \eqref{eq:growthrate} to first Chern classes
 (e.g., via the proof of Iwasawa's theorem in \cite[Theorem 13.13]{Washington}).
 
Suppose now that $m=2$.
Let $\calC^\bullet$ be a complex of finitely generated 
$ R$-modules which is exact on codimension $\leq 1$. 
We can then consider the second localized Chern class
$ 
c_2(\calC^\bullet)\in \rZ^2(Y).
$ 
In this case, we can also consider characteristic symbols in a restricted product of $\rK_2$-groups.
An element $(a_{\eta_1})_{\eta_1}\in \prod'_{\eta_1\in Y^{(1)}} \rK_2(Q)$
is a second characteristic symbol for $\calC^\bullet$ when we have
$$
\nu_2((a_{\eta_1})_{\eta_1})=c_2(\calC^\bullet).
$$

 \begin{proposition} \label{prop:2ndsymb}
Suppose that $f_1, f_2$ are two prime elements in   $R$. Assume
that $f_1/f_2$ is not a unit of $ R$. Then a second characteristic 
symbol of the $ R$-module ${ R}/(f_1, f_2)$ is given by $a=(a_{\eta_1})_{\eta_1}$ 
with
\begin{equation*}
  a_{\eta_1}=\begin{cases}
\{f_1, f_2\}^{-1},&  \mbox{\rm if\ } \eta_1=(f_1),\\
  1, & \mbox{\rm if\ } \eta_1\neq (f_1).  
  \end{cases}
  \end{equation*}
\end{proposition}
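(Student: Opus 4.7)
The plan is to verify the identity $\nu_2(a) = c_2(R/(f_1,f_2))$ componentwise over each height two prime $y \in Y^{(2)}$. Since the only nonzero component of $a$ is at $\eta_1 = (f_1)$, the sum \eqref{eq:summaps} collapses, and the $y$-component of $\nu_2(a)$ equals $\nu_{(\eta_0, (f_1), y)}(\{f_1,f_2\}^{-1})$ when $y \supseteq (f_1)$, and is zero otherwise.

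First I would handle the vanishing cases. Since $f_1$ and $f_2$ are prime and non-associate (the hypothesis that $f_1/f_2$ is not a unit, combined with primality, forces $f_2 \notin (f_1)$), the ideal $(f_1,f_2)$ has height $2$ and the localization $(R/(f_1,f_2))_y$ vanishes unless $y \supseteq (f_1,f_2)$. So both sides vanish at $y$ if $y \not\supseteq (f_1)$ or $y \not\supseteq (f_2)$; in the latter case, note that $f_2$ is a unit in $R/(f_1)$ localized at $y$, so after applying the tame symbol $\partial_2(\eta_0,(f_1))$ to $\{f_1,f_2\}^{-1}$ the result is a unit at $y$ and is killed by $\partial_1((f_1),y) = \mathrm{ord}_y$.

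Next, fix $y \in Y^{(2)}$ containing both $f_1$ and $f_2$. Using the explicit formula \eqref{eq:tamesymbol} with the $(f_1)$-adic valuation (where $\mathrm{val}(f_1) = 1$ and $\mathrm{val}(f_2) = 0$), I compute
\[
\partial_2(\eta_0,(f_1))(\{f_1,f_2\}) = (-1)^{1 \cdot 0}\, f_1^{\,0} f_2^{-1} \bmod (f_1) = f_2^{-1} \bmod (f_1),
\]
so $\partial_2(\eta_0,(f_1))(\{f_1,f_2\}^{-1}) = f_2 \bmod (f_1)$ in $k((f_1))^\times$. Applying $\partial_1((f_1),y) = \mathrm{ord}_y$ then gives
\[
\nu_{(\eta_0,(f_1),y)}(\{f_1,f_2\}^{-1}) = \mathrm{length}_{R_{(f_1),y}}\bigl(R_{(f_1),y}/(f_2)\bigr),
\]
by the description of $\mathrm{ord}_y$ recalled in Subsection~\ref{ss:tamesymbols}.

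Finally, I identify this length with the $y$-component of $c_2(R/(f_1,f_2))$. Since $R_{(f_1),y} = (R/(f_1))_y$, we have $R_{(f_1),y}/(f_2) = (R/(f_1,f_2))_y$, and any finite length module over $(R/(f_1))_y$ is equally a finite length module over $R_y$ with the same composition series (the submodule lattices coincide, as the action factors through $R_y \to (R/(f_1))_y$). Hence
\[
\mathrm{length}_{R_{(f_1),y}}\bigl((R/(f_1,f_2))_y\bigr) = \mathrm{length}_{R_y}\bigl((R/(f_1,f_2))_y\bigr),
\]
which is exactly $c_2(R/(f_1,f_2))_y$. The content of the proof is essentially the tame symbol computation; the only point requiring care is matching the sign and ensuring $f_2$ has zero $(f_1)$-adic valuation, which is where the hypothesis that $f_1/f_2$ is not a unit is used.
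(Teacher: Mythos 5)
Your proposal is correct and follows essentially the same route as the paper: the same tame symbol computation at $\eta_1=(f_1)$ using $\mathrm{val}(f_1)=1$, $\mathrm{val}(f_2)=0$, followed by identifying $\mathrm{ord}_y(f_2)$ with the length of $(R/(f_1,f_2))_y$ over $R_y$ via the surjection $R_y \to (R/(f_1))_y$. The only difference is that you spell out the vanishing of the off-support components, which the paper dismisses as obvious.
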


\begin{proof} Notice that, under our assumptions,  ${ R}/(f_1, f_2)$
is supported on codimension $2$.
We have to calculate the image of the Steinberg symbol $\{f_1, f_2\}$ under
$$
\rK_2(Q)\xrightarrow{\partial_2}\rK_1(k(\eta_1))\xrightarrow{{\rm div}_{\eta_2}} \Z
$$
for $\eta_1=(f_1)$ and  $\eta_2\in \overline{\{\eta_1\}}$. (The rest of the contributions to 
$\nu_2((a_{\eta_1})_{\eta_1})$ are obviously trivial.)
We have ${\rm val}_{\eta_1}(f_1)=1$, ${\rm val}_{\eta_1}(f_2)=0$, and so
$$
\partial_2(\{f_1, f_2\})= f_2^{-1}\, {\rm mod}\, (f_1)\in k(\eta_1)^\times.
$$
By definition,
$$
{\rm div}_{\eta_2}(f_2)={\rm length}_{ R'} {(R'/f_2R')},
$$
where  $ R'$ is the localization $  R_{(f_1), \eta_2}=( R/(f_1))_{\eta_2}$.
We have a surjective homomorphism of local rings $ R_{\eta_2}\to  R'$. The $ R_{\eta_2}$-module structure on $
({ R}/(f_1, f_2))_{\eta_2}= R'/f_2\,  R'$ factors through 
$ R_{\eta_2}\to  R'$, so
$$
{\rm length}_{  R'} {(  R'/f_2\, R')} ={\rm length}_{ R_{\eta_2}}(( R/(f_1, f_2))_{\eta_2}).
$$
This, taken  together with the definition of $c_2( R/(f_1, f_2))$, completes the proof.
\end{proof}
 
 \begin{remark} \label{rem:symmetry}
 The same argument shows that a second characteristic 
symbol of the $ R$-module ${ R}/(f_1, f_2)$ is also given (symmetrically) by 
$a'=(a'_{\eta_1})_{\eta_1}$ with 
$ a'_{\eta_1}=\{f_2, f_1\}^{-1} = \{f_1, f_2\}$  if $\eta_1=(f_2)$, and $a'_{\eta_1}=1$ otherwise.
We can actually see directly that the difference $a-a'\in \prod'_{\eta_1\in Y^{(1)}} \rK_2(Q)$
lies in the denominator of the right-hand side of \eqref{eq:adelicCH2}. Indeed, $a-a'$ is equal modulo 
 $\prod_{\eta_1\in Y^{(1)}}\rK_2( R_{\eta_1})$ to
the image of $\{f_1, f_2\}^{-1}\in \rK_2(Q)$ under the diagonal embedding 
$\rK_2(Q)\to \prod'_{\eta_1\in Y^{(1)}} \rK_2(Q)$.
 \end{remark}
 
\section{Some conjectures in Iwasawa theory}
\label{s:conj}

\subsection{Main conjectures}
\label{ss:mainconjectures}

In this subsection, we explain the relationship between the first Chern class (i.e., the case $m=1$ in Section \ref{s:chern}) and main conjectures of Iwasawa theory. First we strip the main conjecture of all its arithmetic content and present an abstract formulation. To make things concrete, we then give two examples. 

For the ring $R$, we take the Iwasawa algebra $\Lambda = \O\ps{\Gamma}$ of the group $\Gamma= \mathbb{Z}_p^r$ for a prime $p$, 
where $\O$ is the valuation ring of a finite extension of $\mathbb{Q}_p$.  That is,
$\Lambda = \varprojlim_U \O[\Gamma/U]$, where $U$ ranges over the open subgroups of $\Gamma$.  In this case, $\Lambda$ is non-canonically isomorphic to $\O\ps{t_1, \ldots, t_r}$, the power series ring in $r$ variables over $\mc{O}$.  We need two ingredients to formulate a ``main conjecture":
\begin{itemize}
\item[(i)] a complex of $\Lambda$-modules $\mathcal{C}^{\bullet}$ quasi-isomorphic to a bounded complex of finitely generated free $\Lambda$-modules that is exact in codimension zero,
and 
\item[(ii)] a subset $\{a_{\rho}\colon \rho \in \Xi\} \subset \overline{\mathbb{Q}}_p$ for a dense set $\Xi$ of continuous characters of $\Gamma$.  
\end{itemize}
Note that every continuous $\rho \colon \Gamma \rightarrow \overline{\mathbb{Q}}_p^{\times}$ induces a homomorphism $\Lambda \rightarrow \overline{\mathbb{Q}}_p$ that can be extended to a map $Q =Q(\Lambda)\rightarrow \overline{\mathbb{Q}}_p \cup \{\infty\}$. We denote this by $\zeta \mapsto \zeta(\rho)$ or by $\zeta \mapsto \int_{\Gamma} \rho d\zeta$. 
A  main conjecture for the data in (i) and (ii) above is the following statement.

\medskip \noindent
{\bf  Main Conjecture for $\mathcal{C}^\bullet$ and $\{a_\rho\}$.} There is an element $\zeta \in Q^\times$ such that 
\begin{itemize}
\item[(a)] $\zeta(\rho) = a_{\rho}$ for all $\rho \in \Xi$,
\item[(b)]  $\zeta$ is a characteristic element for $\mathcal{C}^\bu$, i.e., $c_1(\mathcal{C}^{\bullet}) = {\rm div}(\zeta)$.
\end{itemize}

\medskip \noindent
Here, the Chern class $c_1$ and the divisor ${\rm div}$ are as defined in Section \ref{s:chern}.

\subsection{The Iwasawa main conjecture over a totally real field} 
\label{ss:mainReal}

Let $E$ be a totally real number field.  Let $\chi$ be an even one-dimensional character of the absolute Galois group of $E$ of finite order, and let $E_{\chi}$ denote the fixed field of its kernel.  For a prime $p$ which we take here to be odd, we then set $F = E_{\chi}(\mu_p)$ and $\Delta = \Gal(F/E)$.
We assume that the order of $\Delta$ is prime to $p$.  We denote the cyclotomic $\mathbb{Z}_p$-extension of $F$ by $K$.  Then $\Gal(K/E) \cong \Delta \times \Gamma$, where $\Gamma \cong \mathbb{Z}_p$.  If Leopoldt's conjecture holds for $E$ and $p$, then $K$ is the only $\mathbb{Z}_p$-extension of $F$ abelian over $E$. 
Let $L$ be the maximal abelian unramified pro-$p$ extension of $K$. Then $\Gal(K/E)$ acts continuously on $X=\Gal(L/K)$, as there is a short exact sequence 
\[
	1 \rightarrow \Gal(L/K) \rightarrow \Gal(L/E) \rightarrow \Gal(K/E) \rightarrow 1.
\]
Thus $X$ becomes a module over the Iwasawa algebra $\mathbb{Z}_p\ps{\Gal(K/E)}$. For a character $\psi$ of $\Delta$, define $\O_{\psi}$ to be the $\mathbb{Z}_p$-algebra generated by the values of $\psi$.
The $\psi$-eigenspace 
$$
	X^{\psi} = X \otimes_{\mathbb{Z}_p[\Delta]} \O_{\psi}
$$ 
is a module over $\Lambda_{\psi} = \O_{\psi}\ps{\Gamma}$. By a result of Iwasawa, $X^{\psi}$ is known to be a finitely generated torsion $\Lambda_{\psi}$-module.  

On the other side, we let $\Xi = \{\chi_{\rm cyc}^k \mid k \le 0\}$, where $\chi_{\rm cyc}$ is the $p$-adic cyclotomic character on $\Gamma$. Define
\[
a_{\chi_{\rm cyc}^k} = L(\chi \omega^{k-1}, k) \prod_{\mathfrak{p} \in S_p} (1-\chi\omega^{k-1}(\mathfrak{p})N\mathfrak{p}^{-k}),
\]
where $\omega$ is the Teichm\"{u}ller character, $S_p$ is the set of primes of $E$ above $p$, $N\mathfrak{p}$ is the norm of $\mathfrak{p}$, and $L(\chi \omega^{k-1}, s)$ is the complex $L$-function of $\chi \omega^{k-1}$. Then we have the following Iwasawa main conjecture \cite{Wiles}.

\begin{theorem}[Barsky, Cassou-Nogu\`{e}s, Deligne-Ribet, Mazur-Wiles, Wiles] \label{cyclmc} There is a unique $\mathcal{L} \in Q^{\times}$ such that 
\begin{itemize}
\item[(a)] $\mathcal{L}(\chi_{\cyc}^k) = a_{\chi_{\cyc}^k}$ for every even positive integer $k$,
\item[(b)] $c_1(X^{ \chi^{-1}\omega }) = {\rm div}(\mathcal{L})$.
\end{itemize}
\end{theorem}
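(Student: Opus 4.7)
The plan is to follow the approach of Wiles, extending the Mazur-Wiles strategy from $\Q$ to totally real base fields via Hilbert modular forms, and to separate the statement into the analytic construction of $\mc{L}$ (part (a)) and the characteristic element identity (part (b)).

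First, I would construct $\mc{L}$ by $p$-adic interpolation of the critical values $a_{\chi_{\cyc}^k}$. Following Deligne-Ribet, these partial zeta values arise as constant Fourier coefficients of holomorphic Hilbert modular Eisenstein series over $E$; the $q$-expansion principle together with $p$-integrality of the non-constant coefficients produces congruences among the $a_{\chi_{\cyc}^k}$ as $k$ varies $p$-adically, hence a pseudo-measure on $\Gal(K/F)$ whose image in $Q^\times$ gives the required $\mc{L}$. An alternative route via Shintani cone decomposition (Cassou-Nogu\`es, Barsky) expresses each partial zeta value as a finite sum of generalized Bernoulli polynomial values and proves the congruences directly. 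The uniqueness in (a) follows from the density of $\Xi$ in the space of continuous characters of $\Gamma$.

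For part (b), I would prove the two divisibilities ${\rm div}(\mc{L}) \mid c_1(X^{\chi^{-1}\omega})$ and $c_1(X^{\chi^{-1}\omega}) \mid {\rm div}(\mc{L})$ separately. The deeper direction is the second. Given a height-one prime $\mf{p}$ of $\Lambda_{\chi^{-1}\omega}$ occurring in ${\rm div}(\mc{L})$ with multiplicity $e$, I would construct an ordinary $\Lambda$-adic Hida family of cuspidal Hilbert modular forms over $E$ congruent modulo $\mf{p}^e$ to the Eisenstein family whose constant terms yield $\mc{L}$; this is done by producing the cusp forms at finite level via Eisenstein congruences and then interpolating in Hida's ordinary Hecke algebra. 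The associated $\Lambda$-adic Galois representation is residually reducible modulo $\mf{p}^e$, and a lattice argument in the style of Ribet's lemma extracts an extension of Galois characters which, after untwisting by the cyclotomic character, is everywhere unramified over $K$; this extension contributes to $X^{\chi^{-1}\omega}$ with the required multiplicity $e$ at $\mf{p}$. The opposite divisibility I would obtain from an Iwasawa-theoretic version of the analytic class number formula applied to all $p$-adic characters of $\Gal(F/E)$ simultaneously: taking the product over such characters, both sides compute the growth of the $p$-part of ideal class numbers along the cyclotomic tower of $F$, so divisibility in one direction forces equality provided the relevant $\mu$-invariants vanish (known for $E = \Q$ by Ferrero-Washington, and in the abelian case by descent).

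The main obstacle will be the Eisenstein-congruence step in the deeper divisibility: over $\Q$ the original Mazur-Wiles argument is geometric, exploiting the modular curve, its Jacobian, and the Eichler-Shimura relation, whereas over a general totally real field one must work with Hida's ordinary part of the middle degree cohomology of Hilbert modular varieties, control the local behavior of the family at primes dividing $p$ and at primes where $\chi$ ramifies, and extract the unramified extension from a pseudo-representation (since no geometric Jacobian is available in general). A secondary technical issue is that the divisibility coming from the class number formula requires vanishing of $\mu$-invariants for the relevant $\Lambda_\psi$-modules, which has to be handled character by character and demands either the Ferrero-Washington theorem or an independent input in the genuinely totally real setting.
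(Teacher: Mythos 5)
The paper does not prove Theorem \ref{cyclmc} at all: it is quoted as a known theorem, with the proof attributed to Barsky, Cassou-Nogu\`es, Deligne--Ribet, Mazur--Wiles and Wiles (the reference given is \cite{Wiles}), and the paper only uses the statement as an input in Subsection \ref{ss:rational}. So there is no internal argument to compare with; what you have written is an outline of the literature proof itself, and as such it follows essentially the route of the cited works: Deligne--Ribet $q$-expansion/Shintani-cone construction of $\mathcal{L}$ plus density of $\Xi$ for part (a) and uniqueness, and Eisenstein congruences in Hida families of Hilbert modular forms plus a lattice/pseudo-representation argument, combined with the analytic class number formula, for part (b).

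Two points in your sketch are off, though, and worth fixing. First, you have the divisibilities backwards: the Eisenstein-congruence construction you describe (from a height-one prime $P$ occurring in $\mathrm{div}(\mathcal{L})$ with multiplicity $e$, produce an unramified extension contributing to $X^{\chi^{-1}\omega}$ with multiplicity $e$ at $P$) gives a \emph{lower} bound on the Iwasawa module, i.e.\ it proves $\mathrm{div}(\mathcal{L}) \le c_1(X^{\chi^{-1}\omega})$, which is the first of your two divisibilities, not the second one you label as ``the deeper direction''; the upper bound $c_1(X^{\chi^{-1}\omega}) \le \mathrm{div}(\mathcal{L})$ is what the Euler-system approach would give, and is not what Ribet-style lattice arguments produce. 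The overall logic survives because the class-number-formula step upgrades either single divisibility (summed over all odd characters) to an equality, but as written the labeling indicates a confusion about which inequality the modular-forms method furnishes. Second, the claim that this last step ``requires vanishing of the relevant $\mu$-invariants'' (Ferrero--Washington, or some substitute over totally real fields) is not correct and, if it were needed, would be fatal here, since $\mu=0$ is open for general totally real $E$: the standard reduction compares the \emph{total} $\lambda$- and $\mu$-invariants of both sides via Iwasawa's growth formula and the relative (minus-part) analytic class number formula at finite layers, so one divisibility valid for all characters forces equality of both invariants character by character with no vanishing hypothesis. This is exactly how Wiles' proof handles the totally real case.
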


\subsection{The two-variable main conjecture over an imaginary quadratic field} 
\label{ss:twovariablemain}

We assume that $p$ is an odd prime that splits into two primes $\mf{p}$ and $\bar{\mf{p}}$ in the imaginary quadratic field $E$. 
Fix an abelian extension $F$ of $E$ of order prime to $p$. Let $K$ be the unique abelian extension of $E$ such that $\Gal(K/F) \cong \mathbb{Z}_p^2$. Let $\Delta= \Gal(F/E)$ and $\Gamma = \Gal(K/F)$. Then we have a canonical isomorphism $\Gal(K/E) \cong \Delta \times \Gamma$. 
Let $\mf{X}_{\mf{p}}$ (resp., $\mf{X}_{\overline{\mf{p}}}$) be the Galois group over $K$ of the  maximal abelian pro-$p$ extension of $K$ unramified outside $\mathfrak{p}$ (resp., $\overline{\mathfrak{p}}$). 
Then, as above, $\mathfrak{X}_{\mathfrak{p}}$ and $\mathfrak{X}_{\overline{\mathfrak{p}}}$ become modules over $\mathbb{Z}_p\ps{\Delta \times \Gamma}$. It is proven in \cite[Theorem 5.3(ii)]{Rubin1} that $\mathfrak{X}_{\mathfrak{p}}$ and $\mathfrak{X}_{\overline{\mathfrak{p}}}$ are finitely generated torsion $\mathbb{Z}_p\ps{\Delta \times \Gamma}$-modules. As in Subsection \ref{ss:mainReal}, for any character $\psi$ of $\Delta$, we let $\O_{\psi}$ be the extension of $\mathbb{Z}_p$ obtained by adjoining values of $\psi$ and let
\[
\mathfrak{X}_{\mathfrak{p}}^{\psi} = \mathfrak{X}_{\mathfrak{p}} \otimes_{\mathbb{Z}_p[\Delta]} \O_{\psi},\quad
\mathfrak{X}_{\overline{\mathfrak{p}}}^{\psi} = \mathfrak{X}_{\overline{\mathfrak{p}}} \otimes_{\mathbb{Z}_p[\Delta]} \O_{\psi}.
\]

The other side takes the following analytic data:  Let $\Xi_{\psi, \mathfrak{p}}$ (resp., $\Xi_{\psi, \overline{\mathfrak{p}}}$) be the set of all Gr\"ossencharacters of $E$ factoring through $\Delta \times \Gamma$ of infinity type $(k,j)$ (resp., $(j,k)$), with $j \le 0 < k$  and with restriction to $\Delta$ equal to $\psi$. Let $\mathfrak{g}$ be the conductor of $\psi$. Let $-d_E$ be the discriminant of $E$. For $\chi \in \Xi_{\psi, \mathfrak{p}}$ (resp., $\chi \in \Xi_{\psi, \overline{\mathfrak{p}}}$) of infinity type $(k,j)$, let
\[
a_{\chi, \mathfrak{p}} = \frac{\varOmega^{j-k}}{\varOmega_p^{j-k}} \left( \frac{\sqrt{d_E}}{2\pi}\right)^j G(\chi) \left(1 - \frac{\chi(\mathfrak{p})}{p}\right) L_{\infty, \mathfrak{g}\overline{\mathfrak{p}}}(\chi^{-1}, 0). 
\]
\[
\Big( \text{resp., } a_{\chi, \mathfrak{\overline{p}}}  = \frac{\varOmega^{j-k}}{\varOmega_p^{j-k}} \left( \frac{\sqrt{d_E}}{2\pi}\right)^j G(\chi) \left(1 - \frac{\chi(\mathfrak{\overline{p}})}{p}\right) L_{\infty, \mathfrak{g}{\mathfrak{p}}}(\chi^{-1}, 0) \Big).
\]
Here, $\varOmega$ and $\varOmega_p$ are complex and $p$-adic periods of $E$, respectively, and $G(\chi)$ is a Gauss sum. Moreover, $L_{\infty, \mathfrak{f}}$ refers to the $L$-function with the Euler factor at $\infty$ but without
the Euler factors at the primes dividing $\mathfrak{f}$.
(For more explanation, see \cite[Equation (36), p. 80]{deShalit}.) Let $W$ be the ring of Witt vectors of $\overline{\mathbb{F}}_p$. Using work of Yager, deShalit proves in \cite[Theorem 4.14]{deShalit} that there are $\mathcal{L}_{\mathfrak{p},\psi }, \mathcal{L}_{\mathfrak{\overline{p}},\psi} \in W\ps{\Gamma}$ such that  
\begin{eqnarray*}
\mathcal{L}_{\mathfrak{p},\psi} (\chi) = a_{\chi, \mathfrak{p}}, \text{   for every   } \chi \in \Xi_{\psi, \mathfrak{p}},
&\mr{and} &
\mathcal{L}_{\mathfrak{\overline{p}},\psi}(\chi) = a_{\chi, \mathfrak{\overline{p}}}, \text{   for every   } \chi \in \Xi_{\psi, \mathfrak{\overline{p}}}.
\end{eqnarray*}

We have the following result of Rubin \cite{Rubin1} on the two-variable main conjecture over $E$.

\begin{theorem}[Rubin] \label{thm:imquadmc} With the notation as above, we have
$${\rm div}(\mathcal{L}_{\mathfrak{p},\psi}) = c_1(W\ps{\Gamma} \cotimes{\O_{\psi}\ps{\Gamma}} \mathfrak{X}_{\mathfrak{p}}^{\psi}).$$
The above is also true with $\mathfrak{p}$ replaced by $\overline{\mathfrak{p}}$. 
\end{theorem}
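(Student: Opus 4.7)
The plan is to prove this two-variable main conjecture by Rubin's method of Euler systems of elliptic units, combining a Coleman-type reciprocity law that identifies elliptic units with $\mathcal{L}_{\mathfrak{p},\psi}$ with a Kolyvagin-style derivative argument to bound the Iwasawa module.

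First I would set up the relevant local and global Iwasawa modules. Let $K_n$ be the layers of $K/F$ and let $\mathscr{U}_\infty^\psi$ denote the $\psi$-isotypic component of the inverse limit (under norms) of the principal local units at primes above $\mathfrak{p}$ in the tower $K_n$, and $\overline{\mathscr{E}}_\infty^\psi$ the inverse limit of global units embedded diagonally, closures taken $p$-adically. By a standard application of global class field theory to the Iwasawa module $\mathfrak{X}_\mathfrak{p}$, together with the decomposition behavior of $\mathfrak{p}$ and $\overline{\mathfrak{p}}$ and the fact that $\mathfrak{p}$ is totally ramified in the two-variable tower, one obtains a four-term exact sequence of $\Lambda_\psi$-modules
\begin{equation*}
0 \to \overline{\mathscr{E}}_\infty^\psi/\mathscr{C}_\infty^\psi \to \mathscr{U}_\infty^\psi/\mathscr{C}_\infty^\psi \to \mathfrak{X}_\mathfrak{p}^\psi \to A_\infty^\psi \to 0,
\end{equation*}
where $\mathscr{C}_\infty^\psi$ denotes the $\psi$-component of the inverse limit of elliptic units, and $A_\infty^\psi$ is the $\psi$-component of the inverse limit of $p$-parts of ideal class groups. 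Multiplicativity of characteristic ideals in exact sequences then reduces the problem to computing the characteristic ideal of $\mathscr{U}_\infty^\psi/\mathscr{C}_\infty^\psi$ and showing that the ``defect'' $c_1(\overline{\mathscr{E}}_\infty^\psi/\mathscr{C}_\infty^\psi) = c_1(A_\infty^\psi)$ after base change to $W\ps{\Gamma}$.

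Next I would invoke the explicit reciprocity law of Coates--Wiles--Yager: the Coleman power series attached to a compatible system of elliptic units, followed by the logarithmic derivative operator, produces an element of $W\ps{\Gamma}$ which by the interpolation property of \cite[Theorem 4.14]{deShalit} must agree with $\mathcal{L}_{\mathfrak{p},\psi}$. This identifies the image of $\mathscr{C}_\infty^\psi$ inside the free rank-one module $\mathscr{U}_\infty^\psi \otimes W\ps{\Gamma}$ (after suitable normalization and restriction to the $\psi$-isotypic part, using $\psi$ non-trivial and the fact that $\mathfrak{p}$ is totally ramified so the local-unit module is free of rank one) with the ideal generated by $\mathcal{L}_{\mathfrak{p},\psi}$, giving
\begin{equation*}
c_1\bigl(W\ps{\Gamma} \cotimes{\Lambda_\psi} (\mathscr{U}_\infty^\psi/\mathscr{C}_\infty^\psi)\bigr) = \mathrm{div}(\mathcal{L}_{\mathfrak{p},\psi}).
\end{equation*}

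The main obstacle is establishing the equality of characteristic ideals $c_1(\overline{\mathscr{E}}_\infty^\psi/\mathscr{C}_\infty^\psi) = c_1(A_\infty^\psi)$, i.e., an index-computation version of a class number formula in the tower. One divisibility comes from Kolyvagin's Euler system machinery: one constructs derivative classes from the elliptic units at auxiliary primes $\lambda$ of $E$ split in $F/E$ satisfying appropriate congruence conditions, and uses Chebotarev density together with a ``Checkpoint lemma'' (in the two-variable Iwasawa setting) to extract annihilators of $A_\infty^\psi$ from elements of $\overline{\mathscr{E}}_\infty^\psi/\mathscr{C}_\infty^\psi$, yielding $c_1(A_\infty^\psi) \mid c_1(\overline{\mathscr{E}}_\infty^\psi/\mathscr{C}_\infty^\psi)$. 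The reverse divisibility is the delicate ``class number'' side: one combines the analytic class number formula for the layers $K_n$ with a careful limit argument, or alternatively uses a global duality / Poitou--Tate argument together with the functional equation relating $\mathcal{L}_{\mathfrak{p},\psi}$ and $\mathcal{L}_{\overline{\mathfrak{p}},\psi}$, to show that no extra characteristic contribution can appear. Combining the two divisibilities in the exact sequence yields $c_1(W\ps{\Gamma} \cotimes{\Lambda_\psi} \mathfrak{X}_\mathfrak{p}^\psi) = \mathrm{div}(\mathcal{L}_{\mathfrak{p},\psi})$, as desired; the statement for $\overline{\mathfrak{p}}$ follows by interchanging the roles of $\mathfrak{p}$ and $\overline{\mathfrak{p}}$ throughout.
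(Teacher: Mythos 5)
The paper does not actually prove this statement: it is quoted as Rubin's theorem, and its justification in the text is the citation to \cite{Rubin1} (together with \cite{Rubin2} and Yager's work as packaged in \cite[Ch.~III]{deShalit}, which is what later identifies the characteristic ideal with the Katz $p$-adic $L$-function in the proof of Theorem \ref{thm:imagquad}). So your proposal is best read as an outline of Rubin's published argument rather than something to be checked against a proof in this paper. Your skeleton is indeed the right one: the class-field-theoretic four-term sequence $0 \to \overline{\mathscr{E}}^{\psi}_\infty/\mathscr{C}^{\psi}_\infty \to \mathscr{U}^{\psi}_\infty/\mathscr{C}^{\psi}_\infty \to \mf{X}_{\mf{p}}^{\psi} \to A_\infty^{\psi} \to 0$, the Coleman/Yager identification $c_1(W\cotimes{}\mathscr{U}^{\psi}_\infty/\mathscr{C}^{\psi}_\infty) = \mathrm{div}(\mc{L}_{\mf{p},\psi})$, the Euler-system divisibility $c_1(A^{\psi}_\infty) \mid c_1(\overline{\mathscr{E}}^{\psi}_\infty/\mathscr{C}^{\psi}_\infty)$, and multiplicativity of characteristic ideals along the sequence.

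There are, however, genuine gaps at the points where the theorem is hard. First, a factual slip: $\mf{p}$ is \emph{not} totally ramified in the two-variable tower; its decomposition group in $\Gamma \cong \zp^2$ is open of rank $2$ with inertia of $\zp$-rank one and infinite residue field extension (compare the proof of Lemma \ref{lem:noassum}), so the freeness of the semi-local unit module must come from the actual structure theory of de Shalit/Yager, not from total ramification. Second, and more seriously, the step you call ``the delicate class number side'' is dispatched with two suggestions that do not work as stated: a limit of analytic class number formulas over the layers $K_n$ has no two-variable analogue of the Iwasawa-invariant comparison that closes the gap in one variable, and the functional equation relating $\mc{L}_{\mf{p},\psi}$ and $\mc{L}_{\bar{\mf{p}},\psi}$ does not produce the reverse divisibility. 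In \cite{Rubin1} the equality over the two-variable algebra is obtained by combining the Euler-system divisibility with one-variable situations (where the class number formula applies) via specialization at height-one primes and lemmas controlling characteristic ideals under such specializations; some argument of this kind, or another substitute for the missing divisibility, is indispensable, and your proposal leaves precisely this point unproved.
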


Let $\sigma$ denote the nontrivial element of $\Gal(E/\mathbb{Q})$. We obtain an action of $\sigma$ on $\Delta \times \Gamma$ via conjugation by any lift of $\sigma$ to $\Gal(K/\mathbb{Q})$. We extend this action $\mathbb{Z}_p$-linearly to a map
\[
\sigma \colon \mathbb{Z}_p\ps{\Delta \times \Gamma} \rightarrow \mathbb{Z}_p\ps{\Delta \times \Gamma}.
\]
This homomorphism $\sigma$ maps $\O_{\psi}\ps{\Gamma}$ isomorphically to $\O_{\psi \circ \sigma}\ps{\Gamma}$. 

\begin{lemma} The two Katz $p$-adic $L$-functions are related by 
\label{relationlplpbar}
\begin{itemize}
\item[(a)]
$\mathcal{L}_{\mathfrak{\overline{p}},\psi} = \sigma(\mathcal{L}_{{\mathfrak{p}},\psi\circ\sigma}).$
\item[(b)]
$
\mathcal{L} _{\mathfrak{\overline{p}},\psi }(\chi) = (\text{$p$-adic unit}) \cdot \mathcal{L}_{\mathfrak{p}, \psi^{-1}\omega}(\chi^{-1}\chi_{\cyc}),
$
where $\chi_{\cyc}$ is the $p$-adic cyclotomic character on $\Delta \times \Gamma$. 
\end{itemize}
\end{lemma}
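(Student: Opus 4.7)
The plan is to reduce both equalities to the interpolation formulas that characterize the Katz $L$-functions, using the fact that $\Xi_{\psi,\mathfrak{p}}$ and $\Xi_{\psi,\overline{\mathfrak{p}}}$ are dense in the space of continuous $\overline{\Q}_p^{\times}$-valued characters of $\Gamma$, so that any element of $W\ps{\Gamma}$ is determined by its values on either set.

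For part (a), I would first observe that the conjugation action of $\sigma$ on $\Gamma$ extends to a continuous ring automorphism $\sigma\colon W\ps{\Gamma}\to W\ps{\Gamma}$ satisfying $\sigma(f)(\rho) = f(\rho\circ\sigma)$ for every continuous character $\rho$ of $\Gamma$. It then suffices to verify, for each $\chi\in\Xi_{\psi,\overline{\mathfrak{p}}}$, that $a_{\chi\circ\sigma,\mathfrak{p}}=a_{\chi,\overline{\mathfrak{p}}}$. One first checks that $\chi\mapsto\chi\circ\sigma$ yields a bijection $\Xi_{\psi,\overline{\mathfrak{p}}}\to\Xi_{\psi\circ\sigma,\mathfrak{p}}$ that swaps the infinity types $(k,j)$ and $(j,k)$ and replaces $\psi$ by $\psi\circ\sigma$. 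One then compares the two explicit formulas term by term: the exponents in the period ratio $\varOmega^{j-k}/\varOmega_p^{j-k}$ and in the factor $(\sqrt{d_E}/2\pi)^j$ transform compatibly with the interchange of the two embeddings of $E$ induced by $\sigma$; the Gauss sum and Euler factor rearrange so that $1-\chi(\overline{\mathfrak{p}})/p$ matches $1-(\chi\circ\sigma)(\mathfrak{p})/p$; and the partial $L$-function for $\chi^{-1}$ with Euler factors at $\mathfrak{g}\mathfrak{p}$ removed equals that for $(\chi\circ\sigma)^{-1}$ with Euler factors at $(\mathfrak{g}\overline{\mathfrak{p}})^{\sigma}$ removed.

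For part (b), I would invoke the Iwasawa-theoretic functional equation of Katz's $p$-adic $L$-function. The involution $\chi\mapsto\chi^{-1}\chi_{\cyc}^{-1}$ carries characters of infinity type $(k,j)$ to characters of type $(1-k,1-j)$ and restricts on $\Delta$ to the map $\psi\mapsto\psi^{-1}\omega$, the Teichm\"uller twist arising from the action of $\Delta$ on $\mu_p$. Hence it provides a bijection $\Xi_{\psi,\overline{\mathfrak{p}}}\to\Xi_{\psi^{-1}\omega,\mathfrak{p}}$. On interpolated values, the classical functional equation for complex Hecke $L$-functions supplies the desired identity up to a product of Gauss sums, $\Gamma$-factors, and ratios of complex and $p$-adic periods, and one checks that this discrepancy is a $p$-adic unit independent of $\chi$. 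The relevant comparisons are carried out in Chapter II of \cite{deShalit}, to which I would appeal.

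The main obstacle is essentially bookkeeping: tracking how the complex and $p$-adic periods, Gauss sums, conductors, and Teichm\"uller twists transform under the two involutions, and verifying that all the resulting discrepancies assemble into the claimed equalities. In (a), the delicate point is the $\sigma$-equivariance of the period ratio $\varOmega^{j-k}/\varOmega_p^{j-k}$, which reflects its origin as an algebraic quantity through the algebraicity theorems underlying the Katz construction. In (b), the crux is to verify that the functional-equation discrepancy factor lies in $W^{\times}$, rather than merely in $W$, uniformly in the interpolation parameter.
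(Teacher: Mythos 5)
Your proposal is correct and essentially follows the paper's own proof: part (a) is established exactly as in the paper, by noting both sides lie in $W\ps{\Gamma}$ and comparing the interpolation formulas term by term on the dense set $\Xi_{\psi,\overline{\mathfrak{p}}}$ (using $G(\chi)=G(\chi\circ\sigma)$, the matching of Euler factors, and $L_{\infty,\mathfrak{g}\overline{\mathfrak{p}}}((\chi\circ\sigma)^{-1},0)=L_{\infty,\mathfrak{g}\mathfrak{p}}(\chi^{-1},0)$), and part (b) rests, as in the paper, on the functional equation of the Katz $p$-adic $L$-function cited from de Shalit. The only organizational difference is that the paper quotes de Shalit's equation in its original form (same prime $\mathfrak{p}$ on both sides, with the conjugated data $\overline{\chi}$, $\overline{\psi}$) and then invokes part (a) to switch the prime, whereas you fold that conjugation bookkeeping directly into an interpolation argument comparing $\Xi_{\psi,\overline{\mathfrak{p}}}$ with $\Xi_{\psi^{-1}\omega,\mathfrak{p}}$ under $\chi\mapsto\chi^{-1}\chi_{\cyc}^{-1}$ — the same content, differently packaged.
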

\begin{proof} Assertion (a) is proven simply by interpolating both sides at all elements in $\Xi_{\psi, \overline{\mathfrak{p}}}$. We first note that both $\sigma(\mathcal{L}_{\mathfrak{p}, \psi\circ \sigma})$ and $\mathcal{L}_{\overline{\mathfrak{p}},\psi}$ lie in $W\cotimes{\O_{\psi}}\O_{\psi}\ps{\Gamma}$.  Then
\begin{align*}
\sigma(\mathcal{L}_{\mathfrak{p}, \psi \circ \sigma} )(\chi) & = \mathcal{L}_{\mathfrak{p}, \psi \circ \sigma} (\chi \circ \sigma) \\
& = \frac{\varOmega^{j-k}}{\varOmega_p^{j-k}} \left( \frac{\sqrt{d_E}}{2\pi}\right)^j G(\chi \circ \sigma) \left(1 - \frac{(\chi \circ \sigma)(\mathfrak{p})}{p}\right) L_{\infty, \mathfrak{g}\overline{\mathfrak{p}}}((\chi \circ \sigma)^{-1}, 0) \\
& = \frac{\varOmega^{j-k}}{\varOmega_p^{j-k}} \left( \frac{\sqrt{d_E}}{2\pi}\right)^j G(\chi) \left(1 - \frac{\chi(\mathfrak{\overline{p}})}{p}\right) L_{\infty, \mathfrak{g}{\mathfrak{p}}}(\chi^{-1}, 0) \\
& = \mathcal{L}_{\overline{\mathfrak{p}},\psi} (\chi),
\end{align*}
where we use the fact that the infinity type of $\chi \circ \sigma$ is $(k,j)$, and in the third equality we use the obvious equalities $G(\chi) = G(\chi \circ \sigma)$ and $L_{\infty, \mathfrak{g\overline{p}}}((\chi\circ \sigma)^{-1}, 0) = L_{\infty, \mathfrak{g p}}(\chi^{-1}, 0)$.

To prove (b), we use the functional equation for $p$-adic $L$-functions, which says that
\[
\mathcal{L}_{\mathfrak{p},\psi} (\chi) = \text{($p$-adic unit)} \cdot \mathcal{L}_{\mathfrak{p}, \overline{\psi}^{-1}\omega} (\overline{\chi}^{-1}\chi_{\cyc}),
\]
where we write $\overline{\psi}$ and $\overline{\chi}$ instead of $\psi \circ \sigma$ and $\chi \circ \sigma$ for convenience (see \cite[Equation (9), p. 93]{deShalit}). Using (a) and the functional equation, we obtain
\begin{align*}
\mathcal{L}_{\overline{\mathfrak{p}},\psi} (\chi) & = \sigma(\mathcal{L}_{\mathfrak{p}, \overline{\psi}} )(\chi)\\ 
& = \mathcal{L}_{\mathfrak{p}, \overline{\psi}} (\overline{\chi}) \\
& = \text{($p$-adic unit)} \cdot \mathcal{L}_{\mathfrak{p}, \psi^{-1}\omega} (\chi^{-1}\chi_{\cyc}).  
\end{align*} 
\end{proof}

\subsection{Greenberg's Conjecture}
\label{ss:GreenbergsConjecture}

 Let $E$ be an arbitrary number field, and let $\widetilde{E}$ be the compositum of all $\zp$-extensions of $E$.  Let $\Gamma=\Gal(\widetilde{E}/E)$ and $\Lambda=\zp\ps{\Gamma}$.  Then $\Gamma \cong \zp^r$ for some $r \ge r_2(E)+1$, where $r_2(E)$ is the number of complex places of $E$. Leopoldt's Conjecture for $E$ and $p$ is the assertion that $r=r_2(E)+1$. This is known to be true if $E$ is abelian over $\Q$ or over an imaginary quadratic field \cite{Brumer}. The ring $\Lambda$ is isomorphic (non-canonically) to the formal power series ring over $\zp$ in $r$ variables. 

Let $L$ be the maximal abelian, unramified pro-$p$-extension of $\widetilde{E}$, and let $X=\mathrm{Gal}(L/\widetilde{E})$, which is a $\Lambda$-module that we will call  the unramified Iwasawa module over $\widetilde{E}$.  The following conjecture was
first stated in print in \cite[Conjecture 3.5]{Green1}.

\begin{conjecture}[Greenberg]  \label{con:Ralph} With the above notation,  the $\Lambda$-module $X$ is pseudo-null.  That is, its localizations at all codimension $1$ points of $\mathrm{Spec}(\Lambda)$ are trivial.
\end{conjecture}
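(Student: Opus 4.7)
The plan would be as follows. Since Conjecture \ref{con:Ralph} is a famously open problem in general, the only realistic strategy is to reduce it to codimension one vanishing statements, each of which can in principle be attacked by main-conjecture type methods, and then combine them. To show that $X$ is pseudo-null, by definition one must show that the localization $X_{\mf{q}}$ vanishes at every height one prime $\mf{q}$ of the regular local ring $\Lambda$ of Krull dimension $r+1$. These primes split into two families: the unique prime $(p)$, whose vanishing is the analogue of the $\mu=0$ statement, and the height one primes of $\Lambda[1/p]$, each of which corresponds, up to isogeny, to a one-variable specialization along a $\zp$-direction inside $\widetilde{E}/E$.

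My first step would be to control the height one primes away from $(p)$ by specialization. For each continuous character $\chi$ of $\Gamma$ whose kernel $K_{\chi}$ is a height one prime, $X/K_{\chi}X$ is (up to a pseudo-null error) an unramified Iwasawa module in a one-variable theory. I would invoke the appropriate main conjecture in the relevant one-variable setting (Mazur--Wiles in cyclotomic directions over a totally real subfield, Rubin in anticyclotomic directions over an imaginary quadratic subfield, and so on), and then exploit the fact that the $p$-adic $L$-function attached to the \emph{unramified} Selmer condition carries extra Euler factors at primes above $p$ that force it to be a non-zero-divisor at $K_{\chi}$. This would give codimension one vanishing along suitably many independent directions.

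The second step would be to handle the prime $(p)$. Here my plan would be to follow Ferrero--Washington, Sinnott, and Gillard: realize $X/pX$ as a quotient of a module of $p$-adic measures modulo $p$ constructed from cyclotomic or elliptic units, and apply an $\ell$-adic independence argument to rule out any codimension one contribution at $(p)$. In the cases where $E$ is abelian over $\Q$ or over an imaginary quadratic field, this input is essentially available in the literature; in the general case, producing it is itself a serious problem.

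The main obstacle, and the reason the conjecture remains open, is that pseudo-nullity is a codimension two statement and so cannot be obtained from a single specialization: one needs two genuinely independent annihilators of $X$ whose zero loci meet properly. The canonical mechanism for producing such a pair is to exhibit two distinct natural analytic invariants, exactly as the present paper does in Theorem \ref{thm:imagquad} with the Katz $p$-adic $L$-functions $\mc{L}_{\mf{p},\psi}$ and $\mc{L}_{\overline{\mf{p}},\psi}$ attached to the two primes of $E$ above $p$. In the generality of Conjecture \ref{con:Ralph}, once $E$ has both real and complex places or more than two independent $\zp$-extensions, there is no canonical second analytic invariant available, and no general mechanism other than proving the conjecture directly. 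My plan would therefore in practice be confined to the settings with extra structure (abelian over $\Q$, CM with split primes, presence of two independent natural Selmer conditions) which the rest of the paper exploits via reflection theorems and pairs of $p$-adic $L$-functions.
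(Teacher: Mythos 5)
The statement you were asked about is Conjecture \ref{con:Ralph}, Greenberg's pseudo-nullity conjecture, and the paper does not prove it: it is stated as an open conjecture, used as a hypothesis in the main results (e.g.\ Theorem \ref{thm:imagquad}), and supported only by the evidence recalled in Subsection \ref{ss:GreenbergsConjecture} (Minardi and Hubbard in special cases, Sharifi's criterion together with the Fukaya--Kato/McCallum--Sharifi computations for $\Q(\mu_p)$ with $p<25000$, and the equivalences with triviality of the $p$-part of $\Cl(K)$ and torsion-freeness of $\mf{X}$). Your proposal rightly concedes that no proof is being given, so there is no argument of the paper to compare it with; to that extent your assessment of the situation is accurate.

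Still, the plan itself has concrete defects worth naming. First, the height one primes of $\Lambda$ are not exhausted by $(p)$ together with kernels of one-variable specializations: for $r=2$ the primes coming from $\zp$-directions are the ``linear'' ones, and a module such as $\Lambda/(f)$ with $f$ an irreducible non-linear power series has every such specialization torsion while failing to be pseudo-null, so even a complete execution of your two steps would not yield the conjecture; for $r>2$ the kernel of a one-variable specialization has height $r-1>1$, so the proposed dictionary between height one primes and $\zp$-directions breaks down altogether. Second, the one-variable main conjectures you invoke identify characteristic ideals with $p$-adic $L$-functions that are typically non-units (compare Example \ref{imquadexs}), so they can never force the relevant localizations of $X$ to vanish; pseudo-nullity is strictly stronger than any statement those main conjectures supply, which is precisely why the conjecture is open and why this paper needs \emph{two} analytic inputs ($\mc{L}_{\mf{p},\psi}$ and $\mc{L}_{\bar{\mf{p}},\psi}$) merely to describe the codimension two support \emph{assuming} pseudo-nullity. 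Third, the step at $(p)$ is not routine either: the paper recalls (following Kataoka) that there exist $\zp^r$-extensions, necessarily not containing the cyclotomic $\zp$-extension, for which $(p)$ does lie in the support of $X$, so any $\mu=0$ argument must use the cyclotomic containment in an essential way and is itself unproven in the generality of the conjecture. In short, your text is a reasonable survey of heuristics, but it neither is, nor could be completed into, a proof of the statement, and the paper offers none.
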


Note that if $E$ is totally real, and if Leopoldt's conjecture for $E$ and $p$ is valid, then $\widetilde{E}$ is the cyclotomic $\zp$-extension of $E$, and the conjecture states that $X$ is finite.  

In the case that $E$ is totally complex, we have the following reasonable extension of the above conjecture.  Let $K$ be any finite extension of $\widetilde{E}$ which is abelian over $E$. Then $\Gal(K/E) \cong \Delta \times \Gamma$, where $\Delta$ is a finite group and $\Gamma$ (as defined above) is identified with $\Gal(K/F)$ for some finite extension $F$ of $E$.   Let $X$ be the unramified Iwasawa module over $K$.  Then $\Gamma$ acts on $X$ and so we can again regard $X$ as a $\Lambda$-module. The extended conjecture asserts that $X$ is pseudo-null as a $\Lambda$-module. 

Some evidence for Conjecture \ref{con:Ralph} has been given in various special cases.  For instance, in \cite{minardithesis}, Minardi verifies Conjecture \ref{con:Ralph} when $E$ is an imaginary quadratic field and $p$ is a prime not dividing the class number of $E$, and also for many imaginary quadratic fields $E$ when $p=3$ and does divide the class number. In \cite{hubbardthesis},  Hubbard verifies the conjecture when $p=3$ for a number of biquadratic fields $E$.  In \cite{sh-unram}, Sharifi gave a criterion for Conjecture \ref{con:Ralph} to hold for $\Q(\mu_p)$.  By a result of Fukaya-Kato \cite[Theorem 7.2.8]{FKConj} 
on a conjecture of McCallum-Sharifi and related computations in \cite{mcs}, the condition holds for $E =\Q(\mu_p)$ for all $p < 25000$.  The results of \cite{sh-unram} suggest that $X$ should have an annihilator of very high codimension for $E = \Q(\mu_p)$.

One can construct examples of $\zp^r$-extensions $K$ of a suitably chosen number field $F$ such that the unramified Iwasawa module $X$ over $K$ has the ideal $(p)$ in its support as a $\zp\ps{\Gal(K/F)}$-module.  Such examples can be constructed by imitating Iwasawa's construction of $\zp$-extensions with positive $\mu$-invariant. This was pointed out to us by T. Kataoka.   Such a construction can be done for any positive $r$.  However, if one adds the assumption that $K$ contain the cyclotomic $\zp$-extension of $F$, and $r > 1$,   then we actually know of no examples where $X$ is demonstrably not pseudo-null.

Assume that $K$ is a $\Z^r_p$-extension of $F$ with $r \ge 1$ such that $K$ contains all $p$-power roots of unity and, therefore, the cyclotomic $\zp$-extension of $F$.   
The class group $\Cl(K)$ is defined as the direct limit under the obvious maps of the ideal class groups of the finite extensions of $F$ contained in $K$. The assertion that $X$ is pseudo-null as a $\Lambda$-module should conjecturally be equivalent to the assertion that the $p$-primary subgroup $\Cl(K)_p$ of $\Cl(K)$ is actually trivial.    We sketch an argument just in one direction.  One can show that if $X$ is pseudo-null, then so is $\Hom( \Cl(K)_p, \qp/\zp)$,   the Pontryagin dual of $\Cl(K)_p$.   One then employs a standard Kummer theory argument to show that 
$\Hom( \Cl(K)_p, \mu_{p^{\infty}})=\Hom( \Cl(K)_p, \qp/\zp)(1)$ is isomorphic to a $\Lambda$-submodule of $\mf{X} = \Gal(M/K)$, where $M$ denotes the maximal abelian pro-$p$ extension of $K$ unramified outside the primes above $p$, which we call the unramified outside $p$ Iwasawa module over $K$.  One can then use the result that $\mf{X}$ has no nontrivial pseudo-null $\Lambda$-submodules.  That result is a consequence of the fact that $\mf{X}$ has rank $r_2(F)$ as a $\Lambda$-module known as the {\em weak Leopoldt conjecture} for $K/F$, which is satisfied because $K$ contains the cyclotomic $\zp$-extension of $F$. 
(See  \cite[Th\'eoreme 3.1]{nguyen} or   \cite[Proposition 5]{Gr78}.)  For a partial result in the converse direction, see \cite[Th\'eoreme 5.1]{nv}.

If one assumes in addition that the decomposition subgroups of $\Gamma=\Gal(K/F)$ for primes above $p$ are of $\zp$-rank at least $2$, then the assertion that $X$ is pseudo-null is equivalent to the assertion that $\mf{X}$ is torsion-free as a $\Lambda$-module.  This equivalence follows from \cite[Proposition 3.6]{lannuzelnguyen} (see also Remark \ref{secondproof}).  The result in \cite{lannuzelnguyen} is stated in terms of the pseudo-nullity of a certain quotient $X'$ of $X$, namely $X'=\mathrm{Gal}(L'/K)$, where $L'$ is the maximal unramified abelian pro-$p$ extension of $K$ in which all the primes of $K$ lying over $p$ split completely. However, the kernel of the map $X \to X'$ is pseudo-null under our assumption on the decomposition subgroups by Lemma \ref{unramsplit} below. It follows that $X$ is pseudo-null if and only if $X'$ is pseudo-null.

\section{Unramified Iwasawa modules}
\label{s:FieldsWithR2Equal1}

\subsection{The general setup} 
\label{ss:gensetup}

Let $p$ be a prime, $E$ be a number field, $F$ a finite Galois extension of $E$, and $\Delta = \Gal(F/E)$.  Let $K$ be a
Galois
extension of $E$ that is a $\zp^r$-extension of $F$ for some $r \ge 1$, and set $\Gamma = \Gal(K/F)$.  Set $\mc{G} = \Gal(K/E)$, $\Omega = \zp\ps{\mc{G}}$, and $\La = \zp\ps{\Gamma}$.  Note that $K/F$ is unramified outside $p$ as a compositum of $\zp$-extensions.

Let $S$ be a set of primes of $E$ including those over $p$ and $\infty$, and let $S_f$ be the set of finite primes in $S$.  
For any algebraic extension $F'$ of $F$, let $G_{F',S}$ denote the Galois group of the maximal extension $F'_S$ of $F'$ that is unramified outside the primes over $S$.  Let $\mc{Q} = \Gal(F_S/E)$.  
For a compact $\zp\ps{\mc{Q}}$-module $T$, we consider the Iwasawa cohomology group
$$
	\rH^i_{\Iw}(K,T) = \varprojlim_{F' \subset K} \rH^i(G_{F',S},T)
$$ 
that is the inverse limit of continuous Galois cohomology groups under corestriction maps, 
with $F'$ running over the finite extensions of $F$ in $K$.  It has the natural structure of an $\Omega$-module.

We will use the following notation.  
For a locally compact $\La$-module $M$, let us set 
$$
	\rE^i_{\La}(M) = \Ext^i_{\La}(M,\La)
$$ 
for short.  
This again has a $\La$-module
structure with $\gamma \in \Gamma$ acting on $f \in \rE^0_{\La}(M)$ by $(\gamma \cdot f)(m) = f(\gamma^{-1}m) = \gamma^{-1} f(m)$.
We let $M^{\vee}$ denote the Pontryagin dual, 
to which we give a module structure by letting $\gamma$ act by precomposition by $\gamma^{-1}$. 
If $M$ is a (left) $\Omega$-module, then $M^{\vee}$ is likewise a (left) $\Omega$-module.  Moreover, 
$\rE^i_{\La}(M) \cong \Ext^i_{\Omega}(M,\Omega)$ as $\La$-modules
since $\Omega$ is $\La$-projective (cf. \cite[Proposition 5.4.17]{nsw}), through which $\rE^i_{\La}(M)$ acquires an $\Omega$-module structure.
We set $M^* = \rE^0_{\La}(M) = \Hom_{\La}(M,\La)$.

The first of the following two spectral sequences is due to Jannsen \cite[Theorem 1]{jan-spec}, and
the second to Nekov\'a\v{r} \cite[Theorem 8.5.6]{nekovar} (though it is assumed there that $p$ is odd
or $K$ has no real places).  One can find very general versions that imply these in 
\cite[1.6.12]{FukayaKatoConj} and \cite[Theorem 4.5.1]{ls}.  

\begin{theorem}[Jannsen, Nekov\'a\v{r}] \label{thm:ss}
	Let $T$ be a compact $\zp\ps{\mc{Q}}$-module that is finitely generated and free over $\zp$.  
	Set $A = T \otimes_{\zp} \qp/\zp$.
	There are convergent spectral sequences of $\Omega$-modules 
	\begin{eqnarray*}
		&{\rm F}_2^{i,j}(T) = \rE^i_{\La}(\rH^j(G_{K,S},A)^{\vee}) \Rightarrow {\rm F}^{i+j}(T) = \rH^{i+j}_{\Iw}(K,T)&\\
		&\rH_2^{i,j}(T) = \rE^i_{\La}(\rH^{2-j}_{\Iw}(K,T)) \Rightarrow \rH^{i+j}(T) = \rH^{2-i-j}(G_{K,S},A)^{\vee}.&
	\end{eqnarray*}
\end{theorem}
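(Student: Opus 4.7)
The plan is to realize both spectral sequences as instances of the standard hypercohomology spectral sequence
$$
E_2^{i,j} = \Ext^i_\Lambda\bigl(H^{-j}(C^\bu), \Lambda\bigr) \Rightarrow H^{i+j}\bigl(\RHom_\Lambda(C^\bu, \Lambda)\bigr)
$$
for two different choices of complex $C^\bu$, with the two choices linked by a single derived duality between Iwasawa cohomology and the Pontryagin dual of discrete Galois cohomology. This is essentially Jannsen's approach \cite{jannsen}, refined to the derived category by Nekov\'a\v{r} \cite{nekovar}.

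First I would prove a derived Shapiro's lemma,
$$
\RG{\Iw}(K, T) \simeq \RG{}\bigl(G_{F,S},\, \Lai \cotimes{\zp} T\bigr),
$$
where $\Lai$ is $\Lambda$ as a $\zp$-module but with $G_{F,S}$ acting through $G_{F,S} \twoheadrightarrow \Gamma$ followed by $g \mapsto g^{-1}$. Cohomologically this is just Shapiro's lemma at each finite layer; the passage to the inverse limit is justified because the Mittag-Leffler condition holds in the tower, so $R\varprojlim$ collapses to $\varprojlim$ on cohomology.

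The heart of the argument is the derived duality
$$
\RG{\Iw}(K, T) \simeq \RHom_\Lambda\bigl(\RG{}(G_{K, S}, A)^\vee, \Lambda\bigr),
$$
where $\RG{}(G_{K,S}, A)^\vee$ denotes the derived Pontryagin dual of the discrete cohomology complex. Informally, one uses that $A = T \otimes_{\zp} \qp/\zp$ has Pontryagin dual $A^\vee \cong T$ to rewrite $\Lai \cotimes{\zp} T$ as $\Hom_\Lambda(\Hom^{\cont}_{\zp}(\Lambda, A)^\vee, \Lambda)$, and then passes through $\RG{}(G_{F,S}, -)$ using the identification of continuous cochains for $G_{F,S}$-modules of the form $\Lambda \cotimes{\zp} M$ with the tower of cochains in the $\Gamma$-tower. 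Given this duality, the first spectral sequence is the hypercohomology spectral sequence for $C^\bu = \RG{}(G_{K,S}, A)^\vee$, since $H^{-j}(C^\bu) = H^j(G_{K,S}, A)^\vee$. For the second, apply $\RHom_\Lambda(-, \Lambda)$ to both sides of the duality; reflexivity then yields $\RHom_\Lambda(\RG{\Iw}(K,T), \Lambda) \simeq \RG{}(G_{K,S}, A)^\vee$, whose hypercohomology spectral sequence, after substituting $q = j-2$ and using $H^n(D^\vee) = H^{-n}(D)^\vee$, rewrites in exactly the stated form.

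The main obstacle is making the derived duality rigorous at the level of complexes rather than merely cohomology groups: Iwasawa cohomology is an inverse limit of compact cohomologies while discrete cohomology is a direct limit, so the identification requires the formalism of continuous cochains, $R\varprojlim$, and derived categories of topological $\Omega$-modules, as developed in \cite{jannsen, nekovar, FukayaKatoConj, ls}. Once that formalism is in place, both spectral sequences are formal consequences of a single biduality isomorphism.
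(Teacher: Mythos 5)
The paper does not actually prove Theorem \ref{thm:ss}: it cites \cite[Theorem 1]{jannsen} for the first spectral sequence and \cite[Theorem 8.5.6]{nekovar} for the second, pointing to \cite[1.6.12]{FukayaKatoConj} and \cite[Theorem 4.5.1]{ls} for general versions. Your sketch is, in substance, a reconstruction of the argument in those latter references rather than a new route: derived Shapiro, a single duality $\RHom_{\La}\bigl(\RG{}(G_{K,S},A)^{\vee},\La\bigr)\simeq \RG{\Iw}(K,T)$, and then both spectral sequences as hyper-Ext spectral sequences, the second obtained by applying $\RHom_{\La}(-,\La)$ and derived biduality over the regular ring $\La$. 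That reduction, including your reindexing $H^{n}(D^{\vee})=H^{-n}(D)^{\vee}$ and $b=j-2$, is correct; but note that, like the paper, you ultimately defer the hard input --- the duality itself, i.e., the comparison of the $\varprojlim$ of compact cohomology with the Pontryagin dual of the colimit of discrete cohomology as an equivalence of complexes of $\Omega$-modules --- to the formalism of \cite{jannsen,nekovar,FukayaKatoConj,ls}.

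Two points need repair. First, $A^{\vee}$ is not $T$ but the contragredient $\Hom_{\zp}(T,\zp)$; your rewriting of $\Lai\cotimes{\zp}T$ as $\Hom_{\La}\bigl(\Hom^{\cont}_{\zp}(\La,A)^{\vee},\La\bigr)$ is nonetheless valid because the two successive dualizations give $\Hom_{\zp}\bigl(\Hom_{\zp}(T,\zp),\La\bigr)\cong\La\cotimes{\zp}T$ --- justify it that way. Second, your ``reflexivity'' step, and in fact the convergence of both hyper-Ext spectral sequences, requires $\RG{}(G_{K,S},A)^{\vee}$ (equivalently $\RG{\Iw}(K,T)$) to be quasi-isomorphic to a bounded complex of finitely generated $\La$-modules. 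This holds because $G_{F',S}$ has $p$-cohomological dimension $2$ when $p$ is odd or $K$ has no real places --- exactly the hypothesis the paper flags in Nekov\'a\v{r}'s statement; for $p=2$ with real places the relevant complexes are cohomologically unbounded and one must invoke the Tate-cohomology modifications of \cite{FukayaKatoConj} or \cite{ls}. So either impose that hypothesis (harmless here, since the paper only uses the theorem for odd $p$) or cite the modified statements. Finally, to obtain spectral sequences of $\Omega$-modules rather than merely $\La$-modules, you should remark that the Shapiro and duality isomorphisms are $\mc{G}$-equivariant (or work over $\Omega$ and use $\Ext^i_{\Omega}(M,\Omega)\cong\rE^i_{\La}(M)$, as the paper does); this is routine but should be said.
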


We will be interested in the above spectral sequences in the case that $T = \zp$.
We have a canonical isomorphism
$$
	\rH^1(G_{K,S},\qp/\zp)^{\vee} \cong \mf{X},
$$
where $\mf{X}$ denotes the $S$-ramified Iwasawa module over $K$ (i.e., the Galois group of the maximal
abelian pro-$p$, unramified outside $S$ extension of $K$).  
We study the relationship between $\mf{X}$ and $\rH^1_{\Iw}(K,\zp)$. 

The following nearly immediate consequence of Jannsen's spectral sequence is a mild extension of earlier unpublished results of McCallum \cite[Theorems A and B]{mccallum}.

\begin{theorem}[McCallum, Jannsen] \label{mc}
	There is a canonical exact sequence of $\Omega$-modules 
	$$
		0 \to \zp^{\delta_{r,1}} \to \rH^1_{\Iw}(K,\zp) \to \mf{X}^*
		\to \zp^{\delta_{r,2}} \to \rH^2_{\Iw}(K,\zp), 
	$$
	where $\delta_{j,i} = 1$ if $i = j$ and $\delta_{j,i} = 0$ otherwise.
	If the weak Leopoldt conjecture holds for $K$, which is to say that 
	$\rH^2(G_{K,S},\qp/\zp) = 0$, then this exact sequence extends to
	\begin{equation}
	\label{eq:wantit}
		0 \to \zp^{\delta_{r,1}} \to \rH^1_{\Iw}(K,\zp) \to \mf{X}^*
		\to \zp^{\delta_{r,2}} \to \rH^2_{\Iw}(K,\zp)  \to \rE_{\La}^1(\mf{X}) \to
		\zp^{\delta_{r,3}} \to 0.
	\end{equation}
	If $p$ is odd or $K$ has no real places, then there are canonical isomorphisms 
	$\rE_{\La}^i(\mf{X}) \xrightarrow{\sim} \zp^{\delta_{r,i+2}}$ for $i \ge 2$.
\end{theorem}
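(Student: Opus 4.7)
The approach is to extract the exact sequences from Jannsen's spectral sequence in Theorem \ref{thm:ss}, specialized to $T = \zp$ and $A = \qp/\zp$. First I identify the $E_2$-page terms $\mathrm{F}_2^{i,j}(\zp) = \rE^i_\La(\rH^j(G_{K,S},\qp/\zp)^\vee)$: the $j = 0$ row uses $\rH^0(G_{K,S},\qp/\zp)^\vee = \zp$ (trivial Galois action), the $j = 1$ row uses $\rH^1(G_{K,S},\qp/\zp)^\vee = \mf{X}$ as recalled just before the theorem, and the $j = 2$ row involves a module that vanishes precisely under the weak Leopoldt hypothesis. To compute $\rE^i_\La(\zp)$, I fix topological generators $\gamma_1,\ldots,\gamma_r$ of $\Gamma \cong \zp^r$, so that the augmentation ideal of $\La$ is generated by the regular sequence $\gamma_1-1,\ldots,\gamma_r-1$; the Koszul complex then gives a free resolution of $\zp$ of length $r$, and applying $\Hom_\La(-,\La)$ yields
$$
\rE^i_\La(\zp) \cong \begin{cases} \zp & i = r,\\ 0 & i \ne r, \end{cases}
$$
so $\mathrm{F}_2^{i,0}(\zp) = \zp^{\delta_{r,i}}$.

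For the first part, I apply the standard five-term exact sequence
$$
0 \to \mathrm{F}_2^{1,0} \to \mathrm{F}^1 \to \mathrm{F}_2^{0,1} \to \mathrm{F}_2^{2,0} \to \mathrm{F}^2
$$
of this first-quadrant spectral sequence. Substituting the identifications above gives precisely the asserted exact sequence. To extend it under weak Leopoldt, note that the $E_2$-page is then concentrated in rows $j = 0$ and $j = 1$. Since the differential $d_s\colon E_s^{p,q} \to E_s^{p+s,q-s+1}$ for $s \geq 3$ originating in rows $q \leq 1$ lands in rows $q-s+1 \leq -1$, all higher differentials vanish, so the spectral sequence degenerates at $E_3$ with
$$
E_\infty^{p,0} = \coker\bigl(d_2\colon E_2^{p-2,1} \to E_2^{p,0}\bigr), \qquad E_\infty^{p,1} = \ker\bigl(d_2\colon E_2^{p,1} \to E_2^{p+2,0}\bigr).
$$
The induced filtration on $\rH^2_\Iw(K,\zp)$ therefore gives a short exact sequence
$$
0 \to \coker\bigl(\mf{X}^* \to \zp^{\delta_{r,2}}\bigr) \to \rH^2_\Iw(K,\zp) \to \ker\bigl(\rE^1_\La(\mf{X}) \to \zp^{\delta_{r,3}}\bigr) \to 0,
$$
and splicing this with the five-term sequence produces \eqref{eq:wantit}.

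For the final assertion, observe that the cokernel of the map $d_2\colon \rE^1_\La(\mf{X}) \to \zp^{\delta_{r,3}}$ appearing in \eqref{eq:wantit} is exactly $E_\infty^{3,0}$. This vanishes trivially when $r \ne 3$; when $r = 3$, it is a subquotient of $\rH^3_\Iw(K,\zp)$, which vanishes when $p$ is odd or $K$ has no real places, by the classical bound $\mathrm{cd}_p(G_{F',S}) \leq 2$ for the finite subextensions $F'$ of $F$ in $K$ (Tate) and the exactness of inverse limits on such cohomology.

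The main obstacle is purely bookkeeping: correctly indexing the spectral sequence, verifying degeneration at $E_3$ under weak Leopoldt, and matching the natural edge and transgression maps produced by the five-term sequence and the filtration on $\rH^2_\Iw(K,\zp)$ with the maps appearing in the claim. No essential input beyond Theorem \ref{thm:ss}, the Koszul computation of $\rE^\bullet_\La(\zp)$, and the classical cohomological dimension bound is needed; in particular canonicity follows from the naturality of Jannsen's construction.
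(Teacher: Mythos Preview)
Your proof is correct and follows essentially the same route as the paper: both arguments specialize Jannsen's spectral sequence to $T=\zp$, identify the $j=0$ row via $\rE^i_\La(\zp)\cong\zp^{\delta_{r,i}}$, read off the five-term exact sequence, observe that weak Leopoldt leaves a two-row spectral sequence whose associated long exact sequence yields \eqref{eq:wantit}, and conclude surjectivity from the vanishing of $\rH^3_{\Iw}(K,\zp)$ via $\mathrm{cd}_p(G_{F',S})\le 2$. One minor imprecision: for $p=2$ the bound $\mathrm{cd}_2(G_{F',S})\le 2$ need not hold for \emph{all} finite $F'\subset K$, only for a cofinal family (those $F'$ that are totally imaginary), which still suffices for the inverse limit to vanish; the paper phrases this as ``for some finite extension $F'$ of $F$ in $K$''.
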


\begin{proof}
	The first sequence is just the five-term exact sequence of base terms in Jannsen's spectral
	sequence for $T = \zp$.  For this, we remark that 
	$$
		{\rm F}_2^{i,0}(\zp) = \rE_{\La}^i(\zp) \cong \zp^{\delta_{r,i}}
	$$
	by \cite[Lemma 5]{jannsen} or Corollary \ref{cor:indzp} below.
	Under weak Leopoldt, ${\rm F}_2^{0,2}(\zp)$ 
	is zero, so the exact sequence continues as written, 
	the next term being $\rH^3_{\Iw}(K,\zp)=0$.  If $p$ is odd or $K$ is totally imaginary, then 
	$G_{F',S}$ has $p$-cohomological dimension $2$ for some finite extension $F'$ of $F$ 
	in $K$, so $\rH^j_{\Iw}(K,\zp)$ vanishes for $j \ge 3$, in which case the spectral sequence also yields
	the remaining isomorphisms.
\end{proof}

\begin{remark}
	The weak Leopoldt conjecture for $K$ is well-known to hold in the case that $K(\mu_p)$ contains all
	$p$-power roots of unity (see \cite[Theorem 10.3.25]{nsw}).
\end{remark}

\begin{remark}
	For $p$ odd, McCallum proved everything but the exactness at $\zp^{\delta_{r,2}}$ in Theorem \ref{mc},
	supposing both hypotheses listed therein.
\end{remark}

\begin{cor} \label{dualseq}
	There is a canonical isomorphism $\mf{X}^{**} \to \rH^1_{\Iw}(K,\zp)^*$ of $\Omega$-modules.
\end{cor}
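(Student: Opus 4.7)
The plan is to apply the contravariant functor $(-)^* = \Hom_{\La}(-,\La)$ to the five-term exact sequence produced by Theorem \ref{mc} and to exploit the fact that $\La$ is the regular local ring $\zp\ps{t_1,\dots,t_r}$, so that $\rE^i_{\La}(M)=0$ for $i<\mr{codim}(\mr{supp}(M))$. Split the sequence
$$
   0 \to \zp^{\delta_{r,1}} \to \rH^1_{\Iw}(K,\zp) \to \mf{X}^* \to \zp^{\delta_{r,2}} \to \rH^2_{\Iw}(K,\zp)
$$
into the two short exact pieces
$$
   0 \to \zp^{\delta_{r,1}} \to \rH^1_{\Iw}(K,\zp) \to I \to 0,\qquad
   0 \to I \to \mf{X}^* \to D \to 0,
$$
where $I$ is the image of $\rH^1_{\Iw}(K,\zp) \to \mf{X}^*$ and $D\subseteq \zp^{\delta_{r,2}}$ is the image of $\mf{X}^*\to\zp^{\delta_{r,2}}$; both pieces are sequences of $\Omega$-modules.

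First I would apply $(-)^*$ to the second short exact sequence. Since $r\geq 1$, the trivial $\Ga$-module $\zp$ is $\La$-torsion, so $\rE^0_{\La}(\zp^{\delta_{r,2}})=0$ and a fortiori $D^*=0$. Thus
$$
   0 \to \mf{X}^{**} \to I^{*} \to \rE^1_{\La}(D)
$$
is exact. The key point is that the error term $\rE^1_{\La}(D)$ vanishes in every case: for $r\neq 2$ we have $\delta_{r,2}=0$ so $D=0$, while for $r=2$ the module $\zp=\La/(t_1,t_2)$ has codimension two, hence $D$ is pseudo-null as a submodule of a pseudo-null module, and therefore $\rE^1_{\La}(D)=0$ by the vanishing statement above. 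This yields a canonical $\Omega$-module isomorphism $\mf{X}^{**}\xrightarrow{\sim}I^{*}$.

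Next apply $(-)^*$ to the first short exact sequence. Again $\zp^{\delta_{r,1}}$ is $\La$-torsion (trivially so when $\delta_{r,1}=0$, and by the same dimension argument when $r=1$), so $(\zp^{\delta_{r,1}})^*=0$ and the dualized sequence gives a canonical $\Omega$-module injection $I^{*}\hookrightarrow \rH^1_{\Iw}(K,\zp)^{*}$ whose cokernel sits inside $\rE^1_{\La}(\zp^{\delta_{r,1}})$. However, the claim is that this cokernel is in fact zero for a different reason: the map $I^{*}\to \rH^1_{\Iw}(K,\zp)^{*}$ is an isomorphism because $\zp^{\delta_{r,1}}$ is the full kernel of a map into $\mf{X}^*$, a torsion-free consideration that forces the inclusion $\zp^{\delta_{r,1}}\hookrightarrow\rH^1_{\Iw}(K,\zp)$ to become the zero map on $\La$-duals. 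Concretely, any $\La$-linear map $\rH^1_{\Iw}(K,\zp)\to\La$ must vanish on the $\La$-torsion submodule, which contains $\zp^{\delta_{r,1}}$, so the restriction map $\rH^1_{\Iw}(K,\zp)^{*}\to(\zp^{\delta_{r,1}})^{*}=0$ is trivially zero and $I^{*}\to \rH^1_{\Iw}(K,\zp)^{*}$ is surjective. Composing the two isomorphisms produces the desired canonical $\Omega$-module isomorphism $\mf{X}^{**}\xrightarrow{\sim}\rH^1_{\Iw}(K,\zp)^{*}$.

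The main obstacle is the bookkeeping in the case $r=2$: one must know that the image $D$ of $\mf{X}^*\to\zp$ is pseudo-null in order to conclude $\rE^1_{\La}(D)=0$, and this relies on the fact that $\zp=\La/(t_1,t_2)$ is itself pseudo-null together with the vanishing $\rE^i_{\La}(M)=0$ for $i<\mr{codim}(\mr{supp}(M))$ over the regular local ring $\La$. Everything else is a formal consequence of applying $(-)^*$ and noting torsion-freeness of $\La$.
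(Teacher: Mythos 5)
Your proposal is correct and is essentially the paper's argument: the paper likewise dualizes the five-term sequence of Theorem \ref{mc}, treating the cases $r=1$, $r=2$, $r\ge 3$ via the vanishing $\rE^0_{\La}(\zp)=\rE^1_{\La}(\zp)=0$ (Corollary \ref{cor:indzp}), which is exactly your grade/codimension vanishing applied to the two short exact pieces. One harmless slip: the cokernel of $I^*\hookrightarrow \rH^1_{\Iw}(K,\zp)^*$ embeds into $(\zp^{\delta_{r,1}})^*$, not into $\rE^1_{\La}(\zp^{\delta_{r,1}})$, but your subsequent argument (the restriction map to $(\zp^{\delta_{r,1}})^*=0$ vanishes) is the correct one and closes the proof.
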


\begin{proof}
	This follows from Theorem \ref{mc}, which provides an isomorphism if $r \ge 3$, or if $r = 2$
	and the map $\mf{X}^* \to \zp$ is zero.  If $r = 1$, then
	we obtain an exact sequence
	$$
		0 \to \mf{X}^{**} \to \rH^1_{\Iw}(K,\zp)^* \to \rE_{\La}^0(\zp),
	$$
	and the last term is zero.  If $r = 2$ and the map $\mf{X}^* \to \zp$ is nonzero, then we obtain
	an exact sequence
	$$
		0 \to \rE_{\La}^0(\zp) \to \mf{X}^{**} \to \rH^1_{\Iw}(K,\zp)^* \to \rE_{\La}^1(\zp),
	$$
	and $\rE_{\La}^0(\zp) = \rE_{\La}^1(\zp) = 0$ since $r = 2$.
\end{proof}

Using the second spectral sequence in Theorem \ref{thm:ss}, we may use this to obtain the following.

\begin{cor}
\label{cor:iwseq}
	Suppose that $p$ is odd or $K$ is totally imaginary.
	There is an exact sequence
	\begin{equation} \label{keyseq}
		0 \to \rE_{\La}^1(\rH^2_{\Iw}(K,\zp)) \to \mf{X} \to \mf{X}^{**} \to \rE_{\La}^2(\rH^2_{\Iw}(K,\zp)) \to \zp
	\end{equation}
	of $\Omega$-modules.
	In particular,
	$\rE_{\La}^1(\rH^2_{\Iw}(K,\zp))$ is isomorphic to the $\La$-torsion submodule of $\mf{X}$.
\end{cor}

\begin{proof} By hypothesis, $G_{F',S}$ has $p$-cohomological dimension $2$ for some finite extension $F'$ of $F$ 
in $K$.  Therefore, Nekov\'a\v{r}'s spectral sequence
	is a first quadrant spectral sequence for any $T$.  For $T = \zp$, it provides an exact sequence
	\begin{multline} \label{ptseq}
		0 \to \rE_{\La}^1(\rH^2_{\Iw}(K,\zp)) \to \rH^1(G_{K,S},\qp/\zp)^{\vee} 
		\to \rH^1_{\Iw}(K,\zp)^* \to \rE_{\La}^2(\rH^2_{\Iw}(K,\zp)) \\
		\to 
		\rH^0(G_{K,S},\qp/\zp)^{\vee} \to \rE_{\La}^1(\rH^1_{\Iw}(K,\zp)) \to \rE_{\La}^3(\rH^2_{\Iw}(K,\zp)) \to 0
	\end{multline}
	of $\Omega$-modules.
	In particular, applying Corollary \ref{dualseq} to get the third term, 
	we have the exact sequence of the statement.  
\end{proof}

\begin{remark}
	In the case that $r = 1$, Corollary \ref{cor:iwseq} is in a sense implicit in the work of Iwasawa
	\cite{iwasawa73} (see 
	Theorem 12 and its proof of Lemma 12).  In this case, second $\Ext$-groups are finite,
	so the map to $\zp$ in the corollary is zero.  
\end{remark}

\begin{remark}
	It is natural to ask how \eqref{keyseq} is related to the more abstract exact sequence of 
	\cite[(1.8.1)]{jannsen} (see also \cite[Prop.~5.4.9]{nsw}).  Under the assumption that $K$ contains all $p$-power roots of unity, 
	we may answer this as follows.  Jannsen defines a functor $\mathrm{D}$ on the homotopy category of $\La$-modules.
	Up to homotopy, $\mathrm{D}\mf{X} = \coker(P_0^* \to P_1^*)$ for a choice of projective resolution 
	\begin{equation} \label{projres}
		0 \to P_{r+1} \to \cdots \to P_1 \to P_0 \to \mf{X} \to 0
	\end{equation}
 	of $\mf{X}$.  
	Taking Ext-groups of the four-term exact sequence defining $\mathrm{D}\mf{X}$
	leads to the exact sequence
	\begin{equation} \label{jannsenD}
		0 \to \rE^1_{\La}(\mathrm{D}\mf{X}) \to \mf{X} \to \mf{X}^{**} \to \rE^2_{\La}(\mathrm{D}\mf{X}) \to 0.
	\end{equation}
	
	By definition, one has an injective $\La$-module homomorphism $\alpha \colon \rE^1_{\La}(\mf{X}) \to \mathrm{D}\mf{X}$
	with cokernel equal to the kernel of $P_2^* \to P_3^*$ (taking $P_3 = 0$ if $r = 1$).
	By Theorem \ref{mc}, the Ext-groups computed 
	by the $\La$-dual of \eqref{projres} satisfy $\rE_{\La}^i(\mf{X}) \cong \zp^{\delta_{i+2,r}}$ for $i \ge 2$.   
	We also have a map $\beta \colon \rH^2_{\Iw}(K,\zp) \to \rE^1_{\La}(\mf{X})$ that by \eqref{eq:wantit} 
	is an isomorphism if $r = 1$ or $r \ge 4$, an injection with cokernel $\zp$ if $r = 3$, and a surjection 
	with kernel a quotient $C$ of $\zp$ if $r = 2$.  Setting $C = 0$ if $r = 1$ and $C = \zp$ if $r \ge 3$,
	it follows from these facts that the map $\rho = \alpha \circ \beta \colon \rH^2_{\Iw}(K,\zp) \to \mathrm{D}\mf{X}$ fits
	in a complex
	\begin{equation} \label{complex}
		\rH^2_{\Iw}(K,\zp) \xrightarrow{\rho} \mathrm{D}\mf{X} \to P_2^* \to \cdots \to P_{r+1}^*
	\end{equation}
	which has cohomology $C$ concentrated in degree $r-2$ counting from $0$.
	
	If $r \ge 3$, or if $r = 2$ and $C = \zp$, the complex \eqref{complex} just described allows us to compute that we have isomorphisms
	\begin{equation} \label{ext_isoms}
		\rE_{\La}^i(\mathrm{D}\mf{X}) \to \rE_{\La}^i(\rH^2_{\Iw}(K,\zp))
	\end{equation}
	for $i \notin \{2,3\}$ and an exact sequence 
	$$
		0 \to \rE_{\La}^2(\mathrm{D}\mf{X}) \to \rE^2_{\La}(\rH^2_{\Iw}(K,\zp)) \to \zp \to \rE^3_{\La}(\mathrm{D}\mf{X}) \to 
		\rE^3_{\La}(\rH^2_{\Iw}(K,\zp)).
	$$
	If $r = 2$ and $C$ is finite, the map of \eqref{ext_isoms} is an isomorphism for $i = 2$ and
	an injection with cokernel $C$ for $i = 3$.  
	In all cases, we obtain a map from the exact sequence \eqref{jannsenD} to the exact sequence \eqref{keyseq} of the form
	$$
		\SelectTips{cm}{} \xymatrix{
			0 \ar[r] & \rE_{\La}^1(\mathrm{D}\mf{X}) \ar[r] \ar[d]_{\wr} \ar[r] & \mf{X} \ar@{=}[d] \ar[r] & \mf{X}^{**}
			\ar@{=}[d] \ar[r] & \rE^2_{\La}(\mathrm{D}\mf{X}) \ar[r] \ar@{^{(}->}[d] & 0\\
			0 \ar[r] & \rE_{\La}^1(\rH^2_{\Iw}(K,\zp)) \ar[r] & \mf{X} \ar[r] & \mf{X}^{**} \ar[r] &
			\rE_{\La}^2(\rH^2_{\Iw}(K,\zp)) \ar[r] & \zp,
		}
	$$
	where the leftmost and rightmost vertical maps are induced by $\pm \rho$.  (We have only checked commutativity up
	to the ambiguous signs, as the remark is not used.)
\end{remark}

\begin{remark} \label{rem:stand_map}
	In Corollary \ref{cor:iwseq}, the map $\mf{X} \to \mf{X}^{**}$ can be taken to be 
	the standard map from $\mf{X}$ to its double dual. 
	That is, both the map $\mf{X} \to \rH^1_{\Iw}(K,\zp)^*$ in \eqref{ptseq}
        and the map $\rH^1_{\Iw}(K,\zp) \to \mf{X}^*$ of Theorem \ref{mc} arise in the standard manner from a
        $\La$-bilinear pairing
        $$
        	\mf{X} \times \rH^1_{\Iw}(K,\zp) \to \La 
        $$
        defined as follows.  Write $\La = \invlim{F'} \La_{F'}$, where $\La_{F'} = \zp[\mathrm{Gal}(F'/F)]$ and $F'$ runs over the
        finite extensions of $F$ in $K$.  Take $\sigma \in \mf{X}$ and $f \in \rH^1_{\Iw}(K,\zp)$. 
        Write $f$ as an inverse limit of homomorphisms $f_{F'} \in \rH^1(G_{F',S},\zp)$.  Then our pairing is given by
   $$
        	(\sigma,f) \mapsto \invlim{F'} \sum_{\tau \in \Gal(F'/F)} 
        	f_{F'}(\tilde{\tau}^{-1}\sigma\tilde{\tau}) [\tau]_{F'},
        $$
        where $\tilde{\tau}$ denotes a lift of $\tau$ to $G_{F,S}$, and $[\tau]_{F'}$ denotes the group element of
        $\tau$ in $\Lambda_{F'}$.  
        Thus, the composition of  $\mf{X} \to \rH^1_{\Iw}(K,\zp)^*$ with
	the map $\rH^1_{\Iw}(K,\zp)^* \to \mf{X}^{**}$ of Corollary \ref{dualseq} is the usual map $\mf{X} \to \mf{X}^{**}$. 
\end{remark}

\begin{definition}
\label{def:K}
    For $\mf{p}$ in the set $S_f$ of finite primes in $S$, let
    $\mc{G}_{\mf{p}}$ denote the decomposition group in $\mc{G}$ 
    at a place over the prime $\mf{p}$ in $K$, and
    set $\mc{K}_{\mf{p}} = \zp\ps{\mc{G}/\mc{G}_{\mf{p}}}$, which has the natural structure of a left
    $\Omega$-module.
    We then set 
    \begin{eqnarray*}
    	\mc{K} = \bigoplus_{\mf{p} \in S_f} \mc{K}_{\mf{p}} &\mr{and}& \mc{K}_0 = \ker(\mc{K} \to \zp),
    \end{eqnarray*}
    where the map is the sum of augmentation maps.
\end{definition}

\begin{remark}
\label{rem:needlater}
    If $K$ contains all $p$-power roots of unity,
    then the group $\rH^2_{\Iw}(K,\zp)$ is the twist by $\zp(-1)$ of $\rH^2_{\Iw}(K,\zp(1))$.
    As explained in the proof of \cite[Lemma 2.1]{SharifiLfn}, Poitou-Tate duality provides a canonical exact sequence
    \begin{equation} \label{H2}
    	0 \to X' \to \rH^2_{\Iw}(K,\zp(1)) \to \mc{K}_0 \to 0,
    \end{equation}
    where $X'$ is the completely split 
    Iwasawa module over $K$ (i.e., the Galois group of the maximal abelian pro-$p$ extension $K$ that is completely split at all 
    places above $S_f$). 
\end{remark}
	
We next wish to consider local versions of the above results.  Let $T$ and $A = T \otimes_{\zp} \qp/\zp$ be as in
Theorem \ref{thm:ss}.  For $\mf{p} \in S_f$, let 
$$
	\rH^i_{\Iw,\mf{p}}(K,T) = 
	\varprojlim_{\substack{F'/E \text{ finite} \\ F' \subset K}} \bigoplus_{\mf{P} \mid \mf{p}} \rH^i(G_{F'_\mf{P}},T),
$$	
where $G_{F'_{\mf{P}}}$ denotes the absolute Galois group of
the completion $F'_{\mf{P}}$.  
If $M$ is a discrete $\zp\ps{\Gal(F_S/E)}$-module, let
$\rH^i(G_{K,\mf{p}},M)$ denote the direct sum of the groups $\rH^i(G_{K_{\mf{P}}},M)$ over the primes $\mf{P}$ 
in $K$ over $\mf{p}$.   
We have the local spectral sequence
$$
	{\rm P}_{2,\mf{p}}^{i,j}(T) = \rE_{\La}^i( \rH^j(G_{K,\mf{p}},A)^{\vee}) \Rightarrow {\rm P}^{i+j}_{\mf{p}}(T) = \rH^{i+j}_{\Iw,\mf{p}}(K,T)
$$
(cf. \cite[Theorem 4.2.2]{ls}).
Note that $\rH^j(G_{K,\mf{p}},A)^{\vee} \cong \rH^{2-j}_{\Iw,\mf{p}}(K,T^{\dagger})$ by Tate duality, where $T^{\dagger} = \Hom_{\zp}(T,\zp(1))$.

\begin{remark} \label{rem:ssmaps}
Tate and Poitou-Tate duality provide maps between the sum of these local spectral sequences over all $\mf{p} \in S_f$ 
and the global spectral sequences, supposing for simplicity that
$p$ is odd or $K$ is purely imaginary (in general, for real places, one uses Tate cohomology).  On $E_2$-terms, these form a complex
$$
	\cdots \to {\rm F}_2^{i,j}(T) \to \bigoplus_{\mf{p} \in S_f} {\rm P}_{2,\mf{p}}^{i,j}(T) \to \rH_2^{i,j}(T^{\dagger}) \to {\rm F}_2^{i,j+1}(T)
	\to \cdots.
$$
These spectral sequences can be seen in the derived category of complexes of finitely generated 
$\Omega$-modules, where they form an exact triangle (see \cite[Theorem 4.5.1]{ls}).  To see this, one uses the regularity of $\zp$ in order to 
replace the dualizing complex with $\zp$. The cohomology
groups in question can then be identified with those we have written by the isomorphisms of \cite[Lemmas 5.3.1 and 5.3.2, Theorem 5.4.1]{lim}. 
\end{remark}

Let $\Gamma_{\mf{p}} = \mathcal{G}_{\mf{p}}\cap \Gamma$ be the decomposition group in $\Gamma$ at a prime over $\mf{p}$ in $K$, and let $r_{\mf{p}} = \rank_{\zp} \Gamma_{\mf{p}}$.
For an $\Omega$-module $M$, we let $M^{\iota}$ denote the $\Omega$-module which as a compact $\zp$-module is  $M$ 
and on which $g \in \mc{G}$ now acts by $g^{-1}$. 

\begin{lemma} \label{extKp}
	For $j \ge 0$, we have isomorphisms
	$\rE_{\La}^j(\mc{K}_{\mf{p}}) \cong (\mc{K}_{\mf{p}}^{\iota})^{\delta_{r_{\mf{p}},j}}$
	of $\Omega$-modules.
\end{lemma}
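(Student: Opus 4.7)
The plan is to reduce the calculation of $\rE^j_\La(\mc{K}_\mf{p}) = \Ext^j_\Omega(\mc{K}_\mf{p}, \Omega)$ via a Shapiro-type adjunction to an Ext computation over the completed group ring of the decomposition group, and then to invoke the appendix's Koszul-based input together with a careful analysis of module structures.

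First, note that $\mc{K}_\mf{p} = \Omega \hat\otimes_{\zp\ps{\mc{G}_\mf{p}}} \zp$ is the $\Omega$-module induced from the trivial $\zp\ps{\mc{G}_\mf{p}}$-module. Since $\Omega$ is free as a right $\zp\ps{\mc{G}_\mf{p}}$-module on any set of left coset representatives of $\mc{G}_\mf{p}$ in $\mc{G}$, any projective resolution $P_\bullet \to \zp$ over $\zp\ps{\mc{G}_\mf{p}}$ gives rise to a projective resolution $\Omega \hat\otimes_{\zp\ps{\mc{G}_\mf{p}}} P_\bullet \to \mc{K}_\mf{p}$ over $\Omega$. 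Tensor-hom adjunction then yields an isomorphism of right $\Omega$-modules
\[
\Ext^j_\Omega(\mc{K}_\mf{p}, \Omega) \;\cong\; \Ext^j_{\zp\ps{\mc{G}_\mf{p}}}(\zp, \Omega),
\]
with $\Omega$ on the right viewed as a left $\zp\ps{\mc{G}_\mf{p}}$-module by restriction and the Ext inheriting its right $\Omega$-action from right multiplication on $\Omega$.

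Next, fix right coset representatives $\{h_\alpha\}$ for $\mc{G}_\mf{p}\backslash\mc{G}$, yielding a decomposition of left $\zp\ps{\mc{G}_\mf{p}}$-modules $\Omega \cong \widehat{\bigoplus}_\alpha \zp\ps{\mc{G}_\mf{p}} h_\alpha$, so
\[
\Ext^j_{\zp\ps{\mc{G}_\mf{p}}}(\zp, \Omega) \;\cong\; \widehat{\bigoplus}_\alpha \Ext^j_{\zp\ps{\mc{G}_\mf{p}}}(\zp, \zp\ps{\mc{G}_\mf{p}}) \cdot h_\alpha.
\]
The key input, provided by the appendix as a generalisation of Corollary \ref{cor:indzp} (which treats the case $\mc{G}_\mf{p} = \Gamma_\mf{p}$), is the computation
\[
\Ext^j_{\zp\ps{\mc{G}_\mf{p}}}(\zp, \zp\ps{\mc{G}_\mf{p}}) \;\cong\; \zp^{\delta_{r_\mf{p}, j}},
\]
with trivial $\mc{G}_\mf{p}$-action on $\zp$. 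This may be obtained from Lyndon--Hochschild--Serre for $1 \to \Gamma_\mf{p} \to \mc{G}_\mf{p} \to \Delta_\mf{p} \to 1$: the Koszul resolution of $\zp$ over $\zp\ps{\Gamma_\mf{p}} \cong \zp\ps{t_1,\dots,t_{r_\mf{p}}}$ concentrates $\rH^q(\Gamma_\mf{p}, \zp\ps{\mc{G}_\mf{p}})$ in degree $q = r_\mf{p}$ as the regular representation $\zp[\Delta_\mf{p}]$, whose $\Delta_\mf{p}$-invariants form a single copy of $\zp$.

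Combining the above, $\Ext^j_\Omega(\mc{K}_\mf{p}, \Omega)$ vanishes for $j \neq r_\mf{p}$ and, for $j = r_\mf{p}$, is isomorphic as an abelian group to $\zp\ps{\mc{G}_\mf{p}\backslash\mc{G}}$. The right $\Omega$-action inherited from right multiplication permutes the $h_\alpha$ in the standard right-regular manner. To convert this into a left $\Omega$-module, one applies the canonical involution $\iota\colon g \mapsto g^{-1}$; combining with the inversion bijection $\mc{G}_\mf{p}\backslash\mc{G} \xrightarrow{\sim} \mc{G}/\mc{G}_\mf{p}$, the left action of $g \in \mc{G}$ becomes the action of $g^{-1}$ on $\mc{K}_\mf{p}$, which is by definition $\mc{K}_\mf{p}^\iota$. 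The main subtlety, rather than a genuine obstacle, lies in this bookkeeping of left versus right module structures and in seeing that the $\iota$-twist emerges naturally when converting the right $\Omega$-action on $\Ext^j_\Omega(-, \Omega)$ into a left $\Omega$-action; the inner Ext calculation is classical once Shapiro's adjunction and the appendix's Koszul-based input are in hand.
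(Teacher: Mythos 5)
Your argument is correct and is essentially the paper's own proof unpacked: the paper deduces Lemma \ref{extKp} in one line from Corollary \ref{cor:indzp}, whose proof consists of exactly your two steps --- the Shapiro/base-change reduction $\Ext^j_{\Omega}(\mc{K}_{\mf{p}},\Omega) \cong \Ext^j_{\zp\ps{\mc{G}_{\mf{p}}}}(\zp,\zp\ps{\mc{G}_{\mf{p}}}) \otimes_{\zp\ps{\mc{G}_{\mf{p}}}} \Omega$ (Lemma \ref{lem:indext}, where flatness of $\Omega$ over $\zp\ps{\mc{G}_{\mf{p}}}$ does the work of your coset decomposition and, in particular, justifies exchanging Ext with the completed sum when $\mc{G}_{\mf{p}}$ has infinite index, where $\Omega$ is only a completed, not literal, free module) --- followed by the appendix's computation of $\rE^i_{\La'}(\zp)$ in place of your Lyndon--Hochschild--Serre/Koszul calculation, with the $\iota$-twist emerging from the left-module form of Lemma \ref{lem:indext} just as in your bookkeeping. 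The one point to keep in mind (shared with the paper's formulation of Corollary \ref{cor:indzp}) is that identifying the top Ext over $\zp\ps{\mc{G}_{\mf{p}}}$ with the trivial module $\zp$, rather than with a twist by $\bigwedge^{r_{\mf{p}}}\Gamma_{\mf{p}}$ as in Proposition \ref{Er}, uses that $\mc{G}_{\mf{p}}$ acts trivially by conjugation on $\Gamma_{\mf{p}}$ --- automatic in the paper's applications, where $\mc{G} \cong \Gamma \times \Delta$ --- and your identification of the degree-$r_{\mf{p}}$ term as the regular representation $\zp[\Delta_{\mf{p}}]$ quietly absorbs this same twist.
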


\begin{proof}
	This is immediate from Corollary \ref{cor:indzp}.
\end{proof}

Let $\mf{D}_{\mf{p}}$ denote the Galois group of the maximal abelian, pro-$p$ quotient
of the absolute Galois group of the completion $K_{\mf{p}}$ of $K$ at a prime over $\mf{p}$, and consider the completed tensor product
$$
	D_{\mf{p}} =  \Omega \cotimes{\zp\ps{\mc{G}_{\mf{p}}}} \mf{D}_{\mf{p}},
$$
which has the structure of an $\Omega$-module by left multiplication.

\begin{theorem} \label{thm:localseq} 
	Suppose that $K$ contains all $p$-power roots of unity.  For each $\mf{p} \in S_f$,
	we have a commutative diagram of exact sequences 
	$$
		\SelectTips{cm}{} \xymatrix{
		0 \ar[r] & \rE^1_{\La}(\mc{K}_{\mf{p}})(1) \ar[r] \ar[d] &
		D_{\mf{p}}  \ar[r] \ar[d] & D_{\mf{p}}^{**} \ar[r] \ar[d] & 
		\rE^2_{\La}(\mc{K}_{\mf{p}})(1) \ar[d] \ar[r] & 0 &\\
		0 \ar[r] & \rE^1_{\La}(\rH^2_{\Iw}(K,\zp)) \ar[r] & \mf{X} \ar[r] & \mf{X}^{**} \ar[r] & 
		\rE^2_{\La}(\rH^2_{\Iw}(K,\zp))\ar[r] &\zp
		}
	$$
	of $\Omega$-modules in which the vertical maps are the canonical ones.
\end{theorem}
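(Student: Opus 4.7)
The bottom row is the content of Corollary \ref{cor:iwseq}. To obtain the top row, my plan is to apply the local analogue at $\mf{p}$ of Nekov\'a\v{r}'s second spectral sequence from Theorem \ref{thm:ss}, namely
$$
E_2^{i,j} = \rE^i_{\La}(\rH^{2-j}_{\Iw, \mf{p}}(K, \zp)) \Rightarrow \rH^{2-i-j}(G_{K, \mf{p}}, \qp/\zp)^{\vee}.
$$
This converges as a first-quadrant spectral sequence because each $G_{K_{\mf{P}}}$ has $p$-cohomological dimension at most $2$, so no additional global hypothesis is required at the local level.

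The next step is to identify the relevant terms. Since $\mu_{p^\infty} \subset K$, local Tate duality yields $\rH^2_{\Iw, \mf{p}}(K, \zp) \cong \mc{K}_{\mf{p}}(-1)$ via the invariant map together with the identification $\mu_{p^n}^{\vee} \cong (\Z/p^n)(-1)$, while Pontryagin duality gives $\rH^1(G_{K, \mf{p}}, \qp/\zp)^{\vee} \cong D_{\mf{p}}$ and $\rH^0(G_{K, \mf{p}}, \qp/\zp)^{\vee} \cong \mc{K}_{\mf{p}}$. The local analogue of Corollary \ref{dualseq}, derived by the same spectral-sequence method applied to the local spectral sequence ${\rm P}_{2,\mf{p}}^{i,j}(\zp)$ defined earlier, provides the identification $\rE^0_{\La}(\rH^1_{\Iw, \mf{p}}(K, \zp)) \cong D_{\mf{p}}^{**}$. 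Combined with $\rE^i_{\La}(M(-1)) \cong \rE^i_{\La}(M)(1)$, the five-term low-degree exact sequence of the local Nekov\'a\v{r} spectral sequence yields
$$
0 \to \rE^1_{\La}(\mc{K}_{\mf{p}})(1) \to D_{\mf{p}} \to D_{\mf{p}}^{**} \to \rE^2_{\La}(\mc{K}_{\mf{p}})(1) \to \mc{K}_{\mf{p}},
$$
and to extract the top row as displayed, one must show that the final edge map is zero. By Lemma \ref{extKp}, the group $\rE^2_{\La}(\mc{K}_{\mf{p}})$ is nonzero only when $r_{\mf{p}} = 2$; in that case I would show that the spectral sequence differential $d_2 \colon D_{\mf{p}}^{**} \to \rE^2_{\La}(\mc{K}_{\mf{p}})(1)$ is surjective, which forces the subsequent edge map into $\mc{K}_{\mf{p}}$ to vanish. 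I expect this verification to be the main technical obstacle.

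For the vertical maps and commutativity, my plan is to invoke functoriality. The restriction maps $\rH^j(G_{K, S}, \qp/\zp) \to \rH^j(G_{K, \mf{p}}, \qp/\zp)$ on Galois cohomology together with the restrictions $\rH^i_{\Iw}(K, \zp) \to \rH^i_{\Iw, \mf{p}}(K, \zp)$ on Iwasawa cohomology induce a morphism from the global Nekov\'a\v{r} spectral sequence used in Corollary \ref{cor:iwseq} to the local one used above. These in turn produce the required vertical maps via Ext functoriality on $E_2$-terms and Pontryagin duality on the abutments, with the middle map $D_{\mf{p}} \to \mf{X}$ corresponding to the natural inclusion of the decomposition subgroups at primes over $\mf{p}$ into the abelianization $\mf{X}$. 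Commutativity of the full diagram then follows automatically from the naturality of the low-degree five-term exact sequence under morphisms of spectral sequences.
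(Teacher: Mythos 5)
Your architecture is essentially the paper's: bottom row from Corollary \ref{cor:iwseq}; top row from the local spectral sequences together with the identifications $\rH^2_{\Iw,\mf{p}}(K,\zp) \cong \mc{K}_{\mf{p}}(-1)$, $\rH^1(G_{K,\mf{p}},\qp/\zp)^{\vee} \cong D_{\mf{p}}$ and a local analogue of Corollary \ref{dualseq}; vertical maps and commutativity from the duality-induced morphism between the local and global spectral sequences, which is precisely what Remark \ref{rem:ssmaps} supplies (note the morphism actually runs from the sum of the local spectral sequences to the global one, which after dualization is the direction of the vertical arrows). The genuine gap is the step you flag and do not carry out: the vanishing of the edge map $\rE^2_{\La}(\mc{K}_{\mf{p}})(1) \to \mc{K}_{\mf{p}}$, i.e.\ the surjectivity encoded in the final ``$\to 0$'' of the top row. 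Your proposed substitute --- proving directly that $D_{\mf{p}}^{**} \to \rE^2_{\La}(\mc{K}_{\mf{p}})(1)$ is surjective --- is not a way around it: by exactness of the five-term sequence at $\rE^2_{\La}(\mc{K}_{\mf{p}})(1)$, that surjectivity is \emph{equivalent} to the vanishing of the edge map, and there is no independent description of the image of $D_{\mf{p}}^{**}$ available, so this route begs the question.

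The missing argument is in fact a one-liner, and it is how the paper proceeds. By Lemma \ref{extKp} the issue only arises when $r_{\mf{p}} = 2$, in which case $\rE^2_{\La}(\mc{K}_{\mf{p}})(1) \cong \mc{K}_{\mf{p}}^{\iota}(1)$. Since $K$ contains all $p$-power roots of unity, $\Gamma_{\mf{p}}$ acts on $\mc{K}_{\mf{p}}^{\iota}(1)$ through the nontrivial restriction of the cyclotomic character, hence nontrivially on every nonzero element (the module is $\zp$-torsion free), whereas $\Gamma_{\mf{p}}$ acts trivially on $\mc{K}_{\mf{p}} = \zp\ps{\mc{G}/\mc{G}_{\mf{p}}}$; so every $\Omega$- (indeed $\La$-) module homomorphism $\mc{K}_{\mf{p}}^{\iota}(1) \to \mc{K}_{\mf{p}}$ is zero, and the edge map vanishes. (The paper states this in the untwisted form: there is no nonzero map $\mc{K}_{\mf{p}}^{\iota} \to \mc{K}_{\mf{p}}(-1)$.) Relatedly, your ``same spectral-sequence method'' for the local analogue of Corollary \ref{dualseq} needs a small supplement, because unlike the global case the base terms $\rE^i_{\La}(\mc{K}_{\mf{p}})$ need not vanish: after dualizing the local Jannsen five-term sequence one must kill an $\rE^1_{\La}$ of a submodule of $\rE^2_{\La}(\mc{K}_{\mf{p}})$, which one does either by the same action argument or by noting that any such submodule has annihilator of height $r_{\mf{p}} = 2$, hence trivial $\rE^1_{\La}$.
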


\begin{proof}  
    We have
    \begin{eqnarray*}
    	\rH^2_{\Iw,\mf{p}}(K,\zp) \cong \mc{K}_{\mf{p}}(-1) &\text{and}& \rH^1(G_{K,\mf{p}},\qp/\zp)^{\vee} \cong D_{\mf{p}},
    \end{eqnarray*} 
    the first using our assumption on $K$.  We also have $\rH^2(G_{K,\mf{p}},\qp/\zp) = 0$.
.
    
    The analogue of Theorem \ref{mc} is the exact sequence
    \begin{equation}
    \label{eq:differentname}
    	0 \to \rE_{\La}^1(\mc{K}_{\mf{p}}) \to \rH^1_{\Iw,\mf{p}}(K,\zp)
	\to D_{\mf{p}}^* \to \rE_{\La}^2(\mc{K}_{\mf{p}}) \to \rH^2_{\Iw,\mf{p}}(K,\zp).
    \end{equation}
    We remark that the map 
    $$
   \rE_{\La}^2(\mc{K}_{\mf{p}})  = (\mc{K}_{\mf{p}}^{\iota})^{\delta_{r_{\mf{p}},2}} \to \rH^2_{\Iw,\mf{p}}(K,\zp) \cong \mc{K}_{\mf{p}}(-1)
    $$ 
    is zero since $\Gamma_{\mf{p}}$ acts trivially on $\mc{K}_{\mf{p}}^{\iota}$ but not on any nonzero element
    of $\mc{K}_{\mf{p}}(-1)$.  Applying Lemma \ref{extKp} to \eqref{eq:differentname},
    dualizing, and using the fact that $r_{\mf{p}} \ge 1$ by assumption on $K$, we obtain an isomorphism
    $\rH^1_{\Iw,\mf{p}}(K,\zp)^* \xrightarrow{\sim} D_{\mf{p}}^{**}$ compatible with Corollary \ref{dualseq}.
    The analogue of Corollary \ref{cor:iwseq} is then the exact sequence
    \begin{equation}
    \label{eq:torturous}
    	0 \to \rE_{\La}^1(\mc{K}_{\mf{p}})(1) \to D_{\mf{p}} \to D_{\mf{p}}^{**} \to \rE_{\La}^2(\mc{K}_{\mf{p}})(1) \to 
	\mc{K}_{\mf{p}}. 
    \end{equation}  
    As above, the map $\rE_{\La}^2(\mc{K}_{\mf{p}})(1)
    \to \mc{K}_{\mf{p}}$ is zero.  
    
    The map of exact sequences follows from Remark \ref{rem:ssmaps}.
\end{proof}

One might ask whether or not the map $\mf{X}^* \to \zp$ in Theorem \ref{mc} is zero in the case $r = 2$.

\begin{proposition} \label{nozp1}
	Suppose that $K$ contains all $p$-power roots of unity.  If Leopoldt's conjecture holds for $F$,
	then $X'$ has no $\Lambda$-quotient or $\La$-submodule isomorphic to $\zp(1)$.
\end{proposition}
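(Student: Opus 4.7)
The idea is to reduce the statement to an assertion about $\rH^2_{\Iw}(K,\zp(1))$ via the Poitou--Tate exact sequence \eqref{H2}, and then to analyze that group using Nekov\'a\v{r}'s spectral sequence from Theorem \ref{thm:ss} with $T = \zp(1)$, bringing Leopoldt's conjecture to bear through the Kummer-theoretic description of Iwasawa $\rH^1$.

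First I would show that $\mc{K}_0$ admits neither a $\La$-submodule nor a $\La$-quotient isomorphic to $\zp(1)$, so that \eqref{H2} reduces the problem to the corresponding assertion about $\rH^2_{\Iw}(K,\zp(1))$. Each summand $\mc{K}_\mf{p} = \zp\ps{\mc{G}/\mc{G}_\mf{p}}$ decomposes as a $\La$-module into cyclic pieces of the form $\La/I_g$, where $I_g$ is generated by $\gamma - 1$ for $\gamma$ in a conjugate of $\Gamma_\mf{p}$. For $\mf{p} \mid p$, the completion $F_\mf{p}$ is a finite extension of $\qp$ and hence does not contain $\mu_{p^\infty}$; since $K \supseteq F(\mu_{p^\infty})$, the restriction of $\chi_\cyc$ to $\Gamma_\mf{p}$ is therefore nontrivial. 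This obstructs the existence of any nonzero $\La$-map in either direction between such a cyclic piece and $\zp(1)$, since the former is killed by a $\gamma - 1$ that does not annihilate the latter. A parallel argument, using that $\Gamma_\mf{p}$ is topologically generated by a Frobenius acting on $\mu_{p^\infty}$ by multiplication by the residue-field norm, handles any $\mf{p} \in S_f$ not above $p$.

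Next I would apply Nekov\'a\v{r}'s spectral sequence
$$
    \rH_2^{i,j}(\zp(1)) = \rE^i_{\La}(\rH^{2-j}_{\Iw}(K, \zp(1))) \Rightarrow \rH^{2-i-j}(G_{K,S}, \qp/\zp(-1))^{\vee}.
$$
One has $\rH^0_{\Iw}(K, \zp(1)) = 0$, because the norms on the fixed $\mu_{p^\infty}$ along the tower $K/F$ are multiplications by indices tending to infinity, and $\rH^0(G_{K,S}, \qp/\zp(-1))^{\vee} \cong \zp(1)$. Thus the spectral sequence produces a two-step filtration of $\zp(1)$ whose graded pieces are subquotients of $\rE^1_{\La}(\rH^1_{\Iw}(K, \zp(1)))$ and $\rE^2_{\La}(\rH^2_{\Iw}(K, \zp(1)))$. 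A putative $\La$-submodule or quotient of $\rH^2_{\Iw}(K, \zp(1))$ isomorphic to $\zp(1)$, combined with the vanishing $\rE^i_{\La}(\zp(1)) = \zp(1)^{\delta_{r,i}}$ from Corollary \ref{cor:indzp}, then pins down a nontrivial contribution in a specific Ext term attached to $\rH^1_{\Iw}(K, \zp(1))$.

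Leopoldt's conjecture for $F$ enters at the final step: Kummer theory identifies $\rH^1_{\Iw}(K,\zp(1))$ with the inverse limit, along $F \subseteq F' \subseteq K$, of the $p$-completed $S$-unit groups $\mc{O}_{F', S}^{\times} \otimes \zp$, up to a class-group obstruction. The relevant universal-norm subgroup of global $S$-units is precisely the one whose vanishing is equivalent to Leopoldt for $F$ at $p$, and under Leopoldt this forces the $\zp(1)$-contribution identified in the previous step to be zero, yielding the desired contradiction. The main technical obstacle I foresee is the spectral-sequence bookkeeping, where the $d_2$-differentials can obscure how edge terms relate to the abutment; combined with the oscillation between $\zp(1)$- and $\zp(-1)$-coefficients (and their Galois descent from $K$ back down to $F$), this is where most of the care is needed to see that Leopoldt kills exactly the right piece.
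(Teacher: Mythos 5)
Your plan has genuine gaps, and at each of the three stages the step that would carry the load is the one left unjustified. First, the reduction via \eqref{H2} only works in one direction: a $\zp(1)$-submodule of $X'$ is indeed a $\zp(1)$-submodule of $\rH^2_{\Iw}(K,\zp(1))$, but a $\zp(1)$-\emph{quotient} of the submodule $X'$ gives no $\zp(1)$-quotient (or submodule) of the ambient module $\rH^2_{\Iw}(K,\zp(1))$ --- pushing out along $X' \twoheadrightarrow \zp(1)$ only produces a quotient of $\rH^2_{\Iw}(K,\zp(1))$ containing $\zp(1)$ as a submodule, which is not the statement you need; so even granting everything you claim about $\rH^2_{\Iw}(K,\zp(1))$, half of the proposition is untouched. (The paper handles this asymmetry differently: using \cite[Proposition 2.1]{Gr06} it shows that if $M^{\Gamma}$ has positive $\zp$-rank then so does $M_{\Gamma}$, applies this to $X'(-1)$, and thereby reduces the submodule case to the quotient case before any cohomological input.) Second, the spectral-sequence step is not yet an argument. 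For $T=\zp(1)$ one has $A=\mu_{p^{\infty}}$, so the abutment in total degree $2$ is $\zp(-1)$, not $\zp(1)$; moreover $\rE^i_{\La}(\zp(1)) \cong \zp(-1)^{\delta_{i,r}}$ by Lemma \ref{lem:extcycl} (Corollary \ref{cor:indzp} concerns untwisted induced modules and does not apply). More seriously, a $\zp(1)$ sub or quotient of $\rH^2_{\Iw}(K,\zp(1))$ does not ``pin down a nontrivial contribution'' in any $E_2$-term: $\rE^r_{\La}$ applied to a surjection need not be injective (the long exact sequence has an $\rE^{r-1}_{\La}$ of the kernel in the way), so no contradiction with the filtration of the abutment is forced without substantial extra work.

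Third, the point where Leopoldt enters is asserted rather than proved: the claim that a ``universal-norm subgroup of $S$-units'' along $K/F$ vanishes if and only if Leopoldt holds for $F$ is not the standard formulation and is exactly the nontrivial content you would need to establish, including the descent from $K$ back to $F$. The paper's actual route is more direct and avoids $\rH^2_{\Iw}$ and the spectral sequence entirely: a $\zp(1)$-quotient of $X'$ is a locally trivial quotient of $\mf{X}$, hence gives a subgroup of $\rH^1(G_{K,S},\mu_{p^{\infty}})$ isomorphic to $\zp$ mapping trivially to each $\rH^1(G_{K,\mf{p}},\mu_{p^{\infty}})$; Hochschild--Serre (using finiteness of $\rH^k(\Gamma,\mu_{p^{\infty}})$, which needs $K \supseteq \mu_{p^{\infty}}$) descends this to a $\zp$-subgroup of $\rH^1(G_{F,S},\mu_{p^{\infty}})$ with finite image under localization, contradicting Leopoldt's conjecture in its cohomological form \cite[Theorem 10.3.6]{nsw}. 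If you want to salvage your approach, you would need (i) a separate treatment of the quotient case (e.g.\ the invariants/coinvariants trick above), (ii) a careful exactness argument replacing the vague spectral-sequence step, and (iii) an actual proof of the equivalence you invoke at the end --- at which point the paper's shorter argument is the more economical path.
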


\begin{proof}
	We claim that if $M$ is a finitely generated $\La$-module such that the invariant group
	$M^{\Gamma}$ has positive $\zp$-rank, then the coinvariant group $M_{\Gamma}$ does as well.  
	To see this, let $I$ be the augmentation ideal in $\La$.  The annihilator of $M^{\Gamma}$ is $I$,
	so the annihilator of $M$ is contained in $I$.  By \cite[Proposition 2.1]{Gr06} and its proof,
	there is an ideal $J$ of $\Lambda$ contained in the annihilator of $M$ such that any prime
	ideal $P$ of $\Lambda$ containing $J$ satisfies $\rank_{\La/P} M/PM$ is
	positive.  We then apply this to $P = I$ to obtain the claim.
	
	Applying this to $X'(-1)$, we may suppose that $X'$ has a quotient isomorphic to $\zp(1)$.   Such a quotient
	is in particular a locally trivial $\zp(1)$-quotient of the Galois group of $\mf{X}$.  In other words,
	we have a subgroup of $\rH^1(G_{K,S},\mu_{p^{\infty}})$ isomorphic to $\zp$ and which maps
	trivially to $\rH^1(G_{K,\mf{p}},\mu_{p^{\infty}})$ for all $\mf{p} \in S_f$.  
	
	The maps
	\begin{eqnarray*}
		\rH^1(G_{F,S},\mu_{p^{\infty}}) \to \rH^1(G_{K,S},\mu_{p^{\infty}})^{\Gamma} &\mr{and}&
		\rH^1(G_{F,\mf{p}},\mu_{p^{\infty}}) \to  \rH^1(G_{K,\mf{p}},\mu_{p^{\infty}})^{\Gamma}
	\end{eqnarray*}
	have $p$-torsion kernel and cokernel.  For instance, the kernel (resp., cokernel) 
	of the first map is (resp., is contained in) $\rH^i(\Gamma,\mu_{p^{\infty}})$ for $i = 1$ (resp., $i = 2$).
	If $\Phi \cong \zp$ is a subgroup of $\Gamma$ that does not fix $\zp(1)$,
	then the Hochschild-Serre spectral sequence 
	$$
		\rH^i(\Gamma/\Phi, \rH^j(\Phi,\mu_{p^{\infty}})) \Rightarrow \rH^{i+j}(\Gamma,\mu_{p^{\infty}})
	$$
	gives finiteness of all $\rH^k(\Gamma,\mu_{p^{\infty}})$, as $\rH^j(\Phi,\mu_{p^{\infty}})$
	is finite for every $j$ (and zero for every $j \neq 0$).
	
	We may now conclude that $\rH^1(G_{F,S},\mu_{p^{\infty}})$ has a subgroup isomorphic
	to $\zp$ with finite image under the localization map
	$$
		\rH^1(G_{F,S},\mu_{p^{\infty}}) \to \bigoplus_{\mf{p} \in S_f} \rH^1(G_{F,\mf{p}},\mu_{p^{\infty}}).
	$$
	In other words, Leopoldt's conjecture must fail (see \cite[Theorem 10.3.6]{nsw}).
\end{proof}	

\begin{remark}
	Proposition \ref{nozp1} also holds for the unramified Iwasawa module $X$ over $K$ in place of $X'$.
\end{remark}

\begin{proposition} \label{r2case}
	Suppose that $r = 2$ and $K$ contains all $p$-power roots of unity.  If Leopoldt's conjecture
	holds for $F$, then the sequences
	\begin{eqnarray}
	\label{eq:twoseq}
		&0 \to \rH^1_{\Iw}(K,\zp) \to \mf{X}^* \to \zp \to 0&\\
		\label{eq:threeseq}
		&0 \to \rE_{\La}^1(\rH^2_{\Iw}(K,\zp)) \to \mf{X} \to \mf{X}^{**} \to \rE_{\La}^2(\rH^2_{\Iw}(K,\zp)) \to 0&
	\end{eqnarray}
	of Theorem \ref{mc} and Corollary \ref{cor:iwseq} are exact.
\end{proposition}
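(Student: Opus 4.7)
The plan is to show that both sequences extend as claimed by proving that two maps vanish: the map $\zp \to \rH^2_{\Iw}(K,\zp)$ continuing Theorem \ref{mc}, and the map $\rE_\La^2(\rH^2_{\Iw}(K,\zp)) \to \zp$ from Corollary \ref{cor:iwseq}. Both reduce to controlling certain $\zp(\pm 1)$-components of $\rH^2_\Iw(K,\zp(1))$, to which Proposition \ref{nozp1} applies.

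For \eqref{eq:twoseq}, the image of the $\Omega$-linear map $\zp \to \rH^2_\Iw(K,\zp)$ is either zero or a $\zp$-submodule with trivial $\mc{G}$-action, so after twisting by $\zp(1)$ it suffices to show that $\rH^2_\Iw(K,\zp(1))$ has no $\Lambda$-submodule isomorphic to $\zp(1)$. By Remark \ref{rem:needlater}'s sequence $0 \to X' \to \rH^2_\Iw(K,\zp(1)) \to \mc{K}_0 \to 0$, any such submodule would either meet $X'$ in a nonzero $\zp(1)$, contradicting Proposition \ref{nozp1}, or inject into $\mc{K}_0 \subseteq \bigoplus_{\mf{p} \in S_f} \mc{K}_\mf{p}$. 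In the latter case, Frobenius reciprocity gives
$$
\Hom_\Omega(\zp(1), \mc{K}_\mf{p}) \cong \Hom_{\zp\ps{\mc{G}_\mf{p}}}(\zp(1), \zp),
$$
which vanishes because $\chi_{\cyc}|_{\mc{G}_\mf{p}}$ is nontrivial for every $\mf{p} \in S_f$: primes above $p$ ramify in $E(\mu_{p^{\infty}})/E$, and primes not above $p$ have finite residue fields and so cannot split completely in $E(\mu_{p^{\infty}})/E$.

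For \eqref{eq:threeseq}, I pursue a dual approach. By exactness of \eqref{ptseq}, the vanishing of $\rE_\La^2(\rH^2_\Iw(K,\zp)) \to \zp$ is equivalent to the injectivity of the ensuing map $\zp \to \rE_\La^1(\rH^1_\Iw(K,\zp))$. I would extract this injectivity by applying $\Hom_\La(-, \La)$ to the now-established sequence \eqref{eq:twoseq} and using the vanishing of $\rE_\La^i(\zp)$ for $i \neq 2$ to produce a comparison map $\rE_\La^1(\rH^1_\Iw(K,\zp)) \to \zp$, then arguing that this is a one-sided inverse of the Nekov\'a\v{r} map.

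The main obstacle is verifying the compatibility of these two maps. I would check it via the Poitou-Tate exact triangle of Remark \ref{rem:ssmaps} relating the Jannsen and Nekov\'a\v{r} spectral sequences. Using Lemma \ref{extKp} to concentrate each local $\rE_\La^i(\mc{K}_\mf{p})$ in degree $r_\mf{p}$, and noting that the local sequences of Theorem \ref{thm:localseq} end cleanly at zero (without an analogue of $\zp$), one isolates the global contribution $\rE_\La^2(\zp) = \zp$ as the unique source of the connecting map. A diagram chase, in which Leopoldt's conjecture for $F$ rules out the obstruction just as in Proposition \ref{nozp1}, then forces $\mf{X}^{**} \to \rE_\La^2(\rH^2_\Iw(K,\zp))$ to be surjective, giving the desired exactness.
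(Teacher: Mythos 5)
Your treatment of \eqref{eq:twoseq} follows the paper's route in outline (push the image of $\zp \to \rH^2_{\Iw}(K,\zp)$ into the sequence of Remark \ref{rem:needlater} and invoke Proposition \ref{nozp1}), but the step you use to kill the $\mc{K}_0$ contribution is wrongly justified: the isomorphism $\Hom_\Omega(\zp(1), \mc{K}_{\mf{p}}) \cong \Hom_{\zp\ps{\mc{G}_{\mf{p}}}}(\zp(1), \zp)$ is the adjunction for \emph{co}induction, and since $\Omega \cotimes{\zp\ps{\mc{G}_{\mf{p}}}}(-)$ is left adjoint to restriction it is only valid when $\mc{G}_{\mf{p}}$ is open in $\mc{G}$, which need not hold here (primes of $S_f$ not over $p$, and possibly some over $p$, can have $r_{\mf{p}} = 1 < r$). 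Already for $\mc{G} = \zp$ and $\mc{G}' = 1$ one has $\Hom_\Omega(\zp, \Omega) = 0$ while $\Hom_{\zp}(\zp,\zp) = \zp$. The vanishing you want is nevertheless true, and is best seen directly: as a $\La$-module $\mc{K}_{\mf{p}}$ is a finite direct sum of modules $\zp\ps{\Gamma/\Gamma'}$, and none of these contains a $\chi_{\cyc}$-eigenvector because $\chi_{\cyc}|_{\Gamma}$ is nontrivial with torsion-free image; this is essentially how the paper argues, showing $\mc{K}_0(-1)^{\Gamma} = 0$ from the finite decomposition of the primes of $S_f$ in $F_{\cyc}$ and the faithful action of $\Gamma_{\cyc}$ on $\zp(-1)$. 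So this half is fixable with a correct justification.

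The genuine gap is in your argument for \eqref{eq:threeseq}. You correctly reduce to the injectivity of the map $\zp \to \rE_{\La}^1(\rH^1_{\Iw}(K,\zp))$ in \eqref{ptseq}, but you then hang everything on the claim that the connecting map $\rE_{\La}^1(\rH^1_{\Iw}(K,\zp)) \to \rE_{\La}^2(\zp) \cong \zp$ obtained by dualizing \eqref{eq:twoseq} is a one-sided inverse of that Nekov\'a\v{r} map. This is a compatibility between edge/connecting maps of two different spectral sequences which you never establish; ``a diagram chase \dots then forces'' is not an argument, and it is unclear what Leopoldt's conjecture or Proposition \ref{nozp1} would contribute at that stage. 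Pinning down such a compatibility would require the derived-category duality underlying Remark \ref{rem:ssmaps}, and nothing in the proposal extracts it. The paper avoids the issue entirely with a soft rank argument: dualizing \eqref{eq:twoseq} gives an exact sequence $0 \to \rE_{\La}^1(\mf{X}^*) \to \rE_{\La}^1(\rH^1_{\Iw}(K,\zp)) \to \zp \to \rE_{\La}^2(\mf{X}^*)$, and Corollary \ref{EEconseq}(b) (here $r=2$) says $\rE_{\La}^1(\mf{X}^*)$ is finite and $\rE_{\La}^2(\mf{X}^*) = 0$, so $\rE_{\La}^1(\rH^1_{\Iw}(K,\zp))$ has $\zp$-rank one; on the other hand \eqref{ptseq} identifies the cokernel of $\zp \to \rE_{\La}^1(\rH^1_{\Iw}(K,\zp))$ with $\rE_{\La}^3(\rH^2_{\Iw}(K,\zp))$, which is finite by Proposition \ref{prop:bjork}. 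A map from $\zp$ to a rank-one module with finite cokernel is injective, so the preceding map $\rE_{\La}^2(\rH^2_{\Iw}(K,\zp)) \to \zp$ vanishes and \eqref{eq:threeseq} is exact. You should either prove your compatibility (substantial extra work) or replace that step with this counting argument, which needs no identification of maps.
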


\begin{proof}
	Suppose that Leopoldt's conjecture holds for $F$.  
	Consider first the map $\phi \colon \zp \to \rH^2_{\Iw}(K,\zp)$ of Theorem \ref{mc}.
	The image of $\zp$ is contained in $\rH^2_{\Iw}(K,\zp)^{\Gamma}$.  There is an exact sequence 
	$$
		0 \to X'(-1)^{\Gamma} \to \rH^2_{\Iw}(K,\zp)^{\Gamma} \to \mc{K}_0(-1)^{\Gamma}.
	$$
	Primes in $S_f$ are finitely decomposed in the cyclotomic $\zp$-extension $F_{\cyc}$, 
	and the action of the summand $\Gamma_{\cyc} = \Gal(F_{\cyc}/F)$ of $\Gamma$ on $\mathbb{Z}_p(-1)$ is faithful.  
	It follows that $\mc{K}_0(-1)^{\Gamma} = (\mc{K}_0^{\Gal(K/F_{\cyc})}(-1))^{\Gamma_{\cyc}}$ is trivial.	
	Proposition \ref{nozp1} then implies that $\phi$ must have finite image,
	and we have the first exact sequence.
	
	By Corollary \ref{cor:indzp},  $\rE_{\La}^1(\zp) = 0$ and $\rE_{\La}^2(\zp) \cong \zp$.  The long exact sequence of $\Ext$-groups for  \eqref{eq:twoseq} reads
	$$
		0 \to \rE_{\La}^1(\mf{X}^*) \to \rE_{\La}^1(\rH^1_{\Iw}(K,\zp)) \to \zp \to \rE_{\La}^2(\mf{X}^*).
	$$
	 By Corollary \ref{EEconseq}(b), this implies
	that $\rE_{\La}^1(\rH^1_{\Iw}(K,\zp)) \to \zp$ is surjective with finite kernel.  
	The map $\zp \to \rE_{\La}^1(\rH^1_{\Iw}(K,\zp))$ of \eqref{ptseq} is then also forced to be injective, being that it
	 is of finite (i.e., codimension at least $3$) cokernel $\rE^3_{\La}(\rH^2_{\Iw}(K,\zp))$, for instance by Proposition 
	 \ref{prop:bjork}.
	Therefore, the map $\rE_{\La}^2(\rH^2_{\Iw}(K,\zp)) \to \zp$ in \eqref{ptseq} is trivial,
	and \eqref{eq:threeseq} is exact.
\end{proof}

\subsection{Useful lemmas}

It is necessary for our purposes to account for discrepancies between decomposition and inertia groups,
and the unramified Iwasawa module $X$ and $H^2_{\Iw}(K,\zp(1))$.  The following lemmas are designed for
this purpose.
For a prime $\mf{p} \in S_f$, we set 
$$
	I_{\mf{p}} = \Omega \cotimes{\zp\ps{\mc{G}_{\mf{p}}}} \mf{I}_{\mf{p}},
$$ 
where $\mf{I}_{\mf{p}}$ denotes the inertia subgroup of $\mf{D}_{\mf{p}}$.  Then $I_{\mf{p}}$ 
is an $\Omega$-submodule of $D_{\mf{p}}$.

\begin{remark} \label{unramdesc}
	The unramified Iwasawa module $X$ over $K$
	is the cokernel of the map $\bigoplus_{\mf{p} \in S_f} I_{\mf{p}} \to \mf{X}$, independent of $S$ containing
	the primes over $p$.  Its completely split-at-$S_f$ quotient
	is the cokernel of $\bigoplus_{\mf{p} \in S_f} D_{\mf{p}} \to \mf{X}$.  The latter $\Omega$-module is the
	completely split Iwasawa module $X'$ if $K$ contains the cyclotomic $\zp$-extension of $F$.
\end{remark}

In the following, we suppose that primes over $\mf{p}$
do not split completely in $K/F$, which occurs, for instance, if $\mf{p}$ lies over $p$ or $K$ contains the cyclotomic 
$\zp$-extension of $F$.

\begin{lemma} \label{inertia}
	Suppose that $\Gamma_{\mf{p}} \neq 0$.
	Let $\epsilon_{\mf{p}} = 0$ (resp., 1) if the completion $K_{\mf{p}}$ at a prime over $\mf{p}$ 
	contains (resp., does not contain) the unramified $\zp$-extension of $E_{\mf{p}}$.  Let 
	$\epsilon'_{\mf{p}} = \epsilon_{\mf{p}}\delta_{r_{\mf{p}},1}$, and if $\epsilon'_{\mf{p}} = 1$, suppose
	that $K$ contains all $p$-power roots of unity.
	We have a commutative diagram
	\begin{equation}
	\label{eq:tworows}
		\SelectTips{cm}{} \xymatrix@R=15pt{
		0 \ar[r] & I_{\mf{p}} \ar[r] \ar[d] & D_{\mf{p}} \ar[r] \ar[d] & \mc{K}_{\mf{p}}^{\epsilon_{\mf{p}}} \ar[r] \ar[d] & 0\\
		0 \ar[r] & I_{\mf{p}}^{**} \ar[r] & D_{\mf{p}}^{**} \ar[r] & \mc{K}_{\mf{p}}^{\epsilon_{\mf{p}}'} \ar[r] & 0
		}
	\end{equation}
	where the right-hand vertical map is the identity if $\epsilon_{\mf{p}}' = 1$.
\end{lemma}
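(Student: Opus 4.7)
The plan is to derive both rows of \eqref{eq:tworows} from the tautological local exact sequence
\[
0 \to \mf{I}_{\mf{p}} \to \mf{D}_{\mf{p}} \to \mf{U}_{\mf{p}} \to 0,
\]
where $\mf{U}_{\mf{p}} = \mf{D}_{\mf{p}}/\mf{I}_{\mf{p}}$ is the Galois group of the maximal pro-$p$ unramified extension of $K_{\mf{p}}$. First I would verify that $\mf{U}_{\mf{p}} \cong \zp^{\epsilon_{\mf{p}}}$ with trivial $\mc{G}_{\mf{p}}$-action: the isomorphism follows from the definition of $\epsilon_{\mf{p}}$ (which encodes whether $K_{\mf{p}} \supset E_{\mf{p}}^{\mr{nr},p}$), and triviality follows because the conjugation action of $\mc{G}_{\mf{p}}$ on $\mf{U}_{\mf{p}}$ factors through an abelian quotient (the image of $\Gal(K_{\mf{p}}^{\mr{nr},p}/E_{\mf{p}})$ in $\Gal(E_{\mf{p}}^{\mr{nr},p}/E_{\mf{p}})$), while the restriction of this map to $\mf{U}_{\mf{p}}$ is injective. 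Applying the exact functor $\Omega \cotimes{\zp\ps{\mc{G}_{\mf{p}}}} -$ and using $\Omega \cotimes{\zp\ps{\mc{G}_{\mf{p}}}} \zp = \mc{K}_{\mf{p}}$ produces the top row.

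Second, I would apply $\Hom_{\La}(-, \La) = \rE^0_{\La}(-)$ to the top row. By Lemma \ref{extKp} and the hypothesis $r_{\mf{p}} \ge 1$, we have $\mc{K}_{\mf{p}}^* = 0$ and $\rE^1_{\La}(\mc{K}_{\mf{p}}^{\epsilon_{\mf{p}}}) \cong (\mc{K}_{\mf{p}}^{\iota})^{\epsilon'_{\mf{p}}}$, giving an exact sequence
\[
0 \to D_{\mf{p}}^* \to I_{\mf{p}}^* \to (\mc{K}_{\mf{p}}^{\iota})^{\epsilon'_{\mf{p}}} \to \rE^1_{\La}(D_{\mf{p}}).
\]
When $\epsilon'_{\mf{p}} = 0$ (i.e., $r_{\mf{p}} \ge 2$ or $\epsilon_{\mf{p}} = 0$), the third term is zero, so $D_{\mf{p}}^* \cong I_{\mf{p}}^*$ and hence $D_{\mf{p}}^{**} \cong I_{\mf{p}}^{**}$, which gives the bottom row with trivial quotient. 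In the delicate case $\epsilon'_{\mf{p}} = 1$ (so $r_{\mf{p}} = 1$ and $\epsilon_{\mf{p}} = 1$), I would show that the connecting map $I_{\mf{p}}^* \to \mc{K}_{\mf{p}}^{\iota}$ is surjective. This is where the hypothesis $\mu_{p^\infty} \subset K$ enters; the surjectivity can be extracted by comparing with Theorem \ref{thm:localseq} applied to $D_{\mf{p}}$, together with the rank relationship $\rank_{\La} D_{\mf{p}} = \rank_{\La} I_{\mf{p}} + \epsilon_{\mf{p}}$ arising from the top row.

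Third, with the resulting short exact sequence
\[
0 \to D_{\mf{p}}^* \to I_{\mf{p}}^* \to (\mc{K}_{\mf{p}}^{\iota})^{\epsilon'_{\mf{p}}} \to 0,
\]
I would apply $\rE^0_{\La}(-)$ once more. Using the analogue of Lemma \ref{extKp} for $\mc{K}_{\mf{p}}^{\iota}$, namely $(\mc{K}_{\mf{p}}^{\iota})^* = 0$ and $\rE^1_{\La}(\mc{K}_{\mf{p}}^{\iota}) \cong \mc{K}_{\mf{p}}$ when $r_{\mf{p}} = 1$, the long exact sequence yields injectivity of $I_{\mf{p}}^{**} \to D_{\mf{p}}^{**}$ and a map onto a submodule of $\mc{K}_{\mf{p}}^{\epsilon'_{\mf{p}}}$; the required surjectivity onto $\mc{K}_{\mf{p}}^{\epsilon'_{\mf{p}}}$ reduces to showing that the next boundary map into $\rE^1_{\La}(I_{\mf{p}}^*)$ is zero, which follows from the fact that $\rE^1_{\La}(I_{\mf{p}}^*)$ has no nontrivial pseudo-null submodule (by general Ext-properties of $\La$-duals developed in the appendix). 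The vertical maps of \eqref{eq:tworows} are the canonical biduality maps, commutativity is automatic from naturality, and the identification of the right-hand vertical map as the identity in the $\epsilon'_{\mf{p}}=1$ case is forced by the construction.

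The hard part will be Step 2 in the delicate case $\epsilon'_{\mf{p}}=1$, i.e., pinning down that the connecting map $I_{\mf{p}}^* \to \rE^1_{\La}(\mc{K}_{\mf{p}}^{\epsilon_{\mf{p}}})$ is surjective. Here, one must align the local sequence with the Tate-dual description of $\mf{I}_{\mf{p}}$ (via Iwasawa cohomology of local units), and the assumption $\mu_{p^\infty}\subset K$ is essential to convert the unramified quotient into something amenable to the duality machinery underlying Theorem \ref{thm:localseq}.
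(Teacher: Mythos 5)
Your skeleton is the same as the paper's: tensor the local sequence $0 \to \mf{I}_{\mf{p}} \to \mf{D}_{\mf{p}} \to \zp^{\epsilon_{\mf{p}}} \to 0$ with $\Omega$ over $\zp\ps{\mc{G}_{\mf{p}}}$, dualize to get $0 \to D_{\mf{p}}^* \to I_{\mf{p}}^* \to \rE^1_{\La}(\mc{K}_{\mf{p}}^{\epsilon_{\mf{p}}}) \to \rE^1_{\La}(D_{\mf{p}})$, dispose of the case $\epsilon'_{\mf{p}}=0$ via Lemma \ref{extKp}, and dualize once more. But there is a genuine gap at precisely the point you yourself flag as the hard part: you never prove that the map $\rE^1_{\La}(\mc{K}_{\mf{p}}) \cong \mc{K}_{\mf{p}}^{\iota} \to \rE^1_{\La}(D_{\mf{p}})$ vanishes (equivalently, that $I_{\mf{p}}^* \to \mc{K}_{\mf{p}}^{\iota}$ is surjective) when $r_{\mf{p}} = \epsilon_{\mf{p}} = 1$. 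The mechanism you propose cannot work: since $\Gamma_{\mf{p}} \neq 0$, the module $\mc{K}_{\mf{p}}$ is $\La$-torsion, so $\rank_{\La} D_{\mf{p}} = \rank_{\La} I_{\mf{p}}$ (your claimed relation $\rank_{\La} D_{\mf{p}} = \rank_{\La} I_{\mf{p}} + \epsilon_{\mf{p}}$ is false), and in any case both $\mc{K}_{\mf{p}}^{\iota}$ and $\rE^1_{\La}(D_{\mf{p}})$ are torsion, so no rank count can detect whether the map between them is zero. The missing idea is a Tate-twist mismatch: using Lemma \ref{lem:indext} one reduces to $r=1$ with $K = F_{\cyc}$; there Theorem \ref{thm:localseq} and Lemma \ref{extKp} give $0 \to \mc{K}_{\mf{p}}^{\iota}(1) \to D_{\mf{p}} \to D_{\mf{p}}^{**} \to 0$, and Corollary \ref{EEconseq}(a) kills $\rE^1_{\La}$ and $\rE^2_{\La}$ of $D_{\mf{p}}^{**}$, so $\rE^1_{\La}(D_{\mf{p}}) \cong \mc{K}_{\mf{p}}(-1)$; since there is no nonzero $\La$-homomorphism $\zp \to \zp(-1)$, there is no nonzero map $\mc{K}_{\mf{p}}^{\iota} \to \mc{K}_{\mf{p}}(-1)$, which is exactly the needed vanishing. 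This is where the hypothesis $\mu_{p^{\infty}} \subset K$ actually enters (it produces the twist by $\zp(1)$), rather than through any ``description of $\mf{I}_{\mf{p}}$ via local units.''

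A secondary weak point is your final dualization. The boundary map lands in $\rE^1_{\La}(I_{\mf{p}}^*)$, and your justification --- that this module has no nontrivial pseudo-null submodule --- is neither established in the appendix nor sufficient, since the image of the boundary map is a quotient of $\mc{K}_{\mf{p}}$ and need not be pseudo-null a priori. The paper instead works after the reduction of the previous step, where $r=1$, so Corollary \ref{EEconseq}(a) gives $\rE^1_{\La}(I_{\mf{p}}^*) = 0$ outright; without performing that reduction (or giving some substitute), your step 3 does not close either.
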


\begin{proof}
	We have exact sequences $0 \to \mf{I}_{\mf{p}} \to \mf{D}_{\mf{p}} \to \zp^{\epsilon_{\mf{p}}} \to 0$ by the theory
	of local fields.  These yield the upper exact sequence upon taking the tensor product with $\Omega$ over 
	$\zp\ps{\mc{G}_{\mf{p}}}$.  Since $\Gamma_{\mf{p}} \neq 0$, we have that $\mc{K}_{\mf{p}}$ is a torsion $\La$-module.  
	Taking $\Ext$-groups, we obtain an exact sequence
	\begin{equation} \label{starter}
		0 \to D_{\mf{p}}^* \to I_{\mf{p}}^* \to \rE_{\La}^1(\mc{K}_{\mf{p}}^{\epsilon_{\mf{p}}}) \to \rE_{\La}^1(D_{\mf{p}}).
	\end{equation}
	If $r_{\mf{p}} > 1$ or $\epsilon_{\mf{p}} = 0$, 
	then we are done by Lemma \ref{extKp} after taking a dual.  
	
	Suppose that $r_{\mf{p}} = \epsilon'_{\mf{p}} = 1$.
	We claim that the last map in \eqref{starter} is trivial.  This map is, by Lemma \ref{lem:indext}, 
	just the map of $\Omega$-modules
	$$
		\Omega^{\iota} \otimes_{\zp\ps{\mc{G}_{\mf{p}}}} \Ext^1_{\La_{\mf{p}}}(\zp,\La_{\mf{p}}) 
		\to \Omega^{\iota} \otimes_{\zp\ps{\mc{G}_{\mf{p}}}} \Ext^1_{\La_{\mf{p}}}(\mf{D}_{\mf{p}},\La_{\mf{p}}),
	$$
	where $\La_{\mf{p}} = \zp\ps{\Gamma_{\mf{p}}}$.
	For the claim, we may then assume that $r = 1$ and
	$K$ is the cyclotomic $\zp$-extension of $F$.
	We then have an exact sequence
	$$
		0 \to \mc{K}_{\mf{p}}^{\iota}(1) \to D_{\mf{p}} \to D_{\mf{p}}^{**} \to 0
	$$
	from Theorem \ref{thm:localseq} and Lemma \ref{extKp}.
	Taking $\Ext$-groups yields an exact sequence 
	$$
		0 \to \rE_{\La}^1(D_{\mf{p}}^{**}) \to \rE_{\La}^1(D_{\mf{p}}) \to \mc{K}_{\mf{p}}(-1) \to \rE_{\La}^2(D_{\mf{p}}^{**}),
	$$
	and the first and last term are trivial by Corollary \ref{EEconseq}.
	As there is no nonzero $\La$-module homomorphism $\zp \to
	\zp(-1)$, there is no nonzero homomorphism $\mc{K}_{\mf{p}}^{\iota} \to \mc{K}_{\mf{p}}(-1)$,
	hence the claim.
	Finally, taking $\Ext$-groups once again, we have an exact sequence
	$$
		0 \to I_{\mf{p}}^{**} \to D_{\mf{p}}^{**} \to \mc{K}_{\mf{p}} \to \rE_{\La}^1(I_{\mf{p}}^*).
	$$
	By Corollary \ref{EEconseq}, $\rE_{\La}^1(I_{\mf{p}}^*) = 0$, so we have shown 
	the exactness of the second row of \eqref{eq:tworows}.
\end{proof}

Using Lemma \ref{inertia}, one can derive exact sequences as in Theorem \ref{thm:localseq} with $I_{\mf{p}}$
in place of $D_{\mf{p}}$ if we suppose that $K$ contains all $p$-power roots of unity.  When $F$  contains
$\mu_p$, this hypothesis is equivalent to $K$ containing the cyclotomic $\zp$-extension $F_{\mr{cyc}}$ of $F$.  

\begin{lemma} \label{unramsplit} \
	\begin{itemize}
		\item[(a)] If $K_{\mf{p}}$ contains a $\zp^2$-extension of 
		$E_{\mf{p}}$ for all $\mf{p} \in S_f$ lying over $p$, then the kernel of the quotient map $X \to X'$ 
		is pseudo-null.  
		\item[(b)] If $K_{\mf{p}}$ contains the unramified $\zp$-extension of $E_{\mf{p}}$ for all $\mf{p} \in S_f$ lying over 
		$p$, the the quotient map $X \to X'$ is an isomorphism.
	\end{itemize}
\end{lemma}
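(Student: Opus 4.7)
The plan is to identify the kernel of $X \to X'$ in terms of local modules via Remark \ref{unramdesc}, and then analyze these local pieces using Lemma \ref{inertia}. Since $X$ is independent of $S$ containing the primes over $p$, we may assume $S_f$ consists exactly of the primes of $E$ over $p$. Write $A$ and $B$ for the images in $\mf{X}$ of $\bigoplus_{\mf{p} \in S_f} I_{\mf{p}}$ and $\bigoplus_{\mf{p} \in S_f} D_{\mf{p}}$, respectively. Then $X = \mf{X}/A$, and $X' = \mf{X}/B$ by Remark \ref{unramdesc} (we may assume $K$ contains the cyclotomic $\zp$-extension of $F$, which is the setting of interest and guarantees in particular that the non-split hypothesis of Lemma \ref{inertia} is satisfied at all $\mf{p} \in S_f$). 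Since $A \subseteq B$, the kernel of the surjection $X \to X'$ equals $B/A$, which is a quotient of $\bigoplus_{\mf{p} \in S_f} D_{\mf{p}}/I_{\mf{p}}$.

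Under the hypotheses of either (a) or (b), we have $\Gamma_{\mf{p}} \neq 0$ for every $\mf{p} \in S_f$. Moreover, in case (a), $r_{\mf{p}} \geq 2$, so $\epsilon'_{\mf{p}} = \epsilon_{\mf{p}} \delta_{r_{\mf{p}},1} = 0$. Lemma \ref{inertia} therefore applies in both cases and yields the isomorphism $D_{\mf{p}}/I_{\mf{p}} \cong \mc{K}_{\mf{p}}^{\epsilon_{\mf{p}}}$ for each $\mf{p} \in S_f$.

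For part (b), the hypothesis that $K_{\mf{p}}$ contains the unramified $\zp$-extension of $E_{\mf{p}}$ is precisely the statement that $\epsilon_{\mf{p}} = 0$. Thus $D_{\mf{p}}/I_{\mf{p}} = 0$ for every $\mf{p} \in S_f$, so the kernel of $X \to X'$ is zero and $X \to X'$ is an isomorphism.

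For part (a), it remains to show that each $\mc{K}_{\mf{p}}$ is pseudo-null as a $\La$-module. Since $F/E$ is Galois, $\Gamma \vartriangleleft \mc{G}$, so the $\Gamma$-stabilizer of any coset $g\mc{G}_{\mf{p}} \in \mc{G}/\mc{G}_{\mf{p}}$ is the conjugate $g\Gamma_{\mf{p}}g^{-1}$, which has $\zp$-rank $r_{\mf{p}}$. The $\Gamma$-orbits on $\mc{G}/\mc{G}_{\mf{p}}$ are in bijection with the finite set $\Delta/(\mr{image}\,\mc{G}_{\mf{p}})$. Therefore $\mc{K}_{\mf{p}}$ decomposes as a $\La$-module into a finite direct sum of quotients $\La/J_g$, where $J_g$ is the augmentation ideal of $g\Gamma_{\mf{p}}g^{-1}$ in $\La$. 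Each such quotient is isomorphic to $\zp\ps{\Gamma/g\Gamma_{\mf{p}}g^{-1}} \cong \zp\ps{\zp^{r-r_{\mf{p}}}}$, of Krull dimension $r - r_{\mf{p}} + 1$, so its support in $\Spec(\La)$ has codimension $r_{\mf{p}}$. By hypothesis, $K_{\mf{p}}$ contains a $\zp^2$-extension of $E_{\mf{p}}$, forcing $r_{\mf{p}} \geq 2$; hence each $\mc{K}_{\mf{p}}$ is pseudo-null, and so is the kernel of $X \to X'$. The main concern in the argument is verifying that the hypotheses of Lemma \ref{inertia} are met, most delicately the vanishing of $\epsilon'_{\mf{p}}$ in case (a), which follows immediately from $r_{\mf{p}} \geq 2$.
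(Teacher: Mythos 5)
Your argument is correct and follows essentially the same route as the paper: take $S_f$ to consist of the primes over $p$, observe that $\ker(X \to X')$ is a quotient of $\bigoplus_{\mf{p} \in S_f} D_{\mf{p}}/I_{\mf{p}}$, and note that each $\mf{D}_{\mf{p}}/\mf{I}_{\mf{p}}$ is $0$ or $\zp$, so the induced module vanishes in case (b) and is, in case (a), a finitely generated module over the pseudo-null ring $\mc{K}_{\mf{p}} = \zp\ps{\mc{G}/\mc{G}_{\mf{p}}}$ (pseudo-null exactly because $r_{\mf{p}} \ge 2$, as you verify by the orbit decomposition). The only cosmetic difference is that you route the identification $D_{\mf{p}}/I_{\mf{p}} \cong \mc{K}_{\mf{p}}^{\epsilon_{\mf{p}}}$ through Lemma \ref{inertia} and add the unnecessary standing assumption that $K$ contains the cyclotomic $\zp$-extension of $F$, whereas the paper simply uses the canonical surjection $\bigoplus_{\mf{p} \in S_f}\bigl(\Omega \cotimes{\zp\ps{\mc{G}_{\mf{p}}}} \mf{D}_{\mf{p}}/\mf{I}_{\mf{p}}\bigr) \twoheadrightarrow \ker(X \to X')$ together with local class field theory, which requires neither of these extra steps.
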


\begin{proof}
	Take $S$ to be the set of primes over $p$ and $\infty$. We have a canonical surjection 
	$$
		\bigoplus_{\mf{p} \in S_f} (\Omega \cotimes{\zp\ps{\mc{G}_\mf{p}}} \mf{D}_{\mf{p}}/\mf{I}_{\mf{p}}) \to \ker(X \to X')
	$$
	and $\mf{D}_{\mf{p}}/\mf{I}_{\mf{p}}$ is zero or $\zp$ according as to whether $K_{\mf{p}}$ does or does not
	contain the unramified $\zp$-extension of $E_{\mf{p}}$, respectively.  This implies part (b) immediately.  It
	also implies part (a), since $\Omega \cotimes{\zp\ps{\mc{G}_\mf{p}}} \mf{D}_{\mf{p}}/\mf{I}_{\mf{p}}$ is of finite $\zp\ps{\mc{G}/\mc{G}_{\mf{p}}}$-rank and $\zp\ps{\mc{G}/\mc{G}_{\mf{p}}}$ is pseudo-null in case (a). 
\end{proof}

The following lemma describes the structure of the $\Ext$-groups of $\mc{K}_0$ in terms of 
those of $\mc{K}$.

\begin{lemma} \label{K0}
	Let $\mf{p} \in S_f$. 
	\begin{itemize}
		\item[(a)] For $0 \le j < r-1$, we have
		$\rE_{\La}^j(\mc{K}_0) \cong \rE_{\La}^j(\mc{K})$.  For $j \ge r+1$, we have $ \rE_{\La}^j(\mc{K}) = E_{\La}^j(\mc{K}_0)  = 0$.
		\item[(b)] If $r \neq r_{\mf{p}}$ for all $\mf{p} \in S_f$, then $\rE_{\La}^r(\mc{K}) = \rE_{\La}^r(\mc{K}_0) = 0$, and we have an exact sequence
		$$
			0 \to \rE_{\La}^{r-1}(\mc{K}) \to \rE_{\La}^{r-1}(\mc{K}_0)
			\to \zp \to 0.
		$$
		\item[(c)] If $r = r_{\mf{p}}$ for some $\mf{p} \in S_f$, then $\rE_{\La}^{r-1}(\mc{K}_0) \cong \rE_{\La}^{r-1}(\mc{K})$, and we have an exact sequence
		$$
			0 \to \zp \to \rE_{\La}^r(\mc{K}) \to \rE_{\La}^r(\mc{K}_0) \to 0.
		$$
	\end{itemize}
\end{lemma}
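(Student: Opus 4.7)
The plan is to apply the long exact sequence of $\rE^{\bullet}_{\La}$ to the defining short exact sequence
$$
0 \to \mc{K}_0 \to \mc{K} \to \zp \to 0
$$
and read off each of the three parts from the resulting vanishings.  By Corollary \ref{cor:indzp} one has $\rE^j_{\La}(\zp) \cong \zp^{\delta_{r,j}}$, and by Lemma \ref{extKp} one has $\rE^j_{\La}(\mc{K}) = \bigoplus_{\mf{p} \in S_f} \rE^j_{\La}(\mc{K}_{\mf{p}})$ with $\rE^j_{\La}(\mc{K}_{\mf{p}}) \cong (\mc{K}_{\mf{p}}^{\iota})^{\delta_{r_{\mf{p}},j}}$; in particular $\rE^j_{\La}(\mc{K}) = 0$ whenever $j > r$, since each $r_{\mf{p}} \le r$.

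Part (a) is then immediate: for $j < r-1$ both $\rE^j_{\La}(\zp)$ and $\rE^{j+1}_{\La}(\zp)$ vanish, so the connecting maps identify $\rE^j_{\La}(\mc{K})$ with $\rE^j_{\La}(\mc{K}_0)$, while for $j \ge r+1$ every neighbouring term in the long exact sequence vanishes, forcing both $\rE^j_{\La}(\mc{K})$ and $\rE^j_{\La}(\mc{K}_0)$ to be zero.  For parts (b) and (c), I would isolate the segment
$$
0 \to \rE^{r-1}_{\La}(\mc{K}) \to \rE^{r-1}_{\La}(\mc{K}_0) \to \zp \xrightarrow{\phi} \rE^r_{\La}(\mc{K}) \to \rE^r_{\La}(\mc{K}_0) \to 0,
$$
where $\phi$ is the connecting map, using $\rE^{r-1}_{\La}(\zp) = \rE^{r+1}_{\La}(\zp) = 0$.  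In case (b), the hypothesis $r_{\mf{p}} < r$ for every $\mf{p}$ gives $\rE^r_{\La}(\mc{K}) = 0$, so $\phi = 0$, which forces $\rE^r_{\La}(\mc{K}_0) = 0$ and yields the asserted short exact sequence.

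In case (c) everything reduces to showing that $\phi$ is injective.  I would fix $\mf{p} \in S_f$ with $r_{\mf{p}} = r$ and consider the short exact sequence $0 \to J_{\mf{p}} \to \mc{K}_{\mf{p}} \to \zp \to 0$ furnished by the augmentation.  The module $J_{\mf{p}}$ is annihilated by the extension to $\La$ of the augmentation ideal of $\zp\ps{\Gamma_{\mf{p}}}$; since $\Gamma_{\mf{p}}$ is of finite index in $\Gamma$, the ring $\La$ is finite free over $\zp\ps{\Gamma_{\mf{p}}}$, so this extended ideal has height $r_{\mf{p}} = r$ in $\La$ by flatness.  Consequently $J_{\mf{p}}$ is supported in codimension $\ge r$ in $\Spec(\La)$, and as $\La$ is a regular local ring of Krull dimension $r+1$, the grade--codimension identity forces $\rE^{r-1}_{\La}(J_{\mf{p}}) = 0$.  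The long exact sequence attached to $0 \to J_{\mf{p}} \to \mc{K}_{\mf{p}} \to \zp \to 0$ then yields an injection $\rE^r_{\La}(\zp) \hookrightarrow \rE^r_{\La}(\mc{K}_{\mf{p}})$, which factors through $\phi$ via the inclusion $\mc{K}_{\mf{p}} \hookrightarrow \mc{K}$.  Hence $\phi$ is injective, and the displayed segment splits into the isomorphism $\rE^{r-1}_{\La}(\mc{K}) \cong \rE^{r-1}_{\La}(\mc{K}_0)$ together with the short exact sequence in the statement of (c).

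The main obstacle is the injectivity of $\phi$ in part (c); the rest is essentially bookkeeping in the long exact sequence once one has the explicit vanishing statements of Lemma \ref{extKp} and Corollary \ref{cor:indzp} in hand.
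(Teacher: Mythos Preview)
Your argument is correct and follows the same overall plan as the paper: apply the long exact $\rE_{\La}^{\bullet}$-sequence to $0 \to \mc{K}_0 \to \mc{K} \to \zp \to 0$, feed in Lemma~\ref{extKp} and Corollary~\ref{cor:indzp}, and reduce everything to the injectivity of $\phi$ in part~(c).

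The only point of divergence is precisely that injectivity step. The paper identifies the map $\rE_{\La}^r(\zp) \to \rE_{\La}^r(\mc{K}_{\mf{p}})$ explicitly under the isomorphisms of Corollary~\ref{cor:indzp} and Lemma~\ref{extKp}: it is the map $\zp \to \mc{K}_{\mf{p}}^{\iota}$ sending $1$ to the norm element of the finite group $\mc{G}/\mc{G}_{\mf{p}}$, which is visibly injective. Your route instead shows $\rE_{\La}^{r-1}(J_{\mf{p}}) = 0$ by observing that $J_{\mf{p}}$ (indeed all of $\mc{K}_{\mf{p}}$) is annihilated by the extension of the augmentation ideal of $\zp\ps{\Gamma_{\mf{p}}}$, which has height $r$ in $\La$, so the grade condition in the Cohen--Macaulay ring $\La$ forces the vanishing. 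Both arguments are short; the paper's has the advantage of telling you exactly what the connecting map is, while yours avoids having to trace through the identifications of Corollary~\ref{cor:indzp} and stays purely at the level of supports.
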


\begin{proof}	
	Note that 
	$$
		\rE_{\La}^j(\mc{K}) \cong  \bigoplus_{\mf{p} \in S_f} (\mc{K}_{\mf{p}}^{\iota})^{\delta_{j,r_{\mf{p}}}}
	$$
	by Lemma \ref{extKp}.  
	Moreover, Corollary \ref{cor:indzp} tells us that $\rE_{\La}^j(\zp) \cong \zp^{\delta_{r,j}}$.  We are quickly reduced to the case
	that $r = r_{\mf{p}}$ for some $\mf{p}$.  The map $\rE_{\La}^r(\zp) \to \rE_{\La}^r(\mc{K}_{\mf{p}})$ for such a $\mf{p}$
	is the map $\zp \to \mc{K}_{\mf{p}}^{\iota}$ that takes $1$ to the norm element, hence is injective.
\end{proof}

If $K$ contains all $p$-power roots of unity, then from \eqref{H2} we have an exact sequence
\begin{equation} \label{H2Ext}
	\cdots \to \rE_{\La}^j(\mc{K}_0)(1) \to \rE_{\La}^j(\rH^2_{\Iw}(K,\zp)) \to \rE_{\La}^j(X')(1) \to \rE_{\La}^{j+1}(\mc{K}_0)(1) \to \cdots
\end{equation}
for all $j$.  Lemmas \ref{unramsplit} and \ref{K0} then allow one to study the relationship between the higher $\Ext$-groups 
of $\rH^2_{\Iw}(K,\zp)$ occuring in Theorem \ref{thm:localseq} and the higher $\Ext$-groups of $X$.

\begin{remark} \label{secondproof}
	At the end of Section \ref{ss:GreenbergsConjecture}, we asserted that if $K$ contains $\mu_{p^{\infty}}$ and 
	$r_{\mf{p}} \ge 2$ for all $\mf{p} \in S_f$, then $\mf{X}$ is torsion-free if and only if $X$ is pseudo-null.
	This may also be seen as follows.  By Corollary \ref{cor:iwseq}, the $\La$-torsion subgroup of $\mf{X}$ is
	isomorphic to $\rE_{\La}^1(\rH^2_{\Iw}(K,\zp))$.  By assumption and Lemmas \ref{extKp} and \ref{K0}, we have 
	$\rE_{\La}^1(\mc{K}_0) = 0$.  Thus, by the exact sequence \eqref{H2Ext}, the triviality of $\rE_{\La}^1(\rH^2_{\Iw}(K,\zp))$
	and the triviality of $\rE_{\La}^1(X')$ are equivalent.  Since $X'$ is $\La$-torsion, $\rE_{\La}^1(X') = 0$ if and only if $X'$ is
	pseudo-null, which by Lemma \ref{unramsplit} is equivalent to the pseudo-nullity of $X$.
\end{remark}

\subsection{Eigenspaces}
\label{ss:ranksnake}

We end with a discussion of the rank of the $\Delta$-eigenspaces of the global and local Iwasawa modules $\mf{X}$
and $D_{\mf{p}}$.  Let us suppose now that $\mc{G} = \Gamma \times \Delta$, and for simplicity, that $\Delta$ is abelian. 
Without loss of generality, we shall suppose here that $F$ contains $\Q(\mu_p)$, and we let $\omega \colon \Delta \to \zp^{\times}$
denote the Teichm\"uller character.

Let $\psi$ be a $\bar{\Q}_p^{\times}$-valued character of $\Delta$.  For a $\zp[\Delta]$-module $M$, we let 
$$
	M^{\psi} = M \otimes_{\zp[\Delta]} \mc{O}_{\psi},
$$ 
where $\mc{O}_{\psi}$ is the $\zp$-algebra generated by the values of $\psi$, and $\zp[\Delta] \to \mc{O}_{\psi}$ is the surjection induced by $\psi$.  We set $\La = \zp\ps{\Gamma}$ and $\La_{\psi} = \mc{O}_{\psi}\ps{\Gamma}$.
Note that $\Omega^{\psi} \cong \La_{\psi}$ as compact $\mc{O}_{\psi}$-algebras, but $\Omega^{\psi}$ has the extra structure of an $\Omega$-module on which $\Delta$ acts by $\psi$.  

Let $r_2(E)$ denote the number of complex places of $E$ and $r_1^{\psi}(E)$ the number of real places of $E$ at
which $\psi$ is odd.  We have the following consequence of Iwasawa-theoretic 
global and local Euler-Poincar\'e characteristic formulas, as found in \cite[5.2.11, 5.3.6]{nekovar}.

\begin{lemma} \label{lem:rankone} \
	\begin{itemize}
		\item[(a)] If weak Leopoldt holds for $K$, then
		$\rank_{\La_{\psi}} \mf{X}^{\psi} = r_2(E) + r_1^{\psi}(E)$.
		\item[(b)] If either $\Gamma_{\mf{p}} \neq 0$ or $\psi|_{\Delta_{\mf{p}}} \neq 1$,  
		then $\rank_{\La_{\psi}} D_{\mf{p}}^{\psi} = [E_{\mf{p}}:\qp]$.
	\end{itemize}
\end{lemma}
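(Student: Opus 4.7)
The plan is to derive both statements from the Iwasawa-theoretic global and local Euler--Poincar\'e characteristic formulas of Nekov\'a\v{r}, and then to transfer the resulting rank computations from Iwasawa cohomology to $\mf{X}$ and $D_{\mf{p}}$ via the dualizing exact sequences already established in Theorem \ref{mc} and its local analogue \eqref{eq:differentname}. Throughout, we use that $\Delta$ has order prime to $p$ as a standing hypothesis, so that passing to $\psi$-eigenspaces is exact on finitely generated $\zp[\Delta]$-modules.

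For part (a), I would first verify that $\rH^0_{\Iw}(K,\zp)^{\psi}$ and $\rH^2_{\Iw}(K,\zp)^{\psi}$ are $\La_{\psi}$-torsion. The former is $\zp$ with $\Gamma$ acting trivially, hence of $\La$-rank $0$ since $r \geq 1$. For the latter, Jannsen's spectral sequence in Theorem \ref{thm:ss} gives $\rH^2_{\Iw}(K,\zp)$ a filtration whose associated graded pieces are ${\rm F}_2^{0,2}(\zp)$, ${\rm F}_2^{1,1}(\zp) = \rE^1_{\La}(\mf{X})$, and ${\rm F}_2^{2,0}(\zp) = \rE^2_{\La}(\zp)$; the first vanishes by weak Leopoldt, and the other two are $\La$-torsion because $\rE^i_{\La}(-)$ lands in modules of codimension at least $i$ over the regular ring $\La$. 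Granted these vanishings, the global Euler--Poincar\'e formula (Nekov\'a\v{r} 5.2.11) applied to the $\psi$-component collapses to
\[
\rank_{\La_{\psi}} \rH^1_{\Iw}(K,\zp)^{\psi} = r_2(E) + r_1^{\psi}(E).
\]
The exact sequence of Theorem \ref{mc} then shows that the natural map $\rH^1_{\Iw}(K,\zp)^{\psi} \to (\mf{X}^{\psi})^{*}$ has $\La_{\psi}$-torsion kernel and cokernel (each outer term being either a $\zp$-quotient of $\La$ of codimension $r \geq 1$ or an $\rE^1_{\La}$-term), and since taking $\La_{\psi}$-duals preserves rank over the commutative regular Noetherian ring $\La_{\psi}$, we conclude $\rank_{\La_{\psi}} \mf{X}^{\psi} = r_2(E) + r_1^{\psi}(E)$.

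For part (b), I would run the analogous local argument. By the five-term sequence \eqref{eq:differentname} combined with Lemma \ref{extKp}, the map $\rH^1_{\Iw,\mf{p}}(K,\zp)^{\psi} \to (D_{\mf{p}}^{\psi})^{*}$ has $\La_{\psi}$-torsion kernel and cokernel, so it suffices to compute the $\La_{\psi}$-rank of $\rH^1_{\Iw,\mf{p}}(K,\zp)^{\psi}$. The hypothesis that $\Gamma_{\mf{p}} \neq 0$ or $\psi|_{\Delta_{\mf{p}}} \neq 1$ is precisely what is needed to force $\rH^0_{\Iw,\mf{p}}(K,\zp)^{\psi}$ and, via the local spectral sequence ${\rm P}_{2,\mf{p}}^{i,j}$ of Subsection \ref{ss:gensetup} together with Lemma \ref{extKp}, $\rH^2_{\Iw,\mf{p}}(K,\zp)^{\psi}$ to have $\La_{\psi}$-rank zero: both are built from copies of $\mc{K}_{\mf{p}}^{\psi}$ (suitably twisted), which is $\La_{\psi}$-torsion under exactly that hypothesis. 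The local Euler--Poincar\'e formula of Nekov\'a\v{r} 5.3.6 then yields $\rank_{\La_{\psi}} \rH^1_{\Iw,\mf{p}}(K,\zp)^{\psi} = [E_{\mf{p}}:\qp]$, which transfers to $D_{\mf{p}}^{\psi}$ as above.

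The main technical obstacle will be checking that the various ``error terms'' in Theorem \ref{mc}, in \eqref{eq:differentname}, and in the spectral sequences of Theorem \ref{thm:ss} become $\La_{\psi}$-torsion after taking $\psi$-components under the stated hypotheses; the care required is concentrated at the real archimedean places in (a) and at $\mf{p}$ in (b), and once that verification is done, the Euler-characteristic bookkeeping is mechanical.
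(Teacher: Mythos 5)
Your plan is essentially built on the same analytic core as the paper's proof: both rest on the global and local Iwasawa-theoretic Euler--Poincar\'e characteristic formulas of Nekov\'a\v{r} (5.2.11, 5.3.6), with weak Leopoldt killing the degree-two contribution globally and the hypothesis ``$\Gamma_{\mf{p}} \neq 0$ or $\psi|_{\Delta_{\mf{p}}} \neq 1$'' killing the degree-zero (and degree-two) contributions locally. Where you genuinely differ is in the transfer step: you stay over $K$ with coefficients $\zp$, compute ranks of $\rH^1_{\Iw}(K,\zp)$ and $\rH^1_{\Iw,\mf{p}}(K,\zp)$, and move to $\mf{X}$ and $D_{\mf{p}}$ through the five-term sequences of Theorem \ref{mc} and \eqref{eq:differentname}; the paper instead enlarges $S$ to a set $\Sigma$ containing the primes ramified in $F/E$, applies restriction and Shapiro's lemma to identify $(\mf{X}_{\Sigma}^{\psi})^{\vee}$ (resp.\ $(D_{\mf{p}}^{\psi})^{\vee}$) with $\rH^1(G_{E,\Sigma},B_{\psi})$ (resp.\ $\rH^1(G_{E_{\mf{p}}},B_{\psi})$) up to cotorsion, for $B_{\psi} = (\Omega^{\psi^{-1}})^{\vee}$, and then applies the Euler characteristic formulas over $E$ directly. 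The paper's route buys two things you should be aware of. First, the step you describe as ``the formula applied to the $\psi$-component'' is exactly where the work lies: the formula as usually stated gives total $\La$-ranks, with archimedean terms indexed by places of $F$, and extracting $r_2(E)+r_1^{\psi}(E)$ (resp.\ $[E_{\mf{p}}:\qp]$ per eigenspace) requires a $\Delta$-equivariant refinement; in the paper this refinement comes out automatically as $\sum_{v \in S - S_f}\rank_{\La_{\psi}}(\Omega^{\psi}(1))^{G_{E_v}}$ because the coefficients have been induced to $E$, and if you carry out your ``bookkeeping at the real places'' rigorously you will most likely reproduce that Shapiro argument. Second, the Shapiro route needs no exactness of eigenspace functors, whereas your standing hypothesis that $\Delta$ has order prime to $p$ is not part of the lemma (that assumption on $\psi$ is introduced only for the three lemmas following this one), so as written your argument proves a slightly weaker statement than the one in the paper; since the kernels and cokernels of restriction are killed by $|\Delta|$ and hence are cotorsion over the domain $\La_{\psi}$ in any case, the paper's version avoids this restriction.

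Two small corrections to your outline, neither fatal to the rank count. The module $\rH^0_{\Iw}(K,\zp)$ is not ``$\zp$ with trivial $\Gamma$-action'': the transition maps in the inverse limit are corestrictions, which on $H^0$ are multiplication by the degree, so $\rH^0_{\Iw}(K,\zp) = 0$ (it is $\rH^0(G_{K,S},\qp/\zp)^{\vee}$ that is $\zp$); and the graded pieces of $\rH^2_{\Iw}(K,\zp)$ in Jannsen's spectral sequence are subquotients of ${\rm F}_2^{0,2}$, ${\rm F}_2^{1,1}$, ${\rm F}_2^{2,0}$ rather than those terms themselves, which is all you need for torsionness. In part (b), your claim that $\rH^0_{\Iw,\mf{p}}$ and $\rH^2_{\Iw,\mf{p}}$ are built from (twists of) $\mc{K}_{\mf{p}}$ is correct in substance, but note that when $\Gamma_{\mf{p}} = 0$ the completion $K_{\mf{P}}$ is a finite extension of $\qp$, so $\rH^2_{\Iw,\mf{p}}(K,\zp)$ is induced from a finite module and is automatically torsion, while $\rH^0_{\Iw,\mf{p}} \cong \mc{K}_{\mf{p}}$ and it is precisely the condition $\psi|_{\Delta_{\mf{p}}} \neq 1$ (cf.\ Lemma \ref{Kpsi}) that makes its $\psi$-part torsion; this matches the way the hypothesis enters the paper's proof, where it forces $(\Omega^{\psi})^{G_{E_{\mf{p}}}} = 0$.
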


\begin{proof}
	Let $\Sigma$ be the union of $S$ and the primes that ramify in $F/E$.
	Since the primes in $\Sigma \setminus S$ can ramify at most tamely in $\mf{X}_{\Sigma}$,
	the $\La_{\psi}$-modules $\mf{X}^{\psi}$ and $\mf{X}_{\Sigma}^{\psi}$ (the $\Sigma$-ramified Iwasawa
	module over $K$) have the same rank.
	Endow $\mc{O}_{\psi^{-1}}$ (which equals $\mc{O}_{\psi}$ as a $\zp$-module) 
	with a $G_{E,\Sigma}$-action through $\psi^{-1}$.
	Let $B_{\psi} = (\Omega^{\psi^{-1}})^{\vee} \cong \Hom_{\zp,\cont}(\La,
	\mc{O}_{\psi^{-1}}^{\vee})$, which is a discrete $\La_{\psi}\ps{G_{E,\Sigma}}$-module.
	Restriction and Shapiro's lemma (see \cite[8.3.3]{nekovar} and \cite[5.2.2, 5.3.1]{lim}) provide
	$\La_{\psi}$-module homomorphisms
	$$
		\rH^1(G_{E,\Sigma},B_{\psi}) 
		\xrightarrow{\Res} \rH^1(G_{F,\Sigma}, B_{\psi})^{\Delta}
		\xrightarrow{\sim} \rH^1(G_{K,\Sigma},\mc{O}_{\psi^{-1}}^{\vee})^{\Delta}
		\xrightarrow{\sim} (\mf{X}_{\Sigma}^{\psi})^{\vee},
	$$
	restriction having cotorsion kernel and cokernel.  (The last step passes through the intermediate
	module $\Hom_{\zp[\Delta]}(\mf{X}_{\Sigma} \otimes_{\zp} \mc{O}_{\psi^{-1}},\qp/\zp)$.)
	We are therefore reduced to computing the $\La_{\psi}$-corank
	of $\rH^1(G_{E,\Sigma},B_{\psi})$.  The global Euler characteristic formula tells us that
	$$
		\sum_{j=0}^2 (-1)^{j-1}\rank_{\La_{\psi}} \rH^j(G_{E,\Sigma},B_{\psi})^{\vee}
		= \sum_{v \in S-S_f} \rank_{\La_{\psi}} (\Omega^{\psi}(1))^{G_{E_v}},
	$$
	and $\rH^j(G_{E,\Sigma},B_{\psi})$ is $\La_{\psi}$-cotorsion for $j = 0$ and $j = 2$,
	the latter by weak Leopoldt for $K$.
	
	Recall that $D_{\mf{p}} = \rH^1(G_{K,\mf{p}},\qp/\zp)^{\vee}$.  Restriction and Shapiro's lemma 
	\cite[5.3.2]{lim} again 
	reduce the computation
	of the $\La_{\psi}$-corank of $H^1(G_{E,\mf{p}},B_{\psi})$, and 
	the local Euler characteristic formula tells us that
	$$
		\sum_{j=0}^2 (-1)^j \rank_{\La_{\psi}} \rH^j(G_{E_{\mf{p}}},B_{\psi})^{\vee} 
		= [E_{\mf{p}}:\qp]\cdot \rank_{\La_{\psi}} \Omega^{\psi} = [E_{\mf{p}}:\qp].
	$$
	As $\rH^2(G_{E_{\mf{p}}},B_{\psi})^{\vee}$ is trivial, and 
	$\rH^0(G_{E_{\mf{p}}},B_{\psi})^{\vee} \cong (\Omega^{\psi})_{G_{E_{\mf{p}}}}$ is torsion
	by virtue of the fact that either $\Gamma_{\mf{p}}$ or $\psi|_{\Delta_{\mf{p}}}$ is nontrivial, we are done.
\end{proof}

Let us suppose in the following three lemmas that $\psi$ has order prime to $p$.
These following lemmas are variants of the lemmas of the previous section in ``good eigenspaces''.  
The proofs are straightforward from what has already been done and as such are left to the reader.
For the second lemma, one can use the following simple fact: for an $\Omega$-module $M$, we have 
\begin{equation} \label{reflect}
	(\rE_{\La}^j(M)(1))^{\psi} \cong \rE_{\La}^j(M^{\omega\psi^{-1}})(1).
\end{equation}

\begin{lemma} \label{Kpsi}
	We have $\mc{K}_{\mf{p}}^{\psi} = 0$ if $\psi|_{\Delta_{\mf{p}}} \neq 1$.  We have $\mc{K}^{\psi} \cong \mc{K}_0^{\psi}$
	if $\psi \neq 1$.
\end{lemma}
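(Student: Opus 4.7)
The plan is to reduce both assertions to elementary properties of $\psi$-eigenspaces for characters of prime-to-$p$ order, exploiting the induced structure of $\mc{K}_{\mf{p}}$ as a $\zp[\Delta]$-module.

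For the first claim, the key structural observation is that, as a $\zp[\Delta]$-module, $\mc{K}_{\mf{p}} = \zp\ps{\mc{G}/\mc{G}_{\mf{p}}}$ decomposes as a (topological) direct sum of copies of the induced module $\mr{Ind}_{\Delta_{\mf{p}}}^{\Delta}\zp$. The reason is that $\mc{G} = \Gamma \times \Delta$, so the left $\Delta$-action on $\mc{G}/\mc{G}_{\mf{p}}$ has stabilizers conjugate (in fact equal, by commutativity of $\Gamma$ with $\Delta$) to $\mc{G}_{\mf{p}} \cap \Delta = \Delta_{\mf{p}}$, and each $\Delta$-orbit is therefore isomorphic to $\Delta/\Delta_{\mf{p}}$ as a $\Delta$-set. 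Applying $(-)^{\psi} = (-) \otimes_{\zp[\Delta]} \mc{O}_{\psi}$ and using the standard adjunction
$$
(\mr{Ind}_{\Delta_{\mf{p}}}^{\Delta} \zp) \otimes_{\zp[\Delta]} \mc{O}_{\psi} \cong \zp \otimes_{\zp[\Delta_{\mf{p}}]} \mc{O}_{\psi},
$$
where $\Delta_{\mf{p}}$ acts on $\mc{O}_{\psi}$ via $\psi|_{\Delta_{\mf{p}}}$, one sees that the result is the quotient of $\mc{O}_{\psi}$ by the ideal generated by all $\psi(\delta)-1$ for $\delta \in \Delta_{\mf{p}}$. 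If $\psi|_{\Delta_{\mf{p}}}$ is nontrivial, then some $\psi(\delta)-1$ is a nonzero difference of prime-to-$p$ roots of unity, hence a unit in $\mc{O}_{\psi}$, and so the quotient vanishes.

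For the second claim, I would apply the $\psi$-eigenspace functor to the defining short exact sequence
$$
0 \to \mc{K}_0 \to \mc{K} \to \zp \to 0.
$$
Because $\psi$ has order prime to $p$, the idempotent $e_{\psi} = \frac{1}{|\Delta|}\sum_{\delta \in \Delta} \psi^{-1}(\delta)\delta$ makes sense in $\mc{O}_{\psi}[\Delta]$, so $(-)^{\psi}$ is an exact functor on $\zp[\Delta]$-modules. The assertion therefore reduces to the vanishing $\zp^{\psi} = 0$ for $\psi \neq 1$, which follows from the same unit argument as above applied to any $\delta \in \Delta$ with $\psi(\delta) \neq 1$.

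The main point requiring care is the orbit decomposition used in the first step — specifically, verifying that the convention $\Delta_{\mf{p}} = \mc{G}_{\mf{p}} \cap \Delta$ agrees with the stabilizer computation (which uses that $\Gamma$ and $\Delta$ commute in the direct product, so conjugation by elements of $\Gamma$ is trivial on $\Delta$). Once this is pinned down, both claims are essentially formal consequences of the semisimple nature of $\zp[\Delta]$-modules after extension of scalars to $\mc{O}_{\psi}$.
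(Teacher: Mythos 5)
The paper offers no proof to compare against here: it states that the proofs of the three eigenspace lemmas ``are straightforward from what has already been done and as such are left to the reader,'' so your argument only needs to be checked on its own terms, and in substance it is correct and is the natural one. On the first claim, the convention you flag is harmless: since $\mc{G}$ is abelian, every $\delta \in \Delta \cap \mc{G}_{\mf{p}}$ acts trivially on $\mc{G}/\mc{G}_{\mf{p}}$, and for $\psi$ of order prime to $p$ the restriction of $\psi$ to the image of $\mc{G}_{\mf{p}}$ in $\Delta$ is nontrivial if and only if its restriction to $\Delta \cap \mc{G}_{\mf{p}}$ is (the two subgroups have the same prime-to-$p$ part; in the paper's applications $|\Delta|$ is prime to $p$ and $\mc{G}_{\mf{p}} = \Gamma_{\mf{p}} \times (\Delta \cap \mc{G}_{\mf{p}})$, so they coincide outright). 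You can also bypass the topological direct-sum/orbit bookkeeping, which strictly speaking needs a continuous cross-section of the orbit map: $\delta - 1$ annihilates $\mc{K}_{\mf{p}}$ for every $\delta \in \Delta \cap \mc{G}_{\mf{p}}$, so in $\mc{K}_{\mf{p}} \otimes_{\zp[\Delta]} \mc{O}_{\psi}$ the element $\psi(\delta)-1$ acts as zero; when $\psi|_{\Delta_{\mf{p}}} \neq 1$ it is a unit, so the eigenspace vanishes.

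The one step whose justification does not hold up as written is the exactness of $(-)^{\psi}$ in the second claim. The idempotent $e_{\psi} = |\Delta|^{-1}\sum_{\delta \in \Delta}\psi^{-1}(\delta)\delta$ exists only when $p \nmid |\Delta|$, and that is not implied by the subsection's stated hypothesis that $\psi$ has order prime to $p$. In the paper's applications this is immaterial, since there $\Delta = \Gal(E_{\psi}(\mu_p)/E)$ automatically has order prime to $p$; but to get the lemma in the stated generality, argue instead that only injectivity of $\mc{K}_0^{\psi} \to \mc{K}^{\psi}$ is at issue (right-exactness plus $\zp^{\psi}=0$ already gives surjectivity), and that the obstruction $\mathrm{Tor}_1^{\zp[\Delta]}(\zp,\mc{O}_{\psi}) \cong \rH_1(\Delta,\mc{O}_{\psi})$, with $\Delta$ acting through $\psi$, vanishes: $\psi$ factors through the prime-to-$p$ part $\Delta'$ of $\Delta$ and is nontrivial there, so $\rH_j(\Delta',\mc{O}_{\psi}) = 0$ for all $j$ (for $j \geq 1$ because $|\Delta'|$ is invertible, for $j=0$ because some $\psi(\delta)-1$ is a unit), and the Lyndon--Hochschild--Serre spectral sequence then forces $\rH_i(\Delta,\mc{O}_{\psi}) = 0$ for all $i$. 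With that repair, or with the explicit assumption $p \nmid |\Delta|$ (the setting actually used in the paper), your proof is complete.
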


\begin{lemma} \label{inertpsi}
	Suppose that $K$ contains the cyclotomic $\zp$-extension $F_{\mr{cyc}}$ of $F$.  
	If $\psi|_{\Delta_{\mf{p}}} \neq 1$ (resp., $\omega\psi^{-1}|_{\Delta_{\mf{p}}} \neq 1$), 
	then $I_{\mf{p}}^{\psi} \to D_{\mf{p}}^{\psi}$ (resp., $D_{\mf{p}}^{\psi} \to (D_{\mf{p}}^{\psi})^{**}$) is an isomorphism. 
\end{lemma}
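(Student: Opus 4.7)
The plan is to derive both isomorphisms by taking $\psi$-eigenspaces in the exact sequences produced by Lemma~\ref{inertia} and Theorem~\ref{thm:localseq}. Throughout, we use that $(-)^{\psi}$ is an exact functor on $\zp[\Delta]$-modules, since $\psi$ has order prime to $p$.

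For the first statement, the top row of Lemma~\ref{inertia}, applied at $\mf{p}$, yields the short exact sequence
\begin{equation*}
	0 \to I_{\mf{p}} \to D_{\mf{p}} \to \mc{K}_{\mf{p}}^{\epsilon_{\mf{p}}} \to 0
\end{equation*}
(the primes over $\mf{p}$ do not split completely in $K/F$ because $K \supset F_{\mr{cyc}}$). Taking $\psi$-eigenspaces gives the exact sequence
\begin{equation*}
	0 \to I_{\mf{p}}^{\psi} \to D_{\mf{p}}^{\psi} \to (\mc{K}_{\mf{p}}^{\epsilon_{\mf{p}}})^{\psi} \to 0,
\end{equation*}
and under the hypothesis $\psi|_{\Delta_{\mf{p}}} \neq 1$ the last term vanishes by Lemma~\ref{Kpsi}, giving the desired isomorphism $I_{\mf{p}}^{\psi} \xrightarrow{\sim} D_{\mf{p}}^{\psi}$.

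For the second statement, I would first observe that the hypothesis $F \supset \Q(\mu_p)$ from Subsection~\ref{ss:ranksnake} together with $K \supset F_{\mr{cyc}}$ ensures $K \supset F(\mu_{p^{\infty}})$, so Theorem~\ref{thm:localseq} applies and produces the four-term exact sequence
\begin{equation*}
	0 \to \rE_{\La}^1(\mc{K}_{\mf{p}})(1) \to D_{\mf{p}} \to D_{\mf{p}}^{**} \to \rE_{\La}^2(\mc{K}_{\mf{p}})(1) \to 0.
\end{equation*}
Taking $\psi$-eigenspaces preserves exactness, and since $\psi$ has order prime to $p$ and $\La_{\psi}$ is a direct summand of $\Omega$, the canonical map $(D_{\mf{p}}^{**})^{\psi} \to (D_{\mf{p}}^{\psi})^{**}$ (where the second dual is taken over $\La_{\psi}$) is an isomorphism. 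Applying the reflection identity~\eqref{reflect} with $M = \mc{K}_{\mf{p}}$ yields
\begin{equation*}
	(\rE_{\La}^j(\mc{K}_{\mf{p}})(1))^{\psi} \cong \rE_{\La}^j(\mc{K}_{\mf{p}}^{\omega\psi^{-1}})(1)
\end{equation*}
for $j = 1, 2$. Under the hypothesis $\omega\psi^{-1}|_{\Delta_{\mf{p}}} \neq 1$, Lemma~\ref{Kpsi} gives $\mc{K}_{\mf{p}}^{\omega\psi^{-1}} = 0$, so both the kernel and cokernel vanish and $D_{\mf{p}}^{\psi} \to (D_{\mf{p}}^{\psi})^{**}$ is an isomorphism.

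The only potential snag is the commutation of $(-)^{\psi}$ with the double-dual, but this is routine: $e_{\psi}$ acts as an idempotent on $D_{\mf{p}} \otimes_{\zp} \mc{O}_{\psi}$, splitting the module into eigenspaces and commuting with $\Hom_{\La}(-,\La) \otimes_{\zp} \mc{O}_{\psi}$, after which one identifies $\Hom_{\La}(-, \La)^{\psi}$ with $\Hom_{\La_{\psi}}(-^{\psi}, \La_{\psi})$ using $\Omega^{\psi} = \La_{\psi}$. No serious difficulty is expected.
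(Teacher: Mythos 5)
Your proof is correct and is precisely the argument the paper intends: it leaves this lemma to the reader with the single hint \eqref{reflect}, and your route — taking $\psi$-eigenspaces of the top row of Lemma \ref{inertia} (valid since $K \supset F_{\mr{cyc}}$ gives $\Gamma_{\mf{p}} \neq 0$, and $\mu_{p^\infty} \subset K$ since $\mu_p \subset F$) and of the four-term sequence of Theorem \ref{thm:localseq}, then killing the end terms with Lemma \ref{Kpsi} via \eqref{reflect} — is exactly that argument. Your closing remarks on the exactness of $(-)^{\psi}$ and the identification $(D_{\mf{p}}^{**})^{\psi} \cong (D_{\mf{p}}^{\psi})^{**}$ are the same implicit conventions the paper uses elsewhere (e.g.\ in \eqref{onepseq} and \eqref{eq:diagram1new}), so there is no gap.
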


\begin{lemma}
	Suppose that $K$ contains $F_{\mr{cyc}}$.  
	Then the maps $X^{\psi} \twoheadrightarrow (X')^{\psi} \hookrightarrow \rH^2(K,\zp(1))^{\psi}$ 
	are isomorphisms if $\psi|_{\Delta_{\mf{p}}} \neq 1$ for all $\mf{p}$ lying over $p$.
\end{lemma}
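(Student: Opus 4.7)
The plan is to treat the two claimed isomorphisms separately. Throughout, take $S = S_p \cup S_\infty$; since $\psi$ has order prime to $p$, the functor of taking $\psi$-eigenspaces is exact, so both the surjection $X \twoheadrightarrow X'$ and the injection $X' \hookrightarrow \rH^2_{\Iw}(K,\zp(1))$ remain surjective and injective, respectively, on $\psi$-parts.

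For the first isomorphism $X^{\psi} \xrightarrow{\sim} (X')^{\psi}$, I will invoke the argument in the proof of Lemma \ref{unramsplit}, which exhibits a surjection
$$
    \bigoplus_{\mf{p} \in S_f} \Omega \cotimes{\zp\ps{\mc{G}_\mf{p}}} (\mf{D}_{\mf{p}}/\mf{I}_{\mf{p}}) \twoheadrightarrow \ker(X \to X'),
$$
in which each $\mf{D}_{\mf{p}}/\mf{I}_{\mf{p}}$ is either zero or $\zp$ with trivial $\mc{G}_{\mf{p}}$-action. Consequently, $\ker(X \to X')^{\psi}$ is a quotient of $\bigoplus_{\mf{p} \in S_f} \mc{K}_{\mf{p}}^{\psi}$ (tensored with $\mc{O}_{\psi}$), which vanishes by Lemma \ref{Kpsi} under the hypothesis $\psi|_{\Delta_{\mf{p}}} \neq 1$ for every $\mf{p} \mid p$.

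For the second isomorphism, note that $K \supseteq F_{\mr{cyc}}$ and $F \supseteq \Q(\mu_p)$ imply $K \supseteq \mu_{p^{\infty}}$, so Remark \ref{rem:needlater} supplies an exact sequence
$$
    0 \to X' \to \rH^2_{\Iw}(K,\zp(1)) \to \mc{K}_0 \to 0.
$$
Taking $\psi$-parts and using exactness, it suffices to show $\mc{K}_0^{\psi}=0$. The hypothesis forces $\psi \neq 1$ (as $\psi|_{\Delta_{\mf{p}}} \neq 1$ for some $\mf{p}$), so by Lemma \ref{Kpsi} we have $\mc{K}_0^{\psi} \cong \mc{K}^{\psi} = \bigoplus_{\mf{p} \in S_f} \mc{K}_{\mf{p}}^{\psi}$, and each summand vanishes by a further application of Lemma \ref{Kpsi}.

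The one subtlety—rather than a serious obstacle—is the choice of $S$: we take $S = S_p \cup S_{\infty}$ so that every $\mf{p} \in S_f$ lies over $p$, guaranteeing that the hypothesis kills $\mc{K}_{\mf{p}}^{\psi}$ for each $\mf{p} \in S_f$. This is legitimate because $X$ is independent of the choice of $S \supseteq S_p$ by Remark \ref{unramdesc}, while enlarging $S$ would alter both $X'$ and $\rH^2_{\Iw}(K,\zp(1))$.
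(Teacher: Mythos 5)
Your argument is correct and is exactly the ``straightforward from what has already been done'' proof that the paper intends but leaves to the reader: the first map is handled via the surjection $\bigoplus_{\mf{p} \in S_f} (\Omega \cotimes{\zp\ps{\mc{G}_\mf{p}}} \mf{D}_{\mf{p}}/\mf{I}_{\mf{p}}) \twoheadrightarrow \ker(X \to X')$ from the proof of Lemma \ref{unramsplit} together with Lemma \ref{Kpsi}, and the second via the Poitou--Tate sequence \eqref{H2} of Remark \ref{rem:needlater} together with Lemma \ref{Kpsi}. Your reading that $S_f$ consists only of the primes over $p$ (as in Section \ref{s:reflectiontype}) is the intended one, and the only small imprecision is that exactness of the $\psi$-eigenspace functor (used for injectivity in the second step) really rests on $\Delta$ having order prime to $p$, which holds in the paper's applications since $F = E_{\psi}(\mu_p)$ with $p$ odd, rather than on the order of $\psi$ alone.
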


\section{Reflection-type theorems for Iwasawa modules}
\label{s:reflectiontype}

In this section, we prove results that relate an Iwasawa module in a given eigenspace
with another Iwasawa module in a ``reflected'' eigenspace.  These modules typically appear on opposite sides
of a short exact sequence, with the middle term being measured by $p$-adic $L$-functions.
The method in all cases is the same:
we take a sum of the maps of exact sequences at primes over $p$ found in Theorem \ref{thm:localseq} and apply the snake
lemma to the resulting diagram.  Here, we focus especially on cases in which eigenspaces of the unramified outside $p$ Iwasawa modules $\mf{X}$ have
rank $1$, in order that the corresponding eigenspace of the double dual is free of rank one.  
Our main result is a 
symmetric exact sequence for an unramified Iwasawa module and its reflection 
in the case of an imaginary quadratic field. This sequence gives rise to a computation of  second Chern classes (see
Subsection \ref{ss:imaginaryquad}).

We maintain the notation of Section \ref{s:FieldsWithR2Equal1}.
We suppose in this section that $p$ is odd, and
we let $S$ be the set of primes of $E$ over $p$ and $\infty$.
We let $\psi$ denote a one-dimensional character of the absolute Galois group of $E$ of finite order prime to $p$, and
let $E_{\psi}$ denote the fixed field of its kernel.
We then set $F = E_{\psi}(\mu_p)$ and $\Delta = \Gal(F/E)$.
Let $\omega$ denote the 
Teichm\"uller character of $\Delta$. 

We now take $\widetilde{E}$ to be the compositum of all $\zp$-extensions of $E$, and we set $r = \rank_{\zp} \Gal(\widetilde{E}/E)$.
If Leopoldt's conjecture holds for $E$, then $r = r_2(E)+1$.
We set $K = F\widetilde{E}$.  As before, we
take $\mc{G} = \Gal(K/E)$ and $\Gamma = \Gal(K/F)$ and set $\Omega = \zp\ps{\mc{G}}$ and $\La = \zp\ps{\Ga}$.  
 
For a subset $\Sigma$ of $S_f$, let us set $\mc{K}_{\Sigma} = \bigoplus_{\mf{p} \in \Sigma} \mc{K}_{\mf{p}}$.  We set
$$\mc{H}_{\Sigma} = \ker(\rH^2_{\Iw}(K,\zp(1)) \to \mc{K}_{S_f - \Sigma}),$$ 
which for $\Sigma \neq \varnothing$ 
fits in an exact sequence
\begin{equation} \label{Hseq}
	0 \to X' \to \mc{H}_{\Sigma} \to \mc{K}_{\Sigma} \to \zp \to 0.
\end{equation}
For $\Sigma = \varnothing$, we have $\mc{H}_{\Sigma} \cong X'$.  We shall study the diagram that arises from the sum
of exact sequences in Theorem \ref{thm:localseq} over primes in $T = S_f - \Sigma$.  Setting $D_T = \bigoplus_{\mf{p} \in T} D_{\mf{p}}$, 
it reads
\begin{equation} \label{Tdiag}
		\SelectTips{cm}{} \xymatrix{
		0 \ar[r] & \rE^1_{\La}(\mc{K}_T)(1) \ar[r] \ar[d] &
		D_T  \ar[r] \ar[d] & D_T^{**} \ar[r] \ar[d]^{\phi_T} & 
		\rE^2_{\La}(\mc{K}_T)(1) \ar[d] \ar[r] & 0 &\\
		0 \ar[r] & \rE^1_{\La}(\rH^2_{\Iw}(K,\zp)) \ar[r] & \mf{X} \ar[r] & \mf{X}^{**} \ar[r] & 
		\rE^2_{\La}(\rH^2_{\Iw}(K,\zp))\ar[r] &\zp,
		}
\end{equation}
where $\phi_T = \sum_{\mf{p} \in T} \phi_{\mf{p}}$ is the sum of maps $\phi_{\mf{p}} \colon D_{\mf{p}}^{**} \to \mf{X}^{**}$.
We take $\psi$-eigenspaces, on which the map to $\zp$ in the diagram will vanish if $\psi \neq 1$, $r = 1$, or $r = 2$
and Leopoldt's conjecture holds for $F$, the latter by Proposition \ref{r2case}.  The cokernel of $D_T \to \mf{X}$ is the
Iwasawa module $\mf{X}_{\Sigma,T}$ which is the Galois group over $K$ of the maximal pro-$p$ abelian extension of $K$ which is unramified outside of $\Sigma$ and totally split over $T = S_f - \Sigma$.  The group $I_T
=  \bigoplus_{\mf{p} \in T} I_{\mf{p}}$ has the property that the cokernel of $I_T \to \mf{X}$
is the unramified outside of $\Sigma$-Iwasawa module $\mf{X}_{\Sigma}$.

In this section, we focus on examples for which $\rank_{\La_{\psi}} \mf{X}^{\psi} = 1$, which forces $r \le 2$ under Leopoldt's conjecture by Lemma \ref{lem:rankone}.  We have that $(\mf{X}^{\psi})^{**}$ is free of rank one over $\Lambda_{\psi}$, by Lemma \ref{lem:double}.  If $\mf{p}$
is split in $E$, then $(D_{\mf{p}}^{\psi})^{**}$ is also isomorphic to $\La_{\psi}$, so 
$$
	\phi_{\mf{p}}^{\psi} \colon (D_{\mf{p}}^{\psi})^{**} \to (\mf{X}^{\psi})^{**}
$$ 
is identified
with multiplication by an element of $\La_{\psi}$, well-defined up to unit.  We shall exploit this fact throughout.
At times, we will have to distinguish between decomposition and inertia groups, which we will deal with below as the need arises.
In our examples, $T$ is always a set of degree one primes, so $r_{\mf{p}} = 1$ for $\mf{p} \in T$.  The assumptions on $\psi$
and $T$ make most results cleaner and do well to illustrate the role of second Chern classes, but the methods can be applied for any $\zp^r$-extension containing the cyclotomic $\zp$-extension and any set of primes over $p$.

As in Definition \ref{def:adjoint}, for an $\Omega$-module $M$ that is finitely generated over $\La$ 
with annihilator of height at least $r$, we define the adjoint $\alpha(M)$ of $M$ to be $\rE^r_{\La}(M)$.
 
\subsection{The rational setting}
\label{ss:rational}

Let us demonstrate the application of the results of Subsection \ref{ss:gensetup} in the setting of the classical Iwasawa main conjecture.   Suppose that $E = \Q$ and that  $\psi$ is odd.  For simplicity, we assume $\psi \ne \omega$.
We study the unramified Iwasawa module $X$ over $K$.

\begin{theorem} \label{thm:cyclcase}
	If $(X')^{\omega\psi^{-1}}$ is finite, then there is an exact sequence of $\Omega$-modules
	$$
		0 \to X^{\psi} \to \Omega^{\psi}/(\mc{L}_\psi) \to ((X')^{\omega\psi^{-1}})^{\vee}(1) \to 0
	$$
	with $\mc{L}_\psi$ interpolating the $p$-adic $L$-function for $\chi=\omega\psi^{-1}$ as in
	Theorem \ref{cyclmc}.
\end{theorem}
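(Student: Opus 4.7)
The plan is to apply the snake lemma to a commutative diagram comparing the defining short exact sequence
\[
0 \to I_\mf{p}^\psi \to \mf{X}^\psi \to X^\psi \to 0
\]
with a ``reflexive hull'' version whose middle term, by the cyclotomic main conjecture, is $\Omega^\psi/(\mc{L}_\psi)$.  Since $E = \Q$, Leopoldt's conjecture gives $r = 1$ and $K = \Q(\mu_{p^\infty})$.  There is a unique prime $\mf{p}$ of $\Q$ above $p$, totally ramified in $K$, so $\mc{G}_\mf{p} = \mc{G}$, whence $\mc{K}_\mf{p} = \zp$ with trivial $\mc{G}$-action and $\mc{K}_0 = 0$.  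The sequence \eqref{H2} and Remark \ref{rem:needlater} then give $\rH^2_\Iw(K,\zp) \cong X'(-1)$.  Because $\psi \neq 1,\omega$ and $\Delta_\mf{p} = \Delta$, Lemmas \ref{inertpsi} and \ref{inertia} yield a chain of isomorphisms $I_\mf{p}^\psi \xrightarrow{\sim} (I_\mf{p}^{**})^\psi \xrightarrow{\sim} \Omega^\psi$, so both $I_\mf{p}^\psi$ and its reflexive hull are free of rank one over $\Omega^\psi \cong \La_\psi$.

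For the cokernel of $\mf{X}^\psi \to (\mf{X}^\psi)^{**}$, I apply Corollary \ref{cor:iwseq} in the $\psi$-eigenspace, where the terminal map into $\zp$ vanishes because $\psi \neq 1$.  Using the identity $\rE^j_\La(N(-1)) \cong \rE^j_\La(N)(1)$ together with the reflection formula \eqref{reflect}, I compute
\[
\rE^j_\La(\rH^2_\Iw(K,\zp))^\psi \cong \rE^j_\La((X')^{\omega\psi^{-1}})(1).
\]
Under the hypothesis that $(X')^{\omega\psi^{-1}}$ is finite, its support has codimension $\dim \La = 2$, so $\rE^1_\La$ vanishes while local duality over the Gorenstein regular local ring $\La$ identifies $\rE^2_\La((X')^{\omega\psi^{-1}})$ with the Pontryagin dual $((X')^{\omega\psi^{-1}})^\vee$.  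Corollary \ref{cor:iwseq} then produces the short exact sequence
\[
0 \to \mf{X}^\psi \to (\mf{X}^\psi)^{**} \to ((X')^{\omega\psi^{-1}})^\vee(1) \to 0,
\]
and $(\mf{X}^\psi)^{**}$ is free of rank one over $\Omega^\psi$ since every rank-one reflexive module over the two-dimensional regular local UFD $\Omega^\psi$ is free.

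The induced-module map $I_\mf{p}^\psi \to \mf{X}^\psi$ is injective by a rank count: $I_\mf{p}^\psi$ is free of rank one, $\mf{X}^\psi$ is torsion-free of rank one by the preceding sequence and Lemma \ref{lem:rankone}(a), so any kernel has rank zero inside a torsion-free module and hence vanishes.  Extending to reflexive hulls inside $(\mf{X}^{**})^\psi \cong \Omega^\psi$ gives a map $(I_\mf{p}^{**})^\psi \to (\mf{X}^{**})^\psi$ of free rank-one $\Omega^\psi$-modules, hence multiplication by some $f \in \Omega^\psi$.  Since $\Omega^\psi/(f)$ and $X^\psi$ differ only by pseudo-null extensions, they share a first Chern class, which by Theorem \ref{cyclmc} equals $\mr{div}(\mc{L}_\psi)$, so $(f) = (\mc{L}_\psi)$.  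The snake lemma applied to
\[
\SelectTips{cm}{} \xymatrix@R=12pt@C=12pt{
0 \ar[r] & I_\mf{p}^\psi \ar[r] \ar[d]_{\cong} & \mf{X}^\psi \ar[r] \ar[d] & X^\psi \ar[r] \ar[d] & 0 \\
0 \ar[r] & (I_\mf{p}^{**})^\psi \ar[r] & (\mf{X}^{**})^\psi \ar[r] & \Omega^\psi/(\mc{L}_\psi) \ar[r] & 0
}
\]
then produces the claimed sequence: the leftmost vertical is an isomorphism and the middle vertical has trivial kernel and cokernel $((X')^{\omega\psi^{-1}})^\vee(1)$.  The most delicate step is the reflection computation in the second paragraph, where the Tate-twist direction must be tracked carefully: Ext-contravariance flips the sign of the twist, which then combines with the eigenspace-inverting formula \eqref{reflect} to produce exactly the reflected eigenspace $\omega\psi^{-1}$ and the Tate twist $(1)$ present in the statement.
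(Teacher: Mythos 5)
Your overall strategy coincides with the paper's: compare the presentation of $X^{\psi}$ as the cokernel of $I_p^{\psi}\to\mf{X}^{\psi}$ with its reflexive-hull analogue, identify the image of $(I_p^{\psi})^{**}\to(\mf{X}^{\psi})^{**}$ with $(\mc{L}_{\psi})$ via first Chern classes and Theorem \ref{cyclmc}, and compute the remaining cokernel by duality. But there is a genuine gap at the very first step. You assert that there is a unique prime of $K$ over $p$, so that $\mc{G}_p=\mc{G}$, $\Delta_p=\Delta$, $\mc{K}_p\cong\zp$ and $\mc{K}_0=0$ (and that $K=\Q(\mu_{p^{\infty}})$). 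None of this follows from the hypotheses: in fact $K=E_{\psi}(\mu_{p^{\infty}})$, and $p$ may split in $E_{\psi}$, in which case $\mc{G}_p\subsetneq\mc{G}$, $\mc{K}_0\neq 0$, and $\psi\neq 1,\omega$ does not prevent $\psi|_{\Delta_p}=1$ or $\omega\psi^{-1}|_{\Delta_p}=1$.

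This assumption carries real weight in your argument, in two places, and both fail without it. First, the isomorphism $\rH^2_{\Iw}(K,\zp)\cong X'(-1)$ must in general be replaced by the extension \eqref{H2} with nonzero $\mc{K}_0$; then the kernel $\rE^1_{\La}(\rH^2_{\Iw}(K,\zp))^{\psi}$ of $\mf{X}^{\psi}\to(\mf{X}^{\psi})^{**}$ from Corollary \ref{cor:iwseq} is, by \eqref{H2Ext}, Lemmas \ref{Kpsi} and \ref{K0} and the finiteness of $(X')^{\omega\psi^{-1}}$, isomorphic to $\rE^1_{\La}(\mc{K}_p^{\omega\psi^{-1}})(1)$, which is nonzero exactly when $\omega\psi^{-1}|_{\Delta_p}=1$; so your middle vertical map need not be injective. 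Second, your chain $I_p^{\psi}\cong(I_p^{\psi})^{**}\cong\Omega^{\psi}$ invokes Lemma \ref{inertpsi}, whose hypotheses $\psi|_{\Delta_p}\neq 1$ and $\omega\psi^{-1}|_{\Delta_p}\neq 1$ are exactly what can fail; by Lemma \ref{inertia} and Theorem \ref{thm:localseq}, the kernel of $I_p^{\psi}\to(I_p^{\psi})^{**}$ is again $\rE^1_{\La}(\mc{K}_p)(1)^{\psi}$, nonzero in the same situation, so $I_p^{\psi}$ need not be free, and with it your rank-count injectivity argument and the identification $(f)=(\mc{L}_{\psi})$ collapse. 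The paper's proof handles precisely these cases by keeping both $\rE^1$ terms in the two-row diagram of Theorem \ref{thm:localseq} (with $D_p$ replaced by $I_p$) and using that the left-hand vertical map $\rE^1_{\La}(\mc{K}_p)(1)^{\psi}\to\rE^1_{\La}(\rH^2_{\Iw}(K,\zp))^{\psi}$ is an isomorphism, so the two kernels cancel in the snake lemma. As written, your proof is valid only under the additional hypothesis that $\psi$ and $\omega\psi^{-1}$ are nontrivial on $\Delta_p$ (for instance, when $p$ does not split in $F$).
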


\begin{proof}
	Note that $\mc{K} = \mc{K}_p$.
	By Lemma \ref{extKp}, we have $\rE_{\La}^1(\mc{K}) \cong \mc{K}^{\iota}$ and 
	$\rE_{\La}^2(\mc{K}) = 0$. 
	Since $\psi \neq \omega$, Lemma
	\ref{Kpsi} tells us that $\mc{K}^{\omega\psi^{-1}} \cong \mc{K}_0^{\omega\psi^{-1}}$.  
	By \eqref{H2Ext} and Lemma \ref{K0} and our assumption of pseudo-nullity of $X^{\omega\psi^{-1}}$,
	we have that the natural maps
	\begin{eqnarray*}
		\rE_{\La}^1(\mc{K})(1)^{\psi} \xrightarrow{\sim} \rE_{\La}^1(\rH^2_{\Iw}(K,\zp))^{\psi}
		&\mr{and}& \rE_{\La}^2(\rH^2_{\Iw}(K,\zp))^{\psi} \xrightarrow{\sim} \rE_{\La}^2(X')(1)^{\psi}
	\end{eqnarray*}
	are isomorphisms. By \eqref{reflect} and Proposition \ref{Er}(a), $\rE_{\La}^2(X')(1)^{\psi} \cong ((X')^{\omega\psi^{-1}})^{\vee}(1)$.
	The diagram of Theorem \ref{thm:localseq} with $\mf{p} = p$ reads
	 $$
    		\SelectTips{cm}{} \xymatrix@C=15pt{		
   		0 \ar[r] & \rE^1_{\La}(\mc{K}_p)(1)^{\psi} \ar[r]  \ar[d]^{\wr} &
    		D_p^{\psi} \ar[r] \ar[d] & (D_p^{\psi})^{**}
    		\ar[r]  \ar[d] & 0&\\
    		0 \ar[r] & \rE_{\La}^1(\rH^2_{\Iw}(K,\zp))^{\psi} \ar[r] & 
		\mf{X}^{\psi} \ar[r] & (\mf{X}^{\psi})^{**} \ar[r] & ((X')^{\omega\psi^{-1}})^{\vee}(1) \ar[r]&0.
    		}
  	$$
	It follows from Lemma \ref{inertia} and the fact that there is no nonzero map
	$\mc{K}_p^{\iota}(1) \to \mc{K}_p$ of $\Omega$-modules
	that we can replace $D_p$ by $I_p$ in the diagram.
	By applying the snake lemma to the resulting diagram, we obtain an exact sequence
	\begin{equation} \label{intermedseq}
		0 \to X^{\psi} \to \coker \theta \to ((X')^{\omega\psi^{-1}})^{\vee}(1) \to 0,
	\end{equation}
	where $\theta \colon (I^{\psi} _p)^{**} \to (\mf{X}^{\psi})^{**}$ is the canonical map (restricting $\phi_p^{\psi}$).  
	The map $\theta$ is of free rank one $\La_{\psi}$-modules by Lemmas \ref{lem:rankone} and \ref{lem:double},
	and it is nonzero, hence injective, as $X$ is torsion.  We may identify its
	image with a nonzero submodule of $\Omega^{\psi}$.  Since $(X')^{\omega\psi^{-1}}$ is pseudo-null, 
	\eqref{intermedseq} tells us that
    	$$
    		c_1(X^{\psi}) = c_1( \Omega^{\psi}/\im \theta), 
    	$$
    	and this forces the image of $\theta$ to be $c_1(X^{\psi})$.
	By the main conjecture of Theorem \ref{cyclmc}, we have $c_1(X^{\psi}) = (\mc{L}_{\psi})$.
\end{proof}

\begin{remark} \label{nofinite}
	If we do not assume that $(X')^{\omega\psi^{-1}}$ is finite, one may still derive an exact sequence
	of $\La_{\psi}$-modules
	$$
		0 \to \alpha(X^{\omega\psi^{-1}})(1) \to X^{\psi} \to \Omega^{\psi}/(\mc{M}) \to 
		((X'_{\fin})^{\omega\psi^{-1}})^{\vee}(1) \to 0
	$$ 
	for some $\mc{M} \in \Omega^{\psi}$ such that $(\mc{M})c_1((X^{\omega\psi^{-1}})^{\iota}(1)) = (\mc{L}_{\psi})$.
\end{remark}

\subsection{The imaginary quadratic setting}
\label{ss:imaginaryquad}

In this subsection, we take our base field $E$ to be imaginary quadratic.  Let $p$ be an odd prime that splits 
into two primes $\mf{p}$ and $\bar{\mf{p}}$ in $E$, so $S_f = \{\mf{p},\bar{\mf{p}}\}$.
Since Leopoldt's conjecture holds for $E$, we have $\Ga = \Gal(K/F) \cong \zp^2$.   
We let $\mf{X}_{\mf{p}}$ denote the $\mf{p}$-ramified (i.e., unramified outside of the primes over $\mf{p}$) Iwasawa module over $K$, and similarly for $\bar{\mf{p}}$.

We will prove the following result and derive some consequences of it.

\begin{theorem}
\label{thm:mainresult1}  
Suppose that $E$ is imaginary quadratic and $p$ splits in $E$.
If $X^{\omega\psi^{-1}}$ is pseudo-null as a $\La_{\psi}$-module, then 
there is a canonical exact sequence
of $\Omega$-modules
\begin{equation}
\label{eq:maineq1} 
	0 \to   
	(X/X_{\mathrm{fin}})^{\psi} \to 
	\frac{\Omega^{\psi}}{c_1(\mf{X}_{\mf{p}}^{\psi}) +  c_1(\mf{X}_{\bar{\mf{p}}}^{\psi}) }
	\to \alpha(X^{\omega\psi^{-1}})(1) \to 0.
\end{equation}
Moreover, we have $X_\mathrm{fin}^{\psi} = 0$ unless $\psi = \omega$, and $X_\mathrm{fin}^{\omega}$ is cyclic.   
\end{theorem}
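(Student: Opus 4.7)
The plan is to follow the strategy outlined in Subsection \ref{s:strategy}. Since $F$ is abelian over the imaginary quadratic field $E$, Leopoldt's conjecture for $F$ holds by Brumer, and Proposition \ref{r2case} yields the four-term exact sequence
\[
0 \to \rE^1_{\La}(\rH^2_{\Iw}(K,\zp)) \to \mf{X} \to \mf{X}^{**} \to \rE^2_{\La}(\rH^2_{\Iw}(K,\zp)) \to 0.
\]
In parallel, Theorem \ref{thm:localseq} applied at $\mf{p}$ and $\bar{\mf{p}}$, combined with Lemma \ref{inertia} (which lets us replace $D_{\mf{p}}$ by $I_{\mf{p}}$, as each $\Gamma_{\mf{p}}$ has $\zp$-rank two because $\widetilde{E}/E$ is ramified at each prime above $p$), yields an analogous four-term sequence built from $I_{\mf{p}} \oplus I_{\bar{\mf{p}}}$. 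These two sequences fit into a commutative diagram with vertical maps from the local to the global sequence, and after passage to $\psi$-eigenspaces the cokernel of the map $I_{\mf{p}}^{\psi} \oplus I_{\bar{\mf{p}}}^{\psi} \to \mf{X}^{\psi}$ is $X^{\psi}$ by Remark \ref{unramdesc}.

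By Lemma \ref{lem:rankone}, each of $\mf{X}^{\psi}$, $I_{\mf{p}}^{\psi}$, and $I_{\bar{\mf{p}}}^{\psi}$ has $\La_{\psi}$-rank one, so by Lemma \ref{lem:double} their reflexive hulls are free of rank one over $\La_{\psi}$. Fixing an identification $(\mf{X}^{\psi})^{**} \cong \Omega^{\psi}$, the images of $(I_{\mf{p}}^{\psi})^{**}$ and $(I_{\bar{\mf{p}}}^{\psi})^{**}$ are then principal ideals. Since $\mf{X}/I_{\mf{p}} = \mf{X}_{\bar{\mf{p}}}$ and $\mf{X}/I_{\bar{\mf{p}}} = \mf{X}_{\mf{p}}$, and these quotients agree with the corresponding reflexive-hull quotients up to pseudo-null modules, these ideals equal $c_1(\mf{X}_{\bar{\mf{p}}}^{\psi})$ and $c_1(\mf{X}_{\mf{p}}^{\psi})$ respectively. (By Rubin's main conjecture, Theorem \ref{thm:imquadmc}, they are generated by the Katz $p$-adic $L$-functions, but we only need their principality here.) Hence the cokernel of the vertical map $(I_{\mf{p}}^{\psi})^{**} \oplus (I_{\bar{\mf{p}}}^{\psi})^{**} \to (\mf{X}^{\psi})^{**}$ is $\Omega^{\psi}/(c_1(\mf{X}_{\mf{p}}^{\psi}) + c_1(\mf{X}_{\bar{\mf{p}}}^{\psi}))$.

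The composition of $\mf{X}^{\psi} \to (\mf{X}^{\psi})^{**}$ with the projection to this cokernel vanishes on the images of $I_{\mf{p}}^{\psi}$ and $I_{\bar{\mf{p}}}^{\psi}$ (they already factor through the subgroup being killed), so it factors through a canonical map
\[
	f \colon X^{\psi} \to \Omega^{\psi}/(c_1(\mf{X}_{\mf{p}}^{\psi}) + c_1(\mf{X}_{\bar{\mf{p}}}^{\psi})).
\]
A snake-lemma diagram chase identifies $\coker(f)$ with the cokernel of the natural map $\rE^2_{\La}(\mc{K})(1)^{\psi} \to \rE^2_{\La}(\rH^2_{\Iw}(K,\zp))^{\psi}$. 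Combining \eqref{H2Ext}, Lemmas \ref{Kpsi} and \ref{K0} (to eliminate the $\mc{K}_0$ contribution), the isomorphism \eqref{reflect} (to bring $\psi$ inside $\rE^2_{\La}$), and Lemma \ref{unramsplit}(a) together with the hypothesis that $X^{\omega\psi^{-1}}$ is pseudo-null (to replace $X'$ by $X$, the remaining pseudo-null error being killed by $\rE^2_{\La}$), this cokernel simplifies to $\rE^2_{\La}(X^{\omega\psi^{-1}})(1) = \alpha(X^{\omega\psi^{-1}})(1)$.

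The same diagram chase identifies $\ker(f)$ with a quotient of the $\La_{\psi}$-torsion submodule of $\mf{X}^{\psi}$ by the image of $\rE^1_{\La}(\mc{K})(1)^{\psi}$, and this is precisely $X_{\mr{fin}}^{\psi}$. For the last assertion, one reads off finite contributions from \eqref{H2} and \eqref{H2Ext}: the relevant piece is $\rE^1_{\La}(\mc{K}_0)(1)^{\psi}$, and using the description of each $\mc{K}_{\mf{p}}$ as $\zp\ps{\mc{G}/\mc{G}_{\mf{p}}}$ together with Lemma \ref{K0}, this eigenspace is nonzero only when $\psi = \omega$ and is cyclic in that case. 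The main obstacle in the plan is the careful bookkeeping of Tate twists, pseudo-null discrepancies between $X$, $X'$, and $\rH^2_{\Iw}(K,\zp)$, and the local contributions, all of which enter into the identification $\coker(f) \cong \alpha(X^{\omega\psi^{-1}})(1)$.
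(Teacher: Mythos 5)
Your route is the paper's own: the diagram from Theorem \ref{thm:localseq} summed over $\mf{p}$ and $\bar{\mf{p}}$, replacement of decomposition by inertia groups, freeness of the rank-one reflexive hulls, identification of the images of $(I_{\mf{p}}^{\psi})^{**}$ and $(I_{\bar{\mf{p}}}^{\psi})^{**}$ with $c_1(\mf{X}_{\bar{\mf{p}}}^{\psi})$ and $c_1(\mf{X}_{\mf{p}}^{\psi})$, and a snake-lemma chase. Two of your steps, however, do not work as written. First, a pseudo-null module is \emph{not} killed by $\rE^2_{\La}$: for $r=2$ this is the adjoint functor $\alpha$, which is nonzero on any infinite pseudo-null module (cf.\ Proposition \ref{prop:adjoint}), so Lemma \ref{unramsplit}(a) plus ``the pseudo-null error is killed by $\rE^2_{\La}$'' does not let you pass from $X'$ to $X$. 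What you should use is Lemma \ref{unramsplit}(b): since $\mf{p}$ has degree one and $r_{\mf{p}}=2$, the completion $K_{\mf{p}}$ contains the unramified $\zp$-extension of $E_{\mf{p}}$, so $X=X'$ on the nose (Lemma \ref{lem:noassum}). Relatedly, your assertion that the images of the reflexive hulls are the ideals $c_1(\mf{X}_{\mf{p}}^{\psi})$, $c_1(\mf{X}_{\bar{\mf{p}}}^{\psi})$ ``up to pseudo-null modules'' is exactly the content of Proposition \ref{unramoutp} and itself uses the pseudo-nullity of $X^{\omega\psi^{-1}}$; it is not a formal consequence of reflexivity.

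More seriously, your identification of $\ker(f)$ is wrong, and with it the entire last assertion of the theorem. The $\La_{\psi}$-torsion submodule of $\mf{X}^{\psi}$ is $\rE^1_{\La}(\rH^2_{\Iw}(K,\zp))^{\psi}$, which vanishes under the pseudo-nullity hypothesis (Lemma \ref{lem:H2seqnew} together with \eqref{reflect}), and $\rE^1_{\La}(\mc{K})(1)^{\psi}=0$ because $r_{\mf{p}}=2$; so your description would force $\ker(f)=0$ for every $\psi$, contradicting the case $\psi=\omega$. The snake lemma actually identifies $\ker(f)$ with the image in $X^{\psi}$ of the kernel of $\rE^2_{\La}(\mc{K})(1)^{\psi}\to\rE^2_{\La}(\rH^2_{\Iw}(K,\zp))^{\psi}$, and by Lemma \ref{K0}(c) (the sequence $0\to\zp\to\rE^2_{\La}(\mc{K})\to\rE^2_{\La}(\mc{K}_0)\to 0$) that kernel is $\zp(1)^{\psi}$, not $\rE^1_{\La}(\mc{K}_0)(1)^{\psi}$, which is zero. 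This leaves the genuinely delicate work undone: you must show that the image of $\zp(1)\to X^{\omega}$ is \emph{finite} cyclic --- a priori it could be an infinite copy of $\zp(1)$ --- which the paper proves by showing $I_{\mf{p}}^{\omega}$ lies in $I\cdot (I_{\mf{p}}^{\omega})^{**}$ for the ideal $I$ of $\La$ with $\La/I\cong\zp(1)$ (alternatively one can appeal to Proposition \ref{nozp1} and the remark following it); and you must then identify $\ker(f)$ with $X^{\psi}_{\fin}$ using Lemma \ref{lem:dumbfinite}, i.e., that $\Omega^{\psi}/(c_1(\mf{X}_{\mf{p}}^{\psi})+c_1(\mf{X}_{\bar{\mf{p}}}^{\psi}))$ has no nonzero finite submodule. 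Without these steps, the claims that $X^{\psi}_{\fin}=0$ for $\psi\neq\omega$ and that $X^{\omega}_{\fin}$ is cyclic are not established.
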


We require some lemmas. 

\begin{lemma} \label{lem:noassum}
	The completely split Iwasawa module $X'$ over $K$ is equal to the unramified Iwasawa module $X$ over $K$, and
	the map $I_{\mf{p}} \to D_{\mf{p}}$ is an isomorphism.
	Moreover, we have $\rE_{\La}^1(\mc{K}_{\mf{p}}) = 0$ and  $\rE_{\La}^2(\mc{K}_{\mf{p}}) \cong \mc{K}_{\mf{p}}^{\iota}$.
\end{lemma}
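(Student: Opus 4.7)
The plan is to deduce all four assertions from the general results of Subsection 4.2, the key input being that the decomposition rank $r_{\mf{p}} = \rank_{\zp} \Gamma_{\mf{p}}$ equals $2$ (and likewise $r_{\bar{\mf{p}}} = 2$). First I would verify this rank equality. Since $p$ splits in $E$, we have $E_{\mf{p}} = \qp$, and local class field theory gives $\Gal(\qp^{\mathrm{ab},p}/\qp) \cong \zp^2$. Both the cyclotomic $\zp$-extension of $E$ and the $\mf{p}$-ramified $\zp$-extension of $E$ are contained in $\widetilde{E}$, and their completions at $\mf{p}$ together generate this maximal $\zp^2$-extension of $\qp$. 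It follows that the image of $\mc{G}_{\mf{p}}$ in $\Gal(K_{\mf{p}}^{\mathrm{ab},p}/\qp)$ has $\zp$-rank $2$, and since $\mc{G}/\Gamma = \Delta$ is finite of order prime to $p$, we conclude $r_{\mf{p}} = 2 = r$. The same argument at $\bar{\mf{p}}$ gives $r_{\bar{\mf{p}}} = 2$.

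Given $r_{\mf{p}} = 2$, the formulas $\rE_{\La}^1(\mc{K}_{\mf{p}}) = 0$ and $\rE_{\La}^2(\mc{K}_{\mf{p}}) \cong \mc{K}_{\mf{p}}^{\iota}$ are immediate from Lemma \ref{extKp}, which asserts $\rE_{\La}^j(\mc{K}_{\mf{p}}) \cong (\mc{K}_{\mf{p}}^{\iota})^{\delta_{r_{\mf{p}},j}}$. For the isomorphism $I_{\mf{p}} \cong D_{\mf{p}}$, I would apply Lemma \ref{inertia}: since $K_{\mf{p}}$ realizes the full pro-$p$ abelian closure of $\qp$, in particular it contains the unramified $\zp$-extension of $E_{\mf{p}}$, so the constant $\epsilon_{\mf{p}}$ in that lemma is $0$. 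Hence $\mc{K}_{\mf{p}}^{\epsilon_{\mf{p}}} = 0$ and the upper row of the diagram in Lemma \ref{inertia} collapses to give the desired isomorphism.

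Finally, for the equality $X = X'$, I would invoke Lemma \ref{unramsplit}(b). Its hypothesis is that $K_{\mf{p}}$ contains the unramified $\zp$-extension of $E_{\mf{p}}$ for each prime $\mf{p} \in S_f$ lying over $p$, which was established above for both $\mf{p}$ and $\bar{\mf{p}}$. The natural surjection $X \twoheadrightarrow X'$ is therefore an isomorphism. I do not anticipate any real obstacle: the whole proof reduces to the purely local observation that in the split case, the completion $K_{\mf{p}}$ of the compositum of all $\zp$-extensions of $E$ already realizes the maximal pro-$p$ abelian extension of $\qp$, so the local decomposition groups are as large as possible.
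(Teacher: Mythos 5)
Your proof is correct and takes essentially the same route as the paper's: establish that $r_{\mf{p}} = r_{\bar{\mf{p}}} = 2$ (equivalently, that $K_{\mf{p}}$ contains the unramified $\zp$-extension of $E_{\mf{p}} = \qp$), then quote Lemma \ref{unramsplit}(b) for $X = X'$, Lemma \ref{inertia} with $\epsilon_{\mf{p}} = 0$ for $I_{\mf{p}} \cong D_{\mf{p}}$, and Lemma \ref{extKp} with $r_{\mf{p}} = 2$ for the computation of the $\Ext$-groups of $\mc{K}_{\mf{p}}$. The one caveat is your assertion that the completions at $\mf{p}$ of the cyclotomic and the $\mf{p}$-ramified $\zp$-extensions of $E$ generate the full local $\zp^2$-extension of $\qp$: this is true, but it is precisely the nontrivial content of $r_{\mf{p}} = 2$ (a priori the $\mf{p}$-ramified extension could be locally contained in the cyclotomic direction, as both are ramified at $\mf{p}$), so it merits a word of justification --- for instance, the Frobenius at $\mf{p}$ has infinite order in the $\bar{\mf{p}}$-ramified $\zp$-extension because a generator $\pi$ of $\mf{p}^{h}$ is not a root of unity and hence has infinite order in the local units at $\bar{\mf{p}}$ modulo the (finite) image of the global units; the paper's own proof is comparably terse, simply asserting that $\mf{p}$ is infinitely ramified with infinite residue field extension in $\widetilde{E}$.
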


\begin{proof}
	The prime $\mf{p}$ is infinitely ramified and has infinite residue
	field extension in $\widetilde{E}$, so $r_{\mf{p}} = 2$.  
	The statements follow from Lemma \ref{unramsplit}(b), Lemma \ref{inertia}, and Lemma \ref{extKp} respectively.
\end{proof}

Note that $\mc{K} = \mc{K}_{\mf{p}} \oplus \mc{K}_{\bar{\mf{p}}}$ and $\mc{K}_0 = \ker(\mc{K} \to \zp)$ by the definition of
Remark \ref{rem:needlater}. 

\begin{lemma} \label{lem:H2seqnew}
	If $X^{\omega\psi^{-1}}$ is pseudo-null as a $\La_{\psi}$-module, then so is $\rH^2_{\Iw}(K,\zp)^{\omega\psi^{-1}}$,
	and we have an exact sequence of $\Omega$-modules
	$$
		0 \to \zp(1)^{\psi} \to (\mc{K}^{\omega\psi^{-1}})^{\iota}(1) \to \rE^2_{\La}(\rH^2_{\Iw}(K,\zp))^{\psi} \to 
		\rE_{\La}^2(X^{\omega\psi^{-1}})(1) \to 0.
	$$
\end{lemma}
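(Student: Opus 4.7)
The plan is to invoke the long exact $\Ext$-sequence \eqref{H2Ext} associated to the Poitou--Tate sequence \eqref{H2}, compute the terms involving $\mc{K}_0$ explicitly using the lemmas from the previous subsection, take $\psi$-eigenspaces, and splice the two resulting short exact sequences. First I would use Lemma \ref{lem:noassum} to identify $X' = X$ and to note that $r_{\mf{p}} = r_{\bar{\mf{p}}} = 2 = r$, so that Lemma \ref{extKp} yields $\rE^1_{\La}(\mc{K}) = 0$ and $\rE^2_{\La}(\mc{K}) \cong \mc{K}^{\iota}$. Applying Lemma \ref{K0} (noting $r = r_{\mf{p}}$ for every $\mf{p} \in S_f$) then gives $\rE^1_{\La}(\mc{K}_0) = 0$, vanishing $\rE^j_{\La}(\mc{K}_0) = 0$ for $j \ge 3$, and a short exact sequence
$$
0 \to \zp \to \mc{K}^{\iota} \to \rE^2_{\La}(\mc{K}_0) \to 0.
$$

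Substituting these vanishings into \eqref{H2Ext}, the relevant portion collapses to
$$
0 \to \rE^1_{\La}(\rH^2_{\Iw}(K,\zp)) \to \rE^1_{\La}(X)(1) \to \rE^2_{\La}(\mc{K}_0)(1) \to \rE^2_{\La}(\rH^2_{\Iw}(K,\zp)) \to \rE^2_{\La}(X)(1) \to 0.
$$
Taking $\psi$-eigenspaces is exact because $|\Delta|$ is prime to $p$, and applying the reflection identity \eqref{reflect} rewrites both $X$-terms as $\rE^j_{\La}(X^{\omega\psi^{-1}})(1)$. The pseudo-nullity hypothesis on $X^{\omega\psi^{-1}}$ together with Proposition \ref{prop:bjork} then forces $\rE^1_{\La}(X^{\omega\psi^{-1}}) = 0$, which gives $\rE^1_{\La}(\rH^2_{\Iw}(K,\zp))^{\psi} = 0$ and the short exact sequence
$$
0 \to (\rE^2_{\La}(\mc{K}_0)(1))^{\psi} \to \rE^2_{\La}(\rH^2_{\Iw}(K,\zp))^{\psi} \to \rE^2_{\La}(X^{\omega\psi^{-1}})(1) \to 0.
$$
Separately, twisting the sequence $0 \to \zp \to \mc{K}^{\iota} \to \rE^2_{\La}(\mc{K}_0) \to 0$ by $(1)$, taking $\psi$-eigenspaces, and using the identity $(\mc{K}^{\iota}(1))^{\psi} \cong (\mc{K}^{\omega\psi^{-1}})^{\iota}(1)$ produces
$$
0 \to \zp(1)^{\psi} \to (\mc{K}^{\omega\psi^{-1}})^{\iota}(1) \to (\rE^2_{\La}(\mc{K}_0)(1))^{\psi} \to 0.
$$
Splicing these two three-term sequences yields the asserted four-term exact sequence.

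For the pseudo-nullity claim, I would argue that $\rH^2_{\Iw}(K,\zp)^{\omega\psi^{-1}}$ is $\La$-torsion since, in the $(-1)$-twist of \eqref{H2}, it is an extension of $(X(-1))^{\omega\psi^{-1}}$ (an eigenspace of the torsion module $X$) and $(\mc{K}_0(-1))^{\omega\psi^{-1}}$ (pseudo-null, hence torsion). Combined with the already-established vanishing of $\rE^1_{\La}(\rH^2_{\Iw}(K,\zp))^{\psi}$, which via \eqref{reflect} translates to $\rE^1_{\La}(\rH^2_{\Iw}(K,\zp)^{\omega\psi^{-1}}) = 0$, another application of Proposition \ref{prop:bjork} forces pseudo-nullity. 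The argument is essentially a diagram chase using already-assembled inputs, so the main obstacle will simply be careful bookkeeping of the Tate twists and Teichm\"uller eigenspace shifts that intervene when passing between $X$, $\mc{K}_0$, and $\rH^2_{\Iw}(K,\zp)$ versus $\rH^2_{\Iw}(K,\zp(1))$; no new ingredient beyond Lemmas \ref{extKp}, \ref{K0}, \ref{lem:noassum}, the identity \eqref{reflect}, and Proposition \ref{prop:bjork} is required.
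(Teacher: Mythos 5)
Your argument is correct and follows essentially the same route as the paper's proof: Lemmas \ref{lem:noassum} and \ref{K0} to compute $\rE^i_{\La}(\mc{K}_0)$, the sequence \eqref{H2Ext} with $X = X'$, passage to $\psi$-eigenspaces via \eqref{reflect} and the pseudo-nullity hypothesis, and splicing with the twisted sequence for $\mc{K}_0$. The only difference is that you make explicit the pseudo-nullity of the relevant eigenspace of $\rH^2_{\Iw}(K,\zp)$ (torsion plus vanishing of $\rE^1_{\La}$ forces grade at least $2$), which the paper leaves implicit; for that step the cleanest reference is the grade--height equality for the Cohen--Macaulay ring $\La$ stated in the appendix rather than Proposition \ref{prop:bjork} itself.
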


\begin{proof} 
    	Lemmas \ref{lem:noassum} and \ref{K0} tell us that $E_{\La}^i(\mc{K}_0)(1) = 0$ for $i \neq 2$ and provide
    	an exact sequence 
	\begin{equation}
	\label{eq:betterdisplay}
	0 \to \zp(1) \to \mc{K}^{\iota}(1) \to \rE_{\La}^2(\mc{K}_0)(1) \to 0.
	\end{equation}  
	The exact sequence \eqref{H2Ext} has the form
	$$
		0 \to \rE_{\La}^1(\rH^2_{\Iw}(K,\zp)) \to \rE_{\La}^1(X)(1) \to \rE_{\La}^2(\mc{K}_0)(1) 
		\to \rE_{\La}^2(\rH^2_{\Iw}(K,\zp)) \to \rE_{\La}^2(X)(1) \to 0,
	$$
	noting that $X = X'$ by Lemma \ref{lem:noassum}.
	The $\psi$-eigenspaces of the first two terms are zero by \eqref{reflect} and the pseudo-nullity
	of $X^{\omega\psi^{-1}}$, yielding the first assertion and
	leaving us with a short exact sequence.  Splicing this together with the
	$\psi$-eigenspace of the sequence \eqref{eq:betterdisplay} and applying \eqref{reflect} to the last
	term, we obtain the exact sequence of the statement.
\end{proof}

The main conjecture for imaginary quadratic fields is concerned with the unramified outside $\mf{p}$
Iwasawa module $\mf{X}_{\mf{p}}$ over $K$.  For it, we have the following result on first Chern classes.

\begin{proposition} \label{unramoutp}
	If $X^{\omega\psi^{-1}}$ is pseudo-null as a $\La_{\psi}$-module, then there is an injective pseudo-isomorphism
	$\mf{X}_{\mf{p}}^{\psi} \to \Omega^{\psi}/c_1(\mf{X}_{\mf{p}}^{\psi})$
	of $\Omega$-modules.
\end{proposition}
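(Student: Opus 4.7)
The plan is to apply the snake lemma to a commutative diagram that compares $\mf{X}^{\psi}$ and $I_{\bar{\mf{p}}}^{\psi}$ with their respective reflexive hulls.  Because $E$ is imaginary quadratic, we have $r_2(E)=1$ and $r_1^{\psi}(E)=0$, so Lemma \ref{lem:rankone}(a) gives $\rank_{\La_{\psi}}\mf{X}^{\psi}=1$; since $p$ splits in $E$, Lemma \ref{lem:rankone}(b) together with Lemma \ref{lem:noassum} (which identifies $I_{\bar{\mf{p}}}$ with $D_{\bar{\mf{p}}}$) gives $\rank_{\La_{\psi}}I_{\bar{\mf{p}}}^{\psi}=[E_{\bar{\mf{p}}}:\qp]=1$.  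Consequently the reflexive hulls $(\mf{X}^{\psi})^{**}$ and $(I_{\bar{\mf{p}}}^{\psi})^{**}$ are free of rank one over the regular local UFD $\La_{\psi}$.

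The pseudo-nullity hypothesis on $X^{\omega\psi^{-1}}$, via Lemma \ref{lem:H2seqnew}, yields $\rE^{1}_{\La}(\rH^{2}_{\Iw}(K,\zp))^{\psi}=0$, so by Proposition \ref{r2case} the canonical map $\mf{X}^{\psi}\hookrightarrow(\mf{X}^{\psi})^{**}$ is an injective pseudo-isomorphism; likewise Lemma \ref{lem:noassum} combined with Theorem \ref{thm:localseq} provides an injective pseudo-isomorphism $I_{\bar{\mf{p}}}^{\psi}\hookrightarrow(I_{\bar{\mf{p}}}^{\psi})^{**}$.  Set $J=\im(I_{\bar{\mf{p}}}^{\psi}\to\mf{X}^{\psi})$, so that by definition $\mf{X}_{\mf{p}}^{\psi}=\mf{X}^{\psi}/J$.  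Rubin's theorem (Theorem \ref{thm:imquadmc}) gives that $\mf{X}_{\mf{p}}^{\psi}$ is $\La_{\psi}$-torsion, so $J$ has $\La_{\psi}$-rank one and the induced map $J^{**}\hookrightarrow(\mf{X}^{\psi})^{**}$ is an injection of free rank-one modules with cokernel $\La_{\psi}/(f)$ for some nonzero $f\in\La_{\psi}$.  Applying the snake lemma to
\[
\SelectTips{cm}{}\xymatrix{
0\ar[r] & J\ar[r]\ar[d]^{\alpha} & \mf{X}^{\psi}\ar[r]\ar[d]^{\beta} & \mf{X}_{\mf{p}}^{\psi}\ar[r]\ar[d]^{\gamma} & 0\\
0\ar[r] & J^{**}\ar[r] & (\mf{X}^{\psi})^{**}\ar[r] & \La_{\psi}/(f)\ar[r] & 0
}
\]
and using Greenberg's theorem \cite{Gr78} that $\mf{X}$ has no nonzero pseudo-null $\La$-submodule (which implies the same for the submodule $J$), we conclude that $\alpha$ and $\beta$ are injective with pseudo-null cokernels, so that $\gamma$ is a pseudo-isomorphism.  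Comparing characteristic ideals then forces $(f)=c_1(\mf{X}_{\mf{p}}^{\psi})$.

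The main obstacle will be to upgrade $\gamma$ from a pseudo-isomorphism to an \emph{injection}.  This is equivalent to showing that $\mf{X}_{\mf{p}}^{\psi}$ has no nonzero pseudo-null $\La_{\psi}$-submodule, or equivalently that $J=\mf{X}^{\psi}\cap J^{**}$ inside $(\mf{X}^{\psi})^{**}$.  I expect this to follow either from an extension of Greenberg's no-pseudo-null-submodules argument to the $\mf{p}$-ramified Iwasawa module $\mf{X}_{\mf{p}}$ in the split imaginary quadratic setting, or from a direct Galois-theoretic argument analyzing how the image of inertia at $\bar{\mf{p}}$ sits inside the unramified-outside-$p$ Iwasawa module $\mf{X}$.
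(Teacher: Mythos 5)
Your snake-lemma setup does produce a pseudo-isomorphism: with $J=\im(I_{\bar{\mf{p}}}^{\psi}\to\mf{X}^{\psi})$ one gets pseudo-null kernel and cokernel for $\gamma\colon \mf{X}_{\mf{p}}^{\psi}\to\La_{\psi}/(f)$ and hence $(f)=c_1(\mf{X}_{\mf{p}}^{\psi})$. (Two small repairs: injectivity of $\alpha\colon J\to J^{**}$ is not what \cite{Gr78} gives you --- the kernel of the map to the reflexive hull is the torsion submodule, which need not be pseudo-null --- but it does hold here because $\mf{X}^{\psi}$ is torsion-free under the hypothesis, its torsion submodule being $\rE^1_{\La}(\rH^2_{\Iw}(K,\zp))^{\psi}=0$ by Corollary \ref{cor:iwseq} and Lemma \ref{lem:H2seqnew}; also, the torsionness of $\mf{X}_{\mf{p}}^{\psi}$ is \cite[Theorem 5.3(ii)]{Rubin1}, not the main conjecture.)

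The genuine gap is exactly the point you flag and postpone: the injectivity of $\gamma$, i.e.\ that $\mf{X}_{\mf{p}}^{\psi}$ has no nonzero pseudo-null submodule. This is the substantive content of the proposition beyond a pseudo-isomorphism, and it does not follow from \cite{Gr78}: that result concerns $\mf{X}$, and a quotient such as $\mf{X}_{\mf{p}}^{\psi}=\mf{X}^{\psi}/J$ can perfectly well acquire pseudo-null submodules, so "extending Greenberg's argument" is not a deferral one can make --- indeed the absence of pseudo-null submodules here uses the hypothesis on $X^{\omega\psi^{-1}}$ and is essentially equivalent to what is to be proved. The paper gets injectivity for free from the snake lemma by choosing a different top row: instead of $J\hookrightarrow J^{**}$ (whose cokernel $J^{**}/J$ you cannot control), it keeps the local module and uses the exact sequence $0\to I_{\bar{\mf{p}}}^{\psi}\to (I_{\bar{\mf{p}}}^{\psi})^{**}\to \rE^2_{\La}(\mc{K}_{\bar{\mf{p}}}^{\omega\psi^{-1}})(1)\to 0$ coming from Theorem \ref{thm:localseq}, Lemma \ref{lem:noassum} and Lemma \ref{extKp}, mapped to the bottom row $0\to\mf{X}^{\psi}\to(\mf{X}^{\psi})^{**}\to\rE^2_{\La}(\rH^2_{\Iw}(K,\zp))^{\psi}\to 0$ (exact by Lemma \ref{lem:H2seqnew} and Proposition \ref{r2case}); this is diagram \eqref{onepseq}. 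The crucial extra input is that the rightmost vertical map $(\mc{K}_{\bar{\mf{p}}}^{\omega\psi^{-1}})^{\iota}(1)\to\rE^2_{\La}(\rH^2_{\Iw}(K,\zp))^{\psi}$ is injective: by Lemma \ref{lem:H2seqnew} the kernel of $(\mc{K}^{\omega\psi^{-1}})^{\iota}(1)\to\rE^2_{\La}(\rH^2_{\Iw}(K,\zp))^{\psi}$ is $\zp(1)^{\psi}$, embedded via the norm elements of \emph{both} local summands (cf.\ Lemma \ref{K0}), so it meets the single summand at $\bar{\mf{p}}$ trivially. With the third-column kernel vanishing, the snake lemma yields $0\to\mf{X}_{\mf{p}}^{\psi}\to\coker(\phi_{\bar{\mf{p}}}^{\psi})\to C\to 0$ with $C$ pseudo-null, which is the injective pseudo-isomorphism asserted. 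So the missing idea in your write-up is precisely this local-duality control of $(I_{\bar{\mf{p}}}^{\psi})^{**}/I_{\bar{\mf{p}}}^{\psi}$ and of the induced map on $\rE^2$-terms, which is what replaces the unproven "no pseudo-null submodules in $\mf{X}_{\mf{p}}$" statement.
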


\begin{proof}
	We apply the snake lemma to the $\psi$-eigenspaces of the 
	diagram of Theorem \ref{thm:localseq}.  By Lemma \ref{lem:noassum} and the pseudo-nullity in 
	Lemma \ref{lem:H2seqnew}, one has a commutative diagram
        \begin{equation} \label{onepseq}
    		\SelectTips{cm}{} \xymatrix@C=15pt{		
   		 0 \ar[r] &
    		I_{\bar{\mf{p}}}^{\psi} \ar[r] \ar[d] & (I_{\bar{\mf{p}}}^{\psi})^{**}
    		\ar[r]  \ar[d]^{\phi_{\bar{\mf{p}}}^{\psi}} & \rE_{\La}^2(\mc{K}^{\omega\psi^{-1}}_{\bar{\mf{p}}})(1) \ar[r] \ar[d] & 0 \\
    		0 \ar[r] & 
		\mf{X}^{\psi} \ar[r] & (\mf{X}^{\psi})^{**} \ar[r] & \rE_{\La}^2(\rH^2_{\Iw}(K,\zp))^{\psi} \ar[r]&0,
    		}
  	\end{equation}
	the right exactness of the bottom row following from Proposition \ref{r2case}.  	
	We immediately obtain an exact sequence
	$$
		0 \to \mf{X}_{\mf{p}}^{\psi} \to \coker (\phi_{\bar{\mf{p}}}^{\psi}) \to C \to 0
	$$
	for $\phi_{\bar{\mf{p}}}^{\psi}$ defined to be as in the diagram \eqref{onepseq},
	with $C$ a pseudo-null $\Omega$-module that by Lemmas \ref{extKp} and \ref{lem:H2seqnew} fits in an exact sequence
	$$
		0 \to \zp(1)^{\psi} \to (\mc{K}_{\mf{p}}^{\omega\psi^{-1}})^{\iota}(1) \to C \to 
		\alpha(X^{\omega\psi^{-1}})(1) \to 0.
	$$
	The map
    	$\phi_{\bar{\mf{p}}}^{\psi} \colon (I^{\psi}_{\bar{\mf{p}}})^{**} \to (\mf{X}^{\psi})^{**}$ is an injective homomorphism
    	of free rank one $\La_{\psi}$-modules.  Since $C$ is pseudo-null, the image of $\phi_{\bar{\mf{p}}}^{\psi}$ is 
	$c_1(\mf{X}_{\mf{p}}^{\psi})$, as required.
\end{proof}

We now prove our main result.

\begin{proof}[Proof of Theorem \ref{thm:mainresult1}]   
    Consider \eqref{Tdiag} for $T = \{\mf{p},\bar{\mf{p}}\}$. From \eqref{onepseq} one has
    \begin{equation}
    \label{eq:diagram1new}
    		\SelectTips{cm}{} \xymatrix{		
    0 \ar[r]  &
    		I_{\mf{p}}^{\psi} \oplus I_{\bar{\mf{p}}}^{\psi} \ar[r] \ar[d] & (I_{\mf{p}}^{\psi})^{**} \oplus 
		(I_{\bar{\mf{p}}}^{\psi})^{**}
    		\ar[r]  \ar[d]^{\phi_{\mf{p}}^{\psi} + \phi_{\bar{\mf{p}}}^{\psi}}& 
    		\rE_{\La}^2(\mc{K}^{\omega\psi^{-1}})(1) \ar[d] \ar[r] & 0&\\
    		0 \ar[r] & \mf{X}^{\psi} \ar[r] & (\mf{X}^{\psi})^{**} \ar[r] & \rE^2_{\La}(\rH^2_{\Iw}(K,\zp))^{\psi} \ar[r]&0.\\
    		}
    \end{equation}
    The snake lemma applied to \eqref{eq:diagram1new} produces an exact sequence
    \begin{equation}
    \label{eq:maineq} 
	\zp(1)^{\psi} \to 
    	X^{\psi} \to 
    	\frac{(\mf{X}^{\psi})^{**} }{(I_{\mf{p}}^{\psi})^{**}+ ( I_{\bar{\mf{p}}}^{\psi})^{**}}
    	\to \alpha(X^{\omega\psi^{-1}})(1)  \to 0,
    \end{equation}
    where the first and last terms follow from the exact sequence of Lemma \ref{lem:H2seqnew}.
    In the proof of Proposition \ref{unramoutp}, we showed that $\phi_{\bar{\mf{p}}}^{\psi}$ is injective with 
    image $c_1(\mf{X}_{\mf{p}}^{\psi})$
    in the free rank one $\Omega^{\psi}$-module $(\mf{X}^{\psi})^{**}$, and similarly upon switching
    $\mf{p}$ and $\bar{\mf{p}}$.
    Thus, we have an isomorphism
    \begin{equation}
    \label{eq:fix1}
    \frac{(\mf{X}^{\psi})^{**} }{(I_{\mf{p}}^{\psi})^{**}+ ( I_{\bar{\mf{p}}}^{\psi})^{**}} \cong \frac{\Omega^{\psi}}{c_1(\mf{X}_{\mf{p}}^{\psi}) + c_1(\mf{X}_{\bar{\mf{p}}}^{\psi})}.
    \end{equation}
    
    If $\psi \neq \omega$, then  $\zp(1)^{\psi} = 0$.  For $\psi = \omega$, 
    we claim that the image of the map $\zp(1) \to X^{\omega}$ of \eqref{eq:maineq} is finite cyclic.  
    Since
    $\Omega^{\psi}/(c_1(\mf{X}_{\mf{p}}^{\psi}) + c_1(\mf{X}_{\bar{\mf{p}}}^{\psi}))$ has no
    nontrivial finite submodule by Lemma \ref{lem:dumbfinite}, the result then follows from \eqref{eq:maineq} 
    and \eqref{eq:fix1}.
    
    To prove the claim, we identify
    $I_{\mf{p}}^{\omega}$ and $I_{\bar{\mf{p}}}^{\omega}$ with their isomorphic images in $\mf{X}^{\omega}$, so
    the kernel of $I_{\mf{p}}^{\omega} \oplus I_{\bar{\mf{p}}}^{\omega}  \to \mf{X}^{\omega}$ is identified 
    with $(I_{\mf{p}} \cap I_{\bar{\mf{p}}})^{\omega}$, and similarly with the double duals.
    By the exact sequence    
    $$0 \to I_{\mf{p}}^{\omega} \to (I_{\mf{p}}^{\omega})^{**} \to \zp\ps{\Gamma/\Gamma_{\mf{p}}}^{\iota}(1) \to 0$$
    that follows from \eqref{eq:torturous},
    we see that $I_{\mf{p}}^{\omega}$ is contained in the ideal $I$ of $\Lambda \cong (I_{\mf{p}}^{**})^{\omega}$ with $\Lambda/I
    \cong \zp(1)$.  This means that 
    the intersection $(I_{\mf{p}}\cap I_{\bar{\mf{p}}})^{\omega}$ is contained in $I$ times the
    free rank one $\Lambda$-submodule $(I_{\mf{p}}^{**}\cap I_{\bar{\mf{p}}}^{**})^{\omega}$ of $(\mf{X}^{\omega})^{**}$.
    As the kernel of $\zp(1) \to X^{\omega}$ is isomorphic to 
    $(I_{\mf{p}}^{**}\cap I_{\bar{\mf{p}}}^{**})^{\omega}/(I_{\mf{p}}\cap I_{\bar{\mf{p}}})^{\omega}$, which has
    $\zp(1)$ as a quotient, the claim follows.  
\end{proof}

Let $\Omega_W = W\ps{\mc{G}}$ and $\Lambda_W = W\ps{\Gamma}$, where $W$ denotes the Witt vectors of $\bar{\mb{F}}_p$.
Let $\mc{L}_{\mf{p},\psi}$ denote the element of $\La_W \cong \Omega_W^{\psi}$ that determines the two-variable $p$-adic $L$-function for $\mf{p}$ and $\omega\psi^{-1}$.  Let $X_W^{\psi}$ denote the completed tensor product of $X^{\psi}$ with $W$
over $\mc{O}_{\psi}$.
Together with the Iwasawa main conjecture for $K$,  
Theorem \ref{thm:mainresult1} implies the following result.

\begin{theorem}
\label{thm:imagquad} Suppose that $E$ is imaginary quadratic and $p$ splits in $E$.
If both $X^{\psi}$ and $X^{\omega\psi^{-1}}$
are pseudo-null $\La_{\psi}$-modules,  
then there is an equality of second Chern classes
\begin{equation}
\label{eq:chernequal}
c_2\left (\frac{\Lambda_W}{(\mc{L}_{\mf{p},\psi}, \mc{L}_{\bar{\mf{p}}, \psi} )}\right ) =
c_2(X_W^{\psi}) + c_2((X_W^{\omega\psi^{-1}})^{\iota}(1)).
\end{equation}
These Chern classes 
have a characteristic symbol
with component at a codimension one prime $P$ of $\La_W$ equal to the Steinberg symbol
$\{ \mc{L}_{\mf{p}, \psi}, \mc{L}_{\bar{\mf{p}}, \psi} \}$
if $\mc{L}_{\bar{\mf{p}}, \psi}$ is not a unit at $P$, and with other components trivial.
\end{theorem}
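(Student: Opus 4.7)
The plan is to extend scalars in the exact sequence of Theorem \ref{thm:mainresult1} along the flat map $\mc{O}_\psi \hookrightarrow W$ and then invoke Rubin's two-variable main conjecture (Theorem \ref{thm:imquadmc}) to identify $c_1(\mf{X}_\mf{p}^\psi) \cdot \Lambda_W = (\mc{L}_{\mf{p},\psi})$ and similarly for $\bar{\mf{p}}$. This will produce the short exact sequence of $\Lambda_W$-modules
\begin{equation*}
0 \to W \cotimes{\mc{O}_\psi} (X/X_{\mr{fin}})^\psi \to \frac{\Lambda_W}{(\mc{L}_{\mf{p},\psi}, \mc{L}_{\bar{\mf{p}},\psi})} \to W \cotimes{\mc{O}_\psi} \alpha(X^{\omega\psi^{-1}})(1) \to 0,
\end{equation*}
in which, thanks to the pseudo-nullity hypothesis on $X^\psi$, all three terms are genuinely pseudo-null (equivalently, the two Katz $L$-functions are coprime in the UFD $\Lambda_W$).

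I would then apply additivity of the localized second Chern class on short exact sequences of modules supported in codimension at least two to obtain
\begin{equation*}
c_2\!\left(\frac{\Lambda_W}{(\mc{L}_{\mf{p},\psi}, \mc{L}_{\bar{\mf{p}},\psi})}\right) = c_2((X_W/X_{W,\mr{fin}})^\psi) + c_2(\alpha(X_W^{\omega\psi^{-1}})(1)).
\end{equation*}
Because $X_{W,\mr{fin}}^\psi$ is supported only at the unique maximal ideal of $\Lambda_W \cong W\ps{t_1,t_2}$ (which has codimension three) while $c_2$ records only contributions at height-two primes, the first right-hand summand collapses to $c_2(X_W^\psi)$. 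For the second summand I would invoke the fact used in the strategy outline of Subsection \ref{s:strategy} that for a pseudo-null $\Lambda$-module $M$ the Iwasawa adjoint $\alpha(M) = \rE^2_\Lambda(M)$ has the same class as $M^\iota$ in the Grothendieck group of pseudo-null modules modulo finite ones; since $c_2$ factors through this quotient, this gives $c_2(\alpha(X_W^{\omega\psi^{-1}})(1)) = c_2((X_W^{\omega\psi^{-1}})^\iota(1))$ and hence the desired identity \eqref{eq:chernequal}.

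For the assertion about characteristic symbols, I would apply Proposition \ref{prop:2ndsymb} with $f_1 = \mc{L}_{\mf{p},\psi}$ and $f_2 = \mc{L}_{\bar{\mf{p}},\psi}$, in its symmetric form from Remark \ref{rem:symmetry} so that the nonzero component sits at primes containing $\mc{L}_{\bar{\mf{p}},\psi}$. Since the Katz $L$-functions need not themselves be prime in $\Lambda_W$, I would first factor each into its prime-power components and appeal to bilinearity of the Steinberg symbol in $\rK_2(Q(\Lambda_W))$ together with additivity of the tame symbol to reduce to the prime case handled by the proposition. This will yield a characteristic symbol whose component at any height-one prime $P$ at which $\mc{L}_{\bar{\mf{p}},\psi}$ is not a unit is precisely $\{\mc{L}_{\mf{p},\psi}, \mc{L}_{\bar{\mf{p}},\psi}\}$, and trivial elsewhere, matching the statement.

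The principal obstacle I anticipate is the rigorous verification that $c_2(\alpha(M)(1)) = c_2(M^\iota(1))$ for pseudo-null $M$: although this is a standard Iwasawa-theoretic principle, one must carefully track the interplay between the $\Ext$-adjoint, the involution $\iota$, and the cyclotomic Tate twist, leaning on the $\Ext$-computations developed in the Appendix. Once that identification is in hand, the remainder of the argument is a formal bookkeeping of second Chern classes and the bilinear symbol computation sketched above.
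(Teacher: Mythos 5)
Your proposal is correct and follows essentially the same route as the paper's proof: base-change the exact sequence of Theorem \ref{thm:mainresult1} to $\La_W$, identify $c_1(\mf{X}_{\mf{p}}^{\psi})\La_W$ and $c_1(\mf{X}_{\bar{\mf{p}}}^{\psi})\La_W$ with the ideals generated by the Katz $p$-adic $L$-functions via the main conjecture, use additivity of $c_2$ on the resulting short exact sequence together with $c_2(\alpha(X_W^{\omega\psi^{-1}})(1)) = c_2((X_W^{\omega\psi^{-1}})^{\iota}(1))$ --- your anticipated obstacle, which is exactly Proposition \ref{prop:adjoint} of the Appendix and hence already available --- and conclude with Proposition \ref{prop:2ndsymb} and Remark \ref{rem:symmetry}. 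Your additional reduction to prime elements (the Katz $L$-functions need not be prime) is a point the paper leaves implicit; your bilinearity-of-symbols argument, supplemented by the routine additivity of localized lengths under factorization of $\mc{L}_{\bar{\mf{p}},\psi}$, handles it correctly.
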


\begin{proof} 
By \cite[Corollary III.1.11]{deShalit}, 
the main conjecture as proven in \cite[Theorem 2(i)]{Rubin2} implies that 
that $\mc{L}_{\mf{p}, \psi}$ generates $c_1(\mf{X}_{\mf{p}}^{\psi})\La_W$.
We have $$c_2(\alpha(X_W^{\omega\psi^{-1}})(1)) = c_2((X_W^{\omega\psi^{-1}})^{\iota}(1))$$
by Proposition \ref{prop:adjoint}.  We then apply Proposition \ref{prop:2ndsymb}.
\end{proof}

\begin{remark}
	Supposing that both $X^{\psi}$ and $X^{\omega\psi^{-1}}$ are pseudo-null, 
	the Tate twist of the result of applying $\iota$ to the sequence \eqref{eq:maineq1} reads exactly as the analogous sequence
	for the character $\omega\psi^{-1}$ in place of $\psi$.
	The functional equation of
	Lemma \ref{relationlplpbar}(b) yields an isomorphism
	$$
		(\Omega_W^{\psi}/(\mc{L}_{\mf{p}, \psi}, \mc{L}_{\bar{\mf{p}}, \psi} ))^{\iota}(1)
		\cong 
		\Omega_W^{\omega \psi^{-1}}/(\mc{L}_{\bar{\mf{p}}, \omega\psi^{-1}},\mc{L}_{\mf{p}, \omega\psi^{-1}}),
	$$
	 of the middle terms of these sequences.
\end{remark}

This implies the following codimension two Iwasawa-theoretic analogue of the Herbrand-Ribet theorem
in the imaginary quadratic setting, as mentioned in the introduction.  Note that in this 
analogue we must treat the eigenspaces $X^{\psi}$ and $X^{\omega \psi^{-1}}$ together.  

\begin{cor}
\label{cor:Herbrand-Ribet} 
Suppose that $\psi \neq 1, \omega$.
The Iwasawa modules $X^{\psi}$ and $X^{\omega\psi^{-1}}$  are both trivial if and only if at least one of
$\mc{L}_{\mf{p}, \psi}$ or $\mc{L}_{\bar{\mf{p}}, \psi}$ is a unit in $\La_W$.  
\end{cor}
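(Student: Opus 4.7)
The plan is to derive the corollary directly from Theorem \ref{thm:mainresult1}, the two-variable main conjecture (Theorem \ref{thm:imquadmc}), and the functional equation of Lemma \ref{relationlplpbar}(b), making essential use of the fact that $\La_W$ is a local ring.

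For the forward implication, suppose $X^\psi = X^{\omega\psi^{-1}} = 0$. Since $X^{\omega\psi^{-1}}$ is trivially pseudo-null, Theorem \ref{thm:mainresult1} applies, and its exact sequence has both outer terms equal to zero by hypothesis. Hence the middle term $\Omega^\psi/(c_1(\mf{X}_\mf{p}^\psi) + c_1(\mf{X}_{\bar{\mf{p}}}^\psi))$ also vanishes. After tensoring with $W$ over $\mc{O}_\psi$ and invoking Theorem \ref{thm:imquadmc} to identify $c_1(\mf{X}_\mf{p}^\psi)\La_W = (\mc{L}_{\mf{p},\psi})$, and similarly for $\bar{\mf{p}}$, I would conclude $\La_W = (\mc{L}_{\mf{p},\psi}, \mc{L}_{\bar{\mf{p}},\psi})$. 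Since $\La_W = W\ps{\Gamma}$ is a local ring with maximal ideal $(p, t_1, t_2)$, this forces at least one of $\mc{L}_{\mf{p},\psi}$ or $\mc{L}_{\bar{\mf{p}},\psi}$ to lie outside the maximal ideal, i.e., to be a unit.

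For the reverse direction, suppose without loss of generality that $\mc{L}_{\mf{p},\psi}$ is a unit in $\La_W$. My plan is first to establish pseudo-nullity of $X^\psi$ and $X^{\omega\psi^{-1}}$ so that Theorem \ref{thm:mainresult1} is applicable. Applying the functional equation of Lemma \ref{relationlplpbar}(b) with $\psi$ replaced by $\omega\psi^{-1}$ shows that $\mc{L}_{\bar{\mf{p}},\omega\psi^{-1}}$ is also a unit. By the main conjecture, both $W \cotimes{\mc{O}_\psi} \mf{X}_\mf{p}^\psi$ and $W \cotimes{\mc{O}_{\omega\psi^{-1}}} \mf{X}_{\bar{\mf{p}}}^{\omega\psi^{-1}}$ are pseudo-null, and since $X$ is a quotient of each of $\mf{X}_\mf{p}$ and $\mf{X}_{\bar{\mf{p}}}$, both $X_W^\psi$ and $X_W^{\omega\psi^{-1}}$ are pseudo-null; by faithful flatness, so are $X^\psi$ and $X^{\omega\psi^{-1}}$. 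Applying Theorem \ref{thm:mainresult1} and tensoring with $W$ would then yield, using $X_\mathrm{fin}^\psi = 0$ (valid since $\psi \neq \omega$),
\[
0 \to X_W^\psi \to \La_W/(\mc{L}_{\mf{p},\psi}, \mc{L}_{\bar{\mf{p}},\psi}) \to \alpha(X_W^{\omega\psi^{-1}})(1) \to 0.
\]
The middle term vanishes because $\mc{L}_{\mf{p},\psi}$ is a unit, so both outer terms vanish; in particular $X_W^\psi = 0$ gives $X^\psi = 0$.

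The main obstacle is extracting $X^{\omega\psi^{-1}} = 0$ from the vanishing of $\alpha(X_W^{\omega\psi^{-1}})$. The key observation is that for a pseudo-null module $M$ over the three-dimensional regular local ring $\La_W$, the vanishing of $\alpha(M) = \Ext^2_{\La_W}(M,\La_W)$ forces $M$ to have support in codimension at least three, hence finite length. Descending along faithful flatness of $\La_W/\La_\psi$ and using that $\La_\psi$ has a finite residue field, I would deduce that $X^{\omega\psi^{-1}}$ has finite cardinality. The hypothesis $\psi \neq 1$ ensures $\omega\psi^{-1} \neq \omega$, so Theorem \ref{thm:mainresult1} gives $X_\mathrm{fin}^{\omega\psi^{-1}} = 0$; combining these yields $X^{\omega\psi^{-1}} = 0$, closing the argument.
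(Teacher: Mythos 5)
Your proposal is correct and follows essentially the same route as the paper: Theorem \ref{thm:mainresult1}'s exact sequence combined with the main conjecture, the functional equation of Lemma \ref{relationlplpbar}(b), and the locality of $\La_W$, with pseudo-nullity of both eigenspaces secured before the sequence is invoked. The only cosmetic difference is that you extract finiteness of $X^{\omega\psi^{-1}}$ from the vanishing of $\alpha(X_W^{\omega\psi^{-1}})$ via the grade/Ext characterization and then kill it using the theorem's $X_{\mathrm{fin}}^{\omega\psi^{-1}}=0$ clause for the reflected character, whereas the paper phrases the same finiteness-implies-triviality step symmetrically through Lemma \ref{lem:dumbfinite} applied to the middle term.
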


\begin{proof}

If $X^{\psi}$ is not pseudo-null, then so are both $\mf{X}_{\mf{p}}^{\psi}$ and $\mf{X}_{\bar{\mf{p}}}^{\psi}$.
So, by the main conjecture proven by Rubin (see Theorem \ref{thm:imquadmc}), neither $\mc{L}_{\mf{p},\psi}$
nor $\mc{L}_{\bar{\mf{p}},\psi}$ are units.  If $X^{\omega \psi^{-1}}$ is not pseudo-null, then $\mc{L}_{\mf{p},\omega \psi^{-1}}$ and $\mc{L}_{\bar{\mf{p}},\omega \psi^{-1}}$ are similarly not units.  By the functional
equation of Lemma \ref{relationlplpbar}(b), this implies that $\mc{L}_{\bar{\mf{p}},\psi}$
nor $\mc{L}_{\mf{p},\psi}$ are non-units as well.

If $X^{\psi}$ and $X^{\omega\psi^{-1}}$ are both pseudo-null, then the exact sequence \eqref{eq:maineq1} 
of Theorem \ref{thm:imagquad} shows that $X^{\psi}$ and $X^{\omega \psi^{-1}}$ are both finite
if and only if the quotient $\Omega^{\psi}/(c_1(\mf{X}_{\mf{p}}^{\psi}) +  c_1(\mf{X}_{\bar{\mf{p}}}^{\psi}))$
is finite, which cannot happen unless it is trivial
by Lemma \ref{lem:dumbfinite}. Since $\psi \neq \omega$, again noting \eqref{eq:maineq1}, this happens if
and only if both $X^{\psi}$ and $X^{\omega\psi^{-1}}$ are trivial as well.  
By the main conjecture, the quotient is trivial if and only at least one of $\mc{L}_{\mf{p},\psi}$ and $\mc{L}_{\bar{\mf{p}},\psi}$ is a unit in $\La_W$. 
\end{proof}

\begin{example} \label{imquadexs}
	Suppose that $\psi$ is cyclotomic, so extends to an abelian character of $\widetilde{\Delta} = \Gal(F/\Q)$, 
	that $E \not\subseteq \Q(\mu_p)$, and that
	$\psi \neq 1,\omega$.  Then $X^{\psi}$ is nontrivial if and only if the 
	$\psi$-eigenspace under $\Delta$ of the unramified Iwasawa module $X_{\cyc}$ over the cyclotomic $\zp$-extension
	$F_{\cyc}$ of $F$ is nontrivial.  That is, since all primes over $p$ are unramified in $K/F_{\cyc}$, the map
	from the $\Gal(K/F_{\cyc})$-coinvariants of $X$ to $X_{\cyc}$ is injective with cokernel isomorphic to 
	$\Gal(K/F_{\cyc})$, which has trivial $\Delta$-action.
	We extend $\psi$ and $\omega$ in a unique way to odd characters $\tilde{\psi}$ and $\tilde{\omega}$ of 
	$\widetilde{\Delta}$.  Identify the quadratic character $\kappa$ of $\Gal(E/\Q)$ with a character of $\widetilde{\Delta}$
	that is trivial on $\Delta$.  
	
	The $\psi$-eigenspace  of $X_{\mr{cyc}}$ under $\Delta$
	is the direct sum of the two eigenspaces $X_{\mr{cyc}}^{\tilde{\psi}}$ and $X_{\mr{cyc}}^{\tilde{\psi}\kappa}$ under $\widetilde{\Delta}$.
	By the cyclotomic main conjecture (Theorem \ref{cyclmc}), the Iwasawa module $X_{\mr{cyc}}^{\tilde{\psi}}$ is nontrivial if and only if the appropriate Kubota-Leopoldt $p$-adic $L$-function is not a unit.
	This in turn occurs if and only if $p$ divides the Kubota-Leopoldt $p$-adic $L$-value 
	$$
		L_p(\tilde{\omega} \tilde{\psi}^{-1},0) = (1-\tilde{\psi}^{-1}(p))L(\tilde{\psi}^{-1},0).
	$$  
	The value $L(\tilde{\psi}^{-1},0)$ is the negative of the generalized Bernoulli number $B_{1,\tilde{\psi}^{-1}}$. 
	We have $\tilde{\psi}^{-1}(p) = 1$ if and only if $\tilde{\psi}$ is locally trivial at $p$, in which case the $p$-adic $L$-function is said to have
	an exceptional zero.  By the usual reflection principle (see also Remark \ref{nofinite}), 
	if $X_{\mr{cyc}}^{\tilde{\psi}\kappa}$ is nonzero, then so is 
	$X_{\mr{cyc}}^{\tilde{\omega}\tilde{\psi}^{-1}\kappa}$.
	
	Similarly, the unique extension of $\omega\psi^{-1}$ to an odd character
	of $\widetilde{\Delta}$ is $\tilde{\omega}\tilde{\psi}^{-1}\kappa$, and 
	$X_{\mr{cyc}}^{\tilde{\omega}\tilde{\psi}^{-1}\kappa}$ is nontrivial if and only if $\mc{L}_{\tilde{\omega}\tilde{\psi}^{-1}\kappa}$ is not 
	a unit, which is to say that  $p$ divides $L_p(\tilde{\psi}\kappa,0)$, or equivalently
	that either $p \mid B_{1,\tilde{\omega}^{-1}\tilde{\psi}\kappa}$ or $\tilde{\omega}\tilde{\psi}^{-1}\kappa$ is locally trivial at $p$.
	If $X_{\mr{cyc}}^{\tilde{\omega}\tilde{\psi}^{-1}} \neq 0$, then $X_{\mr{cyc}}^{\tilde{\psi}} \neq 0$.
	
	Typically, when $X_{\mr{cyc}}^{\tilde{\psi}}$ is nonzero, $X_{\mr{cyc}}^{\tilde{\omega}\tilde{\psi}^{-1}}$ and
	$X_{\mr{cyc}}^{\tilde{\omega}\tilde{\psi}^{-1}\kappa}$ are trivial.
	For example, if $p = 37$, then $37 \mid B_{1,\tilde{\omega}^{31}}$ (and $X_{\cyc}^{\tilde{\omega}^{31}} = 0$), 
	but $37 \nmid B_{1,\tilde{\omega}^5\kappa}$
	for $\kappa$ the quadratic character of $\Gal(\Q(i)/\Q)$.
	However, it can occur, though relatively infrequently, that both $B_{1,\tilde{\psi}^{-1}}$ and $B_{1,
	\tilde{\omega}^{-1}\tilde{\psi}\kappa}$ 
	are divisible by $p$.  A cursory 
	computer search using this $\kappa$ revealed many examples in the case one of the $p$-adic $L$-functions has an exceptional 
	zero, e.g., for $p = 5$ and $\tilde{\psi}$ a character of conductor $28$ and order $6$, and other examples in the 
	cases that neither does, e.g., with $p = 5$ and $\tilde{\psi}$ a character of conductor $555$ and order $4$.  
\end{example}

\subsection{Two further rank one cases} 
\label{ss:onecomplexplace}

We will briefly indicate generalizations of Theorem \ref{thm:mainresult1} which can be proved in the remaining
two cases when $\rank_{\La_{\psi}} \mf{X}^{\psi} = 1$.  Our field $E$ will have at most one complex place, but it
will not be $\Q$ or imaginary quadratic.  In view of Lemma \ref{lem:rankone}, the two cases to consider
are when (i) $E$ has exactly one complex place and the character $\psi$ is even at all real places, and (ii) $E$
is totally real and $\psi$ is odd at exactly one real place.

For any set of primes $T$ of $E$, we let $\mf{X}_T$ denote the $T$-ramified
Iwasawa module over $K$.  If $T = \{\mf{p}\}$, we set $\mf{X}_{\mf{p}} = \mf{X}_T$.
Suppose that we are given $n$ degree one primes 
$\mf{p}_1, \ldots, \mf{p}_n$ of $E$ over $p$, and set $T = \{\mf{p}_1,\ldots,\mf{p}_n\}$, $\Sigma = S_f - T$ and 
$\Sigma_i = S_f-\{\mf{p}_i\}$ for $i \in \{1, \ldots, n\}$.

\begin{theorem} \label{1cplx}
	Let $E$ be a number field with exactly one complex place and at least one real place, and suppose
	that $\psi$ is even at all real places of $E$.  Assume that Leopoldt's conjecture holds for $E$, so $r = 2$.
	Furthermore, suppose that $\mf{X}_{\Sigma_i}^{\psi}$ is $\Lambda_{\psi}$-torsion for all $i \in \{1,\ldots,n\}$.
	Assume that $r_{\mf{p}} = 2$ for all $\mf{p} \in S_f$.
	If $X^{\omega\psi^{-1}}$ is pseudo-null, then there is an exact sequence of $\Omega$-modules
	$$
		0 \to \mf{X}_{\Sigma}^{\psi} \to 
		\frac{\Omega^{\psi}}{\sum_{i=1}^n c_1(\mf{X}_{\Sigma_i}^{\psi}) } \to
		\alpha(\mc{H}_{\Sigma}^{\omega\psi^{-1}})(1) \to 0
	$$
	where $\mc{H}_{\Sigma}$ is as in \eqref{Hseq}.
\end{theorem}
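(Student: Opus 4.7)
I want to adapt the strategy of Theorem \ref{thm:mainresult1} to this setting. The central object is the commutative diagram \eqref{Tdiag} for the set $T = \{\mf{p}_1, \ldots, \mf{p}_n\}$. First, I would pass to $\psi$-eigenspaces and simplify using the hypothesis $r_{\mf{p}} = 2$ for all $\mf{p} \in S_f$. By Lemma \ref{extKp}, this forces $\rE^j_{\La}(\mc{K}_{\mf{p}}) \cong (\mc{K}_{\mf{p}}^{\iota})^{\delta_{j,2}}$, so the top-left term of \eqref{Tdiag} vanishes and the top-right becomes $(\mc{K}_T^{\omega\psi^{-1}})^{\iota}(1)$ on the $\psi$-eigenspace. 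By Lemma \ref{inertia}, since $\epsilon'_{\mf{p}} = 0$, one has $I_{\mf{p}}^{**} \cong D_{\mf{p}}^{**}$, so as in the imaginary quadratic proof $D_T$ can be replaced by $I_T$ throughout the top row. Lemma \ref{lem:rankone}(a) gives $\rank_{\La_{\psi}} \mf{X}^{\psi} = r_2(E) + r_1^{\psi}(E) = 1$ (one complex place, $\psi$ even at all real places), and Lemma \ref{lem:rankone}(b) gives $\rank_{\La_{\psi}} D_{\mf{p}_i}^{\psi} = [E_{\mf{p}_i}:\qp] = 1$, so both $(\mf{X}^{\psi})^{**}$ and each $(I_{\mf{p}_i}^{\psi})^{**}$ are free of rank one over $\La_{\psi}$ by Lemma \ref{lem:double}.

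Next I would verify that the bottom row of \eqref{Tdiag} collapses to a short exact sequence on the $\psi$-eigenspace. The right-most map to $\zp$ vanishes by Proposition \ref{r2case} (invoking Leopoldt for $F$). The left-most term $\rE^1_{\La}(\rH^2_{\Iw}(K,\zp))^{\psi}$ also vanishes: from the pseudo-nullity of $X^{\omega\psi^{-1}}$ together with Proposition \ref{prop:bjork}, $\rE^1_{\La}(X^{\omega\psi^{-1}}) = 0$; Lemma \ref{unramsplit}(a) identifies $X$ and $X'$ up to a pseudo-null kernel; and the $\Ext$-long sequence of $0 \to \mc{K}_0 \to \mc{K} \to \zp \to 0$ combined with Lemma \ref{extKp} gives $\rE^1_{\La}(\mc{K}_0) = 0$. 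Feeding this into \eqref{H2Ext} eliminates the $\rE^1$-term. The diagram thus becomes a map of short exact sequences with middle vertical map $\phi_T^{\psi}$ between free rank-one $\La_{\psi}$-modules.

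Applying the snake lemma to this map, the left kernel identifies with $\mf{X}_{\Sigma}^{\psi} = (\mf{X}/I_T)^{\psi}$, while the middle cokernel is $\La_{\psi}/\sum_i \im(\phi_{\mf{p}_i}^{\psi})$. I would then show each $\im(\phi_{\mf{p}_i}^{\psi})$ equals $c_1(\mf{X}_{\Sigma_i}^{\psi})$: the natural maps $I_{\mf{p}_i}^{\psi} \hookrightarrow (I_{\mf{p}_i}^{\psi})^{**}$ and $\mf{X}^{\psi} \hookrightarrow (\mf{X}^{\psi})^{**}$ have pseudo-null cokernels, so $\La_{\psi}/\im(\phi_{\mf{p}_i}^{\psi})$ is pseudo-isomorphic to $\mf{X}^{\psi}/I_{\mf{p}_i}^{\psi} = \mf{X}_{\Sigma_i}^{\psi}$, and the torsion hypothesis on the latter then forces the desired equality of characteristic ideals. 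The right cokernel is $\alpha(\mc{H}_{\Sigma}^{\omega\psi^{-1}})(1)$: apply $\rE^{\bullet}_{\La}(-)(1)$ to the short exact sequence $0 \to \mc{H}_{\Sigma} \to \rH^2_{\Iw}(K,\zp(1)) \to \mc{K}_T \to 0$ (which holds for $\Sigma \neq \varnothing$ because $\mc{K}_0 \to \mc{K}_T$ is then surjective) to obtain a surjection $\rE^2_{\La}(\rH^2_{\Iw}(K,\zp))^{\psi} \twoheadrightarrow \rE^2_{\La}(\mc{H}_{\Sigma}^{\omega\psi^{-1}})(1)$ whose kernel is the image of $(\mc{K}_T^{\omega\psi^{-1}})^{\iota}(1)$, using \eqref{reflect} and the vanishing $\rE^3_{\La}(\mc{K}_T) = 0$.

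The hardest step will be verifying that the connecting map produced by the snake lemma coincides with the natural map arising from the $\Ext$-long exact sequence of $\mc{H}_{\Sigma} \hookrightarrow \rH^2_{\Iw}(K,\zp(1))$, since this agreement is essential to recognize the right cokernel as the claimed adjoint $\alpha(\mc{H}_{\Sigma}^{\omega\psi^{-1}})(1)$. A secondary difficulty is ensuring that all the $\rE^1$-vanishings on the $\psi$-eigenspace hold uniformly and that no finite submodule contributions appear on the left, as they did in the imaginary quadratic case: here the oddness of $\omega$ at any real place combined with $\psi$ being even at all real places rules out $\psi = \omega$, and the hypothesis $r_{\mf{p}} = 2$ for every $\mf{p} \in S_f$ keeps the identifications clean.
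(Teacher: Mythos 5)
Your strategy is the paper's: control the terms of diagram \eqref{Tdiag} for $T=\{\mf{p}_1,\dots,\mf{p}_n\}$, pass from decomposition to inertia groups, kill the relevant $\rE^1_{\La}$-terms on the $\psi$-eigenspace, identify the $\rE^2_{\La}$-column with $\alpha(\mc{H}_{\Sigma}^{\omega\psi^{-1}})(1)$, and apply the snake lemma, with the torsion hypothesis on $\mf{X}_{\Sigma_i}^{\psi}$ forcing $\phi_{\mf{p}_i}^{\psi}$ to be injective with image $c_1(\mf{X}_{\Sigma_i}^{\psi})$ inside the free rank-one module $(\mf{X}^{\psi})^{**}$. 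However, three justifications need repair. First, the passage from $D_T$ to $I_T$ does not follow from $\epsilon'_{\mf{p}}=0$ alone: the isomorphism $I_{\mf{p}}^{**}\cong D_{\mf{p}}^{**}$ says nothing about the left column, and if $\epsilon_{\mf{p}}$ were $1$ the cokernel of $I_{\mf{p}}\to I_{\mf{p}}^{**}$ would acquire an extra copy of $\mc{K}_{\mf{p}}$ (snake lemma on the diagram of Lemma \ref{inertia}), so the top row you feed into the snake lemma would no longer end in $\rE^2_{\La}(\mc{K}_T)(1)$. What is actually needed, and what the paper uses, is $\epsilon_{\mf{p}}=0$: since $\mf{p}$ has degree one and $r_{\mf{p}}=2=r$, the completion $K_{\mf{p}}$ contains the unramified $\zp$-extension of $E_{\mf{p}}=\qp$, so $\mf{I}_{\mf{p}}=\mf{D}_{\mf{p}}$ and $I_T=D_T$ outright.

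Second, your identification of the third column uses the sequence $0\to\mc{H}_{\Sigma}\to\rH^2_{\Iw}(K,\zp(1))\to\mc{K}_T\to 0$, which you correctly restrict to $\Sigma\neq\varnothing$; but the theorem permits $\Sigma=\varnothing$ (e.g.\ $p$ split completely and $T=S_f$). The paper avoids the case split by noting $\zp(1)^{\psi}=0$ (equivalently $\zp^{\omega\psi^{-1}}=0$, since $\psi$ is even at a real place while $\omega$ is odd there), so the discrepancy between $\mc{K}_0$ and $\mc{K}$ vanishes on the $\omega\psi^{-1}$-eigenspace and one gets
$$
0 \to (\rE_{\La}^2(\mc{K}_T)(1))^{\psi} \to (\rE_{\La}^2(\rH^2_{\Iw}(K,\zp)))^{\psi} \to (\rE_{\La}^2(\mc{H}_{\Sigma})(1))^{\psi} \to 0
$$
uniformly. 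Third, to begin your final sequence with $0\to\mf{X}_{\Sigma}^{\psi}$ you need the third vertical map of the diagram to be injective (equivalently, the snake connecting map to vanish); this requires $\rE^1_{\La}(\mc{H}_{\Sigma}^{\omega\psi^{-1}})=0$, which holds because $\mc{H}_{\Sigma}^{\omega\psi^{-1}}$ is a submodule of the pseudo-null module $\rH^2_{\Iw}(K,\zp(1))^{\omega\psi^{-1}}$ (here one uses that each $\mc{K}_{\mf{p}}$ is pseudo-null since $r_{\mf{p}}=2$). Your phrase ``the left kernel identifies with $\mf{X}_{\Sigma}^{\psi}$'' silently assumes this. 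By contrast, the compatibility of connecting maps that you single out as the hardest step is not actually needed: the snake lemma only requires knowing the cokernel of the third vertical map, which the displayed short exact sequence provides.
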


\begin{proof} As in the imaginary quadratic case, the strategy is to control the terms and vertical homomorphisms of the 
diagram \eqref{Tdiag}.  The three steps needed to do this are (i) show that decomposition groups can be replaced
by inertia groups, (ii) show that the appropriate eigenspaces of the $\rE_\Lambda^1$ groups in \eqref{Tdiag} are 
trivial, and (iii) use Iwasawa cohomology groups to relate the $\rE_\Lambda^2$ groups in \eqref{Tdiag} to
$\mc{H}_{\Sigma}$, which is an extension of an unramified Iwasawa module.  

	Note that $\zp(1)^{\psi} = 0$ since $\psi$ is even at a real place.
	For $\mf{p} \in T$, the field $K_{\mf{p}}$ contains the unramified $\zp$-extension of $\qp$
	because $\mf{p}$ has degree $1$ and $r_{\mf{p}} = 2 = r$.  
	By assumption and Lemmas \ref{unramsplit}(a), \ref{inertia}, and \ref{extKp}, the map $X \to X'$ has pseudo-null kernel, 
	$I_T = D_T$ 
	and $\rE_{\La}^i(\mc{K}_T) = 0$ for $i \neq 2$.
	By our pseudo-nullity assumption, we have $\rE_{\La}^1(X^{\omega\psi^{-1}}) = 0$.  It follows from
	Lemma \ref{K0} and \eqref{H2Ext} that $\rE_{\La}^1(\rH^2_{\Iw}(K,\zp))^{\psi}= 0 $.  Since 
	$\mc{H}_{\Sigma}^{\omega\psi^{-1}}$ is a submodule of the pseudo-null module 
	$\rH^2_{\Iw}(K,\zp(1))^{\omega\psi^{-1}}$, we have 
	$\rE_{\La}^1(\mc{H}_{\Sigma}^{\omega\psi^{-1}}) = 0$ as well.

	 As 
	 $\zp(1)^{\psi} = 0$, we have $(\mc{K}_T)_0^{\omega\psi^{-1}} = \mc{K}_T^{\omega\psi^{-1}}$.
	 Hence, we have an exact sequence
		$$
		0 \to (\rE_{\La}^2(\mc{K}_T)(1))^{\psi} \to (\rE_{\La}^2(\rH^2_{\Iw}(K,\zp)))^{\psi} \to (\rE_{\La}^2(\mc{H}_{\Sigma})(1))^{\psi} \to 0.
	$$
	Taking $\psi$-eigenspaces of the terms of diagram \eqref{Tdiag} and applying the snake lemma, we obtain 
	an exact sequence
	$$
		0 \to \mf{X}_T^{\psi} \to \coker \phi_T^{\psi} \to \rE_{\La}^2(\mc{H}_{\Sigma}^{\omega\psi^{-1}})(1) \to 0.
	$$
	As $\mf{X}^{\psi}$ and $D_{\mf{p}}^{\psi}$ for all $\mf{p} \in T$ have 
	$\Lambda_{\psi}$-rank one by Lemma \ref{lem:rankone}, the argument is now just as before,
	the assumption on $\mf{X}_{\Sigma_i}^{\psi}$ ensuring the injectivity of $\phi_{\mf{p}_i}^{\psi}$.
\end{proof}

This yields the following statement on second Chern classes.

\begin{cor}
	Let the notation and hypotheses be as in Theorem \ref{1cplx}.  Suppose in addition that $n = 2$ and
	$\mf{X}_{\Sigma}^{\psi}$ is pseudo-null.  
	Let $f_i$ be a generator for the ideal 
	$c_1((\mf{X}_{\Sigma_i})^{\psi})$ of $\Lambda_{\psi}$. 
	Then the sum of second Chern classes
	$$
		c_2(\mf{X}_{\Sigma}^{\psi}) + c_2((\mc{K}_{\Sigma}^{\omega\psi^{-1}})^{\iota}(1)) +
		c_2(((X')^{\omega\psi^{-1}})^{\iota}(1))
	$$
	has a characteristic symbol 
	with component at a codimension one prime $P$ of $\Lambda_{\psi}$ the Steinberg symbol
	$ \{ f_1, f_2 \}$
	if $f_2$ is not a unit at $P$, and trivial otherwise.
\end{cor}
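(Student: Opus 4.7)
The plan is to combine the exact sequence of Theorem \ref{1cplx} with the defining exact sequence for $\mc{H}_\Sigma$ and the characteristic symbol formula of Proposition \ref{prop:2ndsymb}. Specializing Theorem \ref{1cplx} to $n=2$, and using that $c_1(\mf{X}_{\Sigma_i}^{\psi}) = (f_i)$ by definition of $f_i$, we obtain the exact sequence of pseudo-null $\Omega$-modules
$$
  0 \to \mf{X}_\Sigma^{\psi} \to \Omega^{\psi}/(f_1, f_2) \to \alpha(\mc{H}_\Sigma^{\omega\psi^{-1}})(1) \to 0,
$$
in which pseudo-nullity of the third term follows from that of the outer two. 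Additivity of $c_2$ on short exact sequences of pseudo-null modules yields
$$
  c_2(\Omega^{\psi}/(f_1, f_2)) = c_2(\mf{X}_\Sigma^{\psi}) + c_2(\alpha(\mc{H}_\Sigma^{\omega\psi^{-1}})(1)).
$$

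Next, I would reduce the adjoint term to $(X')^{\omega\psi^{-1}}$ and $\mc{K}_\Sigma^{\omega\psi^{-1}}$. A Tate twist preserves the underlying $\La$-module structure and hence $c_2$, while Proposition \ref{prop:adjoint} gives $c_2(\alpha(N)) = c_2(N^\iota(1))$ for any pseudo-null $\Omega$-module $N$, so $c_2(\alpha(\mc{H}_\Sigma^{\omega\psi^{-1}})(1)) = c_2((\mc{H}_\Sigma^{\omega\psi^{-1}})^\iota(1))$. The sequence \eqref{Hseq}
$$
  0 \to X' \to \mc{H}_\Sigma \to \mc{K}_\Sigma \to \zp \to 0
$$
remains exact after taking $\omega\psi^{-1}$-eigenspaces, as $|\Delta|$ is prime to $p$. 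Since $\omega$ is odd at each real place while $\psi$ is even, $\omega\psi^{-1} \neq 1$ and $\zp^{\omega\psi^{-1}}=0$, leaving a short exact sequence. All three of its terms are pseudo-null: $(X')^{\omega\psi^{-1}}$ by hypothesis (via Lemma \ref{unramsplit}(a), already invoked in the proof of Theorem \ref{1cplx}), $\mc{K}_\Sigma^{\omega\psi^{-1}}$ because $r_\mf{p}=2=r$ forces $\mc{K}_\mf{p}$ to be a finitely generated $\zp$-module for each $\mf{p}\in\Sigma$, and the middle term as an extension. Applying $\iota$, the Tate twist, and the additivity of $c_2$ then yields
$$
  c_2((\mc{H}_\Sigma^{\omega\psi^{-1}})^\iota(1)) = c_2(((X')^{\omega\psi^{-1}})^\iota(1)) + c_2((\mc{K}_\Sigma^{\omega\psi^{-1}})^\iota(1)),
$$
which combined with the previous identity establishes the claimed decomposition of $c_2(\Omega^{\psi}/(f_1, f_2))$.

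Finally, I would identify a characteristic symbol of $\Omega^{\psi}/(f_1, f_2)$. When $f_1, f_2$ are prime, Remark \ref{rem:symmetry} (the symmetric form of Proposition \ref{prop:2ndsymb}) supplies the Steinberg symbol $\{f_1, f_2\}$ at the codimension one prime $(f_2)$ and trivial elsewhere. For general $f_i$, writing $f_2 = \prod_j \pi_j^{n_j}$ into prime factors and using the bilinearity $\{f_1, f_2\} = \sum_j n_j \{f_1, \pi_j\}$ in $\rK_2(Q)$, together with additivity of codimension-two lengths across prime-power filtrations in a two-dimensional regular local ring, one extends this to place $\{f_1, f_2\}$ at each codimension one prime $P$ at which $f_2$ is not a unit, trivial elsewhere. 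This multilinear extension is the only step that requires any direct computation beyond the formalism of Section \ref{s:chern}, and it is identical to the symbol bookkeeping used in deducing Theorem \ref{thm:imagquad}. The heavy Galois-theoretic and local-cohomology inputs have already been carried out in the proof of Theorem \ref{1cplx} and in Proposition \ref{prop:adjoint}, so no new obstacle arises.
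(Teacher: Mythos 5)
Your proposal is correct and follows essentially the same route as the paper: the paper likewise combines the exact sequence of Theorem \ref{1cplx} with the sequence \eqref{Hseq} (using $\zp^{\omega\psi^{-1}}=0$), Proposition \ref{prop:adjoint}, and Proposition \ref{prop:2ndsymb} as in the proof of Theorem \ref{thm:imagquad}. The only cosmetic difference is that the paper splits the adjoint term via Lemma \ref{lem:smallseq} applied directly to \eqref{Hseq}, whereas you first pass from $\alpha(\mc{H}_{\Sigma}^{\omega\psi^{-1}})(1)$ to $(\mc{H}_{\Sigma}^{\omega\psi^{-1}})^{\iota}(1)$ and then use additivity of $c_2$ — the same decomposition in a different order.
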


\begin{proof}
	By the exact sequence \eqref{Hseq} for $\mc{H}_{\Sigma}$, 
	Lemma \ref{lem:smallseq}, and the fact that $\zp^{\omega\psi^{-1}} = 0$, we have
	$$
		c_2(\alpha(\mc{H}_{\Sigma}^{\omega\psi^{-1}})(1)) = c_2(\alpha(\mc{K}_{\Sigma}^{\omega\psi^{-1}})(1)) +
		c_2(\alpha((X')^{\omega\psi^{-1}})(1)).
	$$
	The result then follows from Theorem \ref{1cplx}, as in the proof of Theorem \ref{thm:imagquad}.
\end{proof}

In the following, it is not necessary to suppose Leopoldt's conjecture, if one simply allows $K$ to 
be the cyclotomic $\zp$-extension of $F$.

\begin{theorem}
	Let $E$ be a totally real field other than $\Q$, and let $\psi$ be odd at exactly one real place of $E$.
	Assume that Leopoldt's conjecture holds, so $r = 1$.  
	Furthermore, suppose that $\mf{X}_{\Sigma_i}^{\psi}$ is $\Lambda_{\psi}$-torsion for all $i \in \{1,\ldots,n\}$.
	If $(X')^{\omega\psi^{-1}}$ is finite, then there is an
	exact sequence of $\Omega$-modules 
	$$
		(\mc{K}_{\Sigma}^{\omega\psi^{-1}})^{\iota}(1) \to \mf{X}_{\Sigma}^{\psi} 
		\to \frac{\Omega^{\psi}}{\sum_{i=1}^n c_1(\mf{X}_{\Sigma_i}^{\psi}) } 
		\to ((X')^{\omega\psi^{-1}})^{\vee}(1) \to 0.
	$$
\end{theorem}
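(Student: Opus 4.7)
The plan is to adapt the strategy used in the proofs of Theorems \ref{thm:cyclcase}, \ref{thm:mainresult1}, and \ref{1cplx}: on $\psi$-eigenspaces of the diagram \eqref{Tdiag} with $T = \{\mf{p}_1,\ldots,\mf{p}_n\}$ and $\Sigma = S_f - T$, I replace the decomposition groups $D_T$ by the inertia groups $I_T$ via Lemma \ref{inertia} (the compatibility needed to do this holds because there is no nonzero $\Omega$-map $\mc{K}_\mf{p}^\iota(1) \to \mc{K}_\mf{p}$, the two sides carrying different cyclotomic twists), and then apply the snake lemma.  Under our hypotheses $r = 1$, so $K$ is the cyclotomic $\zp$-extension of $F$, each $\mf{p} \in S_f$ satisfies $r_\mf{p} = 1$, and $\psi \neq \omega$.

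Lemma \ref{lem:rankone}(a)--(b) gives $\rank_{\La_\psi}\mf{X}^\psi = 1$ and $\rank_{\La_\psi}I_{\mf{p}_i}^\psi = 1$, so both $(\mf{X}^\psi)^{**}$ and each $(I_{\mf{p}_i}^{**})^\psi$ are free of rank one over $\La_\psi$.  Lemma \ref{extKp} then yields $\rE^1_\La(\mc{K}_T) \cong \mc{K}_T^\iota$ and $\rE^2_\La(\mc{K}_T) = 0$, so the top row of the modified diagram collapses on $\psi$-eigenspaces to
\[
0 \to (\mc{K}_T^{\omega\psi^{-1}})^\iota(1) \to I_T^\psi \to (I_T^{**})^\psi \to 0.
\]
The long exact sequence \eqref{H2Ext}, combined with the finiteness of $(X')^{\omega\psi^{-1}}$ (which forces $\rE^0_\La$ and $\rE^1_\La$ of this module to vanish), Lemma \ref{K0}, and $\zp(1)^\psi = 0$, gives $\rE^1_\La(\rH^2_\Iw(K,\zp))^\psi \cong (\mc{K}^{\omega\psi^{-1}})^\iota(1)$ and, via Proposition \ref{Er}(a), $\rE^2_\La(\rH^2_\Iw(K,\zp))^\psi \cong ((X')^{\omega\psi^{-1}})^\vee(1)$.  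The decomposition $\mc{K}^{\omega\psi^{-1}} = \mc{K}_T^{\omega\psi^{-1}} \oplus \mc{K}_\Sigma^{\omega\psi^{-1}}$ identifies the left vertical map with the inclusion of the $T$-summand, so its cokernel is $(\mc{K}_\Sigma^{\omega\psi^{-1}})^\iota(1)$.

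Next, I apply the snake lemma, after factoring the bottom row through $W = \im(\mf{X}^\psi \to (\mf{X}^\psi)^{**})$, to obtain an exact sequence $(\mc{K}_\Sigma^{\omega\psi^{-1}})^\iota(1) \to \mf{X}_\Sigma^\psi \to W/\im\phi_T^\psi \to 0$; splicing with $0 \to W/\im\phi_T^\psi \to (\mf{X}^\psi)^{**}/\im\phi_T^\psi \to ((X')^{\omega\psi^{-1}})^\vee(1) \to 0$ gives the desired four-term exact sequence.  To identify $\im\phi_T^\psi$ with $\sum_{i=1}^n c_1(\mf{X}_{\Sigma_i}^\psi)$, I note that each $\phi_{\mf{p}_i}^\psi$ is a nonzero map of free rank-one $\La_\psi$-modules, hence injective, with image some principal ideal $(f_i)$; a separate snake-lemma comparison of $0 \to I_{\mf{p}_i}^\psi \to \mf{X}^\psi \to \mf{X}_{\Sigma_i}^\psi \to 0$ with $0 \to (I_{\mf{p}_i}^{**})^\psi \to (\mf{X}^\psi)^{**} \to \La_\psi/(f_i) \to 0$ shows that $\mf{X}_{\Sigma_i}^\psi$ is pseudo-isomorphic to $\La_\psi/(f_i)$, since the outer vertical kernels and cokernels $(\mc{K}_{\mf{p}_i}^{\omega\psi^{-1}})^\iota(1)$, $(\mc{K}^{\omega\psi^{-1}})^\iota(1)$, $0$, and $((X')^{\omega\psi^{-1}})^\vee(1)$ are all pseudo-null.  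Additivity of first Chern classes then yields $(f_i) = c_1(\mf{X}_{\Sigma_i}^\psi)$.  The main obstacle is this final identification, which rests critically on the torsion hypothesis on each $\mf{X}_{\Sigma_i}^\psi$.
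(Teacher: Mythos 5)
Your construction of the diagram itself follows the paper's own route: take $\psi$-eigenspaces of \eqref{Tdiag}, use Lemma \ref{extKp} with $r_{\mf{p}}=1$ to collapse the top row, replace $D_T$ by $I_T$ as in Theorem \ref{thm:cyclcase}, compute the two middle $\rE$-terms from \eqref{H2Ext}, and run the snake lemma (your factoring through $W=\im(\mf{X}^{\psi}\to(\mf{X}^{\psi})^{**})$ is a correct way to handle the rows of unequal length). Up to that point the argument is sound and is essentially the paper's.

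The gap is in your identification $\im\phi_{\mf{p}_i}^{\psi}=c_1(\mf{X}_{\Sigma_i}^{\psi})$. You justify it by asserting that $(\mc{K}_{\mf{p}_i}^{\omega\psi^{-1}})^{\iota}(1)$ and $(\mc{K}^{\omega\psi^{-1}})^{\iota}(1)$ are pseudo-null, but in the present setting $r=1$, so $\La_{\psi}\cong\mc{O}_{\psi}\ps{t}$ is two-dimensional and pseudo-null means \emph{finite}; here $\mc{K}_{\mf{q}}=\zp\ps{\mc{G}/\mc{G}_{\mf{q}}}$ is a free $\zp$-module of positive rank, and by Lemma \ref{Kpsi} its $\omega\psi^{-1}$-eigenspace is nonzero (hence infinite, hence not pseudo-null) exactly when $\omega\psi^{-1}|_{\Delta_{\mf{q}}}=1$. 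This is precisely the feature that distinguishes this theorem from Proposition \ref{unramoutp} and Theorem \ref{1cplx}, where $r\geq 2$ forces the $\mc{K}_{\mf{p}}$, being finitely generated over $\zp$, to be pseudo-null; that mechanism cannot be imported verbatim. Moreover, even as a bookkeeping matter, what your single-prime comparison actually produces (running the snake lemma rather than comparing kernels and cokernels loosely) is an exact sequence
\begin{equation*}
0 \to (\mc{K}_{\Sigma_i}^{\omega\psi^{-1}})^{\iota}(1) \to \mf{X}_{\Sigma_i}^{\psi} \to W/\im\phi_{\mf{p}_i}^{\psi} \to 0,
\end{equation*}
the relevant error term being the \emph{cokernel} $(\mc{K}_{\Sigma_i}^{\omega\psi^{-1}})^{\iota}(1)$ of the left vertical map, so that $c_1(\mf{X}_{\Sigma_i}^{\psi})$ equals $\im\phi_{\mf{p}_i}^{\psi}$ times $c_1((\mc{K}_{\Sigma_i}^{\omega\psi^{-1}})^{\iota}(1))$. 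Hence the asserted pseudo-isomorphism $\mf{X}_{\Sigma_i}^{\psi}\sim\La_{\psi}/(f_i)$, and with it the identification of the middle term as $\Omega^{\psi}/\sum_i c_1(\mf{X}_{\Sigma_i}^{\psi})$, only follows when $\mc{K}_{\Sigma_i}^{\omega\psi^{-1}}=0$, i.e.\ when $\omega\psi^{-1}|_{\Delta_{\mf{q}}}\neq 1$ for every $\mf{q}\in S_f\setminus\{\mf{p}_i\}$, which is not among the hypotheses you are allowed to use. The torsion hypothesis on $\mf{X}_{\Sigma_i}^{\psi}$ does give the injectivity of $\phi_{\mf{p}_i}^{\psi}$, as you say, but it does not bridge this step; as it stands your proof establishes the sequence with middle term $\Omega^{\psi}/\sum_{i=1}^n \im\phi_{\mf{p}_i}^{\psi}$, and the passage to the stated middle term needs a separate argument (or an additional local nontriviality assumption on $\omega\psi^{-1}$).
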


\begin{proof}
	The argument is much as before: we take the $\psi$-eigenspace of the terms of diagram \eqref{Tdiag} with 
	$T = \{\mf{p}_1,\ldots,\mf{p}_n\}$.
	We have $\rE^2_{\La}(\mc{K}) = 0$ and $\rE^1_{\La}(\mc{K}) \cong \mc{K}^{\iota}$.  The map $D_T \to
	D_T^{**}$ in the diagram can be replaced by $I_T \to I_T^{**}$, as in the proof of Theorem \ref{thm:cyclcase}.
	Applying the snake lemma gives the stated sequence.  
\end{proof}

Although this is somewhat less strong than our other results in general (when $\omega\psi^{-1}|_{\Delta_{\mf{p}}} = 1$
for some $\mf{p} \in \Sigma$), we have the following interesting corollary.

\begin{cor}
	Suppose that $E$ is real quadratic, $p$ is split in $E$ into two primes $\mf{p}_1$ and $\mf{p}_2$, and
	the character $\psi$ is odd at exactly one place of $E$.  If $X^{\psi}$ and
	$(X')^{\omega\psi^{-1}}$ are finite, then there is an
	exact sequence of finite $\Omega$-modules 
	$$
		0 \to X^{\psi} 
		\to \frac{\Omega^{\psi}}{c_1(\mf{X}_{\mf{p}_1}^{\psi}) + c_1(\mf{X}_{\mf{p}_2}^{\psi}) } 
		\to ((X')^{\omega\psi^{-1}})^{\vee}(1) \to 0.
	$$
\end{cor}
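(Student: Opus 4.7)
The plan is to derive this corollary as a direct specialization of the preceding theorem. Since $E$ is real quadratic, it is totally real and distinct from $\Q$, and Leopoldt's conjecture holds for $E$ by Brumer's theorem. The character $\psi$ is odd at exactly one real place, so $r_1^\psi(E) = 1$, and Lemma \ref{lem:rankone}(a) gives $\rank_{\La_\psi} \mf{X}^\psi = r_2(E) + r_1^\psi(E) = 1$. We apply the preceding theorem with $n = 2$ and $T = \{\mf{p}_1, \mf{p}_2\} = S_f$, so that $\Sigma = S_f - T = \varnothing$ and $\Sigma_i = S_f - \{\mf{p}_i\} = \{\mf{p}_j\}$ with $j \neq i$. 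Under these choices one has $\mf{X}_\Sigma = X$, $\mf{X}_{\Sigma_i} = \mf{X}_{\mf{p}_j}$, and $\mc{K}_\Sigma = 0$, so the leftmost term $(\mc{K}_\Sigma^{\omega\psi^{-1}})^\iota(1)$ of the theorem's four-term exact sequence collapses, producing the desired left-injection $0 \to X^\psi$.

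The one hypothesis of the preceding theorem requiring separate verification is that each $\mf{X}_{\mf{p}_j}^\psi$ is $\La_\psi$-torsion. The $\mf{p}_i$ are of degree one and $\Gamma_{\mf{p}_i} \neq 0$ because $K$ contains the cyclotomic $\Z_p$-extension of $F$, so Lemma \ref{lem:rankone}(b) gives $\rank_{\La_\psi} D_{\mf{p}_i}^\psi = 1$. Combining the cokernel description $X^\psi \cong \mf{X}^\psi / (\mathrm{im}\, I_{\mf{p}_1}^\psi + \mathrm{im}\, I_{\mf{p}_2}^\psi)$ with the finiteness of $X^\psi$ and the rank-one computation for $\mf{X}^\psi$ forces the sum of inertia images to exhaust the rank of $\mf{X}^\psi$. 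To upgrade this to each individual inertia image having rank one (which is exactly the required torsion statement for the quotients $\mf{X}_{\mf{p}_j}^\psi$), we invoke Lemma \ref{inertia} to pass between inertia and decomposition groups modulo the pseudo-null correction encoded in $\mc{K}_{\mf{p}_i}$, and exploit the assumed finiteness of $(X')^{\omega\psi^{-1}}$ through the reflection-type argument used in the proof of the preceding theorem.

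Once both $\mf{X}_{\mf{p}_j}^\psi$ are known to be torsion, the preceding theorem applies verbatim to yield the stated exact sequence. Both outer terms $X^\psi$ and $((X')^{\omega\psi^{-1}})^\vee(1)$ are finite by hypothesis, so the middle quotient $\Omega^\psi / (c_1(\mf{X}_{\mf{p}_1}^\psi) + c_1(\mf{X}_{\mf{p}_2}^\psi))$ is also forced to be finite, and the sequence is one of finite $\Omega$-modules. The main technical point is establishing the simultaneous torsion of both $\mf{X}_{\mf{p}_j}^\psi$; once granted, the remainder of the argument is direct bookkeeping from the general statement.
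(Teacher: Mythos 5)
Your overall route is the paper's intended one: the corollary is stated without separate proof precisely because it is the specialization of the preceding theorem to $n=2$, $T=S_f=\{\mf{p}_1,\mf{p}_2\}$, $\Sigma=\varnothing$, so that $\mf{X}_{\Sigma}=X$, $\mf{X}_{\Sigma_i}=\mf{X}_{\mf{p}_{3-i}}$, $\mc{K}_{\Sigma}=0$ (killing the leftmost term), and finiteness of the two outer terms forces finiteness of the middle quotient. All of that bookkeeping in your proposal is correct, including the use of Brumer for Leopoldt and Lemma \ref{lem:rankone} for the rank-one statements.

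The gap is exactly at the point you call ``the main technical point,'' and the justification you sketch does not close it. Finiteness of $X^{\psi}=\coker(I_{\mf{p}_1}^{\psi}\oplus I_{\mf{p}_2}^{\psi}\to\mf{X}^{\psi})$ together with $\rank_{\La_{\psi}}\mf{X}^{\psi}=1$ only shows that the \emph{sum} of the two inertia images has rank one, hence that at least one of the maps $\phi_{\mf{p}_i}^{\psi}$ is nonzero; it does not show that each individual image has rank one, which is what the torsion of both $\mf{X}_{\mf{p}_1}^{\psi}$ and $\mf{X}_{\mf{p}_2}^{\psi}$ (and hence the very definition of both ideals $c_1(\mf{X}_{\mf{p}_i}^{\psi})$ in the middle term) requires. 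The two tools you invoke do not supply this: Lemma \ref{inertia} only compares $I_{\mf{p}}$ with $D_{\mf{p}}$, whose discrepancy $\mc{K}_{\mf{p}}^{\epsilon_{\mf{p}}}$ is $\La$-torsion and therefore irrelevant to whether an image has full rank; and ``the reflection-type argument used in the proof of the preceding theorem'' is circular here, since in that proof the torsion of $\mf{X}_{\Sigma_i}^{\psi}$ is an \emph{assumed hypothesis} used to deduce the injectivity of $\phi_{\mf{p}_i}^{\psi}$ and to identify its image with $c_1(\mf{X}_{\Sigma_i}^{\psi})$, not something derived from the finiteness of $(X')^{\omega\psi^{-1}}$. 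What is actually needed is an argument ruling out the scenario in which, say, the image of $I_{\mf{p}_1}^{\psi}$ in $\mf{X}^{\psi}$ is torsion (equivalently $\phi_{\mf{p}_1}^{\psi}=0$) while $\phi_{\mf{p}_2}^{\psi}$ alone has finite-index image; one would, for instance, have to use that under the finiteness of $(X')^{\omega\psi^{-1}}$ the torsion submodule of $\mf{X}^{\psi}$ is a subquotient of $(\mc{K}^{\omega\psi^{-1}})^{\iota}(1)$ (via Corollary \ref{cor:iwseq} and \eqref{H2Ext}) and then exclude that the rank-one module $I_{\mf{p}_1}^{\psi}$ maps generically into it. No such argument appears in your proposal, so as written the deduction of the theorem's torsion hypothesis from the corollary's finiteness hypotheses is asserted rather than proved.
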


We can make this even more symmetric, replacing $X$ by $X'$ on the left, if we also replace $\mf{X}_{\mf{p}_i}^{\psi}$
by its maximal split-at-$\mf{p}_{3-i}$ quotient for $i \in \{1,2\}$, and supposing only that $(X')^{\psi}$ is finite.  
We of course have the corresponding statement on second Chern classes.

\section{A non-commutative generalization}
\label{s:generalization}

The study of non-commutative generalizations of the first Chern
class main conjectures discussed in Section \ref{s:conj} has been 
very fruitful.  See \cite{Coates1}, for example, and its references.  
 We now indicate briefly a non-commutative generalization of 
 Theorems \ref{thm:mainresult1} and \ref{thm:imagquad} concerning
 second Chern classes.  
 
 We make the same assumptions as in Subsection \ref{ss:imaginaryquad}.
 Namely, $E$ is imaginary quadratic, and $p$ is an odd prime that splits 
into two primes $\mf{p}$ and $\bar{\mf{p}}$ in $E$.
Let $\psi$ be a one-dimensional $p$-adic character of
the absolute Galois group of $E$ of finite order prime to $p$ with fixed field of its kernel $E_{\psi}$.
Let $F = E_{\psi}(\mu_p)$.  
Let $\omega$ denote the Teichm\"uller character of $\Delta = \Gal(F/E)$.
Let $\widetilde{E}$ denote the compositum of all $\zp$-extensions of $E$, and 
let $K$ be the compositum of $\widetilde{E}$ with $F$.    
Let $S$ be the set of primes of $E$ above $p$ and $\infty$, so $S_f = \{\mf{p},\bar{\mf{p}}\}$.

We suppose in addition that $F$ is Galois over $\Q$.  Let $\sigma$ be a complex conjugation
 in $\mathrm{Gal}(F/\Q)$, and let $H = \{e,\sigma\}$.  Then  $\widetilde{\Delta} = \mathrm{Gal}(F/\mathbb{Q})$ is a semi-direct product of the abelian group $\Delta$ with $H$.  
 The group $H$ acts on $\Delta$ and $\Gamma = \mathrm{Gal}(K/F) \cong \mathbb{Z}_p^2$ by conjugation. 
 Let $\tau$ be the character of an $n$-dimensional irreducible $p$-adic representation of $\widetilde{\Delta}$.  Then $n \in \{1,2\}$.    If $n =1$, then $\tau$ restricts to a one-dimensional character $\psi$ of $\Delta$.  If $n = 2$, then the representation corresponding to $\tau$ restricts to a direct sum of two one-dimensional representations $\psi$ and $\psi \circ \sigma$ of $\Delta$.  So, the orbit of $\psi$ under the action of $\sigma$ has order $n$.
 
Let $A_{\tau}$ denote the direct factor of $\mc{O}_{\psi} \otimes_{\zp} \Omega=\mc{O}_{\psi}\ps{\mc{G}}$ obtained by applying the idempotent in 
$\mc{O}_{\psi}[\widetilde{\Delta}]$-attached
to $\tau$, where $\mc{O}_{\psi}$ is as before.  Then $A_{\tau} = \Omega^{\psi}$ if $n = 1$ and $A_{\tau} = \Omega^{\psi} \times \Omega^{\psi \circ \sigma}$ if $n = 2$.
 The $H$-action on $A_{\tau}$ is compatible with the
 $\Omega$-module structure and the action of $H$  on $\Omega$.   Thus,
 $A_{\tau}$ is a module over the twisted group ring $B_{\tau} = A_{\tau}\langle H \rangle$, 
 which itself is a direct factor of $\mc{O}_{\psi}\ps{\Gal(K/\Q)}$.  

 The following non-commutative generalization of Theorem \ref{thm:mainresult1}
 follows from the compatibility with the $H$-action of the arguments used in the proof of said theorem.
 
 \begin{supproposition}
 \label{prop:star}
     Suppose that $X^{\omega\psi^{-1}}$ is pseudo-null as a $\La_{\psi}$-module.
     If $n = 1$, then the sequence \eqref{eq:maineq1} for $\psi$ 
     is an exact sequence of modules for the non-commutative ring $B_{\tau}$.
     If $n = 2$, then the direct sum of the sequences \eqref{eq:maineq1} for $\psi$ and $\psi \circ \sigma$
     is an exact sequence of $B_{\tau}$-modules.
 \end{supproposition}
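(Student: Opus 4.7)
The plan is to observe that every Iwasawa-theoretic object appearing in the proof of Theorem \ref{thm:mainresult1} carries a natural action of the larger Galois group $\Gal(K/\mathbb{Q}) = \mc{G} \rtimes H$, and that every map in the argument is $H$-equivariant. Since $F/\mathbb{Q}$ is Galois and $\widetilde{E}$ is Galois over $\mathbb{Q}$ (being the compositum of all $\zp$-extensions of $E$), the subfield $\mc{G} = \Gal(K/E)$ is normal in $\Gal(K/\mathbb{Q})$, and $\sigma$ acts on $\mc{G}$ by conjugation. This action interchanges the decomposition groups at $\mathfrak{p}$ and $\bar{\mathfrak{p}}$. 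Consequently, $\mf{X}$, $X$, $\mf{X}^{**}$, $X'$, and $\rH^2_{\Iw}(K,\zp)$ are naturally modules over $\zp\ps{\Gal(K/\mathbb{Q})}$, while $\mc{K} = \mc{K}_{\mathfrak{p}} \oplus \mc{K}_{\bar{\mathfrak{p}}}$, $D_{\mathfrak{p}} \oplus D_{\bar{\mathfrak{p}}}$, and $I_{\mathfrak{p}} \oplus I_{\bar{\mathfrak{p}}}$ carry $H$-actions in which $\sigma$ swaps the two summands.

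Next, I would observe that the entire diagram \eqref{eq:diagram1new} can be promoted to a diagram of $\zp\ps{\Gal(K/\mathbb{Q})}$-modules once we replace $\psi$-eigenspaces by the $\widetilde{\Delta}$-isotypic components cut out by the idempotent attached to $\tau$, which is the tensor product functor $A_{\tau} \otimes_{\mc{O}_{\psi}[\widetilde{\Delta}]} (-)$. This works because the vertical maps $\phi_{\mathfrak{p}}$ and $\phi_{\bar{\mathfrak{p}}}$ are interchanged by $\sigma$, so their sum is $H$-equivariant; the horizontal connecting maps arising from Theorem \ref{thm:localseq}, the spectral sequences of Theorem \ref{thm:ss}, and the reflection isomorphism \eqref{reflect} are all functorial constructions (Ext, Pontryagin duality, inverse limits of Galois cohomology) and hence preserve the extended Galois action. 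In particular, Lemma \ref{lem:H2seqnew} upgrades verbatim to an exact sequence of modules over the full group ring.

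I would then apply the snake lemma in the category of $B_{\tau}$-modules to the $A_{\tau}$-tensored version of \eqref{eq:diagram1new}. When $n = 1$, $A_{\tau} = \Omega^{\psi}$ already carries the involution induced by $\sigma$, and the tensored diagram is precisely the $\psi$-eigenspace diagram endowed with its inherited $H$-action; the resulting four-term exact sequence is \eqref{eq:maineq1} viewed as a sequence of $B_{\tau}$-modules. When $n = 2$, $A_{\tau} \otimes_{\zp[\Delta]} M$ decomposes as $M^{\psi} \oplus M^{\psi \circ \sigma}$ for any $\zp[\Delta]$-module $M$, with $\sigma$ interchanging the factors, so the tensored diagram yields the direct sum of \eqref{eq:maineq1} for $\psi$ and for $\psi \circ \sigma$, equipped with the $B_{\tau}$-action that swaps summands.

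The main obstacle is verifying the canonicity of the identification \eqref{eq:fix1} at the level of $H$-equivariant modules. In the commutative proof, the image of $\phi_{\bar{\mathfrak{p}}}^{\psi}$ (respectively $\phi_{\mathfrak{p}}^{\psi}$) inside the free rank one $\Omega^{\psi}$-module $(\mf{X}^{\psi})^{**}$ is identified with the characteristic ideal $c_1(\mf{X}_{\mathfrak{p}}^{\psi})$ (respectively $c_1(\mf{X}_{\bar{\mathfrak{p}}}^{\psi})$), but only up to a choice of isomorphism $(\mf{X}^{\psi})^{**} \cong \Omega^{\psi}$. To retain the $H$-structure, I would fix the identification abstractly as the quotient $(\mf{X}^{\psi})^{**}/((I_{\mathfrak{p}}^{\psi})^{**}+(I_{\bar{\mathfrak{p}}}^{\psi})^{**})$ of reflexive hulls, and verify that this quotient has the correct $H$-action (for $n = 1$, because $\sigma$ interchanges the two inertia summands of the denominator; for $n = 2$, because $\sigma$ additionally swaps the two isotypic factors). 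Once this compatibility is in hand, the argument carries through with no further change.
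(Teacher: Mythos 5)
Your proposal is correct and follows essentially the same route as the paper, whose entire proof is the observation that the arguments proving Theorem \ref{thm:mainresult1} are compatible with the $H$-action; you have simply spelled out that compatibility (with $\sigma$ swapping the data at $\mf{p}$ and $\bar{\mf{p}}$, and the $\tau$-isotypic component replacing the $\psi$-eigenspace when $n=2$). Your added care about \eqref{eq:fix1} — phrasing the middle term canonically as $(\mf{X}^{\psi})^{**}/((I_{\mf{p}}^{\psi})^{**}+(I_{\bar{\mf{p}}}^{\psi})^{**})$ so that the $H$-structure is visible — is exactly the right way to make the paper's implicit claim precise.
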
 
   
 To generalize Theorem \ref{thm:imagquad}, we first extend the approach
to Chern classes used in Subsection \ref{ss:generalsetup} to the context of non-commutative algebras which are finite over
 their centers.  
 (For related work on non-commutative Chern classes, see \cite{CPT}.)  
 
The twisted group algebra $B_\tau$ is a free rank four module over its center $Z_{\tau} = 
A_{\tau}^H$.  
  Suppose that $M$ is a finitely generated module for $B_\tau$ 
 with support as a $Z_{\tau}$-module of codimension at least $2$.
 Let $Y = \mathrm{Spec}(Z_{\tau})$,
 and let $Y^{(2)}$ be the set of codimension two primes in $Y$.  
 The localization $M_y = (Z_{\tau})_y \otimes_{Z_{\tau}} M$ of $M$ at $y \in Y^{(2)}$ has finite length over the localization 
 $(B_{\tau})_y$.  Let 
 $k(y)$ be the residue field of $y$, and let
 $B_\tau(y) = k(y) \otimes_{Z_{\tau}} B_\tau$.
 Then $B_\tau(y)$ has dimension $4$ as a $k(y)$-algebra.  From a composition
 series for $M_y$ as a $B_\tau(y)$-module, we can define a class $[M_y]$ in the Grothendieck group
 $\rK_0'(B_\tau(y))$ of all finitely generated $B_\tau(y)$-modules.  This leads to a second
 Chern class 
 \begin{equation}
 \label{eq:secondCnon}
 c_{2,B_\tau}(M)=\sum_{y\in Y^{(2)}}  [M_y] \cdot y 
\end{equation}
in the group 
$$\rZ^2(B_\tau)= \bigoplus_{y\in Y^{(2)}} \rK_0'(B_\tau(y)).$$

For $y \in Y^{(2)}$, note that $A_{\tau}(y) = k(y) \otimes_{Z_{\tau}} A_{\tau}$ is a $k(y)$-algebra of dimension $2$ with an action of $H$
over $k(y)$, and $B_\tau(y)$ is the twisted group algebra $A_{\tau}(y)\langle H \rangle$.
Moreover, we have
$$
	k(y) \otimes_{Z_{\tau}} Z_{\tau}[H] \cong k(y)[H],
$$ 
and in this way, both $A_{\tau}(y)$ and $k(y)[H]$ are commutative $k(y)$-subalgebras of $B_{\tau}(y)$.

\begin{suplemma}
\label{lem:restrictg}
    For $y \in Y^{(2)}$, the forgetful functors on finitely generated module categories produced by
    restricting operators from $B_\tau(y)$ to either $A_\tau(y)$ or $k(y)[H]$ induce injections
    \begin{eqnarray}
    \label{eq:injB}
    	&\rK_0'(B_{\tau}(y)) \to \rK_0'(A_{\tau}(y)) = \mathbb{Z} \oplus \mathbb{Z}& \text{ if $n = 2$, and } \\
    \label{eq:injC} 
    	&\rK_0'(B_{\tau}(y)) \to \rK_0'(k(y)[H]) = \mathbb{Z} \oplus \mathbb{Z}& \text{ if $n = 1$.}
    \end{eqnarray}
\end{suplemma}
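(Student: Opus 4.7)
My plan is to identify $B_\tau(y)$ explicitly as a $k(y)$-algebra for each $y \in Y^{(2)}$ and to verify the claimed injectivity by computing the classes of the simple $B_\tau(y)$-modules under the forgetful functors.

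For $n = 2$, we have $A_\tau \cong \Omega^\psi \times \Omega^{\psi \circ \sigma}$ with $H$ swapping the two factors, and $Z_\tau = A_\tau^H$ is identified with the diagonal copy of $\Omega^\psi$. As a $Z_\tau$-algebra, $A_\tau \cong Z_\tau[e]/(e^2 - e)$, where $e$ corresponds to the idempotent $(1, 0)$; since the discriminant of $x^2 - x$ is a unit and $p$ is odd, $A_\tau/Z_\tau$ is an \'etale $H$-Galois extension. The classical identification for crossed products over \'etale Galois extensions gives
\[
	B_\tau = A_\tau * H \xrightarrow{\sim} \mr{End}_{Z_\tau}(A_\tau) \cong M_2(Z_\tau),
\]
so $B_\tau(y) \cong M_2(k(y))$ for every $y$. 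Its unique simple module $V_y$ has $k(y)$-dimension $2$ and generates $\rK_0'(B_\tau(y)) = \mb{Z}$. Restricting $V_y$ along the diagonal embedding $A_\tau(y) = k(y) \times k(y) \hookrightarrow M_2(k(y))$ decomposes it as the direct sum of the two one-dimensional $A_\tau(y)$-modules, so $[V_y]$ maps to $(1, 1) \in \mb{Z}^2 = \rK_0'(A_\tau(y))$, proving \eqref{eq:injB}.

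For $n = 1$, the ring $A_\tau = \Omega^\psi$ is free of rank $2$ over $Z_\tau = A_\tau^H$ and generically \'etale $H$-Galois. At any $y$ in the \'etale locus the same crossed-product identification gives $B_\tau(y) \cong M_2(k(y))$, with unique simple module $V_y$ of $k(y)$-dimension $2$. Since $V_y$ is faithful, $\sigma \in H \subset B_\tau(y)$ acts nontrivially on $V_y$ and its action, being an involution, has both eigenvalues $\pm 1$; hence (using $p$ odd) $V_y$ is the regular $k(y)[H]$-module, yielding class $(1, 1) \in \mb{Z}^2 = \rK_0'(k(y)[H])$ and injectivity of \eqref{eq:injC} at these $y$.

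The main obstacle is the case where $A_\tau/Z_\tau$ is ramified at $y$: there $A_\tau(y)$ contains nilpotents and $B_\tau(y)$ is non-semisimple. The approach is to analyze the Jacobson radical $J = J(B_\tau(y))$, to show that $B_\tau(y)/J$ is semisimple of $k(y)$-dimension at most $2$, so $B_\tau(y)$ has at most two simple modules. A direct enumeration of these simples, keeping track of the $H$-action on the precise local structure of $A_\tau(y)$, then verifies that their classes in $\rK_0'(k(y)[H])$ are linearly independent, completing the proof of \eqref{eq:injC}.
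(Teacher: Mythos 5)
Your handling of the \'etale cases is correct and complete, and it is essentially an explicit version of the paper's argument: for $n=2$ the identification $B_{\tau} \cong \End_{Z_{\tau}}(A_{\tau}) \cong M_2(Z_{\tau})$ and the computation that the standard module restricts to the class $(1,1)$ is just Galois descent made concrete, and your faithfulness argument settles the \'etale locus when $n=1$. (In fact, once you know $\rK_0'(B_{\tau}(y)) \cong \Z$ with generator the class of a nonzero module, injectivity into either target is automatic, since a nonzero module has nonzero class in $\rK_0'$ of the subalgebra.)

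The genuine gap is the ramified case for $n=1$, which you yourself flag as ``the main obstacle'' and then leave as an unexecuted plan: you say one should compute $J(B_{\tau}(y))$, bound $\dim_{k(y)} B_{\tau}(y)/J$ by $2$, and ``verify by direct enumeration'' that the simples have independent classes in $\rK_0'(k(y)[H])$ --- but none of this is carried out, and that verification is precisely the content of \eqref{eq:injC} at such $y$ (bounding the number of simple modules by two is not by itself enough). The paper closes this case in one line, and your outline can be completed the same way: at a ramified $y$ one has $A_{\tau}(y) \cong k(y)[\epsilon]/(\epsilon^2)$ with $\sigma \epsilon \sigma^{-1} = -\epsilon$, so $\epsilon B_{\tau}(y)$ is a two-sided ideal of square zero; hence every simple $B_{\tau}(y)$-module is annihilated by $\epsilon$, i.e., is pulled back from $B_{\tau}(y)/\epsilon B_{\tau}(y) \cong k(y)[H]$, which is semisimple because $p$ is odd. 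Thus the simple $B_{\tau}(y)$-modules are exactly the two characters of $H$, their classes form a basis of $\rK_0'(k(y)[H]) \cong \Z^2$, and the forgetful map is injective (indeed an isomorphism). As written, your proposal asserts rather than proves the one case in which the target $k(y)[H]$, rather than $A_{\tau}(y)$, is actually forced on you, so the proof is incomplete without this supplement.
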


\begin{proof}
Since $H$ has order $2$ and $A_\tau$ is a $\mathbb{Z}_p$-algebra
    for an odd prime $p$, the surjection $A_\tau \to A_\tau(y)$ gives a surjection $Z_\tau = A_\tau^H \to (A_\tau(y))^H$.
    Thus $k(y) = A_\tau(y)^H$.  Therefore either $A_\tau(y)$ is a Galois \'etale $H$-algebra over $k(y)$ or
    $A_\tau(y)$ is isomorphic to the dual numbers $k(y)[\epsilon]/(\epsilon^2)$ in such a way that
    the generator $\sigma$ for $H$ sends $\epsilon$ to $-\epsilon$.

    There is a homomorphism $\pi^* \colon K'_0(k(y)) \to K_0'(B_\tau(y))$ which sends a 
    finitely generated $k(y)$-module $M$ to the $B_\tau(y)$-module with underlying
    $A_\tau(y)$-module $A_\tau(y) \otimes_{k(y)} M$ and action of $H$ induced by the action of $H$
    on $A_\tau(y)$. 
    
    Suppose to begin with that $A_{\tau}(y)$ is an \'etale algebra over $k(y)$, so $A_\tau(y)$ is a Galois \'etale $k(y)$-algebra with Galois group  $H$. 
 It is shown by descent in the paragraph just before \cite[Lemma 8.4]{tameness}
    that $\pi^*$ is an isomorphism.  The composition of $\pi^*$ with the 
    forgetful homomorphism $f_1 \colon \rK_0'(B_{\tau}(y)) \to \rK_0'(A_{\tau}(y))$ is the homomorphsm
    $h \colon K_0'(k(y)) \to K_0(A_\tau(y))$ induced by tensoring with $A_\tau(y)$ over $k(y)$.  Since $h$ is injective, $f_1$ is injective. 
    
    If $n = 2$, then $H$ permutes the two algebra components of
    $A_{\tau}$, and $A_\tau(y)$ is isomorphic to the \'etale $k(y)$-algebra $k(y) \times k(y)$,
    so we have shown (\ref{eq:injB}).  
    
    Suppose now that $n = 1$ and that $A_\tau(y)$ is 
    \'etale over $k(y)$.   We have shown $A_\tau(y)$ is then a Galois \'etale $k(y)$-algebra with Galois group  $H$. 
    Thus $A_\tau(y)$ has two one-dimensional $k(y)$-eigenspaces under the action of $H$.
    We have also shown that the map $\pi^* \colon K'_0(k(y)) \to K_0'(B_\tau(y))$ induced by tensoring with $A_\tau(y)$
    over $k(y)$ is an isomorphism.
    So from the definition of $\pi^*$, we see that restricting operators from $B_\tau(y)$ to
    $k(y)[H]$ gives an injection
    $$K_0'(B_\tau(y)) \to K_0(k(y)[H]) = \mathbb{Z} \oplus \mathbb{Z}$$
    whose image is the diagonal embedding of $\mathbb{Z}$.  
    
    Finally, suppose $n = 1$ and that $A_\tau(y)$ is not \'etale over $k(y)$.
    We have shown that $A_\tau(y)$ is isomorphic in this case to the dual numbers $k(y)[\epsilon]/(\epsilon^2)$ in such a way that
    a generator for $H$ sends $\epsilon$ to $-\epsilon$.  All simple
    $B_{\tau}(y)$-modules are annihilated by $\epsilon$.  Since $B_{\tau}(y)/\epsilon B_{\tau}(y)$ is
    isomorphic to $k(y)[H]$, we find that the map \eqref{eq:injC} is injective.
\end{proof}

As in Theorem \ref{thm:imagquad}, we must take completed tensor products
over $\mathcal{O}_\psi$ with the Witt vectors $W$ over $\overline{\mathbb{F}}_p$.
In what follows, we abuse notation and omit this $W$ from the notation of the completed tensor products.
That is, from now on 
we let $Z_{\tau}$ denote $W \cotimes{\mc{O}_{\psi}} Z_{\tau}$, and similarly with $A_{\tau}$ and $B_{\tau}$.
We then let $Y = \mathrm{Spec}(Z_{\tau})$, and
we use $k(y)$ to denote the residue field of $Z_{\tau}$ at $y \in Y$, and we define $A_{\tau}(y)$ and $B_{\tau}(y)$
as before.
Note that the analogue of Lemma \ref{lem:restrictg} holds for $y \in Y^{(2)}$,
with $Y^{(2)}$ the subset of codimension $2$ primes in $Y$.

We suppose for the remainder of this section that $X^{\psi}$ and $X^{\omega\psi^{-1}}$ are pseudo-null as $\Lambda_{\psi}$-modules.
In view of Proposition \ref{prop:star}, we have by Theorem \ref{thm:imagquad} the following identity among
non-commutative second Chern classes
\begin{equation}
\label{eq:fancynon}
	c_{2,B_{\tau}}\left( \bigoplus_{\chi \in T}  
	\frac{\Omega_W^{\chi}}{(\mc{L}_{\mf{p},\chi}, \mc{L}_{\bar{\mf{p}},\chi} )} \right ) = 
	c_{2,B_{\tau}}\Biggl(\bigoplus_{\chi \in T} X_W^{\chi}\Biggr) + 
	c_{2,B_{\tau}}\Biggl(\bigoplus_{\chi \in T} (X_W^{\omega\chi^{-1}})^{\iota}(1)\Biggr),
\end{equation}
where $T$ denotes the orbit of $\psi$ (of order $1$ or $2$).
In view of Lemma \ref{lem:restrictg}, to compute \eqref{eq:fancynon} in terms of $p$-adic $L$-functions, it suffices to compute the
analogous abelian second Chern classes via $L$-functions when $B_\tau$ is replaced
by $A_{\tau}$ and by $Z_{\tau}[H]$ and we view the latter two as quadratic algebras over $Z_{\tau}$.

In the case that $n = 2$, a prime $y \in Y^{(2)}$ gives rise to one prime in each of the two factors of
$A_{\tau} = \Omega_W^{\psi} \times \Omega_W^{\psi \circ \sigma}$ by projection.
Note that we can identify $\Omega_W^{\psi}$ and $\Omega_W^{\psi \circ \sigma}$ 
with $\Lambda_W$ so that $Z_{\tau}$ is identified with the diagonal in $\Lambda_W^2$,
and these two primes of $\Lambda_W$ are then equal.  We have
\begin{equation} \label{eq:niceA}
	\rK_0'(A_{\tau}(y)) \cong \Z \oplus \Z,
\end{equation}
the terms being $\rK_0'$ of the residue fields of $\Omega_W^{\psi}$ and $\Omega_W^{\psi \circ \sigma}$
for $y$, respectively.

\begin{supproposition}
\label{prop:2star}
	If $n = 2$, then under the injective map
	$$
		\bigoplus_{y \in Y^{(2)}} \rK_0'(B_{\tau}(y)) \to 
		\bigoplus_{y \in Y^{(2)}} (\Z \oplus \Z)
	$$
	induced by \eqref{eq:injB} and \eqref{eq:niceA}, 
	the class in \eqref{eq:fancynon} is sent to an element with both components
	having a characteristic symbol which at $P \in Y^{(1)}$ is
	equal to the Steinberg symbol
	\begin{eqnarray*}
	& \{ \mc{L}_{\mf{p}, \psi}, \mc{L}_{\bar{\mf{p}}, \psi} \} \in \rK_2(\mr{Frac}(\Lambda_W))
	& \text{if } \mc{L}_{\bar{\mf{p}}, \psi} \text{ is not a unit at $P$,}
	\end{eqnarray*}
	and is zero otherwise.
\end{supproposition}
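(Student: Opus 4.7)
The plan is to use Lemma \ref{lem:restrictg} to translate the non-commutative Chern class computation into two commutative ones, one for each character in the orbit $T = \{\psi,\psi\circ\sigma\}$, and then invoke Theorem \ref{thm:imagquad} separately for each.

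First, I would make the identification $\bigoplus_{y \in Y^{(2)}} \rK_0'(A_\tau(y)) \cong \bigoplus_y (\Z \oplus \Z)$ explicit. For $y \in Y^{(2)}$, the product decomposition of $A_\tau$ yields $A_\tau(y) \cong k(y) \times k(y)$. Identifying $Z_\tau$ with $\Omega_W^\psi$ via projection to the first factor, the composed projection $Z_\tau \hookrightarrow A_\tau \to \Omega_W^{\psi\circ\sigma}$ coincides with the involution $\sigma$. Thus, for any $A_\tau$-module $M$, the canonical splitting $M = M^\psi \oplus M^{\psi\circ\sigma}$ sends the class $[M_y]$ to the pair whose first coordinate is the length of $M^\psi_y$ and whose second coordinate is the length of $M^{\psi\circ\sigma}_{\sigma^{-1}(y)}$. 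Applying this to any of the three $B_\tau$-modules appearing in \eqref{eq:fancynon}, which are already direct sums over $\chi \in T$ of $\Omega_W^\chi$-modules, the non-commutative identity \eqref{eq:fancynon} splits into two commutative identities in $\rZ^2(Y)$, each of which is exactly the content of Theorem \ref{thm:imagquad} for the respective $\chi \in T$.

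For the first component ($\chi=\psi$), this gives directly the characteristic symbol claimed in the proposition, by the final assertion of Theorem \ref{thm:imagquad}. For the second component ($\chi=\psi\circ\sigma$), Theorem \ref{thm:imagquad} first provides the Steinberg symbol $\{\mc{L}_{\mf{p},\psi\circ\sigma},\mc{L}_{\bar{\mf{p}},\psi\circ\sigma}\}$ at those primes of $\Omega_W^{\psi\circ\sigma}$ where $\mc{L}_{\bar{\mf{p}},\psi\circ\sigma}$ is not a unit. To transport this to a characteristic symbol in $\rK_2(\mathrm{Frac}(\Lambda_W))$ at primes of $Z_\tau$, I would pull back along the isomorphism $\sigma\colon\Omega_W^\psi\xrightarrow{\sim}\Omega_W^{\psi\circ\sigma}$, using Lemma \ref{relationlplpbar}(a) to compute $\sigma^{-1}(\mc{L}_{\mf{p},\psi\circ\sigma})=\mc{L}_{\bar{\mf{p}},\psi}$ and $\sigma^{-1}(\mc{L}_{\bar{\mf{p}},\psi\circ\sigma})=\mc{L}_{\mf{p},\psi}$. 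This yields the symbol $\{\mc{L}_{\bar{\mf{p}},\psi},\mc{L}_{\mf{p},\psi}\}$ at primes of $\Lambda_W$ where $\mc{L}_{\mf{p},\psi}$ is not a unit. By Remark \ref{rem:symmetry} applied with $f_1=\mc{L}_{\mf{p},\psi}$ and $f_2=\mc{L}_{\bar{\mf{p}},\psi}$, this symbol represents the same element of $\rZ^2(Y)$ as $\{\mc{L}_{\mf{p},\psi},\mc{L}_{\bar{\mf{p}},\psi}\}$ placed at primes where $\mc{L}_{\bar{\mf{p}},\psi}$ is not a unit, which completes the argument.

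The main bookkeeping challenge will be tracking the $\sigma$-equivariance between primes and $p$-adic $L$-functions in the two factors of $A_\tau$; however, the functional equation of Lemma \ref{relationlplpbar}(a) is engineered precisely for this purpose, so no serious obstacle is anticipated once the identifications of the previous paragraphs are in place.
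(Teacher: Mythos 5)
Your proposal is correct and takes essentially the same route as the paper's own (one-sentence) proof: the first component is immediate from Theorem \ref{thm:imagquad}, while the second is obtained from the identification of $\Omega_W^{\psi\circ\sigma}$ with $\Lambda_W$, the relation $\mc{L}_{\bar{\mf{p}},\psi}=\sigma(\mc{L}_{\mf{p},\psi\circ\sigma})$ of Lemma \ref{relationlplpbar}(a), and the symmetry of Remark \ref{rem:symmetry}. You merely spell out the bookkeeping of the $\sigma$-twisted identifications that the paper leaves implicit.
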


\begin{proof}
	This is immediate from Theorem \ref{thm:imagquad} in the first coordinate.  The second
	coordinate is the same by Lemma \ref{relationlplpbar}(a) 
	and the above identification of $\Omega_W^{\psi \circ \sigma}$ with $\Lambda_W$, recalling
	Remark \ref{rem:symmetry}.
\end{proof}

In the case that $n = 1$, so $\psi = \tau|_{\Delta}$, 
we have an algebra decomposition 
\begin{equation} \label{eq:algdec}
	Z_{\tau}[H] =  Z_{\tau}^+ \times Z_{\tau}^-
\end{equation}
with the summands corresponding to the trivial and nontrivial one-dimensional
characters of $H$.  
These summands are isomorphic to $Z_{\tau}$ as $Z_{\tau}$-algebras.
We then have a decomposition
\begin{equation}
\label{eq:niceB}
    \rK_0'(k(y)[H]) \cong \mathbb{Z} \oplus \mathbb{Z}
\end{equation}
with the terms being $\rK_0'$ of the residue fields of $Z_{\tau}^+$ and $Z_{\tau}^-$, respectively, for the
images of $y$.

There are pro-generators $\gamma_1, \gamma_2 \in \mathrm{Gal}(K/F)$ 
such that $\sigma(\gamma_1) = \gamma_1$ and $\sigma(\gamma_2) = \gamma_2^{-1}$.
The ring $A_{\tau} = \Omega^{\psi}$ is $Z_{\tau}[\lambda]$, where $\lambda = \gamma_2-\gamma_2^{-1}$.
Note that $\sigma(\lambda) = -\lambda$ and $\lambda^2 \in Z_{\tau}$.
It follows that we have an isomorphism $Z_{\tau}[H] \xrightarrow{\sim} \Omega^{\psi}$ of $Z_{\tau}[H]$-modules 
taking $1$ to $(1+ \lambda)/2$ and $\sigma$ to $(1 - \lambda)/2$.

The element $\sigma$ permutes the $p$-adic $L$-functions $\mc{L}_{\mf{p},\psi}$ and
 $\mc{L}_{\bar{\mf{p}},\psi}$.  Define 
\begin{eqnarray*}
	\mc{L}_{\tau}^+ = \mc{L}_{\mf{p},\psi} + \mc{L}_{\bar{\mf{p}},\psi} 
    	&\mathrm{and}&
    	\mc{L}_{\tau}^- = \mc{L}_{\mf{p},\psi} - \mc{L}_{\bar{\mf{p}},\psi}.
\end{eqnarray*}

 \begin{supproposition}
 \label{prop:1star}
 	If $n = 1$, then under the injective map
	$$
		\bigoplus_{y \in Y^{(2)}} \rK_0'(B_{\tau}(y)) \to \bigoplus_{y \in Y^{(2)}} (\Z \oplus \Z)
	$$
	induced by \eqref{eq:injC} and \eqref{eq:niceB},
	the class in \eqref{eq:fancynon} is sent to an element
	that in the first and second components, respectively, has a characteristic symbol
	which at $P \in Y^{(1)}$ is equal to 
	\begin{eqnarray*} 
	\label{eq:stein1}
   	 \{\lambda\mc{L}_{\tau}^-, \mc{L}_{\tau}^+ \} \in \rK_2(\mathrm{Frac}(Z_{\tau}^+))
	 && \mbox{if $\mc{L}_{\tau}^+$ is not a unit at $P$}, \\
    	\label{eq:stein2}
    	\{ \mc{L}_{\tau}^-, \lambda \mc{L}_{\tau}^+  \} \in \rK_2(\mathrm{Frac}(Z_{\tau}^-))&&
    	\mbox{if $\lambda \mc{L}_{\tau}^+$ is not a unit at $P$},
    	\end{eqnarray*}
	and is zero otherwise.
\end{supproposition}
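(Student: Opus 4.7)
My strategy is to use the decomposition under the central involution $\sigma$ to reduce the non-commutative second Chern class to a pair of commutative ones that can be computed by Proposition \ref{prop:2ndsymb}. For $n = 1$ and $T = \{\psi\}$, the class on the left of \eqref{eq:fancynon} equals $c_{2,B_\tau}(M)$, where $M = \Omega_W^\psi/(\mc{L}_{\mf{p},\psi}, \mc{L}_{\bar{\mf{p}},\psi}) = \Omega_W^\psi/(\mc{L}_\tau^+, \mc{L}_\tau^-)$. By Lemma \ref{lem:restrictg}, this class is determined by its image in $\bigoplus_{y \in Y^{(2)}} \rK_0'(k(y)[H]) \cong \bigoplus_y (\Z \oplus \Z)$. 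Since the central idempotents $e^\pm = (1 \pm \sigma)/2 \in Z_\tau[H]$ decompose $M = M^+ \oplus M^-$ with $M^\pm = e^\pm M$, the two components of this image at $y$ are the $Z_{\tau,y}^\pm$-lengths of $M^\pm$, viewed as commutative Chern classes over $Z_\tau^\pm \cong Z_\tau$.

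Next, I would compute $M^\pm$ explicitly via the $\sigma$-eigenspace decomposition $\Omega_W^\psi = Z_\tau \oplus \lambda Z_\tau$. Since $\lambda^2 \in Z_\tau$, one may write $\mc{L}_\tau^- = \lambda \tilde{\mc{L}}_\tau^-$ for some $\tilde{\mc{L}}_\tau^- \in Z_\tau$. A direct calculation of how $(\mc{L}_\tau^+, \mc{L}_\tau^-)$ meets each eigenspace of $\Omega_W^\psi$ yields
\[
	M^+ \cong Z_\tau/(\mc{L}_\tau^+, \lambda\mc{L}_\tau^-),
	\qquad M^- \cong Z_\tau/(\mc{L}_\tau^+, \tilde{\mc{L}}_\tau^-),
\]
where the second isomorphism incorporates the ring identification $Z_\tau^- \cong Z_\tau$ sending $ze^-$ to $z$, corresponding at the module level to $\lambda Z_\tau \cong Z_\tau$ by $\lambda y \mapsto 2y$.

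Proposition \ref{prop:2ndsymb} applied to $M^+$ yields the first component of the statement directly: at a prime $P \in Y^{(1)}$ containing $\mc{L}_\tau^+$, the characteristic symbol is $\{\lambda\mc{L}_\tau^-, \mc{L}_\tau^+\}$. For $M^-$, Proposition \ref{prop:2ndsymb} gives the natural symbol $\{\tilde{\mc{L}}_\tau^-, \mc{L}_\tau^+\}$ at the same primes. Pulling back along the inverse identification $Z_\tau \to Z_\tau^-$ sending $z$ to $z\lambda/2$ converts $\tilde{\mc{L}}_\tau^-$ to $\mc{L}_\tau^-/2$ and $\mc{L}_\tau^+$ to $\lambda\mc{L}_\tau^+/2$, producing the symbol $\{\mc{L}_\tau^-, \lambda\mc{L}_\tau^+\}$ up to a $\rK_2$-bilinear contribution from the factors of $2 \in W^\times$, which lies in $\rK_2(Z_\tau) \subset \rK_2(Z_{\tau,P})$ and is absorbed into the denominator of the isomorphism \eqref{eq:adelicCH2}. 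Under the same identification, the condition ``$\mc{L}_\tau^+$ is not a unit at $P$'' transforms into ``$\lambda\mc{L}_\tau^+$ is not a unit at $P$'', noting that $\lambda$ corresponds to the unit $2 \in Z_\tau$.

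The main technical point is the eigenspace computation yielding $M^\pm$: because $\mc{L}_\tau^-$ lies in the antiinvariant part $\lambda Z_\tau$, and multiplication by $\lambda$ carries $\lambda Z_\tau$ back into $Z_\tau$ via $\lambda^2 \in Z_\tau$, the ideal $(\mc{L}_\tau^+, \mc{L}_\tau^-)$ interacts asymmetrically with the $\sigma$-decomposition. Tracking the resulting factors of $\lambda$ across the two eigenspaces is precisely what produces the asymmetry between the $+$ and $-$ characteristic symbols in the final statement.
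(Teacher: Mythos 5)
Your proposal is correct and follows essentially the same route as the paper: you decompose $\Omega_W^{\psi}/(\mc{L}_{\mf{p},\psi},\mc{L}_{\bar{\mf{p}},\psi})$ into its $H$-eigencomponents (the paper phrases this via $\Omega^{\psi}\cong Z_{\tau}[H]=Z_{\tau}^+\times Z_{\tau}^-$, with the minus component written as $Z_{\tau}^-/(\lambda\mc{L}_{\tau}^+,\mc{L}_{\tau}^-)$, which is exactly your $Z_{\tau}/(\mc{L}_{\tau}^+,\tilde{\mc{L}}_{\tau}^-)$) and then apply Proposition \ref{prop:2ndsymb} to each summand. One small quibble: the stray $\{2,\cdot\}$ factors need not lie in $\rK_2(Z_{\tau,P})$ as you assert, but they are still harmless because $2$ is a global unit, so the divisor of its residue class vanishes and these symbols die under $\nu_2$.
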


\begin{proof}
    The decomposition \eqref{eq:algdec} and the isomorphism $\Omega^{\psi} \cong Z_{\tau}[H]$
    induce an isomorphism
    $$
    	\frac{\Omega^{\psi}}{(\mc{L}_{\mf{p},\psi}, \mc{L}_{\bar{\mf{p}},\psi} )} 
    	\cong \frac{Z_{\tau}^+}{(\mc{L}_{\tau}^+, \lambda\mc{L}_{\tau}^- )} \oplus
     	\frac{Z_{\tau}^-}{(\lambda \mc{L}_{\tau}^+, \mc{L}_{\tau}^- )}.
    $$
    From the two summands on the right, together with 
    Proposition \ref{prop:2ndsymb}, we arrive at the two components of
    the non-commutative second Chern class of \eqref{eq:fancynon}, as in the statement of the proposition.
\end{proof}

 \appendix
 \section{Results on Ext-groups}
\label{appendix}

In this appendix, we derive some facts about modules over power series rings.  
For our purposes, let $\mc{O}$ be the valuation ring of a finite extension of $\qp$.
Let $\Gamma = \zp^r$ for some $r \ge 1$, and denote its standard topological generators by $\gamma_i$ for $1 \le i \le r$.  Set  $\La = \mc{O}\ps{\Gamma} = \mc{O}\ps{t_1,\ldots,t_r}$, where $t_i = \gamma_i-1$.  

As in Subsection \ref{ss:gensetup}, we use the following notation for a finitely generated $\La$-module $M$.
We set $\rE^i_{\La}(M) = \Ext^i_{\La}(M,\La)$, and we set $M^* = \rE^0_{\La}(M) = \Hom_{\La}(M,\La)$. 
Moreover, $M^{\vee}$ denotes the Pontryagin dual, and $M_{\mr{tor}}$ denotes the $\La$-torsion submodule of $M$. 

We will be particularly concerned with $\La$-modules of large codimension, but we first recall a known result on much larger modules.

\begin{suplemma}
\label{lem:double} 
	Let $M$ be a $\La$-module of rank one. Then $M^{**}$ is free.
\end{suplemma}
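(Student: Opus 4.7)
The plan is to reduce the statement to the classical fact that over a unique factorization domain, every reflexive module of rank one is free.

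First I would record two structural properties of $\Lambda = \mathcal{O}\ps{t_1, \ldots, t_r}$: it is a regular local Noetherian ring of Krull dimension $r+1$, and by the Auslander--Buchsbaum theorem (or by iterating the classical result that a power series ring in one variable over a UFD is again a UFD) it is a unique factorization domain. In particular $\Lambda$ is a Krull domain, and its divisor class group $\mathrm{Cl}(\Lambda)$ vanishes.

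Next I would verify that $M^{**}$ is a reflexive $\Lambda$-module of rank one. Reflexivity is automatic: for any finitely generated module $N$ over a Noetherian domain, $N^{*}$ is torsion-free, and the canonical map $N^{*} \to N^{***}$ is an isomorphism, so $M^{**} = (M^{*})^{**}$ coincides with its double dual. For the rank, we use that $\mathrm{Hom}_\Lambda(-,\Lambda)$ preserves rank for finitely generated modules over a Noetherian integral domain: tensoring with the fraction field $Q$ of $\Lambda$ commutes with $\mathrm{Hom}_\Lambda(-,\Lambda)$ in this setting (because $\Lambda$ is Noetherian and $\Lambda$ is finitely presented over itself), so $M^{*} \otimes_\Lambda Q \cong \mathrm{Hom}_Q(M \otimes_\Lambda Q, Q)$ has $Q$-dimension one, and the same applies to $M^{**}$.

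Finally, I would invoke the standard identification of isomorphism classes of finitely generated reflexive rank-one modules over a Krull domain $R$ with the divisor class group $\mathrm{Cl}(R)$: every such module is isomorphic to a divisorial fractional ideal, and two divisorial ideals are isomorphic iff they differ by a principal divisor. Since $\Lambda$ is a UFD, $\mathrm{Cl}(\Lambda) = 0$, so every divisorial ideal is principal. Thus $M^{**}$ is isomorphic to $\Lambda$, i.e., free of rank one.

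There is no real obstacle here; the only thing to be careful about is the rank computation and the reflexivity of $M^{**}$, both of which are routine over a Noetherian integral domain. The essential input is the UFD property of $\Lambda$, which reduces a potentially subtle reflexive-module question to a triviality.
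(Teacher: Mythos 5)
Your proof is correct and is essentially the paper's argument: both come down to the fact that a rank-one reflexive module over the UFD $\Lambda$ is free, which the paper proves by hand (identifying the double dual with a reflexive ideal and exhibiting a generator from its localizations at height-one primes) and which you instead quote as the vanishing of the divisor class group of a Krull domain that is a UFD. One small caution: the parenthetical alternative justification that a power series ring in one variable over an arbitrary UFD is again a UFD is false in general (Samuel's counterexamples), so the regularity plus Auslander--Buchsbaum argument you give first is the one to rely on.
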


\begin{proof}
	The canonical map
	$(M/M_{\mr{tor}})^* \to M^*$ is an isomorphism, so we may assume that $M_{\mr{tor}} = 0$.
	We may then identify $M$ with a nonzero ideal of $\La$.  
	The dual
	of a finitely generated module is reflexive, so we are reduced to showing that a reflexive ideal $I$ of 
	$\La$ is principal.  
	For each height one ideal $P$ of $\La$, let $\pi_P$ be a 
	uniformizer of $\La_P$, and let $n_P \ge 0$ be such that $\pi_P^{n_P}$ generates $I_P$.  Let $s$ be the 
	finite product of the $\pi_P^{n_P}$.  Then the principal 
	ideal $J = s\La$ 
	is obviously reflexive and has the same localizations at height one primes as $I$.  As $I$ and $J$ are
	reflexive, they are the intersections of their localizations at height one primes, so $I = J$.
\end{proof}

For a finitely generated $\La$-module $M$, we have $\rE_{\La}^i(M) = 0$ for all $i > r+1$.  Since $\La$ is Cohen-Macaulay (in fact, regular), the minimal $j = j(M)$ such that $\rE_{\La}^j(M) \neq 0$ is also the height of the annihilator of $M$.  (We take $j = \infty$
for $M = 0$.)  In particular, $M$ is torsion (resp., pseudo-null) if $j \ge 1$ (resp., $j \ge 2$), and $M$ is finite if $j = r+1$.

\begin{suplemma} \label{lem:groth}
	For $j \ge 0$, let $G_0(j)$ be the Grothendieck group of the category of finitely generated $\La$-modules 
	$M$ with $j(M) \ge j$.  The quotient of $G_0(j)$ by the image of the natural homomorphism $G_0(j+1) \to G_0(j)$ is generated by 
	the classes of modules of the form $\La/P$ with $P$ a prime ideal of height $j$.
\end{suplemma}

\begin{proof} Suppose $M$ is a finitely generated $\La$-module with $j(M) \ge j$.   The codimension of the support
of $M$ is then at least $j$, and the localization of $M$ at every prime $P$ of codimension $j$ is of finite length over $\La_{P}$.  
If $P$ is  in the support of $M$, then
$P$ is an associated prime of $M$ by \cite[(7.D) Thm. 9]{Matsumura}.  Hence there is an $m \in M$ such that $\La \cdot m$ is isomorphic to
$\La/P$.  Thus there is an exact sequence
$$0 \to \La/P \to M \to M' \to 0$$
in which $j(M') \ge j$ and the sum $s(M')$ of the lengths of $M'$ at codimension $j$ primes of $\La$ is one less than $s(M)$.  The lemma now follows by induction on $s(M)$.
\end{proof}

We also have the following.

\begin{suplemma}
\label{lem:dumbfinite} 
	Let $1 \le d \le r$, and let $f_i$ for $1 \le i \le d$ be elements of $\La$
	such that $(f_1,\ldots,f_d)$ has height $d$. Then $M = \La/(f_1,\ldots,f_d)$
	has no nonzero $\La$-submodule $N$ with $j(N) \ge d+1$. 
\end{suplemma}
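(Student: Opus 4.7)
The plan is to use the Cohen-Macaulay property of $\La$ to show that $M$ is unmixed, meaning that all of its associated primes have the same height $d$. Since the associated primes of any nonzero submodule $N$ of $M$ form a nonempty subset of $\mr{Ass}(M)$, this will force $j(N) = d$ rather than $\ge d+1$.

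First I would recall that $\La = \mc{O}\ps{t_1, \ldots, t_r}$ is a regular local ring of Krull dimension $r+1$, hence in particular Cohen-Macaulay. A standard result for Cohen-Macaulay rings says that a sequence of elements generating an ideal whose height equals the length of the sequence is necessarily a regular sequence. Applying this to our hypothesis that $(f_1, \ldots, f_d)$ has height $d$ shows that $f_1, \ldots, f_d$ form a regular $\La$-sequence. It follows from the standard depth/codimension computations (e.g., \cite[Chapter 18]{FultonIntersectionTheory} or a direct Koszul complex calculation) that $M = \La/(f_1, \ldots, f_d)$ is a Cohen-Macaulay $\La$-module with $j(M) = d$, and moreover that the only nonvanishing $\Ext$-group is $\rE^d_{\La}(M)$.

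Being a quotient of $\La$ by a regular sequence, $M$ is unmixed: every element of $\mr{Ass}_{\La}(M)$ is a prime of height exactly $d$. (Equivalently, $M$ has no embedded primes, and every associated prime is a minimal prime over $(f_1, \ldots, f_d)$, which by Krull's height theorem has height $d$.) For any nonzero finitely generated submodule $N$ of $M$, the set $\mr{Ass}_{\La}(N)$ is nonempty and contained in $\mr{Ass}_{\La}(M)$, so every associated prime of $N$ has height $d$. Since the minimal primes of $\mr{Ann}_{\La}(N)$ coincide with the minimal elements of $\mr{Ass}_{\La}(N)$, we conclude that $\mr{Ann}_{\La}(N)$ has height exactly $d$. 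Because $\La$ is Cohen-Macaulay, grade equals height for ideals, and $j(N)$ coincides with the grade of $\mr{Ann}_{\La}(N)$, so $j(N) = d$, contradicting $j(N) \ge d+1$.

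No step here looks genuinely difficult; the only thing to be careful about is invoking the equality of grade and height, which requires the Cohen-Macaulay hypothesis on $\La$, and verifying that the ideal hypothesis guarantees a regular sequence. Both are standard, but they are the load-bearing inputs, so I would state them explicitly with references rather than leaving them implicit.
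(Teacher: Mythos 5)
Your proof is correct and follows essentially the same route as the paper: the height hypothesis plus the Cohen--Macaulay property of $\La$ gives that $f_1,\ldots,f_d$ is a regular sequence, hence $M$ is a Cohen--Macaulay module with no embedded primes, and a nonzero submodule $N$ with $j(N)\ge d+1$ would produce an associated prime of $M$ of height greater than $d$, a contradiction. Your added remarks on grade equalling height are fine but not strictly needed, since the paper has already identified $j(N)$ with the height of the annihilator.
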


\begin{proof}Since $\La$ is a Cohen-Macaulay local ring, we know from  \cite[Theorem 17.4(iii)]{Matsumura}
that the ideal $(f_1,\ldots,f_d)$ has height $d$ if and only if $f_1,\ldots,f_d$ form a regular sequence
in $\La$. Then $M$ is a Cohen-Macaulay module by \cite[Theorem 17.3(ii)]{Matsumura}, and it
has no embedded prime ideals by \cite[Theorem 17.3(i)]{Matsumura}.  If $M$ has a nonzero $\La$-submodule
$N$ with $j(N)\ge d+1$, then a prime ideal of $\La$ of height strictly greater than $d$
will be the annihilator of a nonzero element of $M$.  This contradicts the
fact that $M$ has no embedded primes.    
\end{proof}

Let $\mc{G}$ be a profinite group containing $\Gamma$ as an open normal subgroup, and set $\Omega = \mc{O}\ps{\mc{G}}$.   For a left (resp., right) $\Omega$-module $M$, the groups $\rE_{\La}^i(M)$ have the structure of right (resp., left) $\Omega$-modules (see \cite[Proposition 2.1.2]{ls}, for instance).

We will say that a finitely generated $\Omega$-module $M$ 
is small if $j(M) \ge r$ as a (finitely generated) $\La$-module.   We use the notation $\finite$ to denote an unspecified finite module occurring in an exact sequence, and the notation $M_{\fin}$ to denote the maximal finite $\La$-submodule of $M$. 
Let $M^{\dagger} = (M \otimes_{\zp} \bigwedge^r \Gamma)^{\vee}$, which is isomorphic to $M^{\vee}$ if $\mc{G}$ is abelian.

We derive the following from the general study of Jannsen \cite{jannsen}.   In \cite[Lemma 5]{jan-spec}, a form of this is proven for modules finitely generated over $\zp$.  Its part (b) gives an explicit description of the 
Iwasawa adjoint of a small $\Omega$-module $M$ in the case that $\Omega$ has sufficiently large center.  We do not use
this in the rest of the paper, but for comparison with the classical theory, the explicit description appears to be of interest.
 
\begin{supproposition} \label{Er}  
	Let $M$ be a small (left) $\Omega$-module.  
	\begin{itemize}
	\item[(a)] 
	There exist canonical right $\Omega$-module isomorphisms $\rE^{r+1}_{\La}(M) \cong M_{\fin}^{\dagger}$, 
	and these are natural in $M$.
	\item[(b)]
	Given a non-unit $f \in \La$ that is central in $\Omega$ and 
	not contained in any height $r$ prime ideal in the support of $M$,
	there exists a canonical right $\Omega$-module homomorphism
	$$
		\rE_{\La}^r(M) \cong \varprojlim_n\, (M/f^nM)^{\dagger},
	$$
	the inverse limit taken with respect to maps $(M/f^{n+1}M)^{\vee} \to (M/f^nM)^{\vee}$ induced by
	multiplication by $f$.  
	The maximal finite submodule of $\rE_{\La}^r(M)$ is zero.
	\end{itemize}
\end{supproposition}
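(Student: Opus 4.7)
For part (a), the plan is to invoke Grothendieck local duality for $\La$, which is a regular (hence Gorenstein) local ring of Krull dimension $r+1$ with dualizing module $\La$ itself. This produces canonical $\La$-linear isomorphisms $\rE^{r+1}_\La(M) \cong \rH^0_{\mf{m}}(M)^{\vee}$ for every finitely generated $\La$-module $M$, where $\vee$ denotes Matlis duality. Since $\rH^0_{\mf{m}}(M) = M_{\fin}$ (a finitely generated $\La$-module has finite length exactly when annihilated by a power of $\mf{m}$), this yields the desired identification at the level of $\La$-modules; the twist by $\bigwedge^r \Gamma$ packaged into $M_{\fin}^{\dagger}$ records the determinant twist on the $\mc{G}$-action arising from the standard Koszul representative of the dualizing pairing, and naturality descends from that of local duality.

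For part (b), I would first reduce to the case $M_{\fin}=0$. The short exact sequence $0 \to M_{\fin} \to M \to M/M_{\fin} \to 0$ together with (a) (which forces $\rE^j_\La(M_{\fin})=0$ for $j \ne r+1$) yields $\rE^r_\La(M) \cong \rE^r_\La(M/M_{\fin})$. The parallel short exact sequence on $f^n$-quotients, combined with the nilpotence of $f$ on $M_{\fin}$ (which makes the multiplication-by-$f$ transition maps on $(M_{\fin}/(M_{\fin} \cap f^n M))^{\dagger}$ eventually zero) and Krull's intersection theorem, shows that $\varprojlim_n (M/f^n M)^{\dagger}$ is unchanged when $M$ is replaced by $M/M_{\fin}$.

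Assume now that $M_{\fin}=0$. Then every associated prime of $M$ has height exactly $r$, so the hypothesis on $f$ makes $f$ a non-zero-divisor on $M$; moreover, $M/f^n M$ is supported in codimension $\ge r+1$ and hence is finite. Applying $\Ext^*_{\La}(-,\La)$ to $0 \to M \xrightarrow{f^n} M \to M/f^n M \to 0$ and using (a) together with $\rE^{r+1}_\La(M) = M_{\fin}^{\dagger} = 0$, the long exact sequence collapses to
\[
0 \to \rE^r_\La(M) \xrightarrow{f^n} \rE^r_\La(M) \xrightarrow{\delta_n} (M/f^n M)^{\dagger} \to 0,
\]
producing canonical isomorphisms $\rE^r_\La(M)/f^n \rE^r_\La(M) \cong (M/f^n M)^{\dagger}$ and exhibiting $f$ as a non-zero-divisor on $\rE^r_\La(M)$. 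A diagram chase through the comparison map $(\mathrm{id},f,f)$ between the $n$th and $(n+1)$st short exact sequences shows that $\delta_n$ intertwines the natural quotient transition maps on the left with the multiplication-by-$f$ transition maps on the right. Taking inverse limits and invoking $f$-adic completeness of $\rE^r_\La(M)$---a finitely generated module over the complete local ring $\La$, which is itself $f$-adically complete because each principal ideal $(f^n) \subseteq \mf{m}^n$ is closed in the $\mf{m}$-adic topology---yields the desired canonical isomorphism $\rE^r_\La(M) \cong \varprojlim_n (M/f^n M)^{\dagger}$. The vanishing of the maximal finite submodule is then immediate: any nonzero finite submodule would be killed by a power of $f$, contradicting non-zero-divisibility of $f$ on $\rE^r_\La(M)$.

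The main obstacles I anticipate are the equivariant refinement of (a)---matching the $\mc{G}$-actions on the two sides precisely, which is where the $\bigwedge^r \Gamma$ twist enters---and the bookkeeping in (b) that identifies the transition maps through $\delta_n$, so that the inverse limits on both sides genuinely correspond; the $f$-adic completeness in the last step is a standard consequence of $\mf{m}$-adic completeness via closedness of principal ideals.
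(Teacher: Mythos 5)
Your plan is correct, and for part (a) it takes a genuinely different route from the paper. The paper deduces (a) from duality for the group $\Gamma$: Jannsen's theorem identifies $\rE^{r+1}_{\La}(M)$ with $((\varinjlim_U M^U)[p^{\infty}])^{\dagger}$, so the $\Omega$-structure and the $\bigwedge^r \Gamma$-twist come packaged with the dualizing module of $\Gamma$, and the remaining work is a d\'evissage (via Lemma \ref{lem:groth}, reducing to $\La/P$ with $P$ of height $r$) to show $(\varinjlim_U M^U)[p^{\infty}] = M_{\fin}$. Your local-duality route gets $\rH^0_{\mf{m}}(M) = M_{\fin}$ for free, but the price is exactly the point you flag: the $\mc{G}$-equivariance and the provenance of the twist must be extracted by hand, e.g.\ by computing $\rH^{r+1}_{\mf{m}}(\La)$ from a stable Koszul complex on a regular system of parameters and checking that conjugation by $g \in \mc{G}$ acts through the determinant of its action on $\Gamma$, together with the (standard) identification of Matlis with Pontryagin duality on finite-length modules; this computation is routine but is the actual content of the $\dagger$, so it should be written out rather than asserted. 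Also, the vanishing $\rE^j_{\La}(N) = 0$ for $j \neq r+1$ and $N$ finite is not a consequence of (a); it is the grade (Cohen--Macaulay) fact recalled at the start of the appendix, or equivalently $\rH^i_{\mf{m}}(N) = 0$ for $i > 0$ in your framework. For part (b) your argument is essentially the paper's: the same short exact sequences $0 \to \rE^r_{\La}(M) \to \rE^r_{\La}(M) \to (M/f^nM)^{\dagger} \to 0$, the same identification of transition maps via the comparison $(\mathrm{id},f,f)$, and $f$-adic completeness of the finitely generated module $\rE^r_{\La}(M)$. Your preliminary reduction to $M_{\fin} = 0$ is a small but genuine improvement in bookkeeping: the paper works instead with $M/M[f^n]$, and when $M_{\fin} \neq 0$ its displayed sequence is exact only up to a finite error term mapping to $M_{\fin}^{\dagger}$, which must then be seen to die in the inverse limit; your reduction, justified exactly as you say by the eventual vanishing of the transition maps on the error system (since $f$ is nilpotent on $M_{\fin}$), removes this wrinkle at the outset.
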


\begin{proof}
	For $i \ge 0$ and a locally compact $\Omega$-module $A$, set
	$$
		{\rm D}_i(A) = \dirlim{U} \rH^i_{\cont}(U,A)^{\vee},
	$$
	where the direct limit is with respect to duals of restriction maps 
	over all open subgroups $U$ of finite index in $\Gamma$.
	The group $\Gamma$ is a duality group (see \cite[Theorem 3.4.4]{nsw}) of 
	strict cohomological dimension $r$, and its dualizing module is the
	$\Omega$-bimodule
	$$
		\textstyle {\rm D}_r(\zp) \cong \varinjlim_{U} \Hom_{\zp}(\La^r U,\zp)^{\vee} \cong \bigwedge^r \Gamma \otimes_{\zp} \qp/\zp.
	$$
	We have ${\rm D}_i(M^{\vee}) = 0$ for $i > r$ and, by duality, we have the first isomorphism in
	$$
		\textstyle {\rm D}_r(M^{\vee}) \cong \Hom_{\Lambda}(M^{\vee},{\rm D}_r(\zp)) \cong 
		(\dirlim{U} M^U) \otimes_{\zp} \bigwedge^r \Gamma.
	$$ 
	By \cite[Theorem 2.1]{jannsen}, we then have canonical and natural isomorphisms
	\begin{equation}
	\label{eq:E3stuff}
		\rE^{r+1}_{\La}(M) \cong ({\rm D}_r(M^{\vee})[p^{\infty}])^{\vee} \cong ((\dirlim{U} M^U)[p^{\infty}])^{\dagger}.
	\end{equation}
	Moreover, by \cite[Corollary 2.6b]{jannsen}, we have that $\rE^i_{\La}(M) = 0$ for $i \ne r+1$ if $M$ happens
	to be finite.
	
	We claim that 
	\begin{equation}
	\label{eq:horror}
	(\dirlim{U} M^U)[p^{\infty}] = M_{\fin}
	\end{equation}
	which will finish the proof of part (a).
	As $\Gamma$ acts continuously on $M$, the left-hand side contains $M_{\fin}$, so 
	it suffices to show that $(\dirlim{U} M^U)[p^{\infty}]$ is finite.  As $M$ is compact, there exist
	an open subgroup $V$ and $k \ge 1$ such that
	$M^V[p^k] = \dirlim{U} M^U[p^{\infty}]$.  As $V \cong \Gamma$,
	it suffices to show that $M^{\Gamma}[p]$ is finite.
	
	By Lemma \ref{lem:groth} and the right exactness of $\rE^{r+1}_{\La}$,
	we are recursively reduced to considering $M$ of the form $\Lambda/P$
	with $P$ a prime ideal of height $r$.  If $p \notin P$, 
	then $M$ has no $p$-power torsion and \eqref{eq:horror} is clear. 
	If $p \in P$, then $\Lambda/P$ is isomorphic to 
	$\mathbb{F}_q\ps{\overline{t}_1,\ldots,\overline{t}_r}/P'$ for a prime ideal $P'$ of height $r-1$ and some finite
	field $\mathbb{F}_q$ of characteristic $p$, 
	where the $\overline{t}_i$ are the images of the $t_i = \gamma_i - 1$ for topological generators
	$\gamma_i$ of $\Gamma$.
	The $\Gamma$-invariants of $(\Lambda/P)^{\Gamma}$ are annihilated by all $\overline{t}_i$.  If this invariant group 
	had a nonzero element, it would be annihilated by all $\overline{t}_i$.  The primality of $P'$ would then 
	force $P'$ to contain all $\overline{t}_i$. Since $P'$ is not maximal, this proves the claim, and hence part (a).

	Suppose we are given an element $f \in \La$ which is not a unit in $\La$ and is central in $\Omega$
	but is not in any prime ideal of codimension $r$ in the support of $M$.  
	As $M/f^nM$ and $M[f^n]$ are supported in codimension $r+1$, these $\La$-modules are finite.  
	It follows that we have isomorphisms $\rE^r_{\La}(M/M[f^n]) \cong \rE^r_{\La}(M)$ and
	then exact sequences
	$$
		0 \to \rE^r_{\La}(M) \xrightarrow{f^n} \rE^r_{\La}(M) \to \rE^{r+1}_{\La}(M/f^nM) \to 0.
	$$
	We write
$$
		\rE^r_{\La}(M) \cong \invlim{n} \rE^{r}_{\La}(M)/f^n \rE^{r}_{\La}(M)\cong \invlim{n} 
		\rE^{r+1}_{\La}(M/f^nM) 
		\cong \invlim{n} (M/f^nM)^{\dagger},
$$
	where multiplication by $f$ induces the map $(M/f^{n+1}M)^{\dagger} \to (M/f^nM)^{\dagger}$, which
	is the twist by the inverse of $\bigwedge^r \Gamma$ of
	$M^{\vee}[f^{n+1}] \to M^{\vee}[f^n]$.  It is clear from the latter description that $\rE^r_{\La}(M)$
	can have no nonzero finite submodule (and for this, it suffices to prove the statement as a $\La$-module,
	in which case the existence of $f$ is guaranteed), so we have part (b).
\end{proof}

\begin{supremark}
	A non-unit $f \in \La$ as in Proposition \ref{Er}(b) always exists.  That is, consider the finite set of height $r$
	prime ideals conjugate under $\mc{G}$ to a prime ideal in the support of $M$.  
	The union of these primes is not the maximal ideal of $\La$, so 
	we may always find a non-unit $b \in \La$ not contained in any prime in the set.  
	The product of the distinct $\mc{G}$-conjugates of $b$ is the desired $f$.
	Given a morphism $M \to N$ of small $\Omega$-modules, we obtain a canonical morphism between the isomorphisms 
	of Proposition \ref{Er}(b) for $M$ and $N$ by choosing $f$ to be the same element for both modules.
\end{supremark}

For a small $\Omega$-module $M$ and an $f$ as  in Proposition \ref{Er}(b), the quotient $M/fM$ is finite, $M$ itself is finitely generated and torsion over $\zp\ps{f}$.  The description of $\rE_{\La}^r(M)$
in Proposition \ref{Er}(b) then coincides (up to choice of a $\zp$-generator of $\bigwedge^r \Gamma$) with the usual definition
of the Iwasawa adjoint as a $\zp\ps{f}$-module.  In view of this, we make the following definition.
\begin{supdefinition} \label{def:adjoint}
	The Iwasawa adjoint $\alpha(M)$ of a small $\Omega$-module $M$ is $\rE_{\La}^r(M)$.
\end{supdefinition}
We then have the following simple lemma (cf. \cite[Proposition 15.29]{Washington}).

\begin{suplemma} \label{lem:smallseq}
	Let $0 \to M_1 \to M_2 \to M_3 \to 0$
	be an exact sequence of small $\Omega$-modules.  The long exact sequence of $\Ext$-groups 
	yields an exact sequence 
	$$
		0 \to \alpha(M_3) \to \alpha(M_2) \to \alpha(M_1) \to \finite
	$$
	of right $\Omega$-modules, where $\finite$ is the zero module if $(M_3)_{\fin} = 0$.
\end{suplemma}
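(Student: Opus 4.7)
The plan is to read off the desired sequence directly from the long exact Ext sequence attached to $0 \to M_1 \to M_2 \to M_3 \to 0$, using smallness to kill low-degree terms and invoking Proposition \ref{Er}(a) to control the first term beyond $\alpha$.

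First, I would apply $\mathbf{R}\Hom_{\La}(-,\La)$ to the given short exact sequence to obtain the usual long exact sequence
$$
\cdots \to \rE^{i-1}_{\La}(M_1) \to \rE^i_{\La}(M_3) \to \rE^i_{\La}(M_2) \to \rE^i_{\La}(M_1) \to \rE^{i+1}_{\La}(M_3) \to \cdots
$$
of right $\Omega$-modules, where the $\Omega$-module structure is as described in Subsection \ref{ss:gensetup}. Since each $M_j$ is small (that is, $j(M_j) \ge r$), we have $\rE^i_{\La}(M_j) = 0$ for every $i < r$. Consequently the long exact sequence starts
$$
0 \to \rE^r_{\La}(M_3) \to \rE^r_{\La}(M_2) \to \rE^r_{\La}(M_1) \to \rE^{r+1}_{\La}(M_3) \to \cdots,
$$
which, by the definition $\alpha(M) = \rE^r_{\La}(M)$ (Definition \ref{def:adjoint}), already gives the four-term exact sequence claimed in the statement, with the $\finite$ term identified as $\rE^{r+1}_{\La}(M_3)$.

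It remains to verify that this last group is finite and vanishes when $(M_3)_{\fin} = 0$. This is exactly the content of Proposition \ref{Er}(a), which supplies the canonical natural isomorphism $\rE^{r+1}_{\La}(M_3) \cong (M_3)_{\fin}^{\dagger}$; as the Pontryagin twisted dual of a finite $\zp$-module, $(M_3)_{\fin}^{\dagger}$ is finite and is zero whenever $(M_3)_{\fin} = 0$. There is no genuine obstacle here: the only thing one must be careful about is remembering that smallness is defined via the $\La$-structure and that the Ext functoriality automatically endows every $\rE^i_{\La}(M_j)$ with the correct right $\Omega$-module structure, so that the sequence produced is one of right $\Omega$-modules as stated.
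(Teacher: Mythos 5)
Your proof is correct and is exactly the argument the paper intends (it states the lemma without proof as a simple consequence of the long exact $\Ext$ sequence): smallness kills $\rE^i_{\La}$ for $i<r$, and Proposition \ref{Er}(a) identifies the next term $\rE^{r+1}_{\La}(M_3)$ with $(M_3)_{\fin}^{\dagger}$, which is finite and vanishes when $(M_3)_{\fin}=0$. No gaps.
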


We recall the following consequence of Grothendieck duality \cite[Chapter V]{Hartshorne},
noting that $\La$ is its own dualizing module in that $\La$ is regular (and that $\Omega$ is a finitely generated, free
$\La$-module).

\begin{supproposition} \label{prop:bjork}
	For a finitely generated $\Omega$-module $M$, there is a convergent
	spectral sequence 
	\begin{equation} \label{EEss}
	\rE_{\La}^p(\rE_{\La}^{r+1-q}(M)) \Rightarrow M^{\delta_{p+q,r+1}},
	\end{equation}
	natural in $M$, of right $\Omega$-modules,
	where $\delta_{i,j} = 1$ if $i = j$ and $\delta_{i,j} = 0$ if $i \neq j$.
	Moreover, $\rE_{\La}^i(\rE_{\La}^j(M)) = 0$ for $i < j$ and for $i > r+1$.  
\end{supproposition}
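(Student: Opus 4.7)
\textbf{Proof plan for Proposition \ref{prop:bjork}.}
The ring $\La = \mc{O}\lps t_1,\ldots,t_r \lrps$ is a regular local ring of Krull dimension $r+1$, hence of finite global dimension $r+1$. This immediately gives $\rE_\La^i(N)=0$ for all $i>r+1$ and all finitely generated $\La$-modules $N$, so in particular $\rE_\La^i(\rE_\La^j(M))=0$ whenever $i>r+1$.

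The plan for the spectral sequence is to construct it as the standard ``double-$R\mathrm{Hom}$'' hyperderived spectral sequence. I would choose a finite projective resolution $P^{\bullet}\to M$ (finite by regularity) and set $C^\bullet=\Hom_\La(P^\bullet,\La)$, so that $H^i(C^\bullet)=\rE_\La^i(M)$ and $C^\bullet$ is concentrated in cohomological degrees $0,\ldots,r+1$. Now take a Cartan--Eilenberg resolution of $C^\bullet$ by projectives and apply $\Hom_\La(-,\La)$. The filtration by the second index on the resulting total complex yields a convergent (first-quadrant, up to finite support) spectral sequence
\[
 E_2^{p,q}=\rE_\La^p\bigl(H^{-q}(C^\bullet)\bigr)=\rE_\La^p\bigl(\rE_\La^{-q}(M)\bigr)\ \Longrightarrow\ H^{p+q}\bigl(R\Hom_\La(R\Hom_\La(M,\La),\La)\bigr).
\]
Because $\La$ is regular, the canonical biduality map
\[
 M\longrightarrow R\Hom_\La(R\Hom_\La(M,\La),\La)
\]
is a quasi-isomorphism for every finitely generated $M$ (this is the perfect-complex case of biduality, which applies since $M$ has finite projective dimension). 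Hence the abutment is $M$ in total degree $0$ and zero otherwise. The reindexing $q\mapsto r{+}1{-}q$ then converts this into the form stated in the proposition, with $\delta_{p+q,\,r+1}$ picking out the diagonal that carries the filtration on $M$. Naturality in $M$ and the right $\Omega$-module structure are automatic, since the entire construction is functorial in $M$ and the $\Omega$-action on every term arises solely from the $\Omega$-action on $M$ through the functoriality of $R\Hom_\La(-,\La)$.

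For the remaining vanishing $\rE_\La^i(\rE_\La^j(M))=0$ when $i<j$, the plan is to invoke Ischebeck's lemma, which on a regular (hence Cohen--Macaulay) ring says that for any finitely generated $N$ the module $\rE_\La^j(N)$ is supported only in primes of height $\geq j$; equivalently, every associated prime of $\rE_\La^j(N)$ has height at least $j$. Applied to $N=\rE_\La^j(M)$, this forces $\rE_\La^i(N)=0$ whenever $i<j$, since $\Ext_{\La_\mathfrak{p}}^i(N_\mathfrak{p},\La_\mathfrak{p})=0$ once $i>\mathrm{ht}(\mathfrak{p})=\dim\La_\mathfrak{p}$ by the Auslander--Buchsbaum--Serre formula on the regular local ring $\La_\mathfrak{p}$.

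I do not anticipate a genuine obstacle here: each ingredient (finite global dimension, biduality for perfect complexes over a regular ring, and Ischebeck's vanishing) is standard. The one point requiring care is bookkeeping of the grading: writing out the double complex explicitly and tracking signs and degrees to confirm that the natural spectral sequence, after the reindexing $q\mapsto r{+}1{-}q$, takes precisely the asymmetric shape $\rE_\La^p(\rE_\La^{r+1-q}(M))\Rightarrow M^{\delta_{p+q,r+1}}$ displayed in the statement.
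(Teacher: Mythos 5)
Your construction of the spectral sequence itself is correct, and it takes a genuinely different route from the paper: the paper obtains Proposition \ref{prop:bjork} simply as a consequence of Grothendieck duality (its reference to Hartshorne, Chapter V), noting that the regular ring $\La$ is its own dualizing module and that $\Omega$ is a finitely generated free $\La$-module, whereas you build the bidualizing spectral sequence by hand from a finite projective resolution, a Cartan--Eilenberg resolution of the dual complex, and biduality for perfect complexes. Your route is more self-contained, at the price of the index bookkeeping you acknowledge. One point needing more care than ``functoriality of $\RHom_{\La}(-,\La)$'' is the right $\Omega$-structure: an element $g \in \mc{G}$ acts on $M$ only $\La$-semilinearly (conjugation may twist $\Gamma$), so $\La$-linear functoriality does not literally produce the $\Omega$-action. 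The clean fix is to resolve $M$ by finitely generated projective $\Omega$-modules, which are $\La$-projective because $\Omega$ is $\La$-free, and to carry out the entire construction $\Omega$-equivariantly; this is exactly what the paper's parenthetical remark about $\Omega$ being finitely generated free over $\La$ is there for.

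There is, however, a genuine flaw in your justification of the vanishing $\rE_{\La}^i(\rE_{\La}^j(M)) = 0$ for $i<j$. Writing $N = \rE_{\La}^j(M)$, you correctly observe that $N$ is supported on primes of height at least $j$, but your concluding step invokes the vanishing $\Ext^i_{\La_{\mathfrak{p}}}(N_{\mathfrak{p}},\La_{\mathfrak{p}})=0$ for $i > \mathrm{ht}(\mathfrak{p})$. At any prime in the support of $N$ one has $\mathrm{ht}(\mathfrak{p}) \ge j > i$, so that hypothesis is never satisfied, and the argument proves nothing in the range $i<j$ (that inequality is the tool for the upper vanishing $i>r+1$ and for the support statement, not for the lower vanishing). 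What you need is the actual content of Ischebeck's theorem, equivalently the identity of grade with the height of the annihilator over the Cohen--Macaulay ring $\La$: for a nonzero finitely generated $N$ one has $\Ext^i_{\La}(N,\La)=0$ for all $i < \operatorname{depth}\La - \dim N = \mathrm{ht}(\operatorname{Ann}_{\La} N)$. Since your support computation gives $\mathrm{ht}(\operatorname{Ann}_{\La}\rE_{\La}^j(M)) \ge j$, this yields $\rE_{\La}^i(\rE_{\La}^j(M))=0$ for $i<j$ (this is the Auslander condition, automatic for commutative regular rings). With that one substitution your argument is complete.
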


This implies the following.

\begin{supcor} \label{EEconseq}
	Let $M$ be a finitely generated $\Omega$-module. 
	\begin{itemize}
		\item[(a)] For $r = 1$, one has $\rE_{\La}^i(M^*) = 0$ for all $i \ge 1$.  Hence, $M^*$ is $\La$-free for any $M$.
		\item[(b)] For $r = 2$, one has $\rE_{\La}^2(M^*) = 0$ 
		and $\rE_{\La}^1(M^*) \cong \rE_{\La}^3(\rE_{\La}^1(M))$, so $\rE_{\La}^1(M^*)$ is finite.
		\item[(c)] If $M$ is small, then there is an exact sequence of $\Omega$-modules
		$$
			0 \to \rE_{\La}^{r+1}(\rE_{\La}^{r+1}(M)) \to M \to \rE_{\La}^r(\rE_{\La}^r(M)) \to 0.
		$$
		That is, $\alpha(\alpha(M)) \cong M/M_{\fin}$ as $\Omega$-modules.
	\end{itemize}
\end{supcor}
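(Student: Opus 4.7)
The plan is to derive all three parts uniformly from the biduality spectral sequence of Proposition \ref{prop:bjork}, combined with the vanishing $\rE_{\La}^i(\rE_{\La}^j(M)) = 0$ for $i < j$ stated there. The general strategy is, in each case, to identify the handful of nonzero terms on the $E_2$ page $E_2^{p,q} = \rE_{\La}^p(\rE_{\La}^{r+1-q}(M))$, observe that almost all $d_s$ differentials vanish because their sources or targets sit outside the range $0 \le p,q \le r+1$, and then read off the desired Ext groups as graded pieces of the abutment, which is concentrated in total degree $r+1$.

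For part (a), with $r = 1$, I would look at $E_2^{1,2} = \rE_{\La}^1(M^*)$ and $E_2^{2,2} = \rE_{\La}^2(M^*)$. Both have vanishing incoming and outgoing differentials on every page for bidegree reasons, so they survive unchanged to $E_\infty$; since both sit in total degrees strictly greater than $r+1 = 2$, they must vanish. To upgrade $\rE_{\La}^i(M^*) = 0$ for $i \ge 1$ to freeness of $M^*$, I invoke the standard fact that a finitely generated module $N$ over a regular local ring with $\Ext^i(N,\La) = 0$ for all $i \ge 1$ must be free: the final differential in a minimal free resolution of $N$ would otherwise produce a nonzero top Ext group.

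For part (b), with $r = 2$, the same direct argument gives $\rE_{\La}^2(M^*) = E_\infty^{2,3} = 0$. The term $E_2^{1,3} = \rE_{\La}^1(M^*)$ supports exactly one potentially nonzero differential $d_2 \colon E_2^{1,3} \to E_2^{3,2} = \rE_{\La}^3(\rE_{\La}^1(M))$, and the target itself has no other nonzero differentials. Both bidegrees lie in total degrees exceeding $r+1 = 3$, so both must vanish at $E_\infty$, forcing $d_2$ to be both injective and surjective. This yields $\rE_{\La}^1(M^*) \cong \rE_{\La}^3(\rE_{\La}^1(M))$, and the right-hand side is finite because $\rE_{\La}^3$ of any finitely generated $\La$-module has codimension at least $3 = r+1$.

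For part (c), smallness of $M$ forces $\rE_{\La}^j(M) = 0$ for $j < r$, so the only potentially nonzero $E_2$ positions are $(r+1,0)$, $(r,1)$, and $(r+1,1)$. All intermediate differentials vanish for bidegree reasons, so $E_\infty = E_2$. Vanishing of the abutment in total degree $r+2$ kills $E_2^{r+1,1}$, while in total degree $r+1$ the abutment $M$ acquires a two-step filtration with graded pieces $\rE_{\La}^{r+1}(\rE_{\La}^{r+1}(M))$ (the sub) and $\rE_{\La}^r(\rE_{\La}^r(M)) = \alpha(\alpha(M))$ (the quotient), yielding the stated exact sequence. To identify the sub with $M_{\fin}$, I apply Proposition \ref{Er}(a) twice: first $\rE_{\La}^{r+1}(M) \cong M_{\fin}^{\dagger}$, and then, since $M_{\fin}^{\dagger}$ is finite, $\rE_{\La}^{r+1}(M_{\fin}^{\dagger}) \cong (M_{\fin}^{\dagger})^{\dagger} \cong M_{\fin}$, using that $\dagger$ is involutive on finite $\Omega$-modules. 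The main obstacle is simply the spectral-sequence bookkeeping; the sparsity imposed by $\rE_{\La}^i(\rE_{\La}^j(\cdot)) = 0$ for $i < j$ makes the sequence degenerate essentially at $E_2$, so the proof is almost entirely formal once Proposition \ref{prop:bjork} and Proposition \ref{Er}(a) are in hand.
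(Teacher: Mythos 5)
Your proof is correct and is essentially the argument the paper leaves implicit: the corollary is stated as a direct consequence of Proposition \ref{prop:bjork}, and your bidegree/degeneration analysis of that spectral sequence, together with Proposition \ref{Er}(a) and the standard minimal-resolution criterion for freeness, is the intended derivation. One cosmetic point in (c): to see that the finite submodule you produce is literally $M_{\fin}$ inside $M$ (not merely abstractly isomorphic to it), note that any finite submodule of $M$ lies in $M_{\fin}$ and compare cardinalities, or alternatively use that $\rE_{\La}^r(\rE_{\La}^r(M))$ has no nonzero finite submodule by Proposition \ref{Er}(b).
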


For a left (resp., right) 
$\Omega$-module $M$,  we let $M^{\iota}$ denote the right (resp., left) $\Omega$-module that is $M$ as an $\mc{O}$-module and on which $g \in \mc{G}$ acts as $g^{-1}$ does on $M$. 
The following is a consequence of the theory of Iwasawa adjoints for $r = 1$ (see \cite[Lemma 3.1]{jannsen}), in which case $\La$-small means $\La$-torsion.

\begin{suplemma} \label{lem:extcycl}
	Let $d \ge 1$, and let $f_i$ for $1 \le i \le d$ be elements of $\Lambda$
	such that $(f_1,\ldots,f_d)$ has height $d$.  Set $M = \Lambda/(f_1,\ldots,f_d)$. 
	Then $\rE_{\La}^i(M) \cong (M^{\iota})^{\delta_{i,d}}$ for
	all $i \ge 0$.
\end{suplemma}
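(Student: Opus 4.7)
The strategy is to exploit the regularity of $\La$ via a Koszul resolution. Since $\La = \mc{O}\ps{t_1,\ldots,t_r}$ is a regular local ring, hence Cohen-Macaulay, the assumption that $(f_1,\ldots,f_d)$ has height $d$ implies by \cite[Theorem 17.4(iii)]{Matsumura} (as already invoked in the proof of Lemma \ref{lem:dumbfinite}) that $f_1,\ldots,f_d$ is a regular sequence in $\La$. Consequently, the Koszul complex $K_\bullet = K_\bullet(f_1,\ldots,f_d;\La)$ is a resolution of $M$ by finitely generated free $\La$-modules of length $d$.

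Next I would apply $\Hom_\La(-,\La)$ to $K_\bullet$ and exploit the self-duality of the Koszul complex on a regular sequence: there is a canonical isomorphism of complexes $\Hom_\La(K_\bullet,\La) \cong K_\bullet[-d]$. Taking cohomology yields $\rE^i_\La(M) = 0$ for $i \neq d$ and $\rE^d_\La(M) \cong H_0(K_\bullet) = \La/(f_1,\ldots,f_d) = M$ as left $\La$-modules. This already proves the $i \neq d$ vanishing in the lemma.

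To upgrade the $i=d$ isomorphism to an identification of right $\Omega$-modules with $M^\iota$, I would use that $\Omega$ is a finitely generated free $\La$-module (a basis being any set of coset representatives for $\Gamma$ in $\mc{G}$, since $\Gamma$ is open and normal). This yields a natural identification $\rE^i_\La(M) \cong \Ext^i_\Omega(M,\Omega)$ of right $\Omega$-modules, where the right $\Omega$-structure on the latter comes from right multiplication on $\Omega$ itself. Running the Koszul computation with the $\mc{G}$-action in view, the right action of $g \in \mc{G}$ on $\rE^d_\La(M)$ corresponds, under the identification $\rE^d_\La(M) \cong M$, to the map $m \mapsto g^{-1}m$ given by the original left $\Omega$-action on $M$. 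This is precisely the right $\Omega$-module structure defining $M^\iota$, as required.

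The main obstacle is the bookkeeping in this last step: while the $\La$-module computation is standard Koszul theory, verifying that the right $\Omega$-action on $\rE^d_\La(M)$ is the $\iota$-twist of the left $\Omega$-action on $M$ requires carefully tracking how the anti-involution $g \mapsto g^{-1}$ arises when interchanging left and right $\Omega$-structures via $\Hom_\La(-,\Omega)$. One can reduce the compatibility check to the base case $d=1$ (where $0 \to \La \xrightarrow{\cdot f} \La \to \La/f\La \to 0$ makes the computation explicit) and then propagate through the Koszul filtration by induction on $d$, using that tensoring Koszul complexes is multiplicative and that the identification $\rE^d_\La(M) \cong M$ is functorial in the semilinear sense compatible with the $\mc{G}$-action.
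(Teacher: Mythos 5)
Your commutative-algebra core is correct and close in spirit to the paper's argument: height $d$ together with the Cohen--Macaulay property of $\La$ gives the regular sequence (the same argument the paper invokes in the proof of Lemma \ref{lem:dumbfinite}), and Koszul self-duality then gives $\rE^i_\La(M)=0$ for $i\ne d$ and $\rE^d_\La(M)\cong M$ with its evident untwisted $\La$-structure. The paper reaches the same point by induction on $d$, applying the long exact sequence of $\Ext$-groups to $0\to N\xrightarrow{f_d}N\to M\to 0$ with $N=\La/(f_1,\ldots,f_{d-1})$; that induction builds your Koszul computation one multiplication map at a time, and it carries the twist along automatically, because the induced map on $\rE^{d-1}_\La(N)\cong N^\iota$ is multiplication by $\iota(f_d)$, whose cokernel is exactly $M^\iota$.

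The gap is in how you produce the $\iota$. First, a framing issue: for $\mc{G}\supsetneq\Gamma$ the module $M=\La/(f_1,\ldots,f_d)$ is not an $\Omega$-module (the ideal need not be $\mc{G}$-stable), so $\Ext^i_\Omega(M,\Omega)$ is not available; the lemma is a statement about $\La$-modules, with $\iota$ the involution of $\La$ induced by inversion on $\Gamma$. Second, and more seriously, the mechanism you invoke does not give the twist. With the structure coming from multiplication on the target (equivalently, since $\La$ is commutative, the natural right action $(f\cdot\gamma)(m)=f(\gamma m)=\gamma f(m)$), the Koszul identification $\rE^d_\La(M)\cong M$ is $\La$-linear for the untwisted structures: the action of $\gamma$ corresponds to $m\mapsto\gamma m$, not to $m\mapsto\gamma^{-1}m$. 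Under that convention the correct conclusion would be $\rE^d_\La(M)\cong M$ with no twist, and the twisted statement would in fact be false, since already for $d=1$ the modules $\La/(f)$ and $\La/(\iota(f))$ are non-isomorphic in general. The inversion in the lemma comes entirely from the paper's convention (made explicit just before Lemma \ref{lem:indext}) that the left action of $g$ on $\Hom_\La(M,\La)$ is precomposition with $g^{-1}$, i.e., the natural right action converted into a left action through $g\mapsto g^{-1}$, exactly parallel to the paper's convention for Pontryagin duals. So the $\iota$ has to be extracted from that convention, not from the Koszul computation itself nor from ``interchanging left and right structures via $\Hom_\La(-,\Omega)$''; your proposed explicit $d=1$ check would expose this, but as written the final identification with $M^\iota$ does not follow from the argument you give.
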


\begin{proof}
	This is clearly true for $d = 0$.  Let $d \ge 1$, and set $N= \La/(f_1, \ldots, f_{d-1})$ so that $M = N/(f_d)$.  The exact sequence 
	$$
		0 \to N \xrightarrow{f_d} N \to M \to 0
	$$
	that is a consequence of Lemma \ref{lem:dumbfinite}
	gives rise to a long exact sequence of Ext-groups.  By induction on $d$, the only nonzero terms of that
	sequence form a short exact sequence 
	$$
		0 \to N^{\iota} \xrightarrow{(f_d)^\iota} N^{\iota} \to \rE_{\La}^d(M) \to 0,
	$$
	and the result follows.
\end{proof}

For more general $\Omega$-modules, we can for instance prove the following.

\begin{supproposition} \label{prop:adjoint}
	Suppose that $\mc{G} \cong \Gamma \times \Delta$, where $\Delta$ is abelian of order prime to $p$.
	Let $M$ be a small $\Omega$-module. 
	Then $(M/M_{\fin})^{\iota}$ and $\alpha(M)$ have the same class in the quotient of the Grothendieck group 
	of the category of 
	small right $\Omega$-modules by the image of the Grothendieck group of the category of finite right $\Omega$-modules.
  In particular, as $\La$-modules, their $r$th localized Chern classes agree.
\end{supproposition}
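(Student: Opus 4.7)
The plan is to reduce the claim to the complete intersection case handled by Lemma \ref{lem:extcycl}, via a devissage in the Grothendieck group. First I would verify that both of the functors $M \mapsto \alpha(M) = \rE^r_\La(M)$ and $M \mapsto (M/M_{\fin})^\iota$ are additive on the Grothendieck group $G$ of small right $\Omega$-modules modulo finite modules. For $\iota$ this is immediate, since $\iota$ is exact. For $\alpha$, the long exact sequence of $\Ext$-groups associated to a short exact sequence $0 \to M_1 \to M_2 \to M_3 \to 0$ of small modules, combined with the vanishing $\rE^{r-1}(M_i)=0$ (forced by the small hypothesis) and the finiteness of $\rE^{r+1}(M_i)$ from Proposition \ref{Er}(a), gives additivity in $G$. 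It therefore suffices to check $[\alpha(M)] = [(M/M_{\fin})^\iota]$ on a generating set of $G$.

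Next I would exploit the hypothesis that $|\Delta|$ is prime to $p$: the group algebra $\mathcal{O}[\Delta]$ splits as a product $\prod_\chi \mathcal{O}_\chi$ indexed by Galois orbits of characters $\chi$ of $\Delta$, so $\Omega \cong \prod_\chi \Lambda_\chi$ with each $\Lambda_\chi$ a commutative regular (hence Cohen-Macaulay) local ring of dimension $r+1$. Both $\alpha$ and $\iota$ respect this decomposition, with $\iota$ interchanging the $\chi$ and $\chi^{-1}$ components. This reduces the problem to one concerning $\Lambda_\chi$-modules. By Lemma \ref{lem:groth} (applied to $\Lambda_\chi$), the Grothendieck group of small $\Lambda_\chi$-modules modulo finite is generated by the classes $[\Lambda_\chi/P]$ for height-$r$ primes $P$. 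For any such $P$ I would choose a regular sequence $f_1,\ldots,f_r \in P$, which exists because $\Lambda_\chi$ is Cohen-Macaulay and $P$ has height exactly $r$. Lemma \ref{lem:extcycl} then supplies an honest isomorphism
\[
\alpha(\Lambda_\chi/(f_1,\ldots,f_r)) \cong (\Lambda_\chi/(f_1,\ldots,f_r))^\iota,
\]
which a fortiori equates classes in $G$. Decomposing this class as $\sum_i m_i [\Lambda_\chi/P_i]$ over the (height-$r$) minimal primes $P_i$ of $(f_1,\ldots,f_r)$ with their multiplicities $m_i$, and using additivity of both functors, one obtains the identity $\sum_i m_i[\alpha(\Lambda_\chi/P_i)] = \sum_i m_i [(\Lambda_\chi/P_i)^\iota]$ in $G$.

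The hard part is extracting the single class $[\Lambda_\chi/P]$ from this relation, since the complete intersection $(f_1,\ldots,f_r)$ may have additional minimal primes $P_i \ne P$. One way to address this is to run the argument for several well-chosen regular sequences in $P$ and perform a linear-algebra argument on the resulting family of identities. A cleaner route, which simultaneously delivers the ``In particular'' clause about $r$th localized Chern classes as $\La$-modules, is to verify pointwise: at each height-$r$ prime $P$ of $\La$, the localization $\La_P$ is a regular local ring of dimension $r$, and Matlis duality identifies $\Ext^r_{\La_P}(N,\La_P)$ with the Matlis dual of any finite-length $\La_P$-module $N$, preserving length. Combined with compatibility of the $\iota$-twist with localization, this gives $\length_{\La_P}\alpha(M)_P = \length_{\La_P}((M/M_{\fin})^\iota)_P$ at every $P \in Y^{(r)}$, so that the two cycles agree; the Grothendieck-group equality then follows because the $r$th Chern class map $G \to \bigoplus_{P \in Y^{(r)}} \mathbb{Z}$ is injective.
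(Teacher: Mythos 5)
Your overall skeleton --- reduction to trivial $\Delta$ via eigenspaces, d\'evissage to classes $[\La/P]$ for height-$r$ primes $P$ using Lemma \ref{lem:groth}, and additivity of both $\alpha$ and $\iota$ in the Grothendieck group via Lemma \ref{lem:smallseq} --- is exactly the paper's. The problem is the decisive step, the case $M=\La/P$, where neither of your two routes is complete. The first route (regular sequences in $P$ plus Lemma \ref{lem:extcycl}) only yields relations $\sum_i m_i\bigl([\alpha(\La/P_i)]-[(\La/P_i)^{\iota}]\bigr)=0$, one for each complete intersection $(f_1,\ldots,f_r)\subseteq P$; as you yourself note, extracting the single class $[\La/P]$ requires knowing that complete-intersection cycles span enough of the cycle group to isolate it, and this is not a routine linear-algebra matter --- a height-$r$ prime of $\La$ need not be expressible (even rationally) in terms of complete-intersection cycles in any evident way. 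So that route is a genuine gap, not a detail.

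The ``cleaner route'' has a more serious defect: as written it proves the untwisted statement. Local (Matlis) duality over the regular local ring $\La_P$ gives $\mathrm{length}_{\La_P}\Ext^r_{\La_P}(M_P,\La_P)=\mathrm{length}_{\La_P}M_P$, so with the naive commutative $\La$-structure on $\Ext^r_{\La}(M,\La)$ you would conclude $c_r(\alpha(M))=c_r(M/M_{\fin})$, with no $\iota$. The twist in the proposition comes entirely from the Galois-module structure the paper puts on $\rE^r_{\La}(M)$ --- the one under which Proposition \ref{Er}(b) identifies $\alpha(M)$ with $\varprojlim_n (M/f^nM)^{\dagger}$ (duals carrying the inverse action) and under which Lemma \ref{lem:extcycl} produces $M^{\iota}$ --- and with that structure the localization of $\alpha(M)$ at $P$ is governed by $M_{\iota(P)}$, not $M_P$. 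Your phrase ``compatibility of the $\iota$-twist with localization'' assumes precisely this identification, which is the heart of the matter; the twist genuinely changes the answer whenever the support of $M$ is not $\iota$-stable (and this is how the proposition is used in Theorem \ref{thm:imagquad}, where it exchanges eigenspaces). The paper supplies the missing ingredient directly: computing $\Ext^r_{\La}(\La/P,\La)$ from an injective resolution of $\La$, every Hom group is annihilated by $\iota(P)$, so $\alpha(\La/P)$ is supported only at $\iota(P)$ among height-$r$ primes, and it is nontrivial there (with the multiplicity pinned down by $\alpha(\alpha(M))\cong M/M_{\fin}$, Corollary \ref{EEconseq}(c)). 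Some argument of this kind --- establishing where $\iota$ enters the structure of $\rE^r_{\La}$ --- is what your proposal is missing.
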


\begin{proof}
	By taking $\Delta$-eigenspaces of $M$ (passing to a coefficient ring containing $|\Delta|$th roots of unity), 
	we can reduce to the case that $\Delta$ is trivial.  
	It then suffices by Lemmas \ref{lem:groth} and \ref{lem:smallseq} to show the first statement 
	for $M = \La/P$, where $P$ is a height $r$ prime. Let  $\iota \colon
	\La \to \La$ be the involution determined by inversion of group elements.  We can compute
	$\alpha(M) =   \rE^r_{\La}(M) = \mathrm{Ext}^r_{\La}(M,\La)$ by an injective resolution of $\La$
	by $\La$-modules.  Every group in the resulting complex of homomorphism groups will
	be killed by $\iota(P)$, so $\alpha(M)$ will be annihilated by
	$\iota(P)$.  Clearly,
	$\iota(P)$ is the only codimension $r$ prime possibly in the support of $\rE^r_{\La}(M)$, and it is in the
	support since $\alpha(\alpha(M)) \cong M/M_{\fin}$ by Corollary \ref{EEconseq}(c). 
\end{proof}

The following particular computation is of interest to us.  Let $\mc{G}'$ be a closed subgroup of $\mc{G}$, and 
	let $M$ be a finitely generated left $\Omega' = \zp\ps{\mc{G}'}$-module.	
	Set $\Ga' = \mc{G}' \cap \Ga$, and let $\La' = \zp\ps{\Ga'}$.
	For $i = 0$ we have a right action of $g \in \Ga'$ on $f \in \rE_{\La'}^0(M) = \mathrm{Hom}_{\La'}(M,\La')$
	given by setting $(fg)(m) = f(m)g$ and a left action of $g$ on $f$ given by $(gf)(m) = f(m)g^{-1}$.
	This extends functorially to right and left actions of $\Ga'$ on $\rE_{\La'}^i(M)$ for all $i$.

\begin{suplemma} \label{lem:indext}
	With the above notation,	we have for all $i \ge 0$ an isomorphism of right $\Omega$-modules   
\begin{equation}
\label{eq:Rclaim1}
		\rE_{\La}^i(\Omega \otimes_{\Omega'} M) \cong \rE_{\La'}^i(M) \otimes_{\Omega'} \Omega
		\end{equation}
	and an isomorphism of left $\Omega$-modules 
\begin{equation}
\label{eq:Rclaim2}
	\rE_{\La}^i(\Omega \otimes_{\Omega'} M) \cong \Omega^{\iota} \otimes_{\Omega'} \rE_{\La'}^i(M).
\end{equation}
\end{suplemma}

\begin{proof}    Let us first show that $\Omega$ is flat over $\Omega'$.  This follows, for instance, from \cite[Lemma 2.4.3(a)]{ls}, since
	$\Omega$ is a free profinite $\Omega'$-module, i.e., a topological direct product of copies of $\Omega'$, 
	on a set right coset representatives for $\mc{G}'$ in $\mc{G}$.  By \cite[Lemma 2.1.6]{ls} and \cite[Lemma 2.1.7]{ls}
there are isomorphisms of right $\Omega$-modules
	$$
		\Ext_{\Omega}^i(\Omega \otimes_{\Omega'} M,\Omega) \cong \Ext_{\Omega'}^{i}(M,\Omega)
		\cong \Ext_{\Omega'}^{i}(M,\Omega') \otimes_{\Omega'} \Omega.
	$$
	The isomorphism (\ref{eq:Rclaim1}) follows, as the left term is
	$\rE_{\La}^i(\Omega \otimes_{\Omega'} M)$ and the right term is $\rE_{\La'}^i(M) \otimes_{\Omega'} \Omega$.
	The  isomorphism (\ref{eq:Rclaim2}) follows from (\ref{eq:Rclaim1}).
\end{proof}

\begin{supcor} \label{cor:indzp}
	Let $\mc{G}'$ be a closed normal subgroup of $\mc{G}$, and let $N$ denote the left $\Omega$-module 
	$\zp\ps{\mc{G}/\mc{G'}}$.
	Let $r' = \rank_{\zp}(\mc{G}' \cap \Gamma)$.  Then $\rE_{\La}^i(N) \cong (N^{\iota})^{\delta_{i,r'}}$
	as right $\Omega$-modules.
\end{supcor}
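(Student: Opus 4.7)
The plan is to reduce to the Koszul/regular sequence computation already handled by Lemma \ref{lem:extcycl}, via the induction formula of Lemma \ref{lem:indext}. First, I would rewrite $N$ as an induced module: if $\zp$ denotes the trivial $\Omega'$-module with $\Omega' = \zp\ps{\mc{G}'}$, then
\[
	N = \zp\ps{\mc{G}/\mc{G}'} \cong \Omega \otimes_{\Omega'} \zp
\]
canonically as left $\Omega$-modules. Applying Lemma \ref{lem:indext} then gives an isomorphism of right $\Omega$-modules
\[
	\rE_{\La}^i(N) \cong \rE_{\La'}^i(\zp) \otimes_{\Omega'} \Omega,
\]
where $\La' = \zp\ps{\Ga'}$ and $\Ga' = \mc{G}'\cap \Ga$.

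Next I would compute $\rE_{\La'}^i(\zp)$. Choose topological generators $\gamma'_1,\ldots,\gamma'_{r'}$ of $\Ga'$ and set $t'_j = \gamma'_j - 1 \in \La'$. Since $\La'$ is a regular local ring of Krull dimension $r'+1$ and the ideal $(t'_1,\ldots,t'_{r'})$ is the augmentation ideal, it has height exactly $r'$, so Lemma \ref{lem:extcycl} (with $d=r'$ and $f_j = t'_j$) applies to $\zp = \La'/(t'_1,\ldots,t'_{r'})$ and yields
\[
	\rE_{\La'}^i(\zp) \cong (\zp^{\iota})^{\delta_{i,r'}} = \zp^{\delta_{i,r'}},
\]
the final equality because $\mc{G}'$ acts trivially on $\zp$, so $\zp^{\iota} = \zp$. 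Substituting back, I get $\rE_{\La}^i(N) = 0$ for $i\neq r'$ and
\[
	\rE_{\La}^{r'}(N) \cong \zp \otimes_{\Omega'} \Omega
\]
as right $\Omega$-modules, where $\zp$ is a trivial right $\Omega'$-module.

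It remains to identify $\zp \otimes_{\Omega'} \Omega$ with $N^{\iota}$ as right $\Omega$-modules; this is the step requiring the most care since it is purely a matter of tracking left versus right module structures. Let $\iota \colon \Omega \to \Omega$ denote the continuous anti-involution induced by $g \mapsto g^{-1}$ on $\mc{G}$. If $I$ is the left ideal of $\Omega$ generated by $\{g-1 : g\in\mc{G}'\}$ and $J$ is the analogously defined right ideal, then $\iota(I) = J$, and $\iota$ induces a $\zp$-linear isomorphism
\[
	\Omega/I \;\xrightarrow{\ \sim\ }\; \Omega/J
\]
which converts the left $\Omega$-action on the source, twisted by $\iota$, into the right $\Omega$-action on the target. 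Since $N = \Omega \otimes_{\Omega'}\zp \cong \Omega/I$ as left $\Omega$-modules and $\zp \otimes_{\Omega'}\Omega \cong \Omega/J$ as right $\Omega$-modules, this gives the desired right $\Omega$-module isomorphism $\zp \otimes_{\Omega'}\Omega \cong N^{\iota}$, and hence $\rE_{\La}^{r'}(N) \cong N^{\iota}$. Combining with the vanishing for $i \neq r'$ proves the corollary; the only real obstacle is the bookkeeping of the $\iota$-twist in this last identification, but it follows directly from the definition of the right $\Omega$-structure on Hom and the anti-multiplicativity of $\iota$.
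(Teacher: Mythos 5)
Your proposal is essentially the paper's own proof: the paper likewise writes $N \cong \Omega \otimes_{\Omega'} \zp$, applies Lemma \ref{lem:indext}, computes $\rE_{\La'}^i(\zp) \cong \zp^{\delta_{i,r'}}$, and identifies $\zp \otimes_{\Omega'} \Omega$ with $N^{\iota}$ as right $\Omega$-modules. The only difference is that the paper cites Proposition \ref{Er} for the computation of $\rE_{\La'}^i(\zp)$, whereas you invoke Lemma \ref{lem:extcycl} with the regular sequence $t'_1,\ldots,t'_{r'}$ generating the augmentation ideal of $\La'$ (handling $r'=0$ trivially), which is an equally valid and if anything more direct route to the same computation.
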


\begin{proof}
	Let $\Omega' = \zp\ps{\mc{G'}}$.  Note that $N \cong \Omega \otimes_{\Omega'} \zp$, so 
	$N^{\iota} \cong \zp \otimes_{\Omega'} \Omega$ as right
	$\Omega$-modules.
	By Lemmas \ref{lem:indext} and \ref{Er}, we have 
	$$
		\rE_{\La}^i(N) \cong \rE_{\La'}^i(\zp) \otimes_{\Omega'} \Omega \cong (\zp
		\otimes_{\Omega'} \Omega)^{\delta_{i,r'}}.
	$$
\end{proof}

 \bibliographystyle{plain}

\begin{thebibliography}{88}

\bibitem{ExteriorPower}
F.~M.~Bleher,  T.~Chinburg, R.~Greenberg, M.~Kakde, R.~Sharifi, and M.~J.~Taylor, Exterior powers in Iwasawa Theory, preprint, arXiv:1903.08834, 2019.

\bibitem{BlochonGersten}
S.~Bloch, A note on Gersten's conjecture in the mixed characteristic case, \textit{Applications of algebraic {$K$}-theory to algebraic geometry and number theory, Part I, II (Boulder, Colo., 1983)}, \textit{Contemp. Math.}, vol. 55, Amer. Math. Soc., Providence, RI, 1986, 75--78. 

\bibitem{Iwasawa2012}
A.~Bouganis  and O.~Venjakob, \textit{Iwasawa Theory 2012}, \textit{Springer Contrib. Math. Comput. Sci.}, vol. 7, Springer, Heidelberg, 2014.

\bibitem{Brumer}
A.~Brumer, On the units of algebraic number fields, \textit{Mathematika} {\bf 14} (1967), 121--124.

\bibitem{tameness}
T.~Chinburg, B.~Erez, G.~Pappas, and M.~J.~Taylor, Tame actions of group schemes: integrals and slices, \textit{Duke Math. J.} {\bf 82} (1996), no. 2, 269--308.

\bibitem{CPT}
T. Chinburg, G. Pappas, and M. J. Taylor, Higher adeles and non-abelian Riemann-Roch, \textit{Adv. Math.} {\bf 281} (2015), 928--1024.

\bibitem{Coates1}
 J.~Coates, P.~Schneider, R.~Sujatha, and O.~Venjakob, \textit{Noncommutative Iwasawa main conjectures over totally real fields}, \textit{Springer Proc. Math. Stat.}, vol. 29, Springer, Heidelberg, 2013.

\bibitem{deShalit}
E.~de~Shalit, \textit{Iwasawa theory of elliptic curves with complex multiplication. $p$-adic $L$ functions}, \textit{Perspectives in Mathematics}, vol. 3, Academic Press, Inc., Boston, MA, 1987.

\bibitem{DennisStein}
R.~Dennis and M.~Stein, $K_2$ of discrete valuation rings, \textit{Adv. Math.} {\bf 18} (1975), no. 2, 182--238.

\bibitem{FukayaKatoConj}
T.~Fukaya and K.~Kato, A formulation of conjectures on $p$-adic zeta functions in noncommutative Iwasawa theory, \textit{Proceedings of the St. Petersburg Mathematical Society. Vol. XII}, \textit{Amer. Math. Soc. Transl. Ser. 2}, vol. 219, Amer. Math. Soc., Providence, RI, 2006, 1--85.

\bibitem{FKConj}
T.~Fukaya and K.~Kato, On conjectures of Sharifi, manuscript, 2012.

\bibitem{FultonIntersectionTheory}
W.~Fulton, \textit{Intersection theory}, 2nd ed., \textit{Ergeb. Math. Grenzgeb. (3)}, vol. 2, Springer-Verlag, Berlin, 1998.

\bibitem{GilletRR}
H.~Gillet, Riemann-Roch theorems for higher algebraic $K$-theory, \textit{Adv. Math.} {\bf 40} (1981), no. 3, 203--289.

\bibitem{GilletLevine}
H.~Gillet and M.~Levine, The relative form of Gersten's conjecture over a discrete valuation ring: the smooth case, \textit{J. Pure Appl. Algebra} {\bf 46} (1987), no. 1, 59--71.

\bibitem{GraysonTame}
D.~Grayson, Localization for flat modules in algebraic $K$-theory, \textit{J. Algebra} {\bf 61} (1979), no. 2, 463--496.

\bibitem{Gr78}
R.~Greenberg, On the structure of certain Galois groups, \textit{Invent. Math.} {\bf 47} (1978), no. 1, 85--99.

\bibitem{greenberg:1994}
R.~Greenberg, Iwasawa theory and $p$-adic deformations of motives, \textit{Motives (Seattle, WA, 1991)}, \textit{Proc. Sympos. Pure Math.}, vol. 55, part 2, Amer. Math. Soc., Providence, RI, 1994, 193--223.

\bibitem{Green1}
R.~Greenberg, Iwasawa theory---past and present, \textit{Class field theory---its centenary and prospect (Tokyo, 1998)}, \textit{Adv. Stud. Pure Math.}, vol. 30, Math. Soc. Japan, Tokyo, 2001, 335--385.

\bibitem{Gr06}
R.~Greenberg, On the structure of certain Galois cohomology groups, \textit{Doc. Math.} {\bf Extra Vol.} (2006), 335--391.

\bibitem{Hartshorne}
R.~Hartshorne, \textit{Residues and duality. Lecture notes of a seminar on the work of A. Grothendieck, given at Harvard 1963/64. With an appendix by P. Deligne}, \textit{Lecture Notes in Math.}, vol. 20, Springer-Verlag, Berlin-New York, 1966.

\bibitem{hubbardthesis}
D.~A.~Hubbard, \textit{The nonexistence of certain free pro-$p$ extensions and capitulation in a family of dihedral extensions of Q}, Thesis (Ph.D.)--University of Washington, Ann Arbor, MI, 1996.

\bibitem{IwBAMS}
K.~Iwasawa, On $\Gamma$-extensions of algebraic number fields, \textit{Bull. Amer. Math. Soc.} {\bf 65} (1959), 183--226.

\bibitem{iwasawa73}
K.~Iwasawa, On $\mathbb{Z}_{l}$-extensions of algebraic number fields, \textit{Ann. of Math. (2)} {\bf 98} (1973), 246--326.

\bibitem{jannsen}
U.~Jannsen, Iwasawa modules up to isomorphism, \textit{Algebraic number theory}, \textit{Adv. Stud. Pure Math.}, vol. 17, Academic Press, Boston, MA, 1989, 171--207. 

\bibitem{jan-spec}
U.~Jannsen, A spectral sequence for {I}wasawa adjoints, \textit{M\"unster J. Math.} {\bf 7} (2014), 135--148.

\bibitem{Kings}
J.~Johnson-Leung and G.~Kings, On the equivariant main conjecture for imaginary quadratic fields, \textit{J. Reine Angew. Math.} {\bf 653} (2011), 75--114.

\bibitem{KatoICM2006}
K.~Kato, Iwasawa theory and generalizations, \textit{International Congress of Mathematicians. Vol. I}, Eur. Math. Soc., Z\"urich, 2007, 335--357.

\bibitem{kobayashi:2003}
S.~Kobayashi, Iwasawa theory for elliptic curves at supersingular primes, \textit{Invent. Math.} {\bf 152} (2003), no. 1, 1--36.

\bibitem{lannuzelnguyen}
A.~Lannuzel and T.~Nguyen Quang Do, Conjectures de Greenberg et extensions pro-$p$-libres d'un corps de nombres, \textit{Manuscripta Math.} {\bf 102} (2000), no. 2, 187--209.

\bibitem{llz:2010}
A.~Lei, D.~Loeffler, and S.~Zerbes, Wach modules and Iwasawa theory for modular forms, \textit{Asian J. Math.} {\bf 14} (2010), no. 4, 475--528.

\bibitem{lim}
M.~F. ~Lim, Poitou-Tate duality over extensions of global fields, \textit{J. Number Theory} {\bf 132} (2012), no. 11, 2636--2672.

\bibitem{ls}
M.~F.~Lim and R.~Sharifi, Nekov{\'a}{\v{r}} duality over $p$-adic Lie extensions of global fields, \textit{Doc. Math.} {\bf 18} (2013), 621--678.

\bibitem{Matsumura}
H.~Matsumura, \textit{Commutative algebra}, 2nd ed., \textit{Mathematics Lecture Note Series}, vol. 56, Benjamin/Cummings Publishing Co., Inc., Reading, Mass., 1980.

\bibitem{MazurWiles}
B.~Mazur and A.~Wiles, Class fields of abelian extensions of $\mathbb{Q}$, \textit{Invent. Math.} {\bf 76} (1984), no. 2, 179--330.

\bibitem{mccallum}
W.~McCallum, Duality theorems in the multivariable Iwasawa theory of number fields, manuscript, 1996.

\bibitem{mcs}
W.~McCallum and R.~Sharifi, A cup product in the Galois cohomology of number fields, \textit{Duke Math. J.} {\bf 120} (2003), no. 2, 269--310.

\bibitem{MilnorK}
J.~Milnor, \textit{Introduction to algebraic $K$-theory},  \textit{Annals of Mathematics Studies}, no. 72, Princeton University Press, Princeton, N.J., 1971.

\bibitem{minardithesis}
J.~V.~Minardi, \textit{Iwasawa modules for $\mathbb{Z}^d_p$-extensions of algebraic number fields}, Thesis (Ph.D.)--University of Washington, Ann Arbor, MI, 1986.

\bibitem{nekovar}
J.~Nekov{\'a}{\v{r}}, \textit{Selmer complexes}, \textit{Ast\'erisque}, vol. 310, 2006.

\bibitem{nsw}
J.~Neukirch, A.~Schmidt, and K.~Wingberg, \textit{Cohomology of number fields}, 2nd ed., \textit{Grundlehren Math. Wiss.}, vol. 323, Springer-Verlag, Berlin, 2008.

\bibitem{nguyen}
T.~Nguyen Quang Do, Formations de classes et modules d'Iwasawa, 
\textit{Number theory, Noordwijkerhout 1983 (Noordwijkerhout, 1983)}, \textit{Lecture Notes in Math.}, vol. 1068, Springer, Berlin, 1984, 167--185.

\bibitem{nv}
T.~Nguyen Quang Do and D.~Vauclair, $K_2$ et conjecture de Greenberg dans les $\mathbb{Z}_p$-extensions multiples, \textit{J. Th\'eor. Nombres Bordeaux} {\bf 17} (2005), no. 2, 669--688.

\bibitem{ParshinCrelle}
A.~N.~Parshin, Chern classes, ad\`eles and $L$-functions, \textit{J. Reine Angew. Math.} {\bf 341} (1983), 174--192.

\bibitem{pollack:2003}
R.~Pollack, On the $p$-adic $L$-function of a modular form at a supersingular prime, \textit{Duke Math. J.} {\bf 118} (2003), no. 3, 523--558.

\bibitem{pollackrubin}
R.~Pollack and K.~Rubin, The main conjecture for CM elliptic curves at supersingular primes, \textit{Ann. of Math.} {\bf 159} (2004), 447--464.

\bibitem{pottharst:2013}
J.~Pottharst, Analytic families of finite-slope Selmer groups, \textit{Algebra Number Theory} {\bf 7} (2013), no. 7, 1571--1612.

\bibitem{Quillen}
D.~Quillen, Higher algebraic $K$-theory. I, \textit{Algebraic $K$-theory, I: Higher $K$-theories (Proc. Conf., Battelle Memorial Inst., Seattle, Wash., 1972)}, \textit{Lecture Notes in Math.}, vol. 341, Springer, Berlin, 1973, 85--147.

\bibitem{ReidSherman}
L.~Reid and C.~Sherman, The relative form of Gersten's conjecture for power series over a complete discrete valuation ring, \textit{Proc. Amer. Math. Soc.} {\bf 109} (1990), no. 3, 611--613.

\bibitem{RobertsBook}
P.~Roberts, \textit{Multiplicities and Chern classes in local algebra}, \textit{Cambridge Tracts in Math.}, vol. 133, Cambridge University Press, Cambridge, 1998.

\bibitem{Rubin1}
K.~Rubin, The ``main conjectures" of Iwasawa theory for imaginary quadratic fields, \textit{Invent. Math.} {\bf 103} (1991), no. 1, 25--68.

\bibitem{Rubin2}
K.~Rubin, More ``main conjectures" for imaginary quadratic fields, \textit{Elliptic curves and related topics}, \textit{CRM Proc. Lecture Notes}, vol. 4, Amer. Math. Soc., Providence, RI, 1994, 23--28.

\bibitem{sh-unram}
R.~Sharifi, On Galois groups of unramified pro-$p$ extensions, \textit{Math. Ann.} {\bf 342} (2008), no. 2, 297--308.

\bibitem{SharifiLfn}
R.~Sharifi, A reciprocity map and the two-variable $p$-adic $L$-function, \textit{Ann. of Math. (2)} {\bf 173} (2011), no. 1, 251--300.

\bibitem{SouleAbramovich}
C.~Soul\'e, \textit{Lectures on Arakelov geometry. With the collaboration of D.~Abramovich, J.-F.~Burnol and J.~Kramer}, \textit{Cambridge Stud. Adv. Math.}, vol. 33, Cambridge University Press, Cambridge, 1992.

\bibitem{sprung:2012}
F.~Sprung, Iwasawa theory for elliptic curves at supersingular primes: a pair of main conjectures, \textit{J. Number Theory} {\bf 132} (2012), no. 7, 1483--1506.

\bibitem{Washington}
L.~Washington, \textit{Introduction to cyclotomic fields}, 2nd ed., \textit{Grad. Texts in Math.}, vol. 83, Springer-Verlag, New York, 1997.

\bibitem{Wiles}
A.~Wiles, The Iwasawa conjecture for totally real fields, \textit{Ann. of Math. (2)} {\bf 131} (1990), no. 3, 493--540.

\end{thebibliography}

\end{document}